\theoremstyle{plain}
\newtheorem{thm}{Theorem}[section]
\newtheorem{lemma}[thm]{Lemma}
\newtheorem{prop}[thm]{Proposition}
\newtheorem{corollary}[thm]{Corollary}
\theoremstyle{definition}
\newtheorem{remark}[thm]{Remark}
\newtheorem{remarks}[thm]{Remarks}
\newtheorem{example}[thm]{Example}
\def\Id{\operatorname{Id}}
\def\dist{\operatorname{dist}}
\def\med{\operatorname{med}}
\def\loc{\operatorname{loc}}
\def\la{\lambda}
\def\a{\alpha}
\def\hra{\to}
\def\bX{{X}}
\def\bY{{Y}}
\def\rn{\mathbb R\sp n}
\def\R{\mathbb R}
\def\Rn{\mathbb R\sp n}
\def\N{\mathbb N}
\def\lt{\left}
\def\rt{\right}
\def\ri{R_{I}}
\def\M{\mathcal M}
\def\Mpl{\mathcal M_+}
\def\muf{\mu_{\Phi}}
\def\mufn{\mu_{\Phi,n}}
\newtoks\by
\newtoks\paper
\newtoks\book
\newtoks\jour
\newtoks\yr
\newtoks\pages
\newtoks\vol
\newtoks\publ
\newtoks\eds
\newtoks\proc
\def\ota{{\hbox{???}}}
\def\cLear{\by=\ota\paper=\ota\book=\ota\jour=\ota\yr=\ota
\pages=\ota\vol=\ota\publ=\ota}
\def\endpaper{\the\by, \textit{\the\paper},
{\the\jour} \textbf{\the\vol} (\the\yr), \the\pages.\cLear}
\def\endbook{\the\by, \textit{\the\book}, \the\publ.\cLear}
\def\endprep{\the\by, \textit{\the\paper}, \the\jour.\cLear}
\def\endproc{\the\by, \textit{\the\paper}, \the\publ, \the\pages.\cLear}
\def\name#1#2{#1 #2}
\def\et{ and }
\numberwithin{equation}{section}
\begin{document}

\title[Higher-order  embeddings]
{Higher-order Sobolev  embeddings \\ and isoperimetric inequalities}

\date{\today}
\author {Andrea Cianchi, Lubo\v s Pick and Lenka Slav\'ikov\'a}
\address{Dipartimento di Matematica e Informatica \lq\lq U. Dini"\\
Universit\`a di Firenze\\
Piazza Ghiberti 27\\
50122 Firenze\\
Italy} \email{cianchi@unifi.it}

\address{Department of Mathematical Analysis\\
Faculty of Mathematics and Physics\\
Charles University\\
Sokolovsk\'a~83\\
186~75 Praha~8\\
Czech Republic} \email{pick@karlin.mff.cuni.cz}

\address{Department of Mathematical Analysis\\
Faculty of Mathematics and Physics\\
Charles University\\
Sokolovsk\'a~83\\
186~75 Praha~8\\
Czech Republic} \email{slavikova@karlin.mff.cuni.cz}

\subjclass[2000]{46E35, 46E30} \keywords{Isoperimetric function,
higher-order  Sobolev embeddings, rearrangement-invariant spaces,
John domains, Maz'ya domains, product probability measures,
Gaussian Sobolev inequalities,  Hardy operator}

\thanks{This research was partly supported by the
the research project of MIUR (Italian Ministry of University) Prin
2008 ``Geometric aspects of partial differential equations and
related topics", by GNAMPA of the Italian INdAM (National Institute
of High Mathematics), and by the grants 201/08/0383 and P201/13/14743S of the Grant
Agency of the Czech Republic. The third named author was also partly supported by the grant SVV-2013-267316.}

\begin{abstract}
Optimal higher-order Sobolev type embeddings are shown to follow via
isoperimetric inequalities.  This establishes a higher-order
analogue of a well-known link between first-order Sobolev embeddings
and isoperimetric inequalities. Sobolev type inequalities of any
order, involving arbitrary rearrangement-invariant norms, on open
sets in $\rn$, possibly endowed with a measure density,
are
reduced to much simpler one-dimensional inequalities for suitable
integral operators depending on the isoperimetric function of the
relevant sets.
 As a consequence, the optimal target space
  in the relevant Sobolev embeddings can be determined both in standard and in
  non-standard  classes of function spaces and
underlying measure spaces. In particular, our results are applied to
any-order Sobolev embeddings in regular (John) domains of the
Euclidean space, in Maz'ya classes of (possibly irregular) Euclidean
domains described in terms of their isoperimetric function, and in
 families of product probability spaces, of which the Gauss space
is a classical instance.
\end{abstract}

\maketitle

\tableofcontents

\section{Introduction}\label{S:intro}
Sobolev inequalities  and isoperimetric inequalities  had
traditionally been investigated along independent lines of research,
which had led to the cornerstone results by Sobolev \cite{sob1,
sob2}, Gagliardo \cite{gagliardo} and Nirenberg \cite{nirenberg} on
the one hand, and by De Giorgi \cite{DeG} on the other hand, until
their intimate connection was discovered some half a century ago.
Such breakthrough goes back to the work of Maz'ya \cite{Ma1960,
Ma1961}, who proved that quite general Sobolev inequalities are
equivalent to either isoperimetric or isocapacitary inequalities.
Independently, Federer and Fleming \cite{FF} also  exploited De
Giorgi's isoperimetric inequality to exhibit the best constant in
the special case of the Sobolev inequality for functions whose
gradient is integrable with power one in $\rn$.
These advances paved the way to
an~extensive research, along diverse directions, on the interplay
between isoperimetric and Sobolev inequalities, and to a~number of
remarkable applications, such as the classics by  Moser
\cite{Moser}, Talenti \cite{Ta}, Aubin \cite{Aubin}, Br\'ezis and
Lieb \cite{BL}. The contributions to this field now constitute the
corpus of a vast literature, which includes the papers \cite{AFT,
BCR1, BWW, BH, BLbis, BK, BK1, CK, Cheeger, Ci_ind, Ci1, CFMP1,
CP_gauss, EKP, EFKNT, Gr, HK, HS1, HS2, KM, Kl, Ko, LPT, LYZ, M, Zh}
and the monographs \cite{BZ, CDPT, chavel, Heb, Mabook, Saloff}.
Needless to say, this list of references is by no means exhaustive.

 The strength of the approach  to Sobolev embeddings via isoperimetric
inequalities stems from the fact that not only it applies to a broad
range of situations, but also typically yields sharp results.  The
available results, however, essentially deal with first-order
Sobolev inequalities, apart from few exceptions on quite specific
issues concerning the higher-order case. Indeed, isoperimetric
inequalities are usually considered
 ineffectual in proving optimal higher-order Sobolev
embeddings. Customary techniques that are  crucial in the derivation
of first-order Sobolev inequalities from isoperimetric inequalities,
such as symmetrization, or just truncation, cannot be adapted to the
proof of higher-order Sobolev inequalities. A major drawback is that
these operations do not preserve higher-order (weak)
differentiability. A new  approach to the sharp
Sobolev  inequality in $\rn$, based on mass transportation
techniques, has been introduced in \cite{CNV}, and has later been
developed in various papers to attack other Sobolev type
inequalities, but still in the first-order
case. On the other hand,  methods which can be employed to handle
higher-order Sobolev inequalities, such as representation formulas,
Fourier transforms, atomic decomposition, are not flexible enough to
produce sharp conclusions in full generality. A paradigmatic
instance in this connection is provided by the standard Sobolev
embedding in $\rn$ to which we alluded above, whose original proof
via representation formulas \cite{sob1, sob2} does not include the
borderline case when derivatives are just integrable with power one.
This case was restored in \cite{gagliardo} and \cite{nirenberg}
through a completely different technique that rests upon
one-dimensional integration  combined with a clever use of
H\"older's inequality.

One main purpose of the present paper is to show that,
this notwithstanding,
 isoperimetric inequalities do imply optimal  higher-order Sobolev
embeddings in quite general frameworks. Sobolev embeddings for
functions defined on underlying domains in $\rn$, equipped with
fairly general measures, are included  in our discussion. Also,
Sobolev-type norms built upon any rearrangement-invariant Banach
function norm are considered. The use of isoperimetric inequalities
is shown to allow for a unified approach to the relevant embeddings,
which is based on the reduction to considerably simpler
one-dimensional inequalities. Such reduction principle is crucial in
a~characterization of the best possible target for arbitrary-order
Sobolev embeddings, in the class of all rearrangement-invariant
Banach function spaces. As a~consequence, the optimal target in
arbitrary-order Sobolev embeddings involving various customary and
non-standard underlying domains and norms can be exhibited. In fact,
establishing optimal higher-order Gaussian Sobolev embeddings,
namely Sobolev embeddings in $\rn$ endowed with the Gauss measure,
was our original motivation for the present research. Failure of
standard strategies in the solution of this problem   led us to
develop the general picture which is now the subject of this paper.
\par
A key step in our proofs amounts to the development of a~sharp
iteration method involving subsequent applications of optimal
Sobolev embeddings. We consider this method of independent interest
for its possible use in different problems, where regularity
properties of functions endowed with higher-order derivatives are in
question.

\section{An overview}\label{S:over}
We shall deal with Sobolev inequalities in an open connected set --
briefly, a domain -- $\Omega$ in $\rn$, $n \geq 1$, equipped with a
finite measure $\nu$ which is absolutely continuous with respect to
the Lebesgue measure, with density $\omega$. Namely,
 $d\nu(x)=\omega(x)\,dx$, where $\omega$ is a   Borel function  such that $\omega(x)>0$ a.e.~in $\Omega$.
Throughout the paper, we assume, for simplicity of notation, that
$\nu$ is normalized in such a way that $\nu (\Omega )=1$.
 The basic case when $\nu$ is the Lebesgue measure will be referred to
as Euclidean. Sobolev embeddings of arbitrary order for functions
defined in $\Omega$, with unconstrained values on $\partial
\Omega$, will be considered. However, the even simpler case of
functions vanishing (in the suitable sense) on $\partial \Omega$
together with their derivatives up to the order $m-1$ could be included
in our discussion.

   The isoperimetric inequality relative to     $(\Omega ,
\nu )$ tells us that
\begin{equation}\label{isop2}
P_{\nu }(E, \Omega ) \geq I_{\Omega , \nu} (\nu (E)),
\end{equation}
where $E$ is any measurable subset of $\Omega$, and $P_{\nu }(E, \Omega )$ stands for its perimeter in
$\Omega$ with respect to $\nu$. Moreover, $ I_{\Omega , \nu}$
denotes the largest non-decreasing function in $[0, \tfrac{1}{2}]$
for which \eqref{isop2} holds, called the isoperimetric function (or
isoperimetric profile) of $(\Omega , \nu)$, which  was introduced in
\cite{Ma1960}.

In the Euclidean case, $(\Omega , \nu)$ will be simply denoted by
$\Omega$, and $I_{\Omega , \nu}$ by $I_{\Omega }$. The isoperimetric
function $I_{\Omega , \nu}$ is known only in few special instances,
e.g.~when $\Omega$ is an Euclidean ball \cite{Mabook}, or agrees
with the space $\rn$ equipped with the Gauss measure \cite{Bor}.
 However, the asymptotic behavior of
$I_{\Omega , \nu}$ at $0$ -- the piece of information relevant in
our applications -- can be evaluated for various classes of
domains, including Euclidean bounded domains whose boundary is
locally a graph of a Lipschitz function \cite{Mabook}, or, more
generally, has a prescribed modulus of continuity
\cite{Cirelative, La}; Euclidean John domains, and even $s$-John
domains; the space $\rn$ equipped with the Gauss measure
\cite{Bor}, or with product probability measures which generalize
it \cite{BCR1, BCR}. The literature on isoperimetric inequalities
is very rich. Let us limit ourselves to mentioning that, besides
those quoted above, recent contributions on isoperimetric problems
in (domains in) $\rn$ endowed with a measure $\nu$ include
\cite{CMV, DHHT, FigalliMa, RCBM}.

Given a  Banach function space  $X(\Omega , \nu )$ of measurable
functions on $\Omega$, and  a positive integer $m \in \N$, the
$m$-th order Sobolev type space built upon $X(\Omega , \nu )$ is the
normed linear space $V^m X(\Omega , \nu )$ of all functions on $\Omega$
whose $m$-th order weak derivatives belong to $X(\Omega , \nu )$,
equipped with a natural norm induced by $X(\Omega , \nu )$.

A Sobolev  embedding amounts to the boundedness of the identity
operator from the Sobolev space $V^mX(\Omega , \nu )$ into another
function space $Y(\Omega , \nu )$ and will be denoted by
\begin{equation}\label{1001}
 V^mX(\Omega , \nu ) \to Y(\Omega , \nu ).
\end{equation}
When $m=1$, we
 refer to~\eqref{1001} as a~first-order embedding; otherwise, we call it
a~higher-order embedding.

Necessary and sufficient conditions for the validity of first-order
Euclidean  Sobolev embeddings with $X(\Omega ) = L^1(\Omega )$ and
$Y(\Omega ) = L^q(\Omega )$ for some $q \geq 1$ can be given through
the isoperimetric function $I_{\Omega }$. Sufficient conditions for
first-order Sobolev embeddings when $X(\Omega ) = L^p(\Omega)$ for
some $p>1$ and $Y(\Omega ) = L^q(\Omega)$, for some $q \geq 1$ can
also be provided in terms of $I_{\Omega }$. These results were
established in \cite{Ma1960, Ma1961}, and are exposed in detail in
\cite[Section 6.4.3]{Mabook}.

More recently, first-order Sobolev embeddings of the general form
\eqref{1001} (with $m=1$), where $X(\Omega , \nu )$ and $Y(\Omega ,
\nu )$ are Banach function spaces whose norm depends only on the
measure of level sets of functions, called rearrangement-invariant
spaces in the literature, have been shown to follow from
one-dimensional inequalities for suitable Hardy type operators which
depend on the isoperimetric function $I_{\Omega , \nu}$, and involve
the representation function norms $\| \cdot \|_{X(0,1)}$ and $\|
\cdot \|_{Y(0,1)}$ of $X(\Omega , \nu )$ and $Y(\Omega , \nu )$,
respectively.

Although a reverse implication need not hold in very pathological
settings (e.g.~in  Euclidean domains of Nikod\'ym type
\cite[Remark 6.5.2]{Mabook}),
 first-order Sobolev inequalities are known to be equivalent to
the  associated one-dimensional Hardy inequalities
 in most
 situations of interest in applications. This is
the case, for instance, in the basic case when $\Omega$ is a regular
Euclidean domain -- specifically,
 a John domain in $\rn $, $n \geq 2$ (see Section \ref{S:higher-euclid} for a definition). The class of John
 domains
 includes other more classical families of domains, such as  Lipschitz
domains, and
 domains with the cone property.
The  John domains  arise in connection with the study of
holomorphic dynamical systems and quasiconformal mappings.
 John domains are known to support a first-order
Sobolev inequality with the same exponents as in the standard
 Sobolev inequality \cite{Bo, HK, KM}. In fact,
 being a  John domain is a necessary condition for such a Sobolev
 inequality to hold in the class of two-dimensional simply
 connected open sets, and  in quite general  classes
 of  higher dimensional domains \cite{BK}.
The isoperimetric function $I_\Omega$ of any John domain is
known to  satisfy
\begin{equation}\label{Ijohn}
I_\Omega (s) \approx s^{\frac 1{n'}}
\end{equation}
near $0$, where $n'=\frac n{n-1}$.
Here, and in what follows, the
notation $\approx $ means that the two sides are bounded by each
other up to multiplicative constants independent of appropriate quantities. For instance, in \eqref{Ijohn} such  constants depend only on $\Omega$.

As a
consequence of \eqref{Ijohn}, one can show that the first-order
Sobolev embedding
\begin{equation}\label{E:*}
V^1X(\Omega ) \to Y(\Omega )
\end{equation}
 holds if and only if
the Hardy type inequality
\begin{equation}\label{E:**}
  \left\|\int_t\sp1f(s) s^{-1+ \frac 1n} \,ds \right\|_{ Y(0,1)}
  \leq C \left\|f\right\|_{X(0,1)}
\end{equation}
holds for some constant $C$, and for every nonnegative $f\in
X(0,1)$. Results of this kind, showing that Sobolev embeddings
follow from (and are possibly equivalent to) one-dimensional
inequalities will be referred to as reduction principles or reduction theorems. The
 equivalence of \eqref{E:*} and \eqref{E:**} is a key tool in determining the optimal target
$Y(\Omega )$ for $V^1X(\Omega )$ in \eqref{E:*} within families of
rearrangement-invariant function spaces, such as Lebesgue, Lorentz,
Orlicz spaces, provided that such an optimal target does exist
\cite{Ci_ind, Cianchiibero, EKP}. An even more standard version of
this reduction result, which holds for functions vanishing on
$\partial \Omega$, and is called P\'olya-Szeg\"o symmetrization
principle,
 is a crucial step in exhibiting the sharp
constant in the classical Sobolev inequalities to which we alluded
above \cite{Aubin, BL, Moser, Ta}.

A version of this picture for higher-order Sobolev inequalities is
exhibited in the present paper. We show that any $m$-th order
Sobolev embedding involving arbitrary rearrangement-invariant norms
can be reduced to a suitable one-dimensional inequalities for an
integral operator, with a kernel depending on $I_{\Omega , \nu}$ and
$m$.

Just to give an idea of the conclusions which follow from
our results,
let us mention  that, if, for instance, $\Omega$ is an Euclidean
John domain in $\rn$, $n \geq 2$, then a full higher-order
analogue of the equivalence of \eqref{E:*} and \eqref{E:**} holds.
Namely, the $m$-th order Sobolev embedding
\begin{equation*}
V^mX(\Omega ) \to Y(\Omega )
\end{equation*}
 holds if and only if
the Hardy type inequality
\begin{equation}\label{dec110}
  \left\|\int_t\sp1f(s) s^{-1+ \frac mn} \,ds \right\|_{ Y(0,1)}
  \leq C \left\|f\right\|_{X(0,1)}
\end{equation}
holds for some constant $C$, and for every nonnegative $f\in X(0,1)$
(Theorem \ref{T:eucl_reduction-john}, Section
\ref{S:higher-euclid}).

Our approach to   reduction principles for higher-order Sobolev
embedding relies  on the iteration of first-order results. Loosely
speaking, iteration is understood in the sense that, given a
rearrangement-invariant space and $m\in\N$, a first order optimal
Sobolev embedding is applied to show that the $(m-1)$-th order
derivatives of functions from the relevant Sobolev space belong to
a~suitable rearrangement-invariant~space. Another first-order
optimal Sobolev embedding is then applied to show that the
$(m-2)$-th order derivatives belong to another
rearrangement-invariant~space, and so on. Eventually, $m$ optimal
first-order Sobolev embeddings are exploited  to deduce that the
functions themselves belong to a certain space.

Let us warn that, although this strategy is quite natural in
principle, its implementation is not   straightforward. Indeed,
 even in the basic setting when $\Omega$ is
an Euclidean domain with a smooth boundary, and standard families
of norms are considered, iteration of optimal first-order
embeddings need not lead to optimal higher-order counterparts.

To see this, recall, for instance, that, if $\Omega$ is a regular
domain in $\mathbb R^2$, then
     \begin{equation}\label{ex1} V^2 L^1 (\Omega ) \to L^\infty (\Omega ).
     \end{equation}
       On the other hand, iterating twice the classical first-order Sobolev
       embedding only tells us that
             \begin{equation}\label{ex1'} V^2L^1(\Omega ) \to V^1L^2(\Omega ) \to
L^q(\Omega )
\end{equation}
for every $q<\infty$, and neither of the iterated embeddings can
be improved in the framework of Lebesgue space. This shows that subsequent applications of optimal first-order
Sobolev embeddings in the
    class of Lebesgue spaces  do not necessarily yield optimal higher-order
    counterparts.

    One might relate the loss of optimality in  the chain of
    embeddings
    \eqref{ex1'}
     to the lack of an optimal Lebesgue target space for
    the first-order Sobolev embedding of $V^1L^2(\Omega )$ when
    $n=2$. However,  non-optimal  targets
    may appear after iteration even in situations where optimal first-order target
    spaces do exist.  Consider, for example,  Euclidean Sobolev embeddings involving
Orlicz
    spaces. The optimal target in
    Sobolev embeddings of any order always exists  in this class of spaces,   and can be
    explicitly determined \cite{Ci_ind, Ci_higher}. In particular,
     Orlicz spaces naturally arise  in the borderline case of the Sobolev embedding
theorem.
 Indeed, if $\Omega$ is a regular domain in $\rn$ and $1 \leq m
   <n$, then
   \begin{equation}\label{limiting} V^m L^{\frac nm} (\Omega ) \to \exp L^{\frac
   n{n-m}}(\Omega )
   \end{equation}
  \cite{Yu, Po, Str}; see also \cite{Tr} for
  $m=1$. Here, $\exp L^\alpha (\Omega )$, with $\alpha >0$, denotes the Orlicz
  space associated with the Young function given by $e^{t^\alpha} -1$
  for $t \geq 0$. Observe that the target space in \eqref{limiting} is actually
  optimal
  in the class of all Orlicz spaces \cite{Ci_ind, Cianchiibero}.
Assume, for example, that $n \geq 3$ and $m
  =2$. Then \eqref{limiting} reduces to
   $$ V^2 L^{\frac n2} (\Omega ) \to \exp L^{\frac
   n{n-2}}(\Omega ).$$
   Via the iteration of optimal first-order embeddings, one gets
  \[
  V^2  L^{\frac n2} (\Omega ) \to V^1 L^{n}
(\Omega ) \to \exp L^{\frac
   n{n-1}}(\Omega ) \supsetneqq \exp L^{\frac
   n{n-2}}(\Omega ).
   \]
   Thus, subsequent applications of optimal Sobolev embeddings even in
   the
    class of Orlicz spaces, where optimal target spaces always exist, need not
result in optimal higher-order
    Sobolev  embeddings.

  The underlying idea behind the method that we shall introduce is that such a loss of
  optimality of the target space under iteration does not occur,
  provided that first-order (in fact, any-order)
  Sobolev embeddings whose targets are optimal among all rearrangement-invariant spaces are iterated. We thus proceed via
  a two-step argument, which can be outlined as follows.
Firstly, given any function norm $\| \cdot \|_{X(0,1)}$ and the
isoperimetric function $I_{\Omega , \nu}$ of $(\Omega , \nu)$, the
optimal  target  among all rearrangement-invariant function norms
for the first-order Sobolev space $V^1X(\Omega , \nu)$ is
characterized; secondly, first-order Sobolev embeddings with
an~optimal target are iterated to derive optimal targets in
arbitrary-order Sobolev embeddings.

In order to grasp this procedure in   a simple situation, observe
that, when applied in the proof of embedding \eqref{ex1}, it amounts
to strengthening the chain in \eqref{ex1'} by
\begin{equation}\label{ex1''}
V^2L^1(\Omega ) \to V^1L^{2,1}(\Omega ) \to L^\infty (\Omega ),
\end{equation}
where $L^{2,1}(\Omega )$ denotes a Lorentz space (strictly contained
in $L^2(\Omega )$). We refer to \cite{On, Pe, T2} for standard
Sobolev embeddings in Lorentz spaces. Note that both targets in the
embeddings in \eqref{ex1''} are actually optimal among all
rearrangement-invariant spaces.

As mentioned above, our reduction principle asserts that the Sobolev
embedding \eqref{1001}  follows from a suitable one-dimensional
inequality for an  integral operator depending on $I_{\Omega,\nu}$,
$m$, $\| \cdot \|_{X(0,1)}$ and $\| \cdot \|_{Y(0,1)}$.
Interestingly, in contrast with the first-order case, the relevant
integral operator is not just of Hardy type, but involves a genuine
kernel. The latter takes back the form of a
  basic (weighted) Hardy operator only
if, loosely speaking, the isoperimetric function $I_{\Omega,
\nu}(s)$ does not decay too fast to $0$ when $s$ tends to $0$. This
is the case, for instance, of \eqref{dec110}.  A major consequence
of the reduction principle is a characterization of a target space
$Y(\Omega , \nu)$ in embedding \eqref{1001}, depending on $X(\Omega,
\nu)$, $m$, and $I_{\Omega, \nu}$, which turns to be optimal among
all rearrangement-invariant spaces whenever Sobolev embeddings and
associated one-dimensional inequalities in the reduction principle
are actually equivalent. This latter property depends on the
geometry of $(\Omega , \nu)$, and is fulfilled in most customary
situations, to some of which a consistent part of this paper is
devoted.

Besides regular Euclidean domains, namely the
John domains  which we have already briefly discussed, the
implementations of our   results that will be presented concern
Maz'ya classes of (possibly irregular) Euclidean domains, and
product probability spaces, of which  the Gauss space and the Boltzmann spaces are
distinguished instances.

The Maz'ya classes are defined as families of domains whose isoperimetric
function is bounded from below by some fixed power.
 Sobolev embeddings in all
 domains from  a class of this type take the same form, and a worst, in
 a sense, domain from the relevant class can be singled out to demonstrate the sharpness of
 the
 results.
\par
 The product probability spaces in $\rn$ that are taken into
 account were analyzed in \cite{BCR1, BCR}, and   share common features with
 the Gauss space, namely $\rn$ endowed with the probability
 measure
$d \gamma _n (x) =  (2\pi)^{-\frac n2} e^{-\frac{|x|^2}{2}}dx$. In
particular, the Boltzmann spaces can be handled via our approach.

\section{Spaces of measurable functions}\label{S:measurable}
In this section, we
 briefly recall some basic facts from the theory of
rearrangement-invariant spaces. For more details, a~standard reference is~\cite{BS}.

Let $(\Omega,\nu)$ be as in Section \ref{S:over}. Recall that we are assuming $\nu (\Omega )=1$. The
measure of any measurable set $E \subset \Omega$ is thus given by
$$\nu (E) = \int _E  \omega (x)\, dx.$$
We denote by $\M(\Omega , \nu)$ the set of all Lebesgue measurable
(and hence $\nu$-measurable) functions on~$\Omega$ whose values
belong to $[-\infty , \infty]$. We also define $\M _+(\Omega ,
\nu)= \{u \in \M(\Omega , \nu): u \geq 0\}$, and $\M _0(\Omega ,
\nu)= \{u \in \M(\Omega , \nu): u\,\, \hbox{is finite a.e. in}\,\,
\Omega \}$.

The \textit{decreasing rearrangement} $u\sp* : [0,1] \to [0, \infty ]$ of a function $u \in \M(\Omega , \nu)$  is
defined as
$$
u\sp*(s)=\sup\{t\in\R:\,\nu\left(\{x\in \Omega
:\,|u(x)|>t\}\right)>s\}\ \qquad \qquad \textup{for}\
s\in[0, 1].
$$
The operation $u\mapsto u\sp*$ is monotone in the sense that
\[
|u|\leq |v|\ \textup{a.e.\ in}\ \Omega\quad \textup{implies}\quad
u\sp*\leq v\sp*\ \textup{in}\ (0,1).
\]
We also define $u^{**} : (0,1] \to [0, \infty]$ as
$$
u^{**}(s) = \frac{1}{s}\int _0^s u^*(r)\, dr \qquad \hbox{for
$s\in (0,1]$}.
$$
Note that $u^{**}$ is also non-increasing, and $u^{*}\leq u^{**}$ in $(0, 1]$. Moreover,
\begin{equation}\label{subadd}
\int_0\sp s(u +
v)^{*}(r)\,dr
\leq
\int_0\sp su^{*}(r)\,dr + \int_0\sp sv
^{*}(r)\,dr \quad \textup{for}\ s \in (0,1),
\end{equation}
for every $u,v\in\mathcal M_+(\Omega , \nu)$.

A basic property of rearrangements is the \textit{Hardy-Littlewood
inequality}, which tells us that, if $u, v \in\M(\Omega , \nu)$,
then
\begin{equation}\label{E:HL}
\int _\Omega |u(x) v(x)| d\nu (x) \leq \int _0^1 u^*(s) v^*(s) ds.
\end{equation}
A special case of~\eqref{E:HL} states that for every $u
\in\M(\Omega , \nu)$ and every measurable set $E \subset \Omega$,
\[
\int _E |u(x)| d\nu (x)\leq \int _0\sp {\nu (E)} u\sp*(s)\,ds.
\]
We say that a functional $\|\cdot\|_{X(0,1)}: \M _+ (0, 1) \to
[0,\infty]$  is a \textit{function norm}, if, for all $f$, $g$ and
$\{f_j\}_{j\in\N}$ in $\M_+(0,1)$, and every $\lambda \geq0$, the
following properties hold:
\begin{itemize}
\item[(P1)]\qquad $\|f\|_{X(0,1)}=0$ if and only if $f=0$;
$\|\lambda f\|_{X(0,1)}= \lambda\|f\|_{X(0,1)}$; \par\noindent
\qquad $\|f+g\|_{X(0,1)}\leq \|f\|_{X(0,1)}+ \|g\|_{X(0,1)}$;
\item[(P2)]\qquad $  f \le g$ a.e.\  implies $\|f\|_{X(0,1)} \le
\|g\|_{X(0,1)}$; \item[(P3)]\qquad $  f_j \nearrow f$ a.e.\ implies
$\|f_j\|_{X(0,1)} \nearrow \|f\|_{X(0,1)}$;
\item[(P4)]\qquad $\|1\|_{X(0,1)}<\infty$; \item[(P5)]\qquad  $\int_{0}\sp1 f(x)\,dx \le C
\|f\|_{X(0,1)}$ for some constant $C$ independent of $f$.
\end{itemize}

If, in addition,
\begin{itemize}
\item[(P6)]\qquad $\|f\|_{X(0,1)} = \|g\|_{X(0,1)}$ whenever
$f\sp* = g\sp *$,
\end{itemize}
we say that $\|\cdot\|_{X(0,1)}$ is a
\textit{rearrangement-invariant function norm}.

With any rearrangement-invariant function norm $\|\cdot\|_{X(0,1)}$,
it is associated another functional on $\M_+(0,1)$, denoted by
$\|\cdot\|_{X'(0,1)}$, and defined, for $g \in  \M_+(0,1)$, as
$$
\|g\|_{X'(0,1)}=\sup_{\overset{f\geq 0}{\|f\|_{X(0,1)}\leq
1}}\int_0\sp1 f(s)g(s)\,ds.
$$
It turns out that  $\|\cdot\|_{X'(0,1)}$ is also a
rearrangement-invariant function norm, which is called
the~\textit{associate function norm} of $\|\cdot\|_{X(0,1)}$.
Moreover, for every function norm $\|\cdot\|_{X(0,1)}$ and every function $f\in\Mpl(0,1)$, we have
\begin{equation}\label{E:pre-duality}
\|f\|_{X(0,1)}=\sup_{\overset{g\geq 0}{\|g\|_{X'(0,1)}\leq
1}}\int_0\sp1 f(s)g(s)\,ds.
\end{equation}
We
also introduce yet another functional on $\M_+(0,1)$, denoted by
$\|\cdot\|_{X'_d(0,1)}$, and defined, for $g \in  \M_+(0,1)$, as
$$
\|g\|_{X_d'(0,1)}=\sup_{\|f\|_{X(0,1)}\leq 1} \int_0^1 f^*(t)
g(t)\,dt.
$$
Clearly, one has that  $\|g\|_{X_d'(0,1)}\leq \|g\|_{X'(0,1)}$ for
every $g \in  \M_+(0,1)$, and $\|g\|_{X_d'(0,1)}= \|g\|_{X'(0,1)}$
if $g$ is non-increasing.

Given   a rearrangement-invariant function norm
 $\|\cdot\|_{X(0,1)}$, the space $X(\Omega , \nu)$ is
defined as the collection of all  functions  $u \in\M(\Omega ,
\nu)$ such that the expression
\[
\|u\|_{X(\Omega,\nu)}=\|u\sp* \|_{X(0,1)}
\]
is finite. Such expression defines a norm on $X(\Omega,\nu)$, and
the latter  is a Banach space  endowed with this norm, called a~rearrangement-invariant space. Moreover,
$X(\Omega,\nu) \subset \M_0(\Omega , \nu)$ for any rearrangement-invariant space $X(\Omega,\nu)$.
The space $X(0,1)$ is called the
\textit{representation space} of $X(\Omega , \nu)$.

 We also denote by $X_{\loc}(\Omega,\nu)$ the space
of all functions  $u \in\M(\Omega , \nu)$ such that $u\chi _G \in
X(\Omega , \nu)$ for every compact set $G \subset \Omega$. Here,
$\chi _G$ denotes the characteristic function of  $G$.

The rearrangement-invariant space $X'(\Omega , \nu)$ built upon the
function norm $\|\cdot \|_{X'(0,1)}$ is called the \textit{associate
space} of $X(\Omega , \nu)$. It
turns out that $X''(\Omega , \nu)=X(\Omega , \nu)$. Furthermore,
the \textit{H\"older inequality}
\[
\int_{\Omega}|u(x)v(x)|\,d\nu(x)\leq\|u\|_{X(\Omega ,
\nu)}\|v\|_{X'(\Omega , \nu)}
\]
holds for every $u \in X(\Omega , \nu)$ and $v \in X'(\Omega ,
\nu)$.

For any rearrangement-invariant spaces
$X(\Omega , \nu)$ and $Y(\Omega , \nu)$, we have that
\begin{equation}\label{E:equivemb}
 X(\Omega , \nu) \rightarrow Y(\Omega , \nu) \quad \hbox{if and only
 if} \quad
Y'(\Omega , \nu) \rightarrow X'(\Omega , \nu),
\end{equation}
with the same embedding norms \cite[Chapter~1,
Proposition~2.10]{BS}.

Given any $\la>0$, the \textit{dilation operator} $E_{\la}$, defined at
$f\in \M(0,1)$ by
$$
  (E_{\la}f)(s)=\begin{cases}
  f(\la\sp{-1}s)\quad&\textup{if}\ 0<s\leq \la\\
  0&\textup{if}\ \la<s<1,
  \end{cases}
$$
is bounded on any rearrangement-invariant~space $X(0,1)$, with
norm not exceeding $\max\{1, \frac 1{\la}\}$.

\textit{Hardy's Lemma} tells us that if $f_1,f_2 \in \mathcal
M_+(0,1)$ satisfy
$$
\int _0^s f_1(r) dr \leq \int _0^s f_2(r) dr \quad \textup{for
every}\ s\in(0,1),
$$
then
$$
\int _0^1 f_1(r) h(r) dr \leq \int _0^1 f_2(r)h(r) dr
$$
for every non-increasing function $h : (0,1) \rightarrow [0,
\infty ]$. A consequence of this result is the
\textit{Hardy--Littlewood--P\'olya principle} which asserts that
if the functions $u,v\in\M(\Omega , \nu)$ satisfy
$$
\int_0\sp su\sp*(r)\,dr\leq \int_0\sp sv\sp*(r)\,dr \qquad
\textup{for}\ s\in(0,1),
$$
then
$$
\|u\|_{X(\Omega , \nu)}\leq \|v\|_{X(\Omega , \nu)}
$$
for every rearrangement-invariant~space $X(\Omega , \nu)$.

Let $X(\Omega , \nu)$ and $Y(\Omega , \nu)$ be rearrangement
invariant\ spaces. By~\cite[Chapter~1, Theorem~1.8]{BS},
$$
\hbox{$X(\Omega , \nu) \subset Y(\Omega , \nu)$ \qquad if and only
if \qquad $X(\Omega , \nu) \to Y(\Omega , \nu)$}.
$$

For every rearrangement-invariant space $X(\Omega , \nu)$, one has
that
\begin{equation}\label{l1linf}
L^\infty (\Omega , \nu ) \to X(\Omega , \nu) \to L^1(\Omega , \nu
).
\end{equation}
An embedding of the form
$$X_{\loc}(\Omega , \nu) \to Y_{\loc}(\Omega , \mu),$$
where $\mu$ is a measure enjoying the same properties as $\nu$,
means that,  for every compact set $G\subset \Omega$, there exists
a constant $C$ such that
$$ \|u\chi _G \|_{Y(\Omega , \mu)} \leq C \|u\chi _G \|_{X(\Omega , \nu)},$$
for every $u \in X_{\loc}(\Omega , \nu)$.

Throughout, we use the convention that $\frac1{\infty}=0$, and $0\cdot
\infty =0$.

A basic example of a~function norm is the standard
\textit{Lebesgue norm} $\|\cdot\|_{L\sp p(0,1)}$,  for
$p\in[1,\infty]$, upon which the Lebesgue spaces $L^p(\Omega ,
\nu)$ are built.

The \textit{Lorentz spaces} yield an extension of the Lebesgue
spaces. Assume that $1\leq p,q\le\infty$. We define the functionals
$\|\cdot\|_{L\sp{p,q}(0,1)}$ and $\|\cdot\|_{L\sp{(p,q)}(0,1)}$  as
$$
\|f\|_{L\sp{p,q}(0,1)}=
\left\|s\sp{\frac{1}{p}-\frac{1}{q}}f^*(s)\right\|_{L\sp q(0,1)}
\quad \hbox{and} \quad \|f\|_{L\sp{(p,q)}(0,1)}=
\left\|s\sp{\frac{1}{p}-\frac{1}{q}}f^{**}(s)\right\|_{L\sp
q(0,1)},$$
respectively, for $f \in \M_+(0,1)$. One can show that
\begin{equation}\label{E:lorentz-identity}
L\sp{p,q}(\Omega , \nu)=L\sp{(p,q)}(\Omega , \nu) \quad\textup{if}\ 1<p\leq\infty\,,
\end{equation}
with equivalent norms.
If one of the  conditions
\begin{equation}\label{E:new-star}
\begin{cases}
1<p<\infty,\ 1\leq q\leq\infty,\\
p=q=1,\\
p=q=\infty,
\end{cases}
\end{equation}
is satisfied, then $\|\cdot\|_{L\sp{p,q}(0,1)}$ is equivalent to
a~rearrangement-invariant function norm. The corresponding
rearrangement-invariant space $L\sp{p,q}(\Omega,\nu)$ is called
a~\textit{Lorentz space}.

Let us recall that $L\sp{p,p}(\Omega,\nu)=L\sp p(\Omega,\nu)$ for every $p\in[1,\infty]$ and that $1\leq q\leq r\leq \infty$ implies
$L\sp{p,q}(\Omega,\nu)\hra L\sp {p,r}(\Omega,\nu)$ with equality
if and only if $q=r$.

Assume now that $1\leq p,q\le\infty$, and a third parameter $\alpha
\in\R$ is called into play. We define the functionals
$\|\cdot\|_{L\sp{p,q;\alpha}(0,1)}$ and
$\|\cdot\|_{L\sp{(p,q;\alpha)}(0,1)}$ as
$$
\|f\|_{L\sp{p,q;\alpha}(0,1)}=
\left\|s\sp{\frac{1}{p}-\frac{1}{q}}\log \sp
\alpha\lt(\tfrac{2}{s}\rt) f^*(s)\right\|_{L\sp q(0,1)} \quad
\hbox{and} \quad \|f\|_{L\sp{(p,q;\alpha)}(0,1)}=
\left\|s\sp{\frac{1}{p}-\frac{1}{q}}\log \sp
\alpha\lt(\tfrac{2}{s}\rt) f^{**}(s)\right\|_{L\sp q(0,1)},
$$
respectively, for $f \in \M_+(0,1)$. If one of the following
conditions
\begin{equation}\label{E:lz_bfs}
\begin{cases}
1<p<\infty,\ 1\leq q\leq\infty,\ \alpha\in\R;\\
p=1,\ q=1,\ \a\geq0;\\
p=\infty,\ q=\infty,\ \a\leq 0;\\
p=\infty,\ 1\leq q<\infty,\ \a+\frac1q<0,
\end{cases}
\end{equation}
is satisfied, then $\|\cdot\|_{L\sp{p,q;\alpha}(0,1)}$ is equivalent
to a rearrangement-invariant  function norm,   called a
\textit{Lorentz--Zygmund function norm}. The corresponding
rearrangement-invariant space $L\sp{p,q;\alpha}(\Omega,\nu)$ is a
\textit{Lorentz--Zygmund space}. At a few occasions,  we shall need
also the so-called \textit{generalized Lorentz--Zygmund space}
$L\sp{p,q;\alpha,\beta}(\Omega,\nu)$, where $p,q\in[1,\infty]$ and
$\alpha,\beta\in\R$. It is the space built upon the functional given
by
\[
\|f\|_{L\sp{p,q;\alpha,\beta}(0,1)}=
\left\|s\sp{\frac{1}{p}-\frac{1}{q}}\log \sp
\alpha\lt(\tfrac{2}{s}\rt) \log\sp{\beta}(1+\log
\lt(\tfrac{2}{s}\rt))f^*(s)\right\|_{L\sp q(0,1)}
\]
for $f\in \M_+(0,1)$. The values of $p,q,\alpha$ and $\beta$, for
which $\|\cdot\|_{L\sp{p,q;\alpha,\beta}(0,1)}$ is actually equivalent to
a~rearrangement-invariant  function norm, are characterized
in~\cite{EOP}. For more details on (generalized) Lorentz--Zygmund
spaces, see e.g.~\cite{BR, EOP, OP}.
Assume that
one of the conditions
in~\eqref{E:lz_bfs} is satisfied. Then the associate space
$(L\sp{p,q;\alpha})'(\Omega,\nu)$ of the Lorentz--Zygmund space
$L\sp{p,q;\alpha}(\Omega,\nu)$ satisfies (up to equivalent norms)
\begin{equation}\label{E:lz_assoc}
\big(L\sp{p,q;\alpha}\big)'(\Omega,\nu)=
\begin{cases}
L\sp{p',q';-\alpha}(\Omega,\nu)&\textup{if}\quad 1<p<\infty,\
1\leq q\leq\infty,\
\alpha\in\R;\\
L\sp{\infty,\infty;-\alpha}(\Omega,\nu)&\textup{if}\quad p=1,\
q=1,\ \a\geq0;\\
L\sp{1,1;-\alpha}(\Omega,\nu)&\textup{if}\quad p=\infty,\ q=\infty,\ \a\leq0;\\
L\sp{(1,q';-\alpha-1)}(\Omega,\nu)&\textup{if}\quad p=\infty,\
1\leq q<\infty,\
\a+\frac1q<0\\
\end{cases}
\end{equation}
\cite[Theorems~6.11 and~6.12]{OP}. Moreover,
\begin{equation}\label{E:glz-identity}
L\sp{(p,q;\alpha)}(\Omega , \nu)=
\begin{cases}
L\sp{p,q;\alpha}(\Omega , \nu) &\textup{if}\ 1<p\leq\infty;\\
L\sp{1,1;\alpha+1}(\Omega , \nu) &\textup{if}\ p=q=1,\ \alpha>-1,
\end{cases}
\end{equation}
and
\[
L\sp p(\Omega , \nu)\hra L\sp{(1,q)}(\Omega , \nu)\quad\textup{for
every}\ 1<p\leq\infty,\ 1\leq q\leq\infty
\]
\cite[Theorem~3.16~(i),(ii)]{OP}.

A generalization of the Lebesgue spaces in a different direction
is provided by the Orlicz spaces. Let $A: [0, \infty ) \to [0,
\infty ]$ be a Young function, namely a convex (non trivial),
left-continuous function vanishing at $0$. Any such function takes
the form
\begin{equation}\label{young}
A(t) = \int _0^t a(\tau ) d\tau \qquad \quad \hbox{for $t \geq
0$},
\end{equation}
for some non-decreasing, left-continuous function $a: [0, \infty )
\to [0, \infty ]$ which is neither identically equal to $0$, nor to
$\infty$.  The Orlicz space $L^A (\Omega , \nu )$ is the
rearrangement-invariant space associated with  the
 \emph{Luxemburg function norm} defined  as
\begin{equation*}
\|f\|_{L^A(0,1)}= \inf \left\{ \lambda >0 :  \int_0\sp 1A \left(
\frac{f(s)}{\lambda} \right) ds \leq 1 \right\}
\end{equation*}
for $f \in \M _+(0,1)$. In particular, $L^A (\Omega, \nu)= L^p
(\Omega, \nu )$ if $A(t)= t^p$ for some $p \in [1, \infty )$, and
$L^A (\Omega, \nu)= L^\infty (\Omega, \nu)$ if $A(t)= \infty
\chi_{(1, \infty)}(t)$.

A Young function $A$ is said to dominate another Young function
$B$ near infinity if positive constants $c$ and $t_0$ exist such
that
\[
B(t)\leq A(c t) \qquad \textrm{for \,\,\,} t\geq t_0\,.
\]
The functions $A$ and $B$ are called equivalent near infinity if
they dominate each other near infinity. One has that
\begin{equation}\label{B.6}
L^A(\Omega, \nu )\to L^B(\Omega, \nu) \quad \textup{if and only
if}\
 \hbox{$A$ dominates $B$ near infinity}\,.
\end{equation}

 We denote by $L^p\log^\alpha L(\Omega , \nu)$ the Orlicz space
associated with a Young function equivalent to $t^p (\log
t)^\alpha$ near infinity, where either $p>1$ and $\alpha \in \R$, or
$p=1$ and $ \alpha \geq 0$. The notation $\exp L^\beta (\Omega
,\nu)$ will be used for the Orlicz space built upon a Young
function equivalent to $e^{t^\beta}$ near infinity, where $\beta>0$. Also,
$\exp\exp L^\beta (\Omega ,\nu)$ stands for the Orlicz space
associated with a Young function equivalent to $e^{e^{t^\beta }}$
near infinity.

The classes of Orlicz and (generalized) Lorentz-Zygmund spaces overlap, up to equivalent norms. For
instance, if  $1\leq p<\infty$ and $\alpha\in\R$, then
$$
L\sp{p,p;\alpha}(\Omega,\nu)=L\sp p(\log L)\sp
{p\alpha}(\Omega,\nu).
$$
Moreover, if $\beta>0$, then
$$
L\sp{\infty,\infty;-\beta}(\Omega,\nu)=\exp L\sp
{\frac1{\beta}}(\Omega,\nu)
$$
and \cite[Lemma~2.2]{EOP}
$$
L\sp{\infty,\infty;0,-\beta}(\Omega,\nu) = \exp\exp
L\sp{\frac1{\beta}}(\Omega,\nu).
$$

A common extension of the Orlicz and Lorentz spaces is provided by a
family of Orlicz-Lorentz spaces defined as follows. Given $p\in (1,
\infty )$, $q\in [1, \infty )$ and  a Young function $D$ such that
$$\int ^\infty \frac{D(t)}{t^{1+p}}\,dt < \infty\,,$$
we denote by $L(p,q,D)(\Omega , \nu)$ the Orlicz-Lorentz space
associated with the rearrangement-invariant function norm defined,
for $f \in \mathcal M_+(0,1)$,  as
\[
\|f\|_{L(p,q,D)(0,1)} = \left\|s^{-\frac 1p} f^* (s^{\frac
1q})\right\|_{L^D(0, 1)}.
\]
The fact that  $\|\cdot \|_{L(p,q,D)(0,1)}$
is actually a function norm follows via easy modifications in the
proof of \cite[Proposition 2.1]{Cianchiibero}.  Observe that the
class of the spaces $L(p,q,D)(\Omega , \nu)$ actually includes (up
to equivalent norms) the Orlicz spaces and various instances of
Lorentz and Lorentz-Zygmund spaces.

\section{Spaces of Sobolev
type and the isoperimetric function}\label{S:sobolev}

Let $(\Omega , \nu)$ be as in Section \ref{S:over}. Define the
 perimeter of  a measurable set $E$ in $(\Omega, \nu)$
as
$$P_{\nu }(E, \Omega ) = \int _{\Omega \cap \partial ^M E  } \omega(x) d\mathcal H
^{n-1}(x) ,$$ where $\partial ^ME$ denotes the essential boundary of
$E$, in the sense of geometric measure theory \cite{Mabook, Z}. The
isoperimetric function $I_{\Omega , \nu} : [0, 1] \to [0, \infty ]$
of $(\Omega , \nu)$ is then given by
\[
I_{\Omega , \nu} (s) = \inf \left\{P_{\nu }(E, \Omega ) : E
\subset \Omega, s \leq \nu (E) \leq \tfrac 12 \right\} \quad
\textup{if}\ s \in [0,\tfrac 12],
\]
and $I_{\Omega , \nu} (s) = I_{\Omega , \nu} (1 -s)$ if $s \in
(\frac 12, 1]$.
 The  isoperimetric inequality \eqref{isop2} in $(\Omega , \nu )$
is a straightforward consequence of this definition and of the
fact that $P_{\nu }(E, \Omega ) = P_{\nu }(\Omega \setminus E,
\Omega )$ for every set $E \subset \Omega$.

Let us observe that, actually, $I_{\Omega , \nu} (s)
< \infty$ for $s \in [0, \tfrac 12)$. To verify this fact, fix any
$x_0 \in \Omega$, and let $R>0$ be such that $\nu (\Omega \cap
B_R(x_0)) = \tfrac 12$. Here, $B_R(x_0)$ denotes the ball, centered
at $x_0$, with radius $R$. By the  polar-coordinates formula for
integrals,
\begin{align}\label{finiteI}
\tfrac 12 = \int _{\Omega \cap B_R(x_0)} \omega (x)\, dx  = \int _0
^R \int _{\Omega \cap \partial B_\rho(x_0)} \omega (x) \, d\mathcal
H ^{n-1} (x)\, d\rho  = \int _0 ^R P_{\nu }(\Omega \cap
B_\rho(x_0),\Omega)\, d\rho\,,
\end{align}
whence $P_{\nu }(\Omega \cap B_\rho(x_0),\Omega) < \infty$
for a.e. $\rho \in (0, R)$. The finiteness of $I_{\Omega , \nu}$ in
$[0, \tfrac 12)$ now follows by its very definition.

 The next result shows that the best possible behavior of an
 isoperimetric function at $0$ is that given by \eqref{Ijohn}, in
 the sense that
  $I_{\Omega , \nu}(s)$
cannot decay more slowly than $s^{\frac 1{n'}}$ as  $s \to 0$,
whatever $(\Omega , \nu )$ is.

\begin{prop}\label{lower}
There exists a positive constant $C=C(\Omega, \nu)$ such that
\begin{equation}\label{relisop} I_{\Omega , \nu} (s)
\leq C s^{\frac 1{n'}}  \quad \hbox{near $0$}.
\end{equation}
\end{prop}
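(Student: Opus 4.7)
The plan is to construct an explicit family of admissible sets in the definition of $I_{\Omega,\nu}$, namely small balls centered at a carefully chosen interior point of $\Omega$, and to show that their perimeters already decay like $s^{1/n'}$ in their measure $s$. This mirrors the classical upper bound on the Euclidean isoperimetric function and is exactly what the polar-coordinate identity displayed in \eqref{finiteI} is designed to support.

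First, I would choose the basepoint $x_0 \in \Omega$ to be a Lebesgue point of $\omega$ at which $\omega(x_0)>0$. Since $\omega>0$ a.e. in $\Omega$ and Lebesgue's differentiation theorem holds a.e., such a point exists. Fix $R_0>0$ with $B_{R_0}(x_0)\subset\Omega$ and small enough that
\[
\tfrac{1}{2}\omega(x_0)\,\omega_n R^n \;\le\; \nu(B_R(x_0)) \;\le\; 2\,\omega(x_0)\,\omega_n R^n \qquad \text{for every } R\in(0,R_0),
\]
which is possible by the Lebesgue point property. The polar-coordinate formula reproduced in \eqref{finiteI} then gives, for such $R$,
\[
\nu(B_R(x_0)) \;=\; \int_0^R h(\rho)\,d\rho, \qquad h(\rho):=P_{\nu}(B_\rho(x_0),\Omega).
\]

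The second step is a mean value argument. For each $R\in(0,R_0)$, averaging the identity above on $(R/2,R)$ produces some $\rho^*=\rho^*(R)\in(R/2,R)$ with
\[
h(\rho^*) \;\le\; \frac{2\,\bigl[\nu(B_R(x_0))-\nu(B_{R/2}(x_0))\bigr]}{R} \;\le\; \frac{2\,\nu(B_R(x_0))}{R} \;\le\; C_1 R^{n-1},
\]
while by construction the set $E_R:=B_{\rho^*}(x_0)$ satisfies $\nu(E_R)\ge \nu(B_{R/2}(x_0))\ge c_1 R^n$, with constants $c_1,C_1>0$ depending only on $\omega(x_0)$ and $n$.

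Finally, given $s>0$ sufficiently small, take $R=(s/c_1)^{1/n}$, so that $R<R_0$ and $\nu(E_R)\ge s$; the choice also forces $\nu(E_R)\le 1/2$, so $E_R$ is admissible in the definition of $I_{\Omega,\nu}(s)$. Then
\[
I_{\Omega,\nu}(s) \;\le\; P_{\nu}(E_R,\Omega) \;=\; h(\rho^*) \;\le\; C_1 R^{n-1} \;=\; C_1 \bigl(s/c_1\bigr)^{(n-1)/n} \;\le\; C\, s^{1/n'},
\]
which is exactly \eqref{relisop}. No step is really the obstacle here; the only care needed is in the selection of $x_0$, since without the Lebesgue point property one would lose the two-sided comparison $\nu(B_R(x_0))\asymp R^n$ that is needed to convert the perimeter bound $R^{n-1}$ into a bound in terms of the measure $s$ of the competitor.
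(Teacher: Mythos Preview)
Your argument is correct and follows essentially the same route as the paper: balls centered at a Lebesgue point of $\omega$, the polar-coordinate identity \eqref{finiteI}, and a mean-value selection of a good radius in $(R/2,R)$. If anything, you are more careful than the paper's version: by insisting that $\omega(x_0)>0$ you secure the two-sided estimate $\nu(B_R(x_0))\asymp R^n$, which is exactly what is needed to convert the perimeter bound $R^{n-1}$ into a bound in terms of the $\nu$-measure $s$ of the competitor; the paper only writes the upper bound $\nu(B_r(x_0))\le Cr^n$ explicitly and then passes to \eqref{relisop} somewhat elliptically.
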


\begin{proof}
Let $x_0$ be any  Lebesgue point of $\omega$, namely a point such
that
\begin{equation}\label{lebesgue1}
\lim _{r \to 0^+} \frac{1}{|B_r(x_0)|} \int _{B_r(x_0)} \omega (x)\,
dx
\end{equation}
exists and is finite. Here, $|E|$ denotes the Lebesgue
measure of a set $E \subset \rn$. By \eqref{lebesgue1}, there exists
$r_0
>0$ and $C>0$ such that
\begin{equation}\label{lebesgue2}
\int _{B_r(x_0)} \omega (x)\, dx    \leq  C  r^{n} \quad \hbox{if
$0 < r < r_0$.}
\end{equation}
By an analogous chain as in \eqref{finiteI},
\begin{align}\label{lebesgue3}
\int _{B_r(x_0)} \omega (x)\, dx
= \int _0 ^r P_{\nu }(B_\rho(x_0),\Omega)\, d\rho \geq \tfrac r2
\inf\{P_{\nu }(B_\rho(x_0),\Omega): \tfrac r2 \leq \rho \leq r\}
\end{align}
if $0 < r < r_0$. \relax From \eqref{lebesgue2} and
\eqref{lebesgue3} we deduce that there exists a constant $C$ such
that
\begin{align*}
 C |B_r(x_0)|^{\frac 1{n'}}  & \geq \inf\{P_{\nu }(B_\rho(x_0),\Omega): \tfrac
r2 \leq \rho \leq r\} \\  & =  \inf\{P_{\nu
}(B_\rho(x_0),\Omega): \tfrac {1}{2^n}|B_r(x_0)| \leq |B_\rho(x_0)| \leq
|B_r(x_0)|\} \qquad \hbox{if $0 < r < r_0$.}
\end{align*}
Thus, there exists a constant $C$ such that
\[
C s^{\frac 1{n'}} \geq \inf\{P_{\nu }(E,\Omega): s \leq |E| \leq \tfrac
12\},
\]
provided that $s$ is sufficiently small, and hence
\eqref{relisop} follows.
\end{proof}

Let $ m \in \N$ and let $X(\Omega , \nu)$ be a  rearrangement-invariant  space.
We define the $m$-th order Sobolev  space $V^m X(\Omega,
\nu )$ as
\[
V^m X(\Omega , \nu) =   \big\{u: \hbox{$u$ is $m$-times weakly
differentiable in $\Omega$, and $|\nabla ^m u| \in X(\Omega,
\nu)$}\big\}.
\]
Here, $\nabla ^m u$ denotes the vector of all $m$-th order weak
derivatives of $u$. We shall also denote $\nabla\sp 0u=u$. Let us notice that in the definition of $V^m
X(\Omega , \nu)$ it is only required that the derivatives of the
highest order $m$ of $u$ belong to $X(\Omega , \nu)$. This
assumption does not entail, in general, that also $u$ and its
derivatives up to the order $m-1$ belong to $X(\Omega , \nu)$, and
even to $L^1(\Omega , \nu)$. Thus, it may happen that $V^m X(\Omega
, \nu) \nsubseteq V^k X(\Omega , \nu)$ for $m>k$. Such inclusion
indeed fails, for instance, when   $(\Omega , \nu) = (\rn , \gamma
_n)$, the
 Gauss space, and $\| \cdot \|_{X (0, 1)} = \| \cdot \|_{L^\infty (0,
 1)}$ (or $\| \cdot \|_{X (0, 1)} = \| \cdot \|_{{\rm exp} L^\beta (0,
 1)}$ for some $\beta >0$). Examples of Euclidean domains for which
$V^m X(\Omega) \nsubseteq L^1(\Omega)$  are those of Nykod\'ym type,
see, e.g., \cite[Sections 5.2 and 5.4]{Mabook}.

However, if $I_{\Omega , \nu}(s)$ does not decay at
$0$ faster than linearly, namely if there exists a positive constant
$C$ such that
\begin{equation}\label{lower-est} I_{\Omega , \nu} (s)
\geq C s   \quad \hbox{for $s \in [0 , \frac 12 ]$},
\end{equation}
then
 any function $u\in
V^m X(\Omega , \nu)$ does at least belong to $L^1(\Omega , \nu)$,
together with all its derivatives up to the order $m-1$. This is a
consequence of the next result. Such result in the case when $\nu$
is the Lebesgue measure is established in \cite[Theorem
5.2.3]{Mabook}; the general case rests upon  an analogous argument.
We provide a proof for completeness.

\begin{prop}\label{mazya}  {\bf [Condition for $V^1L\sp1(\Omega , \nu ) \subset L^1(\Omega , \nu )$]}
Assume that \eqref{lower-est} holds. Then
$V^1L\sp1(\Omega , \nu ) \subset L^1(\Omega , \nu )$, and
\begin{equation}\label{mazya2mean}
\tfrac C2 \bigg\|u - \int _\Omega u\, d\nu  \bigg\|_{L^1(\Omega ,
\nu)} \leq \|\nabla u\|_{L^1(\Omega , \nu )}
\end{equation}
for every $u \in V^1L^1(\Omega , \nu )$, where $C$ is the same
constant as in \eqref{lower-est}.
\end{prop}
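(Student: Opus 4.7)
The plan is to follow the classical Maz'ya-style argument based on the coarea formula, adapted to the weight $\omega$, combined with the median rather than the mean (since we do not yet know $u \in L^1(\Omega,\nu)$).

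First, I would choose a median $m$ of $u$, that is, a real number with $\nu(\{u>m\})\leq \tfrac12$ and $\nu(\{u<m\})\leq \tfrac12$. Such $m$ exists because $u$, being weakly differentiable, is finite a.e.\ in $\Omega$. Passing to $v=u-m$, which has $\nabla v = \nabla u$, we then have that each superlevel set $\{v>t\}$ for $t>0$ satisfies $\nu(\{v>t\})\leq \tfrac12$ and each sublevel set $\{v<t\}$ for $t<0$ satisfies $\nu(\{v<t\})\leq \tfrac12$.

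Next, I would combine three standard ingredients:
\begin{itemize}
\item the layer-cake (Cavalieri) formula
\[
\int_\Omega |v|\,d\nu \;=\; \int_0^\infty \bigl[\nu(\{v>t\})+\nu(\{v<-t\})\bigr]\,dt;
\]
\item the weighted coarea formula
\[
\int_\Omega |\nabla u|\,d\nu \;=\; \int_{-\infty}^\infty P_\nu(\{v>t\},\Omega)\,dt,
\]
which follows from the ordinary coarea formula applied to $|\nabla u|\omega$, together with the very definition of $P_\nu$ as an $\omega$-weighted $\mathcal H^{n-1}$ measure of $\partial^M\{u>t\}$;
\item the isoperimetric inequality \eqref{isop2} together with the hypothesis \eqref{lower-est}, which gives $P_\nu(E,\Omega)\geq C\,\nu(E)$ whenever $\nu(E)\leq \tfrac12$ (using also $P_\nu(E,\Omega)=P_\nu(\Omega\setminus E,\Omega)$ for the sublevel sets).
\end{itemize}
Putting these together, for $t>0$ one has $P_\nu(\{v>t\},\Omega)\geq C\,\nu(\{v>t\})$, and for $t<0$ one gets analogously $P_\nu(\{v>t\},\Omega)=P_\nu(\{v<t\},\Omega)\geq C\,\nu(\{v<t\})$ for a.e.\ $t$. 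Integrating yields
\[
\|\nabla u\|_{L^1(\Omega,\nu)} \;\geq\; C\int_0^\infty\!\bigl[\nu(\{v>t\})+\nu(\{v<-t\})\bigr]\,dt \;=\; C\,\|u-m\|_{L^1(\Omega,\nu)}.
\]
In particular, since $\|\nabla u\|_{L^1(\Omega,\nu)}<\infty$, we conclude $u-m\in L^1(\Omega,\nu)$ and hence $u\in L^1(\Omega,\nu)$, which proves the inclusion $V^1L^1(\Omega,\nu)\subset L^1(\Omega,\nu)$.

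Finally, to pass from the median $m$ to the mean $\bar u=\int_\Omega u\,d\nu$, I would use the trivial bound
\[
|m-\bar u| \;=\; \Bigl|\int_\Omega (m-u)\,d\nu\Bigr| \;\leq\; \|u-m\|_{L^1(\Omega,\nu)},
\]
which, combined with the triangle inequality and $\nu(\Omega)=1$, gives $\|u-\bar u\|_{L^1(\Omega,\nu)}\leq 2\|u-m\|_{L^1(\Omega,\nu)}$; inserting the previous inequality yields \eqref{mazya2mean}. The only delicate point I expect is a careful justification of the weighted coarea formula under the sole hypothesis $|\nabla u|\in L^1(\Omega,\nu)$ (rather than, say, BV with respect to Lebesgue measure), but this is standard once one observes that $|\nabla u|\omega\in L^1(\Omega)$ and that the essential boundary $\partial^M\{u>t\}$ coincides, for a.e.\ $t$, with the reduced boundary appearing in the classical coarea formula.
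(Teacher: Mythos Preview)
Your proposal is correct and follows essentially the same route as the paper: reduce to the median via the coarea formula and the isoperimetric lower bound \eqref{lower-est}, then pass from the median to the mean at the cost of a factor $2$. The only cosmetic difference is that the paper splits $u-\med(u)$ into its positive and negative parts and applies the coarea formula to each, whereas you apply it directly to $v=u-m$ and handle $t>0$ and $t<0$ separately; the substance is identical.
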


\begin{proof}
Let  $\med (u)$ denote the median of a function $u\in
\M(\Omega,\nu)$, given by
$$\med  (u) = \sup\{t\in \R :\,\nu (\{x\in \Omega
:\,u(x)>t\})>\tfrac 12\}.$$ We begin by showing that
\begin{equation}\label{mazya2}
C \|u - \med  (u)\|_{L^1(\Omega , \nu )} \leq   \|\nabla
u\|_{L^1(\Omega , \nu )}
\end{equation}
for every $u \in V^1L^1(\Omega , \nu )$. On replacing, if necessary,
$u$ by $u - \med  (u)$, we may assume, without loss of generality,
that $\med  (u)=0$. Let us set $u_+ = \frac 12(|u|+u)$ and $u_- =
\frac 12(|u|-u)$, the positive and the negative parts of $u$,
respectively. Thus,
\begin{equation}\label{mazya3}
\nu (\{u_{\pm} >t \}) \leq \tfrac12 \qquad \hbox{for $t >0
$.}
\end{equation}
By~\eqref{isop2} and~\eqref{lower-est},
\[
P_{\nu }(\{u_{\pm} >t \},\Omega)\geq I_{\nu,\Omega }(\nu(\{u_{\pm}
>t \}))\geq C\nu(\{u_{\pm}>t \}).
\]
Therefore, owing to~\eqref{mazya3}, and to the
coarea formula, we have that
\begin{align*}
C \|u_{\pm }\|_{L^1(\Omega , \nu )} & = C \int _0^\infty
\nu(\{u_{\pm}>t \})  \, dt \leq   \int _0^\infty P_{\nu }(\{u_{\pm}
>t \}, \Omega )\, dt \\ & =
  \int _0^\infty \int _{ \partial ^M \{u_{\pm} >t \}\cap \Omega  } \omega (x) d\mathcal H
^{n-1}(x)
 \, dt =
   \int _\Omega |\nabla u_{\pm}| d\nu .
\end{align*}
Hence, \eqref{mazya2} follows. In particular, \eqref{mazya2} tells
us that $V^1L\sp1(\Omega , \nu ) \subset L^1(\Omega , \nu )$.
Inequality \eqref{mazya2mean} is a consequence of  \eqref{mazya2}
and of the fact that
$$
\bigg\|u - \int _\Omega u\, d\nu \bigg\|_{L^1(\Omega , \nu)}
\leq 2 \|u - \med  (u)\|_{L^1(\Omega , \nu )}
$$
for every $u \in
L^1(\Omega , \nu )$.
\end{proof}

\begin{corollary}\label{C:mazya-m}
Assume that~\eqref{lower-est} holds. Let $m\geq 1$. Let $X(\Omega,\nu)$ be any rearrangement-invariant space.
Then $V\sp m X(\Omega,\nu)\subset V\sp k L\sp1(\Omega,\nu)$ for every $k=0,\dots, m-1$.
\end{corollary}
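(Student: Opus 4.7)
The plan is to argue by induction on $m$, with Proposition \ref{mazya} supplying the substantive analytic content at each step, and the universal embedding $L\sp\infty(\Omega,\nu)\to X(\Omega,\nu)\to L\sp1(\Omega,\nu)$ from \eqref{l1linf} handling the passage from a general rearrangement-invariant space to $L\sp1$.

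For the base case $m=1$, I would take $u\in V\sp1X(\Omega,\nu)$. By \eqref{l1linf} we have $X(\Omega,\nu)\hra L\sp1(\Omega,\nu)$, so $|\nabla u|\in L\sp1(\Omega,\nu)$, hence $u\in V\sp1L\sp1(\Omega,\nu)$. Proposition \ref{mazya} then yields $u\in L\sp1(\Omega,\nu)=V\sp0L\sp1(\Omega,\nu)$, which is the only statement required when $m=1$.

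For the inductive step, assume the assertion for $m-1$ and let $u\in V\sp mX(\Omega,\nu)$. For each $i=1,\dots,n$, the partial derivative $\partial_i u$ is $(m-1)$-times weakly differentiable, and its $(m-1)$-th order weak derivatives are $m$-th order weak derivatives of $u$, hence lie in $X(\Omega,\nu)$; thus $\partial_i u\in V\sp{m-1}X(\Omega,\nu)$. The inductive hypothesis, applied to each $\partial_i u$, gives $\partial_i u\in V\sp k L\sp1(\Omega,\nu)$ for every $k=0,\dots,m-2$. Rephrasing in terms of $u$, this means $|\nabla\sp{k+1}u|\in L\sp1(\Omega,\nu)$ for $k=0,\dots,m-2$, i.e.\ $u\in V\sp jL\sp1(\Omega,\nu)$ for $j=1,\dots,m-1$. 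In particular $u\in V\sp1L\sp1(\Omega,\nu)$, so a final application of Proposition \ref{mazya} provides $u\in L\sp1(\Omega,\nu)$, covering the missing case $j=0$.

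The argument is essentially a bookkeeping exercise: the two real inputs are \eqref{l1linf} (to pull the top-order derivatives into $L\sp1$) and Proposition \ref{mazya} (to convert integrability of a gradient into integrability of the function itself, which is where the hypothesis \eqref{lower-est} enters). The only mildly delicate point is making sure that the induction is applied to $\partial_i u$ rather than to $u$, so that the order of derivatives really drops by one at each step; once this is in place, the chain of inclusions $u\in V\sp jL\sp1(\Omega,\nu)$ for all $j=0,\dots,m-1$ follows without further difficulty.
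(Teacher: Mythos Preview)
Your proposal is correct and follows essentially the same approach as the paper. The paper's proof is the two-line version of yours: it invokes property (P5) (equivalently \eqref{l1linf}) to obtain $V^m X(\Omega,\nu)\to V^m L^1(\Omega,\nu)$, and then says the conclusion follows from an iterated use of Proposition~\ref{mazya}; your induction on $m$ simply unpacks that iteration explicitly.
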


\begin{proof}
By property (P5) of rearrangement-invariant spaces,  $V\sp m
X(\Omega,\nu)\to V\sp m L^1(\Omega,\nu)$. Thus, the conclusion
follows from an iterated use of Proposition~\ref{mazya}.
\end{proof}

Under \eqref{lower-est}, an assumption which will always be kept in
force hereafter, $V^m X(\Omega , \nu)$ is easily seen to be a normed
linear space, equipped with the norm
\[
\|u\|_{V^m X(\Omega , \nu)} = \sum _{k=0}^{m-1}\|\nabla ^k
u\|_{L^1(\Omega , \nu )} + \|\nabla ^m u\|_{X(\Omega , \nu )}.
\]
Standard arguments show that $V^m X(\Omega , \nu)$  is complete, and
hence a Banach space, under the additional assumption that
\[
L\sp1_{\loc}(\Omega , \nu ) \to
L^1_{\loc}(\Omega ).
\]
We  also define the subspace $ V^m_\bot X(\Omega , \nu )$ of $V^m
X(\Omega , \nu )$ as
\[
 V^m_\bot X(\Omega , \nu ) =  \bigg\{u \in
V^m X(\Omega , \nu ): \int _{\Omega} \nabla ^k u \,d \nu =0,\,\,
\hbox{for }\, k= 0, \dots , m-1 \bigg\}.
\]

The Sobolev embedding \eqref{1001} turns out to be equivalent to a Poincar\'e type
inequality for functions in $V^m_\bot X(\Omega , \nu )$.

\begin{prop}\label{poincsob}{\bf [Equivalence of Sobolev and
Poincar\'e inequalities]}
 Assume that $(\Omega , \nu)$   fulfils \eqref{lower-est} and that $m \geq 1$.
Let $\| \cdot \|_{X(0,1)}$ and $\| \cdot \|_{Y(0,1)}$ be
rearrangement-invariant function norms. Then
\begin{equation}\label{embedd} V^mX(\Omega , \nu ) \to Y(\Omega ,
\nu )\end{equation} if and only if  there exists a constant $C$
such that
\begin{equation}\label{poinc}
\|u\|_{Y(\Omega , \nu )} \leq C \|\nabla ^mu\|_{X(\Omega , \nu )}
\end{equation}
for every $u \in  V^m_\bot X(\Omega , \nu )$.
\end{prop}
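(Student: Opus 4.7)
The plan is to prove the two implications separately.

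For the implication from~\eqref{embedd} to~\eqref{poinc}, I would fix $u\in V^m_\bot X(\Omega,\nu)\subset V^mX(\Omega,\nu)$ and apply the assumed embedding, which gives
\[
\|u\|_{Y(\Omega,\nu)}\leq C\|u\|_{V^mX(\Omega,\nu)}= C\bigg(\sum_{k=0}^{m-1}\|\nabla^k u\|_{L^1(\Omega,\nu)}+\|\nabla^m u\|_{X(\Omega,\nu)}\bigg).
\]
Because $\int_\Omega\nabla^k u\,d\nu=0$ for every $k=0,\dots,m-1$, Proposition~\ref{mazya} applied to each scalar partial derivative $\partial^\alpha u$ with $|\alpha|=k$ yields $\|\partial^\alpha u\|_{L^1(\Omega,\nu)}\leq(2/C)\|\nabla\partial^\alpha u\|_{L^1(\Omega,\nu)}$; summing over $|\alpha|=k$ produces $\|\nabla^k u\|_{L^1(\Omega,\nu)}\leq c\,\|\nabla^{k+1}u\|_{L^1(\Omega,\nu)}$ for a combinatorial constant $c$. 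Iterating this bound downward from $k=m-1$ and invoking property~(P5) (which gives $\|\cdot\|_{L^1(\Omega,\nu)}\leq C\|\cdot\|_{X(\Omega,\nu)}$) absorbs every lower-order term into $\|\nabla^m u\|_{X(\Omega,\nu)}$, proving~\eqref{poinc}.

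For the converse direction, the strategy is to associate with each $u\in V^mX(\Omega,\nu)$ a correction $P(u)$ in a fixed finite-dimensional subspace $F\subset V^mX(\Omega,\nu)\cap Y(\Omega,\nu)$ such that $v:=u-P(u)\in V^m_\bot X(\Omega,\nu)$, and then apply~\eqref{poinc} to $v$. The linear map
\[
\Phi: V^mX(\Omega,\nu)\to T,\qquad \Phi(u)=\bigg(\int_\Omega\nabla^k u\,d\nu\bigg)_{k=0}^{m-1},
\]
is bounded (by Corollary~\ref{C:mazya-m}) and takes values in the finite-dimensional space $T$ of tuples of symmetric tensors of orders $0,\dots,m-1$, with $\ker\Phi=V^m_\bot X(\Omega,\nu)$. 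Once $F$ is chosen so that $\Phi|_F:F\to T$ is bijective, setting $P(u)=(\Phi|_F)^{-1}(\Phi(u))$ gives the desired decomposition. Applying the Poincar\'e inequality to $v$ then yields
\[
\|v\|_{Y(\Omega,\nu)}\leq C\|\nabla^m v\|_{X(\Omega,\nu)}\leq C\bigl(\|\nabla^m u\|_{X(\Omega,\nu)}+\|\nabla^m P(u)\|_{X(\Omega,\nu)}\bigr),
\]
and, since $F$ is finite-dimensional and $L^\infty(\Omega,\nu)$ embeds into both $X(\Omega,\nu)$ and $Y(\Omega,\nu)$ by~\eqref{l1linf}, both $\|P(u)\|_{Y(\Omega,\nu)}$ and $\|\nabla^m P(u)\|_{X(\Omega,\nu)}$ are controlled by a multiple of $|\Phi(u)|\leq\sum_{k=0}^{m-1}\|\nabla^k u\|_{L^1(\Omega,\nu)}$. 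Adding this to the Poincar\'e bound on $v$ delivers $\|u\|_{Y(\Omega,\nu)}\leq C'\|u\|_{V^mX(\Omega,\nu)}$, which is~\eqref{embedd}.

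The main obstacle is the construction of $F$: one must exhibit $N=\dim T$ functions in $V^mX(\Omega,\nu)\cap Y(\Omega,\nu)$ whose matrix of $\Phi$-moments is invertible. Polynomial bumps supported in a fixed compact subset of $\Omega$, or a sufficiently rich family of functions in $C_c^\infty(\Omega)$, serve this purpose in all settings considered subsequently in the paper; the required non-degeneracy of the moment matrix reduces, via integration by parts, to the statement that no nontrivial constant-coefficient differential operator of order at most $m-1$ annihilates the density $\omega$, which is readily verified from the positivity and regularity of $\omega$. Everything else in the argument is purely functional-analytic and independent of the specific geometry of $(\Omega,\nu)$.
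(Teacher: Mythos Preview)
Your argument for the implication \eqref{embedd}$\Rightarrow$\eqref{poinc} is correct and matches the paper's: apply the embedding, then iterate Proposition~\ref{mazya} on each component $\partial^\alpha u$ (which has mean zero) to absorb the lower-order $L^1$ terms into $\|\nabla^m u\|_{X(\Omega,\nu)}$ via (P5).

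The converse direction, however, has a genuine gap. You propose to take $F$ inside $C_c^\infty(\Omega)$ (or ``polynomial bumps'' supported in a fixed compact set) and claim that $\Phi|_F$ can be made bijective onto $T$. Your justification---that no nontrivial constant-coefficient differential operator of order $\leq m-1$ annihilates $\omega$---is false in the most basic case: if $\omega$ is constant (the Euclidean setting), then $\partial_i\omega=0$ for every $i$, and indeed
\[
\int_\Omega \partial^\alpha u\,d\nu = 0 \qquad\text{for every } u\in C_c^\infty(\Omega)\ \text{and every } |\alpha|\geq 1.
\]
Thus $\Phi(C_c^\infty(\Omega))$ is a proper subspace of $T$ whenever $m\geq 2$ and $\nu$ is Lebesgue measure, and no choice of $F\subset C_c^\infty(\Omega)$ can make $\Phi|_F$ surjective. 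The same obstruction arises whenever $\omega$ is a polynomial, and more generally $\omega$ is only assumed Borel, so your integration-by-parts reduction is not even available.

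The paper resolves this by taking $F=\mathcal P^{m-1}$, the space of genuine polynomials of degree $\leq m-1$. For such $P$, the highest nonvanishing gradient $\nabla^{\deg P}P$ is a nonzero constant tensor, so $\Phi|_{\mathcal P^{m-1}}$ is injective and, by a dimension count, bijective onto $T$. The price is that polynomials are not bounded when $\Omega$ is unbounded, so one cannot invoke~\eqref{l1linf} to place them in $Y(\Omega,\nu)$. Instead, the paper bootstraps: it applies the assumed Poincar\'e inequality~\eqref{poinc} to $Q-P_Q$ with $Q(x)=x_i^m$ (whose $m$-th gradient is constant, hence in $X$) to deduce $Q-P_Q\in Y(\Omega,\nu)$, and from the growth of $Q-P_Q$ concludes $|x_i|^m\in Y(\Omega,\nu)$, whence $\mathcal P^{m}\subset Y(\Omega,\nu)$. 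This self-referential use of~\eqref{poinc} is the missing idea.
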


\begin{proof}
Assume that \eqref{embedd} holds.
Thus, there exists a constant $C$ such
that
\begin{align}\label{poinc3}
\|u\|_{Y(\Omega ,\nu)} \leq C \Big(\sum _{k=0}^{m-1}\|\nabla ^k
u\|_{L^1(\Omega , \nu )}  + \|\nabla ^m u\|_{X(\Omega , \nu)}\Big)
\end{align}
for every $u \in V^m X(\Omega , \nu )$. Iterating inequality
\eqref{mazya2mean} implies that there exist constants $C_1,
\dots, C_m$ such that
\begin{equation}\label{poinc4}
\|u\|_{L^1(\Omega , \nu)} \leq C_1 \|\nabla u\|_{L^1(\Omega , \nu)}
\leq C_2 \|\nabla ^2u\|_{L^1(\Omega , \nu)} \leq \dots \leq C_m
\|\nabla ^m u\|_{L^1(\Omega , \nu)}
\end{equation}
 for every $u \in V^m_\bot X(\Omega
, \nu )$. By property (P5) of rearrangement-invariant function norms, there
exists a constant $C$, independent of $u$, such that $\|\nabla ^m
u\|_{L^1(\Omega , \nu)} \leq C\|\nabla ^m u\|_{X(\Omega , \nu)}$.
Thus, \eqref{poinc} follows from \eqref{poinc3} and \eqref{poinc4}.

Suppose next that  \eqref{poinc} holds. Given $k \in \N$, denote by
$\mathcal P ^k$ the space of polynomials whose degree does not
exceed $k$. Observe that $\mathcal P ^k \subset L^1(\Omega , \nu)$
for every $k \in \N$. Indeed, $\nabla ^h P =0$ for every $P \in
\mathcal P ^k$, provided that $h>k$, and hence $\mathcal P ^k
\subset V^h X(\Omega , \nu)$ for any rearrangement-invariant space
$X(\Omega , \nu)$. The inclusion $\mathcal P ^k \subset L^1(\Omega ,
\nu)$ thus follows via Corollary \ref{C:mazya-m}. Next, it is not
difficult to verify that, for each $u \in V^m X(\Omega , \nu)$,
there exists a (unique) polynomial $P_u \in \mathcal P^{m-1}$ such
that $u - P_u \in V^m _\bot X(\Omega , \nu)$. Moreover, the
coefficients of $P_u$ are linear combinations of the components of
$\int _{\Omega} \nabla ^k u \,d\nu $, for $k=0, \dots , m-1$, with
coefficients depending on $n$, $m$ and $(\Omega , \nu)$. Now, we
claim that
\begin{equation}\label{Pm}
  \mathcal P^m
\subset Y(\Omega ,\nu). \end{equation}
 This inclusion is trivial in
the case when $\Omega$ is bounded, owing to axioms (P2) and (P4) of
the definition of rearrangement-invariant function norms, since any
polynomial is bounded in $\Omega$. To verify \eqref{Pm} in the
general case, consider, for each $i=1, \dots , n$, the polynomial
$Q(x)= x_i^m \in \mathcal P ^m$. Let $P_Q \in \mathcal P^{m-1}$ be
the polynomial associated with $Q$ as above, such that $Q - P_Q \in
V^m _\bot X(\Omega , \nu)$. Note that the polynomial $P_Q$ also
depends only on $x_i$. From \eqref{poinc} applied with $u = Q - P_Q$
we deduce that $Q - P_Q \in Y(\Omega ,\nu)$. This inclusion and the
inequality $|Q - P_Q| \geq C |x_i|^m$, which holds, for a suitable
positive constant $C$,  if  $|x_i|$ is sufficiently large, tell us,
via axiom (P2) of the definition of rearrangement-invariant function
norms, that $|x_i|^m \in Y(\Omega ,\nu)$ as well. Thus, $|x|^m \in
Y(\Omega ,\nu)$, and by axiom (P2) again, any polynomial of degree
not exceeding $m$ also belongs to $Y(\Omega ,\nu)$. Hence,
\eqref{Pm} follows.
Thus, given any $u \in  V^m X(\Omega , \nu)$, we have that
\begin{align*}
\|u\|_{Y(\Omega ,\nu)} & \leq \|u- P_u \|_{Y(\Omega ,\nu)} + \|P_u
\|_{Y(\Omega ,\nu)} \\  & \leq C \|\nabla ^m u\|_{X(\Omega
,\nu)} + \sum _{k=0}^{m-1}C\int _{\Omega }
|\nabla ^k u |\,d\nu \sum _{ \alpha _1 + \dots + \alpha _n=k} \|
|x_1|^{\alpha _1} \cdots |x_n|^{\alpha _n} \|_{Y(\Omega ,\nu)}
\\  & \leq C \|\nabla ^m
u\|_{X(\Omega ,\nu)} + C'\sum
_{k=0}^{m-1}\|\nabla ^k u\|_{L^1(\Omega , \nu )},
\end{align*}
for some constants $C$ and $C'$ independent of $u$.
Hence, embedding
\eqref{embedd} follows.
\end{proof}

Let us  incidentally mention that more customary Sobolev type spaces
$W^m X(\Omega , \nu)$ can be defined as
\[
W^m X(\Omega , \nu) =   \big\{u: \hbox{$u$ is $m$-times weakly
differentiable in $\Omega$,  \, $|\nabla ^k u| \in X(\Omega, \nu)$
for $k=0, \dots , m$}\big\},
\]
and equipped with the norm
$$\|u\|_{W^m X(\Omega , \nu)} = \sum _{k=0}^{m} \|\nabla
^k u\|_{X(\Omega , \nu )}.$$    The  space $W^m X(\Omega , \nu)$ is
a normed linear space, and it is a Banach space if
\[
X_{\loc}(\Omega , \nu ) \to L^1_{\loc}(\Omega ).
\]
By the second embedding in \eqref{l1linf},
\begin{equation}\label{inclWV}
W^m X(\Omega , \nu) \to V^m X(\Omega , \nu)
 \end{equation}
 for every $(\Omega , \nu)$
 fulfilling \eqref{lower-est}, but, in general, $W^m X(\Omega , \nu) \subsetneqq V^m X(\Omega ,
 \nu)$.  For instance, if $(\Omega , \nu) = (\rn , \gamma _n)$, the
 Gauss space, and $\| \cdot \|_{X (0, 1)} = \| \cdot \|_{L^\infty (0,
 1)}$ (or $\| \cdot \|_{X (0, 1)} = \| \cdot \|_{{\rm exp} L^\beta (0,
 1)}$ for some $\beta >0$), then $V^m X(\Omega , \nu) \neq W^m X(\Omega ,
 \nu)$.
 However, the spaces $W^m X(\Omega , \nu)$   and  $V^m
X(\Omega , \nu)$ agree if condition \eqref{lower-est} is slightly
strengthened to
\begin{equation}\label{WV2}
\int _0 \frac {ds}{I_{\Omega , \nu} (s)} < \infty.
\end{equation}
Note that~\eqref{WV2} indeed implies~\eqref{lower-est}, since $\frac
{1}{I_{\Omega , \nu}}$ is a non-increasing function.

\begin{prop}\label{WV} {\bf [Condition for $W^mX(\Omega , \nu )= V^mX(\Omega , \nu )$]} Let $(\Omega ,\nu )$ be as above, and
let $m \in \N$. Assume that \eqref{WV2} holds. Let $\| \cdot
\|_{X(0,1)}$ be a rearrangement-invariant function norm. Then
\begin{equation}\label{WV1}
W^mX(\Omega , \nu )= V^mX(\Omega , \nu ),
\end{equation}
up to equivalent norms.
\end{prop}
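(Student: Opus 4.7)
The inclusion $W^mX(\Omega,\nu)\hookrightarrow V^mX(\Omega,\nu)$ is already recorded in~\eqref{inclWV}, so what remains is the reverse inclusion: for each $u\in V^mX(\Omega,\nu)$, the intermediate derivatives $\nabla^ku$, $k=0,\dots,m-1$, must lie in $X(\Omega,\nu)$ with the corresponding norm bound. My plan is to reduce everything to a first-order Sobolev--Poincar\'e inequality
$$\|u-u_\Omega\|_{X(\Omega,\nu)}\leq C\,\|\nabla u\|_{X(\Omega,\nu)},\qquad u\in V^1X(\Omega,\nu),$$
under the hypothesis~\eqref{WV2}, and then to iterate it.

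To derive this inequality, I would combine the isoperimetric inequality~\eqref{isop2} with the coarea formula to produce a pointwise rearrangement estimate of Maz'ya type: after subtracting the median, one has
$$u^*(s)-u^*(\tfrac12)\leq \int_s^{1/2}\frac{(|\nabla u|)^{**}(r)}{I_{\Omega,\nu}(r)}\,dr,\qquad s\in(0,\tfrac12).$$
Since $(|\nabla u|)^{**}$ is non-increasing and, by~\eqref{WV2}, $J:=\int_0^{1/2}dr/I_{\Omega,\nu}(r)<\infty$, the right-hand side is majorized pointwise by $J\,(|\nabla u|)^{**}(s)$. Taking $X(0,1)$-norms via axiom~(P2), and controlling the additive constant $u^*(\tfrac12)$ through axioms~(P4)--(P5) together with Proposition~\ref{mazya}, would then yield
$$\|u-\med(u)\|_{X(\Omega,\nu)}\leq C\bigl(\|(|\nabla u|)^{**}\|_{X(0,1)}+\|\nabla u\|_{L^1(\Omega,\nu)}\bigr),$$
which, after passing from median to mean, gives the desired inequality provided one can bound $\|(|\nabla u|)^{**}\|_{X(0,1)}$ by $\|\nabla u\|_{X(\Omega,\nu)}$.

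With the first-order inequality in hand, induction on $m$ completes the proof: if $u\in V^mX(\Omega,\nu)$, then each component of $\nabla^{m-1}u$ lies in $V^1X(\Omega,\nu)$, hence differs from its mean by an $X(\Omega,\nu)$-function controlled by $\|\nabla^mu\|_{X(\Omega,\nu)}$; the $L^1$-norm of the residual mean is itself controlled via Corollary~\ref{C:mazya-m} and property~(P5), and is therefore absorbed in the $X$-norm estimate. Iterating downward in $k$ yields $\nabla^ku\in X(\Omega,\nu)$ for every $k=0,\dots,m-1$, with norms bounded by $\|\nabla^mu\|_{X(\Omega,\nu)}$, and thus establishes $V^mX(\Omega,\nu)\hookrightarrow W^mX(\Omega,\nu)$.

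The principal obstacle is the last step of the first-order argument: the operator $f\mapsto f^{**}$ is \emph{not} bounded on every rearrangement-invariant space (it fails, for instance, for norms close to $L^1$), so one cannot simply pass from $\|(|\nabla u|)^{**}\|_{X(0,1)}$ to $\|\nabla u\|_{X(\Omega,\nu)}$. To circumvent this I would refine the pointwise rearrangement bound so that $(|\nabla u|)^{*}$ replaces $(|\nabla u|)^{**}$, which is plausible under the strong integrability~\eqref{WV2}; alternatively, one can apply the Hardy--Littlewood--P\'olya principle to the tail integral $\int_s^{1/2}(|\nabla u|)^{*}(r)/I_{\Omega,\nu}(r)\,dr$, combined with the finiteness of $J$, to extract the required $X$-norm bound without imposing any Boyd-index hypothesis on $X$.
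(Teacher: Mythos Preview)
Your overall strategy---reduce to a first-order inequality $V^1X(\Omega,\nu)\to X(\Omega,\nu)$ and then iterate downward through the orders of derivatives---matches the paper's. The difference, and the gap, lies entirely in how you establish that first-order step.

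You correctly identify the obstacle: after the Maz'ya-type rearrangement bound you are left with $\|H_I(|\nabla u|^*)\|_{X(0,1)}$ where $H_If(t)=\int_t^1 f(s)/I(s)\,ds$, and you need this controlled by $\|\,|\nabla u|\,\|_{X(\Omega,\nu)}$ for \emph{every} rearrangement-invariant $X$. Your first proposed fix (replace $(|\nabla u|)^{**}$ by $(|\nabla u|)^{*}$ in the pointwise estimate) is not a consequence of~\eqref{WV2}; the integrability of $1/I$ has no bearing on whether the coarea/level-set argument delivers $f^*$ rather than $f^{**}$. Your second fix (Hardy--Littlewood--P\'olya) is in fact viable, but you have not carried it out: one must actually check that $\int_0^s H_If^*(t)\,dt\le C\int_0^s f^*(t)\,dt$ for all $s$, which splits into $\int_0^s \tfrac{r}{I(r)}f^*(r)\,dr$ (bounded via~\eqref{E:lower-bound}) and $s\int_s^1\tfrac{f^*(r)}{I(r)}\,dr\le sf^*(s)\int_0^1\tfrac{dr}{I(r)}$ (bounded via~\eqref{WV2}). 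Without this computation the argument is incomplete.

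The paper closes the gap more cleanly and without touching rearrangement inequalities for $u$ at all. It observes that $H_I:L^1(0,1)\to L^1(0,1)$ (Fubini plus $r/I(r)\le C$) and $H_I:L^\infty(0,1)\to L^\infty(0,1)$ (directly from $\int_0^1 dr/I(r)<\infty$), then invokes Calder\'on's interpolation theorem to conclude $H_I:X(0,1)\to X(0,1)$ for every rearrangement-invariant $X$. At that point the already-proven reduction principle (Theorem~\ref{T:reduction} with $m=1$ and $Y=X$) gives $V^1X(\Omega,\nu)\to X(\Omega,\nu)$ immediately. This avoids both the delicate pointwise estimate and the $f^{**}$ issue entirely, and packages the HLP computation you gestured at inside a single citation to a standard interpolation result.
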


A proof of this proposition relies upon one of our main results, and
can be found at the end of  Section \ref{S:proof-convergent}.

\section{Main results}\label{S:main}

The present section contains the main results of this paper, which link
  embeddings and Poincar\'e
inequalities for Sobolev-type spaces of arbitrary order to
isoperimetric inequalities. The relevant results
depend only on a lower bound for the isoperimetric function
$I_{\Omega,\nu}$ of $(\Omega , \nu)$ in terms of some other
non-decreasing function $I:[0, 1]\to[0,\infty)$; precisely, on the
existence of a positive constant $c$ such that
\begin{equation}\label{isop-ineq}
I_{\Omega,\nu} (s) \geq c I(c s)  \quad \textup{for}\ s\in[0, \tfrac
12].
\end{equation}
As mentioned in Proposition~\ref{mazya} and the preceding remarks, it is reasonable to suppose that the function $I_{\Omega,\nu}$ satisfies the estimate~\eqref{lower-est}. In the light of this fact, in what follows  we shall assume
that
\begin{equation}\label{E:lower-bound}
\inf _{t \in (0,1)} \frac {I(t)}t>0.
\end{equation}

\begin{thm}\label{T:reduction}{\bf [Reduction principle]}
Assume that  $(\Omega,\nu)$  fulfils \eqref{isop-ineq} for some
non-decreasing function  $I$ satisfying \eqref{E:lower-bound}.
 Let $m\in\N$, and let
$\|
\cdot \|_{X(0,1)}$ and $\|\cdot\|_{Y(0,1)}$ be
rearrangement-invariant function norms. If there exists a constant
$C_1$ such that
\begin{equation}\label{E:kernel-cond}
\left\|\int_t\sp1\frac{f(s)}{I(s)}\left(\int_t\sp
s\frac{dr}{I(r)}\right)\sp{m-1}\,ds\right\|_{Y(0,1)} \leq
C_1\left\|f\right\|_{X(0,1)}
\end{equation}
for every nonnegative $f \in  X(0,1)$, then
\begin{equation}\label{E:main-emb}
V^m  X(\Omega,\nu) \to Y(\Omega,\nu),
\end{equation}
and there exists a~constant $C_2$ such that
\begin{equation}\label{E:main-poincare}
\|u\|_{Y(\Omega,\nu)}\leq C_2\left\|\nabla\sp m
u\right\|_{X(\Omega,\nu)}
\end{equation}
for every $u \in V^m_{\bot } X(\Omega,\nu)$.
\end{thm}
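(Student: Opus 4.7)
The plan is to reduce the embedding to a pointwise rearrangement estimate and then apply the one-dimensional inequality \eqref{E:kernel-cond}. By Proposition~\ref{poincsob}, the embedding \eqref{E:main-emb} is equivalent to the Poincar\'e inequality \eqref{E:main-poincare} for $u\in V\sp m_\bot X(\Omega,\nu)$, so it suffices to prove the latter. Specifically, I aim to establish the pointwise bound
\begin{equation*}
u\sp*(s)\leq K\int_{\delta s}\sp 1\frac{|\nabla\sp m u|\sp *(r)}{I(r)}\left(\int_{\delta s}\sp r\frac{d\rho}{I(\rho)}\right)\sp{m-1}\,dr \qquad\textup{for}\ s\in(0,\tfrac12),
\end{equation*}
with constants $K,\delta>0$ depending only on $m$, $n$, and the constant $c$ of \eqref{isop-ineq}. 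Taking $\|\cdot\|_{Y(0,1)}$ of this inequality, using the boundedness of the dilation operator $E_\delta$ on $Y(0,1)$, applying the hypothesis \eqref{E:kernel-cond} with $f=|\nabla\sp m u|\sp*\in X(0,1)$, and invoking property (P6) together with the Hardy--Littlewood--P\'olya principle then gives \eqref{E:main-poincare}.

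The cornerstone building block is the first-order pointwise rearrangement inequality of Maz'ya type: the argument used in the proof of Proposition~\ref{mazya}, based on the coarea formula applied to the superlevel sets $\{|v|>t\}$ and on the isoperimetric inequality \eqref{isop2}, when refined yields, for any mean-zero scalar $v\in V\sp 1 L\sp 1(\Omega,\nu)$,
\begin{equation*}
v\sp *(s)\leq C\int_{\delta s}\sp{1/2}\frac{|\nabla v|\sp *(r)}{I(r)}\,dr \qquad\textup{for}\ s\in(0,\tfrac12),
\end{equation*}
after converting $I_{\Omega,\nu}$ into $I$ via \eqref{isop-ineq} and handling the median-versus-mean shift through \eqref{mazya2mean}. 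This is precisely the $m=1$ instance of the pointwise bound claimed above.

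For $m\geq 2$, I iterate the base case componentwise. Because $u\in V\sp m_\bot X(\Omega,\nu)$ forces $\int_\Omega\partial\sp\alpha u\,d\nu=0$ for every multi-index $|\alpha|\leq m-1$, and because Corollary~\ref{C:mazya-m} places each such scalar derivative in $V\sp 1 L\sp 1(\Omega,\nu)$, the base case applies to every component $\partial\sp\alpha u$. Using the subadditivity property \eqref{subadd} of $f\mapsto\int_0\sp s f\sp*$ to reassemble scalar components into $|\nabla\sp k u|\sp*$ yields, for suitable constants $C_k,\delta_k>0$,
\begin{equation*}
|\nabla\sp k u|\sp *(s)\leq C_k\int_{\delta_k s}\sp{1/2}\frac{|\nabla\sp{k+1} u|\sp *(r)}{I(r)}\,dr,\qquad k=0,1,\dots,m-1.
\end{equation*}
Composing these $m$ one-step inequalities creates an $m$-fold iterated integral of $|\nabla\sp m u|\sp*$ against $m$ copies of $1/I$ over the simplex $\delta s\leq r_1\leq\cdots\leq r_m\leq 1/2$; switching the order of integration, or equivalently computing the $(m-1)$-simplex volume in the variable $G(t)=\int_{\delta s}\sp t d\rho/I(\rho)$, collapses it to
\begin{equation*}
\frac{1}{(m-1)!}\int_{\delta s}\sp{1/2}\frac{|\nabla\sp m u|\sp *(r)}{I(r)}\left(\int_{\delta s}\sp r\frac{d\rho}{I(\rho)}\right)\sp{m-1}\,dr,
\end{equation*}
which is the kernel of \eqref{E:kernel-cond} after harmlessly extending the upper limit from $1/2$ to $1$. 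The main obstacle is the iterative bookkeeping: the first-order estimate is formulated for mean-zero scalars, must be applied to each of the numerous partial derivatives of order $k$, and then recombined into a bound on $|\nabla\sp k u|\sp*$. Propagating combinatorial multiplicities, dilation factors $\delta_k$, and mean/median corrections cleanly across the $m$ layers -- absorbing them into a single global $\delta$ and $K$ -- is the delicate part, made tractable by the boundedness of dilation operators on every rearrangement-invariant space.
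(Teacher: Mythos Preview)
Your approach differs from the paper's, and the gap lies at the very first step. The pointwise estimate you call the ``cornerstone'',
\[
v^*(s)\leq C\int_{\delta s}^{1/2}\frac{|\nabla v|^*(r)}{I(r)}\,dr,
\]
is \emph{not} what the coarea--isoperimetric argument yields, and it is in fact false. What that argument gives (see \cite{CP_gauss,MM}) is $v^*(s)\leq\int_s^{1/2}g(r)/I(r)\,dr$ for some nonnegative $g$ with $\int_0^t g\leq\int_0^t|\nabla v|^*$ for every $t$; replacing $g$ by $|\nabla v|^*$ is illegitimate because the weight $r\mapsto\chi_{(s,1/2)}(r)/I(r)$ is not non-increasing, so Hardy's lemma does not apply. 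A one-dimensional counterexample kills your inequality outright: on $(0,1)$ with Lebesgue measure ($I\equiv1$), take $v$ piecewise linear, equal to $1$ on $(0,\tfrac12-\epsilon)$, to $-1$ on $(\tfrac12+\epsilon,1)$, and linear in between. Then $v$ has mean zero, $v^*(1/4)=1$, but $|\nabla v|^*=\epsilon^{-1}\chi_{(0,2\epsilon)}$, so $\int_{\delta/4}^{1/2}|\nabla v|^*=0$ once $\epsilon<\delta/8$. For $m=1$ the defect of $g$ versus $|\nabla v|^*$ is harmless, since one passes immediately to $\|g\|_X\leq\||\nabla v|\|_X$ by the Hardy--Littlewood--P\'olya principle. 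For $m\geq2$, however, your scheme must substitute one pointwise bound into the integrand of the next, and HLP domination is not preserved by the operator $f\mapsto\int_s^{1/2}f(r)/I(r)\,dr$. The paper explicitly flags this obstruction (paragraph following Theorem~\ref{T:eucl_reduction-john}): plain iteration of first-order pointwise rearrangement estimates delivers the reduction only under extra hypotheses on the norms.

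The paper's proof works at the level of embeddings, not pointwise. From the known $m=1$ case it gets $V^1 X_j(\Omega,\nu)\to X_{j+1}(\Omega,\nu)$ for each $j$, where $\|\cdot\|_{X_{j+1}(0,1)}$ is the optimal rearrangement-invariant target for $H_I$ on $X_j(0,1)$; chaining yields $V^m X\to X_m$. The substantive content is the one-dimensional identity $X_m(0,1)=X_{m,I}(0,1)$ (Theorem~\ref{T:equivalence}), resting on the operator equivalences of Theorem~\ref{T:lenka-main}: iterating first-order optimal targets coincides with the direct $m$-th order optimal target. Since \eqref{E:kernel-cond} says $H_I^m:X(0,1)\to Y(0,1)$ and $X_{m,I}(0,1)$ is the optimal target for $H_I^m$ (Proposition~\ref{P:norm}), one obtains $X_{m,I}\to Y$, and hence $V^mX\to X_m=X_{m,I}\to Y$.
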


\begin{remark}\label{R:star}
It turns out that inequality \eqref{E:kernel-cond} holds for
every  nonnegative $f \in  X(0,1)$ if and only if it just holds for
every nonnegative and non-increasing $f \in  X(0,1)$.
This fact will be proved in
Corollary~\ref{C:restriction-of-main-inequality-to-nonincreasing-functions},
Section \ref{S:proof-convergent},  and can be of use in concrete
applications of Theorem \ref{T:reduction}. Indeed, the available
criteria for the validity of one-dimensional inequalities for
integral operators take, in general, different forms according to
whether trial functions are arbitrary, or just monotone.
\end{remark}

As already stressed in Sections \ref{S:intro} and \ref{S:over}, the
first-order case ($m=1$) of Theorem~\ref{T:reduction} is already
well known; the novelty here amounts to the higher-order case when
$m>1$. To be more precise, when $m=1$, a version of
Theorem~\ref{T:reduction} in the standard Euclidean case, for
functions vanishing on $\partial \Omega$, is by now classical, and
has been exploited in the proof of Sobolev inequalities with sharp
constants, including \cite{Aubin, Moser, Ta, BL}. An argument
showing that \eqref{E:kernel-cond} with $m=1$ implies
\eqref{E:main-emb} and \eqref{E:main-poincare}, for functions with
arbitrary boundary values,
  for Orlicz norms,
on regular Euclidean domains, or, more generally, on domains in
Maz'ya classes, is presented \cite[Proof of Theorem 2 and Remark
2]{Ci_ind}. A proof for arbitrary rearrangement-invariant norms, in
Gauss space, is given in \cite{CP_gauss}. The same proof translates
verbatim to general measure spaces $(\Omega,\nu)$ as in Theorem
\eqref{T:reduction} -- see e.g.~\cite{MM}.

A major feature of Theorem~\ref{T:reduction} is the difference
occurring in \eqref{E:kernel-cond} between the first-order case
($m=1$) and the higher-order case ($m>1$). Indeed, the integral
operator appearing in \eqref{E:kernel-cond} when $m=1$ is just a
weighted Hardy-type operator, namely a primitive of $f$ times a
weight, whereas, in the higher-order case, a genuine kernel, with a
more complicated structure, comes into play. In fact, this seems to
be the first known instance where such a kernel operator is needed
in a reduction result for Sobolev-type embeddings. Of course, this
makes the proof of inequalities of the form \eqref{E:kernel-cond}
more challenging, although several contributions on one-dimensional
inequalities for kernel operators are fortunately available in the
literature (see e.g. the survey papers \cite{KOPR, M-R, Step}, and
the monographs \cite{EE, EKM}).

\begin{remark}\label{reverse}
{\rm As we shall see, the Sobolev embedding \eqref{E:main-emb} (or
the Poincar\'e inequality \eqref{E:main-poincare}) and inequality
\eqref{E:kernel-cond}, in which the function $I$ is equivalent to the isoperimetric function $I_{\Omega,\nu}$ on some neighborhood of zero, are actually equivalent in customary families
of measure spaces $(\Omega,\nu)$, and hence, Theorem~\ref{T:optran}
enables us to determine the optimal rearrangement-invariant target
spaces in Sobolev embeddings for these measure spaces. Incidentally,
let us mention that when $m=1$, this is the case whenever the
geometry of $(\Omega , \nu)$ allows the construction of a family of
trial functions $u$ in \eqref{E:main-emb} or \eqref{E:main-poincare}
characterized by the following properties: the level sets of $u$ are
isoperimetric (or almost isoperimetric) in $(\Omega , \nu)$;
$|\nabla u|$ is constant (or almost constant) on the boundary of the
level sets of $u$. If $m>1$, then the latter requirement has to be
complemented by requiring that the derivatives of $u$ up to the
order $m$ restricted to the boundary of the level sets satisfy
certain conditions depending on $I$.
The relevant conditions have, however, a technical nature, and it is  not
worth to state them explicitly. In fact, heuristically speaking,
properties \eqref{E:kernel-cond}, \eqref{E:main-poincare} and
\eqref{E:main-emb} turn out to be equivalent for every $m \geq 1$ on
the same measure spaces $(\Omega,\nu)$ as for $m=1$. Such
equivalence certainly holds in any customary, non-pathological
situation, including the three  frameworks to which our results will
be applied, namely John domains,  Euclidean domains from Maz'ya
classes and product probability spaces in $\rn$ extending the Gauss
space.}
\end{remark}

Now we are in a~position to characterize the space which, in the
situation discussed in Remark \ref{reverse}, is  the optimal
rearrangement-invariant target space in the Sobolev embedding
\eqref{E:main-emb}. Such an optimal space is the one associated with
the rearrangement-invariant function norm $\| \cdot
\|_{X_{m,I}(0,1)}$, whose associate norm is defined as
\begin{equation}\label{E:eucl_opt_norm}
\|f\|_{X_{m,I}'(0,1)}
  =\left\|\frac{1}{I(t)}\int _0^t \left(\int_s\sp
t\frac{dr}{I(r)}\right)\sp{m-1} f\sp{*}(s)ds\right\|_{X'(0,1)}
\end{equation}
for  $f \in \mathcal M_+(0,1)$.

\begin{thm}\label{T:optran}{\bf [Optimal
target]} Assume that $(\Omega,\nu)$, $m$, $I$ and $\| \cdot
\|_{X(0,1)}$ are as in  Theorem~\ref{T:reduction}. Then the
functional $\|\cdot\|_{X'_{m,I}(0,1)}$, given by
\eqref{E:eucl_opt_norm}, is a~rearrangement-invariant function norm,
whose associate norm $\|\cdot\|_{X_{m,I}(0,1)}$ satisfies
\begin{equation}\label{E:optemb}
 V^m X(\Omega,\nu) \to X_{m,I}(\Omega , \nu ),
 \end{equation}
and there exists a~constant $C$ such that
\begin{equation}\label{E:optran}
\|u\|_{X_{m,I}(\Omega,\nu)}\leq C\left\|\nabla\sp m
u\right\|_{X(\Omega,\nu)}
\end{equation}
for every $u \in V^m_\bot X(\Omega,\nu)$.

Moreover, if
$(\Omega , \nu)$ is such that~\eqref{E:main-emb}, or equivalently
\eqref{E:main-poincare}, implies \eqref{E:kernel-cond}, and hence
\eqref{E:kernel-cond}, \eqref{E:main-emb} and
\eqref{E:main-poincare} are equivalent, then the function norm
$\|\cdot \|_{X_{m,I}(0,1)}$ is
 optimal in
\eqref{E:optemb} and \eqref{E:optran} among all
rearrangement-invariant norms.
\end{thm}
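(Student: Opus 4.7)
Write $S$ for the integral operator
\[
(Sh)(t) = \frac{1}{I(t)}\int_0^t \left(\int_s^t \frac{dr}{I(r)}\right)^{m-1} h(s)\,ds,
\]
so that $\|f\|_{X'_{m,I}(0,1)} = \|Sf^*\|_{X'(0,1)}$, and write $T$ for the operator on the left-hand side of \eqref{E:kernel-cond}. A routine Fubini computation yields the \emph{adjointness identity}
\[
\int_0^1 g(t)(Tf)(t)\,dt = \int_0^1 f(s)(Sg)(s)\,ds \qquad \text{for all } f,g \in \mathcal{M}_+(0,1).
\]
With this at hand, to verify that $\|\cdot\|_{X'_{m,I}(0,1)}$ is a rearrangement-invariant function norm, axioms (P2), (P3) and (P6) are immediate from the corresponding properties of $\|\cdot\|_{X'(0,1)}$. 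The triangle inequality (P1) is the delicate one: I would rewrite $(Sh^*)(t)$ via integration by parts (or a further Fubini), obtaining an expression depending on $h^*$ only through the primitive $r\mapsto \int_0^r h^*(s)\,ds$; the subadditivity~\eqref{subadd} then gives the pointwise bound $S((f+g)^*) \leq Sf^* + Sg^*$, and the triangle inequality for $\|\cdot\|_{X'(0,1)}$ concludes. Axiom (P4) reduces to the uniform estimate $(S\,1)(t)\leq C$ on $(0,1)$, immediate from~\eqref{E:lower-bound} via the crude bound $\int_s^t dr/I(r)\leq c^{-1}\log(t/s)$; axiom (P5) follows from the lower bound $(Sf^*)(t) \geq c_0\|f\|_{L^1(0,1)}$ valid on a fixed subinterval of $(\tfrac12,1)$.

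For the embedding~\eqref{E:optemb} and the Poincar\'e inequality~\eqref{E:optran}, Theorem~\ref{T:reduction} reduces matters to verifying~\eqref{E:kernel-cond} with $Y=X_{m,I}$. By~\eqref{E:pre-duality} and the adjointness identity,
\[
\|Tf\|_{X_{m,I}(0,1)} = \sup_{\substack{g\geq 0\\ \|g\|_{X'_{m,I}(0,1)}\leq 1}}\int_0^1 f(s)(Sg)(s)\,ds,
\]
and the Hardy--Littlewood inequality followed by H\"older's inequality give $\int_0^1 f(Sg) \leq \int_0^1 f^*(Sg)^* \leq \|f\|_{X(0,1)}\|Sg\|_{X'(0,1)}$. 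The key observation is that, for every fixed $t$, the kernel $s\mapsto (\int_s^t dr/I(r))^{m-1}\chi_{[0,t]}(s)$ is non-increasing in $s\in(0,1)$, so a \emph{second} application of the Hardy--Littlewood inequality yields the pointwise bound $(Sg)(t)\leq (Sg^*)(t)$. By axiom (P2) for $\|\cdot\|_{X'(0,1)}$, $\|Sg\|_{X'(0,1)}\leq \|Sg^*\|_{X'(0,1)} = \|g\|_{X'_{m,I}(0,1)}\leq 1$, completing the check.

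For optimality, let $\|\cdot\|_{Y(0,1)}$ be any rearrangement-invariant function norm such that $V^m X(\Omega,\nu)\to Y(\Omega,\nu)$. Under the standing hypothesis, this yields the kernel inequality $\|Tf\|_{Y(0,1)}\leq C\|f\|_{X(0,1)}$. By~\eqref{E:equivemb}, it suffices to prove $Y'(\Omega,\nu)\to X'_{m,I}(\Omega,\nu)$, which follows from the chain
\[
\|h\|_{X'_{m,I}(0,1)} = \|Sh^*\|_{X'(0,1)} = \sup_{\substack{f\geq 0\\ \|f\|_{X(0,1)}\leq 1}}\int_0^1 h^*(r)(Tf)(r)\,dr \leq \|h\|_{Y'(0,1)}\sup_{\|f\|_{X(0,1)}\leq 1}\|Tf\|_{Y(0,1)} \leq C\|h\|_{Y'(0,1)}
\]
obtained from~\eqref{E:pre-duality}, the adjointness identity and H\"older's inequality. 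The main technical obstacle in the whole argument is the pointwise inequality $Sg\leq Sg^*$ used in the second step, which rests on the monotonicity in $s$ of the kernel restricted to $[0,t]$; together with the Fubini rewriting that allows~\eqref{subadd} to be exploited in the triangle inequality, these are the two structural facts that make the optimal target rearrangement-invariant norm both definable and computable, while the remaining manipulations are essentially formal consequences of the $T$--$S$ adjointness.
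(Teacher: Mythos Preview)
Your proof is correct and follows essentially the same route as the paper: the adjointness of $T$ and $S$ is exactly the paper's observation that $H_I^m$ and $R_I^m$ are mutually associate (Remarks~\ref{R:hrq}), the pointwise bound $Sg\le Sg^*$ is the paper's inequality~\eqref{E:riij}, and the triangle inequality, (P4), (P5), and optimality arguments match those of Proposition~\ref{P:norm} up to cosmetic reformulations (Hardy's lemma versus your integration-by-parts rewriting). The only organizational difference is that the paper isolates the norm axioms and the boundedness $H_I^m:X\to X_{m,I}$ in Proposition~\ref{P:norm} and then deduces~\eqref{E:optemb} by quoting~\eqref{sept7} from the proof of Theorem~\ref{T:reduction}, whereas you verify~\eqref{E:kernel-cond} with $Y=X_{m,I}$ directly and invoke Theorem~\ref{T:reduction} as a black box.
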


An important special case of Theorems~\ref{T:reduction} and
\ref{T:optran} is enucleated in the following corollary.

\begin{corollary}\label{linfinity}
{\bf [Sobolev embeddings into $L^\infty$]} Assume that
$(\Omega,\nu)$, $m$, $I$ and $\| \cdot \|_{X(0,1)}$ are as in
Theorem~\ref{T:reduction}. If
\begin{equation}\label{linfinity1}
\bigg\|\frac{1}{I(s)} \bigg(\int _0^s \frac
{dr}{I(r)}\bigg)^{m-1}\bigg\|_{X'(0,1)} < \infty\,,
\end{equation}
then
\begin{equation}\label{linfinity2}
 V^m X(\Omega,\nu) \to L^\infty(\Omega , \nu ),
 \end{equation}
and there exists a~constant $C$ such that
\begin{equation}\label{linfinity3}
\|u\|_{L^\infty (\Omega,\nu)}\leq C\left\|\nabla\sp m
u\right\|_{X(\Omega,\nu)}
\end{equation}
for every $u \in V^m_\bot X(\Omega,\nu)$.

 Moreover, if $(\Omega , \nu)$ is such
that~\eqref{E:main-emb}, or equivalently \eqref{E:main-poincare},
implies \eqref{E:kernel-cond}, and hence \eqref{E:kernel-cond},
\eqref{E:main-emb} and \eqref{E:main-poincare} are equivalent, then
\eqref{linfinity1} is necessary for \eqref{linfinity2} or
\eqref{linfinity3} to hold.
\end{corollary}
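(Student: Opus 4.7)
The plan is to recognize Corollary~\ref{linfinity} as the specialization of Theorem~\ref{T:reduction} (and the associated Theorem~\ref{T:optran}) to the particular target $\|\cdot\|_{Y(0,1)}=\|\cdot\|_{L^\infty(0,1)}$. Since $L^\infty$ is itself a~rearrangement-invariant function norm, the hypotheses of Theorem~\ref{T:reduction} are met as soon as the kernel inequality~\eqref{E:kernel-cond} is checked in this special case.

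For the sufficiency direction, my first step is to observe that for any nonnegative $f\in X(0,1)$ and any $t\in(0,1)$,
$$\int_t^1 \frac{f(s)}{I(s)}\left(\int_t^s \frac{dr}{I(r)}\right)^{m-1} ds \leq \int_0^1 \frac{f(s)}{I(s)}\left(\int_0^s \frac{dr}{I(r)}\right)^{m-1} ds,$$
since both the outer domain enlarges and the inner primitive $\int_t^s dr/I(r)$ grows as $t\downarrow 0$. Taking the essential supremum in $t$ on the left produces the $L^\infty(0,1)$-norm, while the right-hand side is controlled, via the H\"older inequality in the form~\eqref{E:pre-duality}, by
$$\left\|\tfrac{1}{I(s)}\Big(\int_0^s \tfrac{dr}{I(r)}\Big)^{m-1}\right\|_{X'(0,1)}\|f\|_{X(0,1)}.$$
Hypothesis~\eqref{linfinity1} thus yields~\eqref{E:kernel-cond}, and Theorem~\ref{T:reduction} delivers both \eqref{linfinity2} and \eqref{linfinity3}.

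For the necessity direction I would work under the standing assumption, spelled out in the second half of the statement, that embedding~\eqref{E:main-emb} (equivalently, \eqref{E:main-poincare}) implies~\eqref{E:kernel-cond}. Under this hypothesis, specializing once more to $Y=L^\infty$ gives
$$\sup_{t\in(0,1)}\int_t^1 \frac{f(s)}{I(s)}\left(\int_t^s \frac{dr}{I(r)}\right)^{m-1} ds \leq C_1\|f\|_{X(0,1)}$$
for every nonnegative $f\in X(0,1)$. Letting $t\downarrow 0$ and invoking monotone convergence (the integrand is nonnegative and pointwise non-decreasing as $t$ decreases), the same bound persists at $t=0$. Taking the supremum over all nonnegative $f$ with $\|f\|_{X(0,1)}\le 1$ and invoking the very definition of the associate norm yields precisely~\eqref{linfinity1}.

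The only point that deserves a moment's care is the passage from an essential supremum over $t$ to a genuine limit as $t\downarrow 0$, but this is painless because $t\mapsto\int_t^1\cdots\,ds$ is monotone in $t$, so the essential supremum coincides with that limit. No conceptual obstacle is anticipated; the corollary amounts to the observation that, when the target is $L^\infty$, the abstract kernel inequality~\eqref{E:kernel-cond} collapses into a single pairing of the weight $\tfrac{1}{I}\bigl(\int_0^\cdot dr/I\bigr)^{m-1}$ against $f$, and hence reduces to a mere membership statement in $X'(0,1)$.
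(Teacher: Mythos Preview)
Your proof is correct and follows essentially the same approach as the paper: both arguments recognize that, with $Y(0,1)=L^\infty(0,1)$, the kernel inequality~\eqref{E:kernel-cond} is equivalent to the single condition~\eqref{linfinity1}, after which Theorem~\ref{T:reduction} does the work. The paper compresses the two directions into one chain of equalities, while you split sufficiency and necessity and add a word about monotone convergence, but the content is the same.
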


\begin{remark}\label{reminf}
If
$(\Omega ,\nu)$ is such  that~\eqref{E:main-emb}, or equivalently
\eqref{E:main-poincare}, implies \eqref{E:kernel-cond}, and hence
\eqref{E:kernel-cond}, \eqref{E:main-emb} and
\eqref{E:main-poincare} are equivalent,
then \eqref{linfinity2} cannot hold, whatever $\| \cdot \|_{X(0,1)}$
is, if $I$ decays so fast at $0$ that $$\int _0  \frac {dr}{I(r)} =
\infty.$$
\end{remark}

Our last main result concerns the preservation of optimality in
targets among all rearrangement-invariant spaces under iteration
of Sobolev embeddings of arbitrary order.

\begin{thm}\label{T:reit}{\bf [Iteration principle]}
 Assume that
$(\Omega,\nu)$, $I$ and $\| \cdot \|_{X(0,1)}$ are as in
Theorem~\ref{T:reduction}.  Let $k,h\in\N$. Then
\[
(X_{k,I})_{h,I}(\Omega,\nu)= X_{k+h,I}(\Omega , \nu),
\]
up to equivalent norms.
\end{thm}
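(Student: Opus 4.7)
The spaces $(X_{k,I})_{h,I}(\Omega,\nu)$ and $X_{k+h,I}(\Omega,\nu)$ are rearrangement-invariant, built on their representation norms on $(0,1)$, so the asserted equality up to equivalent norms reduces to proving $\|f\|_{(X_{k,I})_{h,I}(0,1)}\approx \|f\|_{X_{k+h,I}(0,1)}$ on $\mathcal M_+(0,1)$; by $X''=X$ this is equivalent to the analogous equivalence of associate norms. Setting $J(t)=\int_0^t dr/I(r)$ and $H_mg(t)=\frac{1}{I(t)}\int_0^t (J(t)-J(s))^{m-1}g(s)\,ds$, formula \eqref{E:eucl_opt_norm} reads $\|f\|_{X_{m,I}'(0,1)}=\|H_mf^*\|_{X'(0,1)}$, and the task becomes to prove
\[
  \bigl\|H_k\bigl((H_hf^*)^*\bigr)\bigr\|_{X'(0,1)} \approx \|H_{k+h}f^*\|_{X'(0,1)}\qquad\text{for every }f\in\mathcal M_+(0,1).
\]

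The algebraic backbone is the composition identity $H_k(H_hg)(t)=B(k,h)\,H_{k+h}g(t)$, valid for any non-negative $g$; it follows from Fubini and the substitution $w=J(u)$ in the inner integral, which collapses it to $(J(t)-J(v))^{k+h-1}\int_0^1 \xi^{h-1}(1-\xi)^{k-1}\,d\xi = B(k,h)(J(t)-J(v))^{k+h-1}$. For the direction $\|H_{k+h}f^*\|_{X'(0,1)}\lesssim \bigl\|H_k((H_hf^*)^*)\bigr\|_{X'(0,1)}$, the kernel $s\mapsto (J(t)-J(s))^{k-1}/I(t)$ of $H_k$ is, for each fixed $t$, a non-increasing function of $s$ on $(0,1)$ (extended by zero beyond $t$). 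Applying Hardy's Lemma (Section~\ref{S:measurable}) with this monotone weight and the pair $(g, g^*)$ gives $H_kg(t)\le H_k(g^*)(t)$ pointwise for every non-negative $g$, and choosing $g=H_hf^*$ and using the composition identity yields $B(k,h)H_{k+h}f^*(t)\le H_k((H_hf^*)^*)(t)$ pointwise, whence the norm inequality.

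The main obstacle is the reverse direction $\bigl\|H_k((H_hf^*)^*)\bigr\|_{X'(0,1)}\lesssim \|H_{k+h}f^*\|_{X'(0,1)}$. The difficulty is that $H_hf^*$ is generically not monotone---already the elementary case $I(s)=s^{1/n'}$, $f^*=\chi_{(0,1/2)}$ produces an $H_1f^*$ that first increases on $(0,1/2)$ and then decreases on $(1/2,1)$, so $(H_hf^*)^*$ can exceed $H_hf^*$ pointwise by a factor that blows up as $s\to 0^+$; a direct pointwise comparison of $H_k((H_hf^*)^*)$ with $H_{k+h}f^*$ is therefore impossible. My plan for this step is to pass through the dual representation of the $X'(0,1)$-norm as a supremum of integrals $\int_0^1 H_k((H_hf^*)^*)(t)\,\phi(t)\,dt$ over non-negative $\phi$ with $\|\phi\|_{X(0,1)}\le 1$. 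Fubini rewrites any such integral as $\int_0^1 (H_hf^*)^*(s)\,\Psi(s)\,ds$ with the non-increasing weight $\Psi(s)=\int_s^1 (J(t)-J(s))^{k-1}\phi(t)/I(t)\,dt$, and the task becomes to dominate this by $C\int_0^1 H_{k+h}f^*(t)\,\phi(t)\,dt$. The crucial ingredient, which I expect to be the hardest part, is a structural upper bound for the rearrangement $(H_hf^*)^*(s)$ by a Hardy-type expression in $f^*$ that compensates for the non-monotonicity of $H_hf^*$; once such a bound is in hand, a second application of Fubini combined with the same $w=J(u)$ substitution reduces the resulting expression to a constant multiple of $\int H_{k+h}f^*\cdot\phi$, and the Hardy--Littlewood--P\'olya principle delivers the required $X'(0,1)$-norm inequality, completing the equivalence.
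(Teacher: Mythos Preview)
Your setup is correct: the task reduces to the associate-norm equivalence you state, the composition identity $H_k(H_hg)=B(k,h)\,H_{k+h}g$ holds (your $H_m$ is $(m-1)!\,R_I^m$ in the paper's notation, see \eqref{E:rij}), and the direction $\|H_{k+h}f^*\|_{X'}\lesssim\|H_k((H_hf^*)^*)\|_{X'}$ follows exactly as you say from Hardy's lemma applied to the non-increasing kernel. The reverse direction, however, is not proved in your proposal: you only describe a plan and explicitly flag the ``crucial ingredient'' (a structural upper bound on $(H_hf^*)^*$) as something you expect but do not supply. That ingredient is the entire content of the theorem; without it the argument is a restatement of the difficulty, not a proof. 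There is in fact no pointwise bound of the form $(H_hf^*)^*(s)\le C\cdot(\text{Hardy expression in }f^*)(s)$ that would let your Fubini scheme close --- the bound has to be at the level of integrals against non-increasing test functions.

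The paper supplies this missing piece as follows. Rather than attacking general $h$ directly, it introduces an auxiliary sequence $X_j$ defined by iterating the first-order construction ($\|f\|_{X_j'}=\|R_If^*\|_{X_{j-1}'}$), for which $(X_k)_h=X_{k+h}$ is tautological, and reduces Theorem~\ref{T:reit} to proving $X_{m,I}=X_m$ for all $m$ (Theorem~\ref{T:equivalence}). By induction this amounts to the case $h=1$ of your equivalence, namely $\|R_I^m((R_If^*)^*)\|_{X'}\approx\|R_I^{m+1}f^*\|_{X'}$. For this the paper introduces the non-increasing majorant $G_I^mf(t)=\sup_{t\le s\le 1}R_I^mf^*(s)$ and proves (Proposition~\ref{T:proposition}) that $G_I^mG_If\approx G_I^{m+1}f$, together with (Theorem~\ref{T:lenka-main}) the bound $\|G_I^{m+1}f\|_{X'}\le C\|R_I^{m+1}f^*\|_{X'_d}\le C\|R_I^{m+1}f^*\|_{X'}$. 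The latter is obtained by writing the open set $E=\{R_I^{m+1}f^*<G_I^{m+1}f\}$ as a disjoint union of intervals $(c_k,d_k)$, on each of which $G_I^{m+1}f$ equals the constant $R_I^{m+1}f^*(d_k)$, replacing the test function $g^*$ in $\int_0^1 g^*\,G_I^{m+1}f\,dt$ by its average over each $(c_k,d_k)$ (which preserves monotonicity and does not increase the $X$-norm), and using the endpoint inequality $(d_k-c_k)R_I^{m+1}f^*(d_k)\le 2^{m+2}\int_{c_k}^{d_k}R_I^{m+1}f^*$ of Lemma~\ref{T:lemma}. Your dual approach is in spirit exactly this computation; to complete it you need to insert the $G_I^m$ machinery (or an equivalent device) in place of the unspecified ``structural upper bound''.
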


We now focus on the  case when
\begin{equation}\label{E:doubling}
\int_0\sp{s}\frac{dr}{I(r)}\approx \frac{s}{I(s)} \quad \hbox{for
$s\in(0,1)$.}
\end{equation}
If the function $I$ satisfies \eqref{E:doubling}, then the results
of Theorems \ref{T:reduction}, \ref{T:optran} and~\ref{T:reit} can
be somewhat simplified. This is the content of the next three
corollaries. Let us preliminarily observe that, since the right-hand
side of \eqref{E:doubling} does not exceed its left-hand side for
any non-decreasing function $I$, only the estimate in the reverse
direction is relevant in \eqref{E:doubling}.

\begin{corollary}\label{T:convergent}{\bf [Reduction principle under
\eqref{E:doubling}]} Let $(\Omega,\nu)$, $m$, $I$, $\| \cdot
\|_{X(0,1)}$ and $\| \cdot \|_{Y(0,1)}$ be as in
Theorem~\ref{T:reduction}. Assume, in addition, that $I$ fulfils
\eqref{E:doubling}. If there exists a constant $C_1$ such that
\begin{equation}\label{E:condition-convergent}
\left\|\int_t\sp1 f(s) \frac {s^{m-1}}{I(s)^m}
\,ds\right\|_{Y(0,1)} \leq C_1\left\|f\right\|_{X(0,1)}
\end{equation}
for every nonnegative $f \in  X(0,1)$, then
\begin{equation}\label{E:conv-embedding}
V^m  X(\Omega,\nu) \to Y(\Omega,\nu),
\end{equation}
and there
exists a~constant $C_2$ such that
\begin{equation}\label{E:conv-poincare}
\|u\|_{Y(\Omega,\nu)}\leq C_2\left\|\nabla\sp m
u\right\|_{X(\Omega,\nu)}
\end{equation}
for every $u \in V^m_{\bot } X(\Omega,\nu)$.
\end{corollary}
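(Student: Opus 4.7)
The plan is to derive Corollary \ref{T:convergent} directly from Theorem \ref{T:reduction} by showing that, under the doubling hypothesis \eqref{E:doubling}, the hypothesis \eqref{E:condition-convergent} implies the hypothesis \eqref{E:kernel-cond}; once this is established, the conclusions \eqref{E:conv-embedding} and \eqref{E:conv-poincare} are just restatements of \eqref{E:main-emb} and \eqref{E:main-poincare}.

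The first step is a pointwise comparison of the two kernels. For every $0 < t \le s \le 1$, monotonicity of $I$ gives the trivial bound
\[
\int_t^s \frac{dr}{I(r)} \le \int_0^s \frac{dr}{I(r)},
\]
and then \eqref{E:doubling} yields a constant $C_0$, depending only on $I$, such that
\[
\int_t^s \frac{dr}{I(r)} \le C_0\,\frac{s}{I(s)}.
\]
Raising this to the $(m-1)$-st power and dividing by $I(s)$, one obtains, for every nonnegative $f$ and every $t\in(0,1)$, the pointwise estimate
\[
\int_t^1 \frac{f(s)}{I(s)} \left(\int_t^s \frac{dr}{I(r)}\right)^{m-1} ds \;\le\; C_0^{m-1}\int_t^1 f(s)\,\frac{s^{m-1}}{I(s)^m}\,ds.
\]

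The second step is to promote this pointwise inequality to a norm inequality in $Y(0,1)$. By property (P2) of a rearrangement-invariant function norm, the estimate above gives
\[
\left\|\int_t^1 \frac{f(s)}{I(s)} \left(\int_t^s \frac{dr}{I(r)}\right)^{m-1} ds \right\|_{Y(0,1)} \le C_0^{m-1} \left\|\int_t^1 f(s)\,\frac{s^{m-1}}{I(s)^m}\,ds\right\|_{Y(0,1)},
\]
and the assumed inequality \eqref{E:condition-convergent} bounds the right-hand side by $C_0^{m-1} C_1 \|f\|_{X(0,1)}$. Hence \eqref{E:kernel-cond} holds with the constant $C_0^{m-1}C_1$ in place of $C_1$.

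The third and final step is to invoke Theorem \ref{T:reduction}. All the hypotheses of that theorem on $(\Omega,\nu)$, $m$, $I$, $\|\cdot\|_{X(0,1)}$ and $\|\cdot\|_{Y(0,1)}$ are assumed, and by the previous paragraph its main input \eqref{E:kernel-cond} is satisfied. Therefore \eqref{E:main-emb} and \eqref{E:main-poincare} hold, which are exactly \eqref{E:conv-embedding} and \eqref{E:conv-poincare}. There is no real obstacle here: the entire argument reduces to the elementary observation that, under the doubling condition \eqref{E:doubling}, the kernel $\frac{1}{I(s)}(\int_t^s \frac{dr}{I(r)})^{m-1}$ appearing in Theorem \ref{T:reduction} is dominated by the simpler weight $\frac{s^{m-1}}{I(s)^m}$ uniformly in $t$, so the kernel operator is controlled by a weighted Hardy-type operator.
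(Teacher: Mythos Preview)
Your proof is correct and follows essentially the same route as the paper: the paper deduces Corollary~\ref{T:convergent} from Theorem~\ref{T:reduction} via Proposition~\ref{link}(i), whose relevant direction is precisely the pointwise kernel estimate $\int_t^s \frac{dr}{I(r)} \le \int_0^s \frac{dr}{I(r)} \le C\,\frac{s}{I(s)}$ that you wrote down. The only cosmetic difference is that the paper packages this comparison as a two-sided norm equivalence, while you extract just the one inequality needed here.
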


Let us notice that a remark parallel to Remark \ref{R:star} applies
on the equivalence of the validity of \eqref{E:condition-convergent}
for any $f$, or for any non-increasing $f$ (cf.~Proposition~\ref{link}).

The next corollary tells us that, under the extra
condition~\eqref{E:doubling}, the optimal rearrangement-invariant
target space takes a simplified form. Namely, it can be equivalently
defined via the rearrangement-invariant function norm $\| \cdot \|_{
X_{m,I}^\sharp(0,1)}$ obeying

\begin{equation}\label{E:xji-convm}
\|f\|_{( X_{m,I}^\sharp)'(0,1)}=\left\|\frac
{t^{m-1}}{I(t)^m} \int _0^t f\sp*(s)\,ds\right\|_{X'(0,1)}
\end{equation}
for every $f \in \mathcal M_+(0,1)$.

\begin{corollary}\label{T:optimalconv}{\bf [Optimal
target under \eqref{E:doubling}]} Assume that $(\Omega,\nu)$, $m$,
$I$ and $\| \cdot \|_{X(0,1)}$ are as in
Corollary~\ref{T:convergent}.  Then the functional $\|\cdot
\|_{(X_{m, I}^\sharp)'(0,1)}$, given by \eqref{E:xji-convm}, is
a~rearrangement-invariant function norm, whose
associate norm $\|\cdot \|_{X_{m, I}^\sharp(0,1)}$ satisfies
\begin{equation}\label{E:main-embedding-opt}
V^m X(\Omega,\nu) \to  X_{m, I}^\sharp(\Omega , \nu ),
\end{equation}
and there exists a~constant
$C$ such that
\begin{equation}\label{E:main-opt}
\|u\|_{X_{m, I}^\sharp(\Omega,\nu)}\leq C\left\|\nabla ^m
u\right\|_{X(\Omega,\nu)}
\end{equation}
for every $u \in V^m_\bot X(\Omega,\nu)$.

 Moreover, if $(\Omega ,
\nu)$ is such that the validity of \eqref{E:conv-embedding}, or
equivalently \eqref{E:conv-poincare},
 implies \eqref{E:condition-convergent},
and hence \eqref{E:condition-convergent}, \eqref{E:conv-embedding}
and  \eqref{E:conv-poincare} are equivalent,
 then the
function norm $\| \cdot \|_{ X_{m, I}^\sharp(0,1)}$ is optimal in
\eqref{E:main-embedding-opt} and \eqref{E:main-opt} among all
rearrangement-invariant norms.
\end{corollary}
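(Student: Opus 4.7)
The strategy is to show that, under the doubling condition~\eqref{E:doubling}, the functional $\|\cdot\|_{(X_{m,I}^\sharp)'(0,1)}$ in~\eqref{E:xji-convm} is equivalent, up to multiplicative constants, to the functional $\|\cdot\|_{X_{m,I}'(0,1)}$ from Theorem~\ref{T:optran}, and then to transfer each assertion of the corollary from that theorem. The upper bound $\|f\|_{X_{m,I}'(0,1)}\leq C\|f\|_{(X_{m,I}^\sharp)'(0,1)}$ is immediate: the monotonicity of $I$ together with~\eqref{E:doubling} gives $\int_s^t\frac{dr}{I(r)}\leq\int_0^t\frac{dr}{I(r)}\leq C\frac{t}{I(t)}$ for all $0\leq s\leq t$, which when inserted into~\eqref{E:eucl_opt_norm} yields a pointwise domination of the integrand defining $\|\cdot\|_{X_{m,I}'(0,1)}$ by a constant multiple of the one defining $\|\cdot\|_{(X_{m,I}^\sharp)'(0,1)}$.

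For the reverse inequality, the plan is to restrict the inner integration in~\eqref{E:eucl_opt_norm} to $s\in(0,t/2)$, where the monotonicity of $I$ combined with~\eqref{E:doubling} yields the lower bound $\int_s^t\frac{dr}{I(r)}\geq\int_{t/2}^t\frac{dr}{I(r)}\geq\frac{t}{2I(t)}\geq c\frac{t}{I(t)}$. This produces
\begin{equation*}
\|f\|_{X_{m,I}'(0,1)}\geq c^{m-1}\left\|\frac{t^{m-1}}{I(t)^m}\int_0^{t/2}f^*(s)\,ds\right\|_{X'(0,1)}.
\end{equation*}
The key step is then to recover $\int_0^t f^*$ from $\int_0^{t/2}f^*$: I would exploit the fact that $f^{**}$ is non-increasing, so $f^{**}(t)\leq f^{**}(t/2)$, which upon multiplication by $t$ reads $\int_0^t f^*(s)\,ds\leq 2\int_0^{t/2}f^*(s)\,ds$. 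This closes the norm equivalence.

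Once the equivalence is in place, the rearrangement-invariant function norm axioms for $\|\cdot\|_{(X_{m,I}^\sharp)'(0,1)}$ and its associate $\|\cdot\|_{X_{m,I}^\sharp(0,1)}$ transfer from those of Theorem~\ref{T:optran}, and $X_{m,I}^\sharp(\Omega,\nu)$ agrees with $X_{m,I}(\Omega,\nu)$ up to equivalent norms. The embedding~\eqref{E:main-embedding-opt} and the Poincaré inequality~\eqref{E:main-opt} then follow at once from~\eqref{E:optemb} and~\eqref{E:optran}. For optimality, I would additionally check that the two kernel inequalities~\eqref{E:kernel-cond} and~\eqref{E:condition-convergent} are themselves equivalent under~\eqref{E:doubling}: one direction is immediate from $\int_t^s\frac{dr}{I(r)}\leq C\frac{s}{I(s)}$; the other uses the estimate $\int_t^s\frac{dr}{I(r)}\geq c\frac{s}{I(s)}$ valid for $s\geq 2t$, complemented by boundedness of the dilation operator on every rearrangement-invariant space to absorb the missing range $s\in(t,2t)$. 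Hence the hypothesis of the present corollary matches that of Theorem~\ref{T:optran}, and the optimality of $X_{m,I}$ proved there transfers to $X_{m,I}^\sharp$ via the norm equivalence. The main obstacle is the lower bound in the equivalence: $\int_0^{t/2}f^*$ and $\int_0^t f^*$ are not comparable pointwise for an arbitrary non-increasing function, and recognising that the monotonicity of $f^{**}$ is precisely what the argument requires is the essential insight.
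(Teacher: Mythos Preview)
Your proposal is correct and follows the same route as the paper: reduce Corollary~\ref{T:optimalconv} to Theorem~\ref{T:optran} by showing that, under~\eqref{E:doubling}, the functionals $\|\cdot\|_{X_{m,I}'(0,1)}$ and $\|\cdot\|_{(X_{m,I}^\sharp)'(0,1)}$ are equivalent and the kernel condition~\eqref{E:kernel-cond} is equivalent to the Hardy condition~\eqref{E:condition-convergent}. This is exactly what the paper does via Propositions~\ref{link}--\ref{P:norm-conv}.

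One small difference worth noting: for the lower bound in the norm equivalence the paper (through Proposition~\ref{link}\,(ii), whose proof mirrors part~(i)) uses the boundedness of the dilation operator on rearrangement-invariant spaces, whereas you use the monotonicity of $f^{**}$ to obtain $\int_0^t f^*\leq 2\int_0^{t/2}f^*$ directly. Your trick is specific to non-increasing integrands, but that is all that is needed here since both norms are defined through $f^*$; it is a clean shortcut. Two minor points: (a) strictly speaking, equivalence of functionals does not by itself guarantee the triangle inequality for $\|\cdot\|_{(X_{m,I}^\sharp)'(0,1)}$, but it can be verified directly by Hardy's lemma exactly as in Proposition~\ref{P:norm} (the weight $t^{m-1}/I(t)^m$ is independent of the inner variable); (b) your closing sentence that $\int_0^{t/2}f^*$ and $\int_0^t f^*$ are ``not comparable for an arbitrary non-increasing function'' is misstated---your own $f^{**}$ argument shows they \emph{are} comparable precisely because $f^*$ is non-increasing; the non-comparability is for general~$f$.
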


We conclude this section with a~stability result for the
iterated embeddings under the additional
condition~\eqref{E:doubling}.

\begin{corollary}\label{T:iterconv}{\bf [Iteration principle under \eqref{E:doubling}]}
Assume that $(\Omega,\nu)$,
$I$ and $\| \cdot \|_{X(0,1)}$ are as in
Corollary~\ref{T:convergent}.
 Let $k,h\in\mathbb N$.  Then
\[
 \big(X_{k, I}^\sharp\big)_{h,I}^\sharp(\Omega,\nu)= X_{k+h,
I}^\sharp (\Omega , \nu),
\]
up to equivalent norms.
\end{corollary}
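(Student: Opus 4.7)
The plan is to derive Corollary~\ref{T:iterconv} by combining the unconditional iteration principle of Theorem~\ref{T:reit} with the observation that, under the extra assumption~\eqref{E:doubling}, the two flavors of optimal target norm, $\|\cdot\|_{X_{m,I}(0,1)}$ from \eqref{E:eucl_opt_norm} and $\|\cdot\|_{X_{m,I}^\sharp(0,1)}$ from \eqref{E:xji-convm}, coincide up to equivalence for every $m\in\N$ and every rearrangement-invariant function norm $\|\cdot\|_{X(0,1)}$.

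The first step is to establish this bridge. Since both norms are defined through their associate norms, it suffices to verify that, for every $f\in\mathcal M_+(0,1)$,
\[
\frac{1}{I(t)}\int_0^t \left(\int_s^t \frac{dr}{I(r)}\right)^{m-1} f^*(s)\,ds \approx \frac{t^{m-1}}{I(t)^m} \int_0^t f^*(s)\,ds \qquad \text{on }(0,1).
\]
For the upper bound, I would use $\int_s^t dr/I(r) \leq \int_0^t dr/I(r)$, which under~\eqref{E:doubling} is bounded by a multiple of $t/I(t)$, and then pull this constant factor out of the outer integral. For the lower bound, I would restrict the outer integral to $s\in(0,t/2)$, use that $\int_s^t dr/I(r)\geq \int_{t/2}^t dr/I(r)\geq t/(2I(t))$ (since $1/I$ is non-increasing), and finally invoke $\int_0^{t/2}f^*(r)\,dr \geq \tfrac12 \int_0^t f^*(r)\,dr$ (since $f^*$ is non-increasing). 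Taking $X'$-norms of both sides yields $\|f\|_{X_{m,I}'(0,1)} \approx \|f\|_{(X_{m,I}^\sharp)'(0,1)}$, and hence $X_{m,I}(\Omega,\nu)=X_{m,I}^\sharp(\Omega,\nu)$ up to equivalent norms.

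The second step is the chain of identifications
\[
\big(X_{k,I}^\sharp\big)_{h,I}^\sharp(\Omega,\nu) \approx \big(X_{k,I}\big)_{h,I}(\Omega,\nu) \approx X_{k+h,I}(\Omega,\nu) \approx X_{k+h,I}^\sharp(\Omega,\nu).
\]
The middle equivalence is Theorem~\ref{T:reit} applied to the pair $(k,h)$. The outer two equivalences rely on the bridge from the first step, together with the fact that both operations $Y\mapsto Y_{h,I}$ and $Y\mapsto Y_{h,I}^\sharp$ preserve equivalence of rearrangement-invariant function norms; this is clear from the definitions~\eqref{E:eucl_opt_norm} and \eqref{E:xji-convm}, since each involves only the $Y'$-norm applied to a fixed functional of $f^*$, and equivalent norms $Y\approx Z$ yield equivalent associate norms $Y'\approx Z'$.

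No single step is a serious obstacle; the only point requiring care is verifying the lower bound in the first step, where one must exploit both the monotonicity of $1/I$ and the monotonicity of $f^*$ in order to absorb the restriction to $s\in(0,t/2)$ at the cost of only a multiplicative constant. Once that is done, the corollary follows by threading the equivalences through the two constructions as indicated.
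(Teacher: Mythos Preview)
Your proposal is correct and follows essentially the same route as the paper: derive the corollary from Theorem~\ref{T:reit} via the identification $X_{m,I}^\sharp(0,1)=X_{m,I}(0,1)$ under~\eqref{E:doubling}, which is exactly the content of Proposition~\ref{linkcor} (a consequence of Proposition~\ref{link}). The only cosmetic difference is that you obtain the lower bound in the bridge by a direct pointwise argument exploiting the monotonicity of $f^*$ (restricting to $s\in(0,t/2)$ and using $\int_0^{t/2}f^*\geq\tfrac12\int_0^t f^*$), whereas the paper's Proposition~\ref{link} handles the analogous inequality via the boundedness of the dilation operator; both arguments are valid, and yours is arguably slightly more elementary since the associate-norm definitions only involve $f^*$ anyway.
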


\section{Euclidean--Sobolev embeddings}\label{S:higher-euclid}

The main results of this section are reduction theorems and their consequences for
 Euclidean Sobolev embeddings of arbitrary order
$m$ on  John domains, and on domains from  Maz'ya classes.

We begin with the reduction theorem for John domains. Recall that a bounded open set $\Omega$ in $\rn$ is called a~\textit{John
domain} if there exist a constant $c \in (0,1)$ and a point $x_0 \in
\Omega$ such that for every $x \in \Omega$ there exists a
rectifiable curve $\varpi : [0, l] \to \Omega$, parameterized by
arclength, such that $\varpi (0)=x$, $\varpi (l) = x_0$, and
$${\dist}\, (\varpi (r) , \partial \Omega ) \geq c r \qquad
\hbox{for $r \in [0, l]$.}$$

\begin{thm}\label{T:eucl_reduction-john}{\bf [Reduction principle for
John domains]} Let $n\in\N$, $n\geq 2$, and let $m\in\N$.
Assume that $\Omega$ is a John domain in $\rn$. Let $\| \cdot
\|_{X(0,1)}$ and $\| \cdot \|_{Y(0,1)}$ be rearrangement-invariant
function norms. Then the following assertions are equivalent.

\textup{(i)} The Hardy type inequality
\begin{equation}\label{E:eucl_condition-john}
  \left\|\int_t\sp1f(s) s^{-1+ \frac mn} \,ds \right\|_{ Y(0,1)}
  \leq C_1\left\|f\right\|_{X(0,1)}
\end{equation}
holds for some constant $C_1$, and for every nonnegative $f\in
X(0,1)$.

\textup{(ii)} The Sobolev embedding
\begin{equation}\label{E:eucl_embedd-john}
V^mX(\Omega ) \to Y(\Omega )
\end{equation}
 holds.

\textup{(iii)}
   The Poincar\'e
inequality
\begin{equation}\label{E:eucl_reduction-john}
  \left\|u \right\|_{Y(\Omega)}
  \leq C_2\left\|\nabla ^m u\right\|_{X(\Omega)}
\end{equation}
 holds for some constant
$C_2$ and   every $u\in V^m_\bot X(\Omega)$.
\end{thm}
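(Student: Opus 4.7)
The plan is to deduce the theorem from the general reduction principle of Section~\ref{S:main} by inserting the known asymptotics of the isoperimetric function of a John domain, and then to handle the converse direction via an explicit trial function construction.

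First, recall from~\eqref{Ijohn} that for a John domain $I_\Omega(s)\approx s^{1/n'}$ near~$0$. Setting $I(s)=s^{1/n'}$, conditions~\eqref{isop-ineq} and~\eqref{E:lower-bound} hold trivially (note $s^{1/n'}\geq s$ on $(0,1)$ for $n\geq 2$). Moreover
\[
\int_0^s\frac{dr}{I(r)}=n\,s^{1/n}\approx\frac{s}{I(s)},
\]
so the doubling condition~\eqref{E:doubling} is verified, and
\[
\frac{s^{m-1}}{I(s)^m}=s^{-1+\frac{m}{n}},
\]
so the inequality~\eqref{E:condition-convergent} of Corollary~\ref{T:convergent} coincides verbatim with the Hardy inequality~\eqref{E:eucl_condition-john}. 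Consequently, Corollary~\ref{T:convergent} yields the implications \textup{(i)}$\Rightarrow$\textup{(ii)} and \textup{(i)}$\Rightarrow$\textup{(iii)}. The equivalence \textup{(ii)}$\Leftrightarrow$\textup{(iii)} is immediate from Proposition~\ref{poincsob}, whose hypothesis~\eqref{lower-est} is met here since $I_\Omega$ is bounded away from zero on $[\delta, 1/2]$ for any $\delta>0$ and $s^{1/n'}\geq s$ near $0$.

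The main obstacle is the converse \textup{(iii)}$\Rightarrow$\textup{(i)}: one must show that a Poincar\'e-type inequality forces the one-dimensional Hardy inequality. My plan is to associate, with each nonnegative non-increasing $f\in X(0,1)$ (sufficient by Remark~\ref{R:star}), an explicit trial function $u_f$. Since $\Omega$ is a John domain, it contains a Euclidean ball $B_r(x_0)$. I would take $u_f$ on $B_r(x_0)$ to be a radially symmetric function obtained by $m$-fold radial antidifferentiation of an appropriate rescaling of $f$ under the level-set change of variable $s=|B_1|\rho^n$, extended smoothly by zero to $\Omega\setminus B_r(x_0)$ via a cutoff and finally normalized by subtraction of a constant so as to lie in $V^m_\bot X(\Omega)$. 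The key computation, exploiting the fact that Euclidean balls are isoperimetric, then produces
\[
u_f^*(t)\approx\int_t^{c_0} f(\sigma)\,\sigma^{-1+\frac{m}{n}}\,d\sigma,\qquad \bigl(|\nabla^m u_f|\bigr)^*(t)\approx f(t),
\]
up to constants depending only on $n$, $m$ and $\Omega$. Plugging $u_f$ into~\eqref{E:eucl_reduction-john} and invoking rearrangement invariance of $\|\cdot\|_{Y(\Omega)}$ and $\|\cdot\|_{X(\Omega)}$ yields~\eqref{E:eucl_condition-john}, closing the loop. The delicate technical point will be verifying that the cutoff-induced error in $|\nabla^m u_f|$, and the effect of the constant subtraction on $\|u_f\|_{Y(\Omega)}$, are both absorbable into the leading terms; this should follow by standard arguments together with the stability statement given by the iteration principle (Corollary~\ref{T:iterconv}).
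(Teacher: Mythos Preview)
Your treatment of (i)$\Rightarrow$(ii),(iii) and of (ii)$\Leftrightarrow$(iii) coincides with the paper's. For the converse, the paper proceeds differently: when $m\geq n$ the kernel $s^{-1+m/n}$ is bounded on $(0,1)$, so~\eqref{E:eucl_condition-john} holds for every pair $X,Y$ by~\eqref{l1linf}; when $1\leq m\leq n-1$ the paper invokes \cite[Theorem~A]{T2}, noting that the argument there uses no boundary regularity (hence applies to any open set) and that $W^mX(\Omega)=V^mX(\Omega)$ for John domains by Proposition~\ref{WV}.

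Your proposed radial trial-function route has two gaps. First, subtracting a constant does \emph{not} place $u_f$ in $V^m_\bot X(\Omega)$ once $m\geq2$: that space requires $\int_\Omega\nabla^k u=0$ for every $k\leq m-1$, so one must subtract a polynomial of degree $m-1$ (cf.\ the proof of Proposition~\ref{poincsob}). This is easily repaired by working from (ii) instead and bounding the lower-order $L^1$ terms as in the proof of Theorem~\ref{T:eucl_reduction-div}.

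Second---and this is the real obstruction---for a radial function $u(x)=v(|x|)$ the quantity $|\nabla^m u|$ is \emph{not} pointwise comparable to $|v^{(m)}(|x|)|$. Already for $m=2$ one computes $|\nabla^2u|^2=|v''(\rho)|^2+(n-1)|v'(\rho)/\rho|^2$, and under your change of variable $s=c\rho^n$ the extra term has $X$-norm of order $\bigl\|s^{-1/n}\!\int_s^1 f(r)r^{-1+1/n}\,dr\bigr\|_{X(0,1)}$, which is \emph{not} dominated by $\|f\|_{X(0,1)}$ in general (take $X=L^p$ with $p\geq n$ and $f=\chi_{(0,\varepsilon)}$: the norm is infinite). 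The iteration principle of Corollary~\ref{T:iterconv} cannot absorb this, since it governs target norms of $H_I^m f$, not $X$-norms of derivative cross-terms. Replacing the radial construction by a one-coordinate function on a sub-cube kills the cross-terms but also destroys the $s^{1/n'}$ level-set scaling you need, so no simple fix is available. The converse direction genuinely requires a different mechanism---either the one in~\cite{T2}, or an additional idea you have not yet supplied.
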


Forerunners of Theorem \ref{T:eucl_reduction-john} are known. The
first order case ($m=1$) on Lipschitz domains was obtained
in~\cite{EKP}. In the case when $m=2$, and functions vanishing on
$\partial \Omega$ are considered, the equivalence of
\eqref{E:eucl_condition-john} and \eqref{E:eucl_reduction-john} was
proved in~\cite{Ci1}, as a consequence of a non-standard
rearrangement inequality for second-order derivatives (see also
\cite{Cianchiduke} for a related one-dimensional second-order
rearrangement inequality). The equivalence of
\eqref{E:eucl_condition-john} and \eqref{E:eucl_embedd-john}, when
 $m\leq n-1$ and $\Omega$ is a Lipschitz domain, was
established in~\cite{T2} by a~method relying upon interpolation
techniques. Such a method does not carry over to the more general
setting of Theorem \ref{T:eucl_reduction-john}, since it requires
that $\Omega$ be an extension domain.

Let us also warn that results reducing higher-order Sobolev
embeddings to one-dimensional inequalities can be obtained via more
standard methods, such as, for instance, representation formulas of
convolution type combined with O'Neil rearrangement estimates for
convolutions, or plain iteration of certain first-order pointwise
rearrangement estimates \cite{MM1}. However, these approaches  lead
to optimal Sobolev embeddings only under additional technical
assumptions on the involved rearrangement-invariant function norms
$\|\cdot \|_{X(0,1)}$ and $\|\cdot \|_{Y(0,1)}$.

Given  a rearrangement-invariant function norm $\| \cdot
\|_{X(0,1)}$ and   $m \in \N$, we
define $\| \cdot \|_{X_{m,\operatorname{John}}(0,1)}$ as
the rearrangement-invariant function norm, whose associate function
norm is given by
\begin{equation}\label{E:eucl_opt_norm-john}
\|f\|_{X_{m,\operatorname{John}}'(0,1)}
  =\left\|s\sp{-1+\frac mn}\int _0^s f\sp{*}(r)dr\right\|_{X'(0,1)}
\end{equation}
for  $f \in \M_+ (0,1)$. The function norm $\| \cdot \|_{X_{m,\operatorname{John}}(0,1)}$ is
 optimal, as a target, for Sobolev embeddings of $V^mX(\Omega )$.

\begin{thm}\label{T:eucl_optimal-john}{\bf [Optimal
target for John domains]} Let $n$, $m$, $\Omega$ and  $\| \cdot
\|_{X(0,1)}$ be as in Theorem \ref{T:eucl_reduction-john}.
Then the
functional $\|\cdot \|_{X_{m,\operatorname{John}}'(0,1)}$, given by
\eqref{E:eucl_opt_norm-john}, is a~rearrangement-invariant function
norm, whose associate norm $\|\cdot \|_{X_{m,\operatorname{John}}(0,1)}$ satisfies
\begin{equation}\label{E:eucl_opt_norm_-john}
V^mX(\Omega ) \to X_{m,\operatorname{John}}(\Omega ),
\end{equation}
and
\begin{equation}\label{E:eucl_opt_poinc_-john}
\|u\|_{X_{m,\operatorname{John}}(\Omega )}\leq C\left\|\nabla ^m
u\right\|_{X(\Omega )}
\end{equation}
for some constant $C$ and every $u \in V^m_\bot X(\Omega)$.

Moreover,  the function norm $\|\cdot\|_{X_{m,\operatorname{John}}(0,1)}$ is
optimal in \eqref{E:eucl_opt_norm_-john} and
\eqref{E:eucl_opt_poinc_-john} among all rearrangement-invariant
norms.
\end{thm}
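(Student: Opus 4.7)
The plan is to deduce Theorem \ref{T:eucl_optimal-john} as a direct consequence of the general optimal-target result under the doubling condition, namely Corollary \ref{T:optimalconv}, together with the John-domain reduction theorem (Theorem \ref{T:eucl_reduction-john}). The choice of the test function $I$ is dictated by the known asymptotic behaviour \eqref{Ijohn} of the isoperimetric function of a John domain. So I would set $I(s)=s^{1/n'}=s^{(n-1)/n}$ (extended appropriately on $[0,1]$ so that \eqref{E:lower-bound} is satisfied), and observe that by \eqref{Ijohn} this function satisfies \eqref{isop-ineq} with $\nu$ the Lebesgue measure on $\Omega$.

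First I would verify that $I$ meets the doubling hypothesis \eqref{E:doubling}. A direct computation gives
\[
\int_0^s\frac{dr}{I(r)}=\int_0^s r^{-(n-1)/n}\,dr=n\,s^{1/n},\qquad \frac{s}{I(s)}=s^{1/n},
\]
so \eqref{E:doubling} holds near $0$. Hence the hypotheses of Corollary \ref{T:optimalconv} are in place with this choice of $I$, and applying it immediately gives that $\|\cdot\|_{(X_{m,I}^\sharp)'(0,1)}$ is a rearrangement-invariant function norm, and that the associate norm $\|\cdot\|_{X_{m,I}^\sharp(0,1)}$ delivers the Sobolev embedding \eqref{E:main-embedding-opt} and the Poincar\'e inequality \eqref{E:main-opt} on $(\Omega,\nu)$.

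Next I would identify $X_{m,I}^\sharp$ with $X_{m,\operatorname{John}}$. Substituting $I(t)=t^{(n-1)/n}$ into the defining expression \eqref{E:xji-convm} yields
\[
\frac{t^{m-1}}{I(t)^m}=\frac{t^{m-1}}{t^{m(n-1)/n}}=t^{m/n-1},
\]
which is exactly the weight appearing in \eqref{E:eucl_opt_norm-john}. Thus $\|\cdot\|_{(X_{m,I}^\sharp)'(0,1)}$ and $\|\cdot\|_{X_{m,\operatorname{John}}'(0,1)}$ coincide, hence so do the associate norms; the embedding \eqref{E:eucl_opt_norm_-john} and the Poincar\'e inequality \eqref{E:eucl_opt_poinc_-john} follow.

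The only really substantive step is optimality. For this I would invoke the "moreover" part of Corollary \ref{T:optimalconv}, which requires that the one-dimensional inequality \eqref{E:condition-convergent} be equivalent to the Sobolev embedding \eqref{E:conv-embedding}. With $I(s)=s^{(n-1)/n}$ the weight $s^{m-1}/I(s)^m$ collapses to $s^{-1+m/n}$, so \eqref{E:condition-convergent} is precisely the Hardy inequality \eqref{E:eucl_condition-john}. The required equivalence is then furnished by the implication (ii)$\Rightarrow$(i) in Theorem \ref{T:eucl_reduction-john}, which is the genuinely John-domain-specific information. Consequently the hypothesis of the optimality clause of Corollary \ref{T:optimalconv} is verified, and the resulting optimality of $\|\cdot\|_{X_{m,\operatorname{John}}(0,1)}$ transfers to our statement. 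I expect the main technical obstacle, if any, to be purely bookkeeping: one must keep track that optimality "among all r.i.~norms" as stated in Corollary \ref{T:optimalconv} is proved via the construction of an extremal trial function realizing the one-dimensional inequality as the norm, and to check that this construction is not spoiled by equivalent-norm replacement of $I$ by a comparable $\tilde I$; since both $\|\cdot\|_{X_{m,I}^\sharp}$ and $\|\cdot\|_{X_{m,\operatorname{John}}}$ depend only on the behaviour of $I$ near $0$ up to multiplicative constants, this causes no difficulty.
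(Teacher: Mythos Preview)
Your proposal is correct and follows essentially the same approach as the paper: choose $I(s)=s^{1/n'}$, note that it satisfies \eqref{E:doubling}, identify $X_{m,I}^\sharp$ with $X_{m,\operatorname{John}}$, and apply Corollary~\ref{T:optimalconv}, with the equivalence required for optimality supplied by the implication (ii)$\Rightarrow$(i) of Theorem~\ref{T:eucl_reduction-john}. The paper's proof is simply a two-sentence compression of exactly this argument.
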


The iteration principle for optimal target norms in Sobolev embeddings on John domains reads as follows.

\begin{thm}\label{T:iteration-john}{\bf [Iteration principle for
John domains]} Let $n\in\N$, $\Omega$ and $\| \cdot \|_{X(0,1)}$ be
as in Theorem \ref{T:eucl_reduction-john}.
Let $k,
h \in \N$. Then
\[
\big(X_{k,\operatorname{John}}\big)_{h,\operatorname{John}}(\Omega )=
X_{k+h,\operatorname{John}}(\Omega ),
\]
up to equivalent norms.
\end{thm}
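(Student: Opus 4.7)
The plan is to identify the theorem as a direct specialization of the iteration principle under the doubling condition (Corollary~\ref{T:iterconv}), applied to the isoperimetric profile of a John domain. The starting point is the isoperimetric estimate~\eqref{Ijohn}: for a John domain in $\rn$ with $n\geq 2$, one has $I_\Omega(s)\approx s^{1/n'}$ near~$0$. Taking $I(s)=s^{1/n'}$ on $(0,1)$ (and extended appropriately) and noting that this function is non-decreasing with $\inf_{t\in(0,1)} I(t)/t>0$ for $n\geq 2$, the condition~\eqref{isop-ineq} is fulfilled by $(\Omega,\nu)=\Omega$ equipped with the Lebesgue measure.

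Next I would verify that this $I$ satisfies the doubling condition~\eqref{E:doubling}. A direct computation gives
\[
\int_0^s\frac{dr}{I(r)}=\int_0^s r^{-(n-1)/n}\,dr=n\,s^{1/n}\approx\frac{s}{I(s)}\quad\text{for } s\in(0,1),
\]
so Corollary~\ref{T:iterconv} is applicable. The second step is to identify $X_{m,\operatorname{John}}$ with $X_{m,I}^{\sharp}$ from Corollary~\ref{T:optimalconv}. With $I(t)=t^{1/n'}$ we have $t^{m-1}/I(t)^m=t^{m-1-m(n-1)/n}=t^{-1+m/n}$, so the definition~\eqref{E:xji-convm} of $\|\cdot\|_{(X_{m,I}^{\sharp})'(0,1)}$ reduces precisely to~\eqref{E:eucl_opt_norm-john}, namely
\[
\|f\|_{(X_{m,I}^{\sharp})'(0,1)}=\Bigl\|t^{-1+m/n}\int_0^t f^*(s)\,ds\Bigr\|_{X'(0,1)}=\|f\|_{X_{m,\operatorname{John}}'(0,1)}
\]
for every $f\in\M_+(0,1)$. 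Taking associate norms (and using that the associate of the associate recovers the original norm by the standard rearrangement-invariant space theory) yields $X_{m,\operatorname{John}}(\Omega)=X_{m,I}^{\sharp}(\Omega)$ up to equivalent norms, for every $m\in\N$.

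With these identifications in hand, the conclusion is immediate: applying Corollary~\ref{T:iterconv} to $X(0,1)$ and to its optimal target $X_{k,I}^{\sharp}(0,1)=X_{k,\operatorname{John}}(0,1)$, we obtain
\[
(X_{k,\operatorname{John}})_{h,\operatorname{John}}(\Omega)=(X_{k,I}^{\sharp})_{h,I}^{\sharp}(\Omega)=X_{k+h,I}^{\sharp}(\Omega)=X_{k+h,\operatorname{John}}(\Omega),
\]
with all equalities understood up to equivalent norms. There is essentially no obstacle here beyond bookkeeping, since the genuine analytic work, namely the iteration identity under~\eqref{E:doubling}, has already been carried out in Corollary~\ref{T:iterconv}; the only potential pitfall is making sure one quotes the correct form of the optimal-target functional and verifies~\eqref{E:doubling} explicitly (which is why the case $n\geq 2$ is needed: for $n=1$ the power $1/n'=0$ would degenerate).
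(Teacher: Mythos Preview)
Your proof is correct and follows exactly the same route as the paper, which simply states that the assertion is a consequence of Corollary~\ref{T:iterconv}. You have filled in the routine verifications (the doubling condition for $I(s)=s^{1/n'}$ and the identification $X_{m,\operatorname{John}}=X_{m,I}^{\sharp}$) that the paper leaves implicit.
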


Let us now focus on  Maz'ya classes of domains. Given $\alpha \in
[\frac 1{n'} , 1]$,  we denote by $\mathcal J_\alpha$
the~\textit{Maz'ya class} of all  Euclidean domains $\Omega $
satisfying \eqref{isop-ineq}, with  $I(s) = s^\alpha$ for $s \in [0,
\frac 12]$, namely domains $\Omega$ in $\rn$ such that
\[
I_\Omega (s) \geq C s^\alpha \quad \hbox{for $s \in [0, \frac 12]$,}
\]
for some positive constant $C$. Thanks to \eqref{Ijohn}, any John
domain belongs to the class $\mathcal J_{\frac 1{n'}}$.

The reduction theorem in the class $\mathcal
J_\alpha$ takes the following form.

\begin{thm}\label{T:eucl_reduction-div}{\bf [Reduction principle
for Maz'ya classes]} Let $n\in\N$, $n\geq 2$,  $m\in\N$, and
 $\alpha \in [\frac 1{n'}, 1]$. Let $\| \cdot \|_{X(0,1)}$ and
$\| \cdot \|_{Y(0,1)}$ be rearrangement-invariant function norms.
   Assume that either $\alpha \in [\frac 1{n'}, 1)$ and
there exists a constant $C_1$ such that
\begin{equation}\label{E:eucl_condition-div}
  \left\|\int_t\sp1 f(s) s^{-1+m(1- \alpha)}\,ds\right\|_{ Y(0,1)}
  \leq C_1\left\|f\right\|_{X(0,1)}
\end{equation}
for every nonnegative $f\in X(0,1)$, or $\alpha =1$ and there
exists a constant $C_1$ such that
\begin{equation}\label{E:eucl_condition-div1}
  \left\|\int_t\sp1 f(s) \frac 1s \bigg(\log \frac st \bigg)^{m-1}\,ds\right\|_{
Y(0,1)}
  \leq C_1\left\|f\right\|_{X(0,1)}
\end{equation}
for every nonnegative $f\in X(0,1)$.  Then the Sobolev embedding
\begin{equation}\label{E:eucl_embedd-div}
V^mX(\Omega ) \to Y(\Omega )
\end{equation}
 holds  for every
$\Omega \in \mathcal J _\alpha$ and, equivalently, the Poincar\'e
inequality
\begin{equation}\label{E:eucl_reduction-div}
  \left\|u \right\|_{Y(\Omega)}
  \leq C_2\left\|\nabla ^m u\right\|_{X(\Omega)}
\end{equation}
holds for every $\Omega \in \mathcal J _\alpha$, for some constant
$C_2$, depending on $\Omega, m, X$ and $Y$, and   every $u\in V^m_\bot X(\Omega)$.

Conversely, if the Sobolev embedding \eqref{E:eucl_embedd-div},
or, equivalently, the Poincar\'e inequality
\eqref{E:eucl_reduction-div}, holds for every $\Omega \in \mathcal
J _\alpha$, then either inequality \eqref{E:eucl_condition-div}, or
 \eqref{E:eucl_condition-div1} holds, according to whether  $\alpha \in [\frac 1{n'}, 1)$ or $\alpha =1$ .
\end{thm}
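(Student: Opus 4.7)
The plan is to prove the forward implication by applying the general reduction principle (Theorem~\ref{T:reduction}) with $I(s) = s^\alpha$ to every $\Omega \in \mathcal{J}_\alpha$, and to prove the converse by constructing a ``worst-case'' Maz'ya domain in $\mathcal{J}_\alpha$ together with a family of test functions that extract the one-dimensional inequality from the Sobolev/Poincar\'e inequality. The equivalence between \eqref{E:eucl_embedd-div} and \eqref{E:eucl_reduction-div} for each fixed $\Omega$ will follow from Proposition~\ref{poincsob}.

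For the forward direction, I would fix $\Omega \in \mathcal{J}_\alpha$, so that \eqref{isop-ineq} holds with $I(s) = s^\alpha$, and observe that \eqref{E:lower-bound} is automatic since $\alpha \leq 1$. A direct computation gives
\[
\int_t^s \frac{dr}{r^\alpha} =
\begin{cases}
\dfrac{s^{1-\alpha} - t^{1-\alpha}}{1-\alpha} \leq \dfrac{s^{1-\alpha}}{1-\alpha} & \textup{if } \alpha < 1,\\[3pt]
\log(s/t) & \textup{if } \alpha = 1.
\end{cases}
\]
Hence the kernel $\tfrac{1}{I(s)}\bigl(\int_t^s dr/I(r)\bigr)^{m-1}$ appearing in \eqref{E:kernel-cond} is pointwise bounded (up to a multiplicative constant) by $s^{-\alpha + (1-\alpha)(m-1)} = s^{-1+m(1-\alpha)}$ when $\alpha < 1$, and equals $\tfrac{1}{s}\bigl(\log(s/t)\bigr)^{m-1}$ when $\alpha = 1$. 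Thus \eqref{E:eucl_condition-div} (respectively \eqref{E:eucl_condition-div1}) implies hypothesis \eqref{E:kernel-cond} of Theorem~\ref{T:reduction}, whose conclusion delivers the Poincar\'e inequality \eqref{E:eucl_reduction-div}; Proposition~\ref{poincsob} then upgrades it to the embedding \eqref{E:eucl_embedd-div}.

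For the converse, the plan is to exhibit a single extremal domain $\Omega_\alpha \in \mathcal{J}_\alpha$ whose isoperimetric function satisfies $I_{\Omega_\alpha}(s) \approx s^\alpha$ near $0$, together with a sufficiently rich family of test functions on it. Such $\Omega_\alpha$ would be built as a Maz'ya-type rooms-and-passages construction -- a central body joined by narrow necks to a countable sequence of chambers whose sizes and necks are tuned so that the isoperimetric profile has exactly the prescribed asymptotic behavior. Given an arbitrary nonnegative non-increasing $f \in X(0,1)$ (which by Remark~\ref{R:star} suffices), I would assemble a function $u_f \in V^m_\bot X(\Omega_\alpha)$ that is essentially polynomial on each chamber and patched smoothly across the necks, with values and derivatives chosen so that $(\nabla^m u_f)^*(s)$ is comparable to $f(s)$ while $u_f^*(t)$ is bounded below by the right-hand side of \eqref{E:eucl_condition-div} (or \eqref{E:eucl_condition-div1}) evaluated at $f$. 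Plugging $u_f$ into \eqref{E:eucl_reduction-div} and invoking rearrangement-invariance of $\|\cdot\|_{X(\Omega_\alpha)}$ and $\|\cdot\|_{Y(\Omega_\alpha)}$ together with their representation on $(0,1)$ then yields the desired one-dimensional inequality.

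The main obstacle is precisely this converse construction. In the first-order case ($m=1$) the extremal Maz'ya domain and the associated one-parameter family of test functions are classical. The higher-order case is more delicate: one must design the chambers and transition regions so that the $m$-th order derivatives of $u_f$ faithfully reproduce an arbitrary decreasing profile $f$, while the lower-order derivatives and the polynomial corrections needed to land in $V^m_\bot X(\Omega_\alpha)$ do not contaminate the left-hand side of \eqref{E:eucl_reduction-div}. Once such higher-order test functions are in place, the rest is bookkeeping of rearrangements and representation norms; the logarithmic kernel case $\alpha=1$ is handled by the same scheme with the necks chosen to give the boundary rate $I(s) \approx s$.
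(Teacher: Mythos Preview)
Your forward direction is essentially the paper's argument: for $\alpha<1$ the pointwise bound on the kernel is exactly Proposition~\ref{link}(i) (equivalently Corollary~\ref{T:convergent}), and for $\alpha=1$ the kernel in \eqref{E:kernel-cond} coincides with that in \eqref{E:eucl_condition-div1}, so Theorem~\ref{T:reduction} applies directly. No issues there.

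The gap is in the converse. You propose a rooms-and-passages extremal domain with test functions that are ``polynomial on each chamber and patched smoothly across the necks,'' and you correctly flag this as the main obstacle---but you do not carry it out, and for higher-order derivatives this patching would be genuinely unpleasant. The paper avoids all of this by choosing a much simpler extremal domain: a \emph{domain of revolution} (a cusp) of the form
\[
\Omega=\{(x',x_n):\ 0<x_n<L_\alpha,\ |x'|<\eta_\alpha(x_n)\},
\]
with $\eta_\alpha$ chosen so that the cross-sectional measure satisfies $|\{x_n>t\}|=M_\alpha(t)$ where $-M_\alpha'=M_\alpha^\alpha$ (so $M_\alpha(t)=(1-(1-\alpha)t)^{1/(1-\alpha)}$ for $\alpha<1$ and $M_\alpha(t)=e^{-t}$ for $\alpha=1$). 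Such domains are known to satisfy $I_\Omega(s)\approx s^\alpha$ (Proposition~\ref{auxdiv}). The test function is then the iterated one-dimensional primitive
\[
u(x)=H_I^m f\bigl(M_\alpha(x_n)\bigr)=\int_{M_\alpha(x_n)}^{1}\frac{1}{r_1^\alpha}\int_{r_1}^{1}\cdots\int_{r_{m-1}}^{1}\frac{f(r_m)}{r_m^\alpha}\,dr_m\cdots dr_1,
\]
which depends on $x_n$ alone. Because $-M_\alpha'=M_\alpha^\alpha$, one has $|\nabla^k u|=H_I^{m-k}f(M_\alpha(x_n))$ and $|\nabla^m u|=f(M_\alpha(x_n))$, so the rearrangements are read off immediately: $u^*=H_I^m f$, $|\nabla^k u|^*=H_I^{m-k}f$, $|\nabla^m u|^*=f^*$. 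No patching, no transition regions, no chambers.

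A second point: you plan to force $u_f\in V^m_\bot X(\Omega_\alpha)$ and worry about polynomial corrections. The paper sidesteps this entirely by applying the embedding \eqref{E:eucl_embedd-div} in its full form \eqref{100000} rather than the Poincar\'e form; the extra lower-order $L^1$ terms $\|\nabla^k u\|_{L^1}=\|H_I^{m-k}f\|_{L^1}$ are then controlled by $\|f\|_{L^1}\le C\|f\|_X$ via the boundedness of $H_I^{m-k}:L^1\to L^1$ (this is \eqref{E:bdj}). That is cheaper than subtracting polynomials.
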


A major consequence of Theorem \ref{T:eucl_reduction-div} is the
identification of the optimal rearrangement-invariant target space
$Y(\Omega)$ associated with a given domain $X(\Omega)$ in
embedding \eqref{E:eucl_embedd-div} as $\Omega$ is allowed to
range among all domains in the class $\mathcal J _\alpha $. This
is the content of the next result.  The rearrangement-invariant
function norm yielding such an optimal space will be denoted by
$\| \cdot \|_{X_{m, \alpha}(0,1)}$. Given a
rearrangement-invariant function norm $\| \cdot \|_{X(0,1)}$,
 $m \in \N$, and $\alpha \in [\frac 1{n'}, 1]$, it is characterized
through its associate function norm defined  by
\begin{equation}\label{E:eucl_opt_norm-mazya}
\|f\|_{X_{m, \alpha}'(0,1)}
  = \begin{cases}
  \left\|s\sp{-1+ m(1-\alpha )}\int _0^s
  f\sp{*}(r)dr\right\|_{X'(0,1)} & \hbox{if} \,\, \alpha \in [\frac 1{n'}, 1),
  \\
  \\
  \left\|\frac 1s\int _0^s  \big(\log \frac sr
\big)^{m-1}f\sp{*}(r)dr\right\|_{X'(0,1)} & \hbox{if} \,\,\alpha
=1,
\end{cases}
\end{equation}
for  $f \in \M _+ (0,1)$.

\begin{thm}\label{T:eucl_optimal-div}{\bf [Optimal target for
Maz'ya classes]} Let  $n\in\N$, $n\geq 2$,  $m\in\N$,  $\alpha$ and
$\| \cdot \|_{X(0,1)}$  be as in Theorem \ref{T:eucl_reduction-div}.
Then the functional $\|\cdot \|_{X_{m, \alpha}'(0,1)}$,
given by \eqref{E:eucl_opt_norm-mazya}, is
a~rearrangement-invariant function norm, whose associate norm
$\|\cdot \|_{X_{m, \alpha}(0,1)}$ satisfies
\begin{equation}\label{E:eucl_opt_norm_1-div}
V^mX(\Omega ) \to X_{m,\alpha}(\Omega )
\end{equation}
for every $\Omega \in \mathcal J _\alpha$, and
\begin{equation}\label{E:eucl_opt_poinc_1-div}
  \left\|u \right\|_{X_{m,\alpha}(\Omega )}
  \leq C\left\|\nabla ^m u\right\|_{X(\Omega)}
\end{equation}
 for every $\Omega \in \mathcal J _\alpha$, for some constant
$C$, depending on $\Omega, m, X$ and $Y$, and   every $u\in V^m_\bot X(\Omega)$.

 Moreover,  the
function norm $\| \cdot \|_{X_{m,\alpha}(0,1)}$ is optimal in
\eqref{E:eucl_opt_norm_1-div} and \eqref{E:eucl_opt_poinc_1-div}
among all rearrangement-invariant norms, as  $\Omega$ ranges in $\mathcal
J _\alpha$.
\end{thm}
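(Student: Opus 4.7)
The plan is to derive Theorem \ref{T:eucl_optimal-div} by specializing the abstract optimal-target results of Section \ref{S:main} to the choice $I(s) = s^\alpha$, and then by invoking the Maz'ya-class reduction theorem (Theorem \ref{T:eucl_reduction-div}) to supply the equivalence between Sobolev embeddings and one-dimensional inequalities that is needed for the optimality assertion. For any $\alpha \in [\tfrac{1}{n'}, 1]$ the function $I(s) = s^\alpha$ is non-decreasing on $(0,1)$ and satisfies $\inf_{t \in (0,1)} I(t)/t \geq 1$ (since $\alpha \leq 1$), so \eqref{E:lower-bound} is in force; moreover, by the very definition of $\mathcal{J}_\alpha$, every $\Omega \in \mathcal{J}_\alpha$ fulfils \eqref{isop-ineq} with this $I$. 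Hence the hypotheses of Theorem \ref{T:optran} (and, when applicable, of Corollary \ref{T:optimalconv}) are met on every $\Omega$ in the class.

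Next I would split into two cases matching the two branches of \eqref{E:eucl_opt_norm-mazya}. For $\alpha \in [\tfrac{1}{n'}, 1)$ the doubling condition \eqref{E:doubling} is verified at once from the identity $\int_0^s r^{-\alpha}\,dr = \frac{s^{1-\alpha}}{1-\alpha} \approx s/I(s)$, and a routine computation gives $t^{m-1}/I(t)^m = t^{-1 + m(1-\alpha)}$, so that the norm \eqref{E:xji-convm} of Corollary \ref{T:optimalconv} coincides with the first branch of \eqref{E:eucl_opt_norm-mazya}. Corollary \ref{T:optimalconv} then immediately provides that $\|\cdot\|_{X_{m,\alpha}'(0,1)}$ is a rearrangement-invariant function norm and that both \eqref{E:eucl_opt_norm_1-div} and \eqref{E:eucl_opt_poinc_1-div} hold on every $\Omega \in \mathcal{J}_\alpha$. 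For $\alpha = 1$ the doubling condition fails, but $\int_t^s dr/r = \log(s/t)$ is explicit, so the kernel appearing in \eqref{E:eucl_opt_norm} reduces, upon swapping the order of integration by Fubini, exactly to the second branch of \eqref{E:eucl_opt_norm-mazya}; Theorem \ref{T:optran} applied with $I(s) = s$ then yields the analogous conclusions.

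Finally, for the optimality claim, suppose $\|\cdot\|_{Y(0,1)}$ is a rearrangement-invariant function norm such that $V^m X(\Omega) \to Y(\Omega)$ for every $\Omega \in \mathcal{J}_\alpha$. The converse implication in Theorem \ref{T:eucl_reduction-div} then delivers \eqref{E:eucl_condition-div} if $\alpha < 1$ and \eqref{E:eucl_condition-div1} if $\alpha = 1$. These are precisely the one-dimensional inequalities \eqref{E:condition-convergent} (respectively \eqref{E:kernel-cond}) specialized to $I(s) = s^\alpha$, and by the optimality statements in Corollary \ref{T:optimalconv} and Theorem \ref{T:optran} they are equivalent to $X_{m,\alpha}(\Omega) \to Y(\Omega)$. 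This yields the desired optimality of $X_{m,\alpha}(\Omega)$ among all rearrangement-invariant targets as $\Omega$ ranges over $\mathcal{J}_\alpha$. The main hurdle in this whole argument lies precisely in the converse direction of Theorem \ref{T:eucl_reduction-div}, whose proof must produce a suitably sharp ``worst-case'' domain within each Maz'ya class that forces the one-dimensional inequality; once that has been accepted, the present theorem is a matter of book-keeping on the explicit form of the kernels.
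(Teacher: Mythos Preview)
Your proposal is correct and follows essentially the same route as the paper, which records the result as a ``straightforward consequence of Theorem \ref{T:eucl_reduction-div}, and either Corollary \ref{T:optimalconv} or Theorem \ref{T:optran}, according to whether $\alpha \in [\tfrac{1}{n'},1)$ or $\alpha = 1$.'' Your case split, verification of \eqref{E:doubling} for $\alpha<1$, identification of the kernels with \eqref{E:eucl_opt_norm-mazya}, and derivation of optimality via the converse direction of Theorem \ref{T:eucl_reduction-div} are exactly what the paper has in mind (a small quibble: for $\alpha=1$ no Fubini swap is needed---substituting $I(r)=r$ into \eqref{E:eucl_opt_norm} gives $\int_s^t dr/r = \log(t/s)$ directly).
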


Theorem \ref{T:eucl_optimal-div} is a straightforward consequence of
Theorem  \ref{T:eucl_reduction-div}, and either  Corollary
\ref{T:optimalconv} or Theorem \ref{T:optran}, according to whether
$\alpha \in [\frac 1{n'}, 1)$ or $\alpha =1$.

The stability of the process of finding optimal
rearrangement-invariant targets in Euclidean Sobolev embeddings on
Maz'ya domains under iteration is the object of the last main result
of the present section. This is the key ingredient which bridges the
first-order case of Theorems \ref{T:eucl_reduction-div} and
\ref{T:eucl_optimal-div} to their higher-order versions.

\begin{thm}\label{T:iteration-div}{\bf [Iteration principle for Maz'ya classes]}
Let $n\in\N$,  $\alpha \in [\frac 1{n'}, 1]$ and $\| \cdot
\|_{X(0,1)}$ be as in Theorem \ref{T:eucl_reduction-div}. Let $k, h
\in \N$. Assume that $\Omega \in \mathcal J _\alpha$.
 Then,
\[
(X_{k,\alpha})_{h,\alpha}(\Omega )= X_{k+h,\alpha}(\Omega ),
\]
up to equivalent norms.
\end{thm}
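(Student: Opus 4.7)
The plan is to deduce Theorem~\ref{T:iteration-div} directly from the general iteration principles already established (Theorem~\ref{T:reit} and Corollary~\ref{T:iterconv}) by choosing $I(s)=s^\alpha$ and recognizing that the two notations for the optimal target--$X_{m,\alpha}$ from \eqref{E:eucl_opt_norm-mazya} and $X_{m,I}$ (or $X_{m,I}^\sharp$) from \eqref{E:eucl_opt_norm} (or \eqref{E:xji-convm})--coincide.

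First, I would verify the standing hypotheses. Since $\Omega \in \mathcal{J}_\alpha$, by definition there is a constant $c>0$ with $I_\Omega(s)\geq c\,s^\alpha$ on $[0,\tfrac12]$, so \eqref{isop-ineq} holds with $I(s)=s^\alpha$. Because $\alpha\in[\tfrac1{n'},1]$, the function $I(s)=s^\alpha$ is non-decreasing and satisfies $I(t)/t=t^{\alpha-1}\geq 1$ on $(0,1)$, so \eqref{E:lower-bound} holds.

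Next, I would split into the two cases corresponding to \eqref{E:eucl_opt_norm-mazya}. If $\alpha\in[\tfrac1{n'},1)$, then
\[
\int_0^s\frac{dr}{I(r)}=\int_0^s r^{-\alpha}\,dr=\frac{s^{1-\alpha}}{1-\alpha}\approx \frac{s}{I(s)},
\]
so the doubling condition \eqref{E:doubling} is in force. A direct comparison of
\[
\|f\|_{(X_{m,I}^\sharp)'(0,1)}=\left\|\frac{t^{m-1}}{I(t)^m}\int_0^t f^*(s)\,ds\right\|_{X'(0,1)}=\left\|t^{\,m-1-m\alpha}\int_0^t f^*(s)\,ds\right\|_{X'(0,1)}
\]
with
\[
\|f\|_{X_{m,\alpha}'(0,1)}=\left\|s^{-1+m(1-\alpha)}\int_0^s f^*(r)\,dr\right\|_{X'(0,1)}
\]
shows, via the identity $-1+m(1-\alpha)=m-1-m\alpha$, that $X_{m,\alpha}=X_{m,I}^\sharp$ with identical (not just equivalent) norms. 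Consequently the claimed identity
\[
(X_{k,\alpha})_{h,\alpha}(\Omega)=X_{k+h,\alpha}(\Omega)
\]
is precisely the conclusion of Corollary~\ref{T:iterconv} applied to the choice $I(s)=s^\alpha$.

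The remaining case $\alpha=1$ is the one where \eqref{E:doubling} fails, since $\int_0^s dr/r$ diverges at $0$, so Corollary~\ref{T:iterconv} is unavailable and I must invoke the general Theorem~\ref{T:reit}. With $I(s)=s$ one has $\int_s^t dr/I(r)=\log(t/s)$, hence \eqref{E:eucl_opt_norm} becomes
\[
\|f\|_{X_{m,I}'(0,1)}=\left\|\frac{1}{t}\int_0^t \bigl(\log(t/s)\bigr)^{m-1}f^*(s)\,ds\right\|_{X'(0,1)},
\]
which is exactly the second branch of \eqref{E:eucl_opt_norm-mazya}. Thus again $X_{m,\alpha}=X_{m,I}$, and Theorem~\ref{T:reit} yields the desired equivalence $(X_{k,1})_{h,1}(\Omega)=X_{k+h,1}(\Omega)$. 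The main obstacle in this argument is essentially bookkeeping: one must match the two parameterizations carefully in each case and confirm that the doubling dichotomy at $\alpha=1$ is handled by the correct one of the two general iteration theorems; all the genuinely analytic content has already been absorbed into Theorem~\ref{T:reit} and Corollary~\ref{T:iterconv}.
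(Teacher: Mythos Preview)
Your proposal is correct and follows exactly the approach the paper takes: the paper's entire proof is the single sentence ``Theorem~\ref{T:iteration-div} follows from a specialization of Corollary~\ref{T:iterconv} ($\alpha \in [\frac 1{n'}, 1)$), or Theorem~\ref{T:reit} ($\alpha =1$).'' You have simply spelled out the verifications (of \eqref{E:lower-bound}, of \eqref{E:doubling} when $\alpha<1$, and of the identification $X_{m,\alpha}=X_{m,I}^\sharp$ or $X_{m,\alpha}=X_{m,I}$) that the paper leaves implicit.
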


Theorem \ref{T:iteration-div} follows from a specialization of
Corollary \ref{T:iterconv} ($\alpha \in [\frac 1{n'}, 1)$),  or
Theorem~\ref{T:reit} ($\alpha =1$).

\begin{remark}\label{R:comparison}
Note that there is one important difference between the reduction
and the  optimal-target theorem concerning John domains on the one
hand, and their counterparts for general Maz'ya domains on the other
hand. Namely, the equivalence in
Theorem~\ref{T:eucl_reduction-john} and the optimality result
in~Theorem~\ref{T:eucl_optimal-john} are valid \textit{for each single
John domain}, whereas the necessity of
condition~\eqref{E:eucl_condition-div}
for~\eqref{E:eucl_condition-div1} (or~\eqref{E:eucl_embedd-div}) in
Theorem~\ref{T:eucl_reduction-div} as well as the optimality of the
target space in Theorem~\ref{T:eucl_optimal-div} are valid in the
 \textit{class of all} $\Omega\in\mathcal J_{\alpha}$. This is
 inevitable, since, of course, each class $\mathcal J_{\alpha}$
 contains all regular domains, and for such domains Sobolev
 embeddings with stronger target norms hold.
\end{remark}

The remaining part of this section is devoted to applications of
Theorems \ref{T:eucl_reduction-div}--\ref{T:iteration-div} to
customary function  norms. Consider first the case when Lebesgue or
Lorentz norms are concerned. Our conclusions take a different form,
according to whether $\alpha \in [\frac 1{n'}, 1)$, or $\alpha =1$.

We begin by assuming that $\alpha
\in [\frac 1{n'}, 1)$.  Note that results for regular (i.e.~John)
domains are covered by the  choice $\alpha = \frac 1{n'}$.

Sobolev embeddings involving usual Lebesgue norms are contained
in the following theorem.

\begin{thm}\label{EX:eucl_lebesgue}
Let $n\in\N$, $n\geq 2$, and let $\Omega \in \mathcal J _\alpha$ for
some $\alpha \in [\frac1{n'},1)$. Let $m\in\N$ and $p \in [1,
\infty]$. Then
\begin{equation}\label{exeucl1}
 V^mL^p(\Omega ) \to \begin{cases}
L^{\frac {p}{1-mp(1-\alpha)}}(\Omega ) & \hbox{if $m(1-\alpha)<1$
and $1 \leq p<
\frac 1{m(1-\alpha )}$,} \\
L^{r} (\Omega ) & \hbox{for any $r\in[1,\infty)$, if $m(1-\alpha)<1$
and $ p= \frac 1{m(1-\alpha
)}$,} \\
L^\infty (\Omega )  & \hbox{otherwise.}
\end{cases}
\end{equation}
Moreover, in the first and the third cases, the target spaces in \eqref{exeucl1} are optimal among all Lebesgue
spaces, as $\Omega$ ranges in $\mathcal
J _\alpha$.
\end{thm}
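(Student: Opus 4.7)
The plan is to derive the embedding from Theorem~\ref{T:eucl_reduction-div} by identifying, in each case, the correct Lebesgue target $Y=L^{q}$ for which the one-dimensional weighted Hardy inequality
\begin{equation*}
  \left\|\int_t^1 f(s)\,s^{-1+m(1-\alpha)}\,ds\right\|_{L^q(0,1)} \leq C\,\|f\|_{L^p(0,1)}
\end{equation*}
holds, and then to establish optimality among Lebesgue spaces in the first and third cases by invoking the converse direction of Theorem~\ref{T:eucl_reduction-div}. Since $\alpha<1$, we always have $m(1-\alpha)>0$, so the exponent $\beta:=-1+m(1-\alpha)$ is a specific real number lying in $(-1,\infty)$ precisely when $m(1-\alpha)>0$, and the quantity $1/(m(1-\alpha))$ is a finite positive number.

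First I would verify the boundedness of the Hardy-type operator $Tf(t)=\int_t^1 f(s)s^{\beta}\,ds$ in the three separate regimes. In the regime $1<p<1/(m(1-\alpha))$, setting $q=p/(1-mp(1-\alpha))$ yields $\tfrac{1}{q}=\tfrac{1}{p}-m(1-\alpha)$, and the boundedness $T\colon L^p(0,1)\to L^q(0,1)$ is a textbook weighted Hardy inequality (obtainable either by a direct change of variables that converts $T$ into the classical Hardy averaging operator, or by the Muckenhoupt criterion). The case $p=1$, $q=1/(1-m(1-\alpha))$ follows by Minkowski's integral inequality. In the borderline regime $p=1/(m(1-\alpha))$, the weight $s\mapsto s^{\beta}$ just fails to belong to $L^{p'}(0,1)$; however, a direct Hölder estimate splitting the integral on $(t,2t)$ and $(2t,1)$ shows that $Tf\in L^r(0,1)$ for every $r<\infty$, hence the embedding holds into each such $L^r$. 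In the remaining "otherwise" regime the weight satisfies $s^{\beta}\in L^{p'}(0,1)$, so Hölder immediately gives $T\colon L^p(0,1)\to L^\infty(0,1)$, and Theorem~\ref{T:eucl_reduction-div} produces the embedding into $L^\infty(\Omega)$.

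For optimality, in the third case $L^\infty(\Omega)$ is trivially the smallest Lebesgue space. In the first case, optimality is obtained from the converse part of Theorem~\ref{T:eucl_reduction-div}: if $V^mL^p(\Omega)\to L^{q'}(\Omega)$ held for every $\Omega\in\mathcal{J}_\alpha$ with some $q'>p/(1-mp(1-\alpha))$, then the weighted Hardy inequality above would hold with $L^{q'}$ in place of $L^q$. Testing this inequality on the family $f_\lambda(s)=s^{-1/p}\chi_{(0,\lambda)}(s)(\log(2/s))^{-1}$, or, more simply, on power functions $f_\lambda(s)=s^{-1/p+\varepsilon}\chi_{(0,\lambda)}$ with $\varepsilon\downarrow 0$, forces the exponent relation $1/q'\geq 1/p-m(1-\alpha)$, a contradiction. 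Hence the stated Lebesgue target is optimal among Lebesgue spaces.

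The main obstacle will be the borderline regime $p=1/(m(1-\alpha))$: one must simultaneously show that the Hardy operator sends $L^p$ into every $L^r$ with $r<\infty$ while failing to map into $L^\infty$ (which is why no single $L^r$ is claimed, and no optimality assertion is made there). A careful splitting argument together with a logarithmic extremizer $f(s)=s^{-1/p}(\log(2/s))^{-1}\chi_{(0,1)}$, which lies in $L^p$ but for which $Tf\notin L^\infty$, handles this point cleanly.
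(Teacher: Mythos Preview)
Your approach is correct and essentially different from the paper's. The paper does not verify the one-dimensional Hardy inequality \eqref{E:eucl_condition-div} directly for each Lebesgue pair $(L^p,L^q)$; instead it invokes Corollary~\ref{T:optimalconv} to compute the associate norm of the \emph{optimal rearrangement-invariant target} $(L^p)_{m,\alpha}$, namely $\|f\|_{((L^p)_{m,\alpha})'(0,1)}=\|s^{m(1-\alpha)}f^{**}(s)\|_{L^{p'}(0,1)}$, identifies this with a Lorentz norm via \eqref{E:lorentz-identity}--\eqref{E:glz-identity} and \eqref{E:lz_assoc}, and then reads off the Lebesgue exponent from the embedding $L^{r,p}\hookrightarrow L^r$. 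The $L^\infty$ case is handled through Corollary~\ref{linfinity} by checking \eqref{linfinity1}. Your route---checking the weighted Hardy inequality $T\colon L^p\to L^q$ directly (Muckenhoupt/Minkowski/H\"older) and deducing optimality from the converse in Theorem~\ref{T:eucl_reduction-div} by testing on explicit functions---is more self-contained for this particular statement and avoids the Lorentz-space bookkeeping; the paper's computation, on the other hand, simultaneously delivers the sharper Lorentz target in Theorem~\ref{EX:eucl_lorentz} and the rearrangement-invariant optimality assertion, which your argument does not reach. Two small clean-ups: in the borderline case $p=1/(m(1-\alpha))$ the single H\"older bound $|Tf(t)|\le\|f\|_{L^p}(\log(1/t))^{1/p'}$ already gives $Tf\in L^r$ for every $r<\infty$, so the splitting is unnecessary; and for optimality in the first case characteristic functions $\chi_{(0,\lambda)}$ with $\lambda\to0$ suffice and are simpler than the logarithmically weighted powers you propose.
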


Although the target spaces in~\eqref{exeucl1} cannot be improved in
the class of Lebesgue spaces, the conclusions of \eqref{exeucl1} can
be strengthened if more general rearrangement-invariant spaces are
employed. Such a strengthening can be obtained as a special case of
a Sobolev embedding for Lorentz spaces which reads as follows.

\begin{thm}\label{EX:eucl_lorentz}
Let $n\in\N$, $n\geq 2$, and let $\Omega \in \mathcal J _\alpha$ for
some $\alpha \in [\frac1{n'},1)$. Let $m\in\N$ and $p,q \in [1,
\infty]$. Assume that one of the conditions in~\eqref{E:new-star} holds. Then
\begin{equation}\label{lorentz1}
 V^mL^{p,q}(\Omega ) \to
 \begin{cases}
L^{\frac {p}{1-mp(1-\alpha)}, q}(\Omega ) & \hbox{if $m(1-\alpha)<1$
and $1 \leq p<
\frac 1{m(1-\alpha )}$,} \\
L^{\infty, q; -1} (\Omega ) & \hbox{if $m(1-\alpha)<1$, $ p= \frac
1{m(1-\alpha
)}$ and $q>1$,} \\
L^\infty (\Omega )  & \hbox{otherwise, }
\end{cases}
\end{equation}
Moreover, the target spaces in \eqref{lorentz1} are optimal among
all rearrangement-invariant spaces, as $\Omega$ ranges in $\mathcal
J _\alpha$.
\end{thm}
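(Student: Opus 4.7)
The plan is to derive Theorem~\ref{EX:eucl_lorentz} as a direct application of the optimality part of Theorem~\ref{T:eucl_optimal-div}~--~equivalently, of Corollary~\ref{T:optimalconv}, since for $\alpha<1$ the function $I(s)=s^\alpha$ satisfies the doubling condition~\eqref{E:doubling}. That theorem tells us that, as $\Omega$ ranges over $\mathcal J_\alpha$, the smallest rearrangement-invariant target for $V^mL^{p,q}(\Omega)$ is the space $(L^{p,q})_{m,\alpha}(\Omega)$, whose associate norm is prescribed by the first line of~\eqref{E:eucl_opt_norm-mazya}. What remains is to identify this target concretely within the Lorentz and Lorentz--Zygmund scales, case by case, and~--~in the third regime~--~to verify an $L^\infty$-embedding directly.

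Set $\beta=m(1-\alpha)$. Using $\int_0^s f^*(r)\,dr=sf^{**}(s)$ together with the classical duality $(L^{p,q})'=L^{p',q'}$, the functional to identify is
\[
\|f\|_{(L^{p,q})_{m,\alpha}'(0,1)}=\bigl\|s^{\beta}f^{**}(s)\bigr\|_{L^{p',q'}(0,1)}.
\]
In the subcritical regime $\beta<1$ and $p<1/\beta$, I would recast this as the norm of $f$ in $L^{(\tilde p,q')}$ with $1/\tilde p=1-1/p+\beta\in(0,1)$; then~\eqref{E:lorentz-identity} yields $L^{(\tilde p,q')}=L^{\tilde p,q'}$, whose associate via~\eqref{E:equivemb} and the standard Lorentz-space duality is $L^{\tilde p',q}$ with $\tilde p'=p/(1-p\beta)$, matching the first line of~\eqref{lorentz1}. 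In the critical regime $p=1/\beta$, the same computation forces $\tilde p=1$; the last line of~\eqref{E:lz_assoc}, applied with parameters corresponding to the index choice $-1$ (whose admissibility constraint $-1+1/q<0$ is precisely the hypothesis $q>1$), identifies $L^{(1,q')}$ as the associate of $L^{\infty,q;-1}$, giving the second line of~\eqref{lorentz1}. For the remaining, supercritical, cases I would bypass the direct identification of $(L^{p,q})_{m,\alpha}$ and invoke Corollary~\ref{linfinity}: with $I(s)=s^\alpha$ the finiteness condition~\eqref{linfinity1} reduces to $\|s^{-1+\beta}\|_{L^{p',q'}(0,1)}<\infty$, a routine computation which is finite precisely when $\beta\geq1$, or $\beta<1$ with $p>1/\beta$, or $(p,q)=(1/\beta,1)$; and since $L^\infty\hookrightarrow X$ for every rearrangement-invariant space $X$ by~\eqref{l1linf}, this target is then automatically optimal.

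The main obstacle is the critical regime: identifying $L^{(1,q')}$ with $(L^{\infty,q;-1})'$ via~\eqref{E:lz_assoc} requires a careful match of four parameters and makes transparent why the boundary value $q=1$ is excluded from the second line of~\eqref{lorentz1} and instead falls into the $L^\infty$ regime. A secondary technical subtlety, present in both the sub- and critical cases, is that the function $s\mapsto s^\beta f^{**}(s)$ is generally not non-increasing, so the passage from $\|s^\beta f^{**}\|_{L^{p',q'}}$ to a weighted $L^{q'}$-integral is not a mere rewording; it rests on standard rearrangement estimates for power-weighted $f^{**}$-functionals, which are nevertheless routinely available (cf.~\cite{BS}) and yield the required equivalence up to constants depending only on $p,q,\beta$.
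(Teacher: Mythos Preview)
Your strategy coincides with the paper's: invoke Corollary~\ref{T:optimalconv} (equivalently Theorem~\ref{T:eucl_optimal-div}) to reduce matters to identifying the associate norm $\bigl\|s^{\beta}f^{**}(s)\bigr\|_{(L^{p,q})'(0,1)}$, dispose of the supercritical regime via Corollary~\ref{linfinity}, and in the sub- and critical regimes read off the target from the duality tables~\eqref{E:lz_assoc},~\eqref{E:glz-identity} and~\eqref{E:lorentz-identity}. The paper's proof follows exactly this route.

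The point where your account is incomplete is precisely the one you label a ``secondary technical subtlety''. The equivalence
\[
\bigl\|s^{\beta}f^{**}(s)\bigr\|_{(L^{p,q})'(0,1)}\;\approx\;\bigl\|s^{1/p'-1/q'+\beta}f^{**}(s)\bigr\|_{L^{q'}(0,1)}
\]
is \emph{not} available from~\cite{BS}. Since $s\mapsto s^{\beta}f^{**}(s)$ is in general non-monotone, treating its $(L^{p,q})'$-norm as a weighted $L^{q'}$-integral amounts to replacing $\|R_Jf^*\|_{X'}$ by $\|R_Jf^*\|_{X'_d}$ for $J(t)=t^{1-\beta}$; that this is legitimate is exactly the content of Theorem~\ref{T:lenka-main} (case $m=0$), one of the paper's main technical contributions. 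The paper isolates the Lorentz--Zygmund instance of this as Theorem~\ref{T:lenka-remark-3} and applies it here. Cite that result rather than~\cite{BS}, and your argument is complete and identical to the paper's.
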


The particular choice of parameters $p=q$, $1\leq
p<\frac1{m(1-\alpha)}$ in Theorem~\ref{EX:eucl_lorentz} shows that
$$
V\sp mL\sp{p}(\Omega)\hra L^{\frac {p}{1-mp(1-\alpha)}, p}(\Omega).
$$
This is  a~non-trivial strengthening  of the first embedding
in~\eqref{exeucl1}, since $L^{\frac {p}{1-mp(1-\alpha)},
p}(\Omega)\subsetneqq L^{\frac {p}{1-mp(1-\alpha)}}$.
Likewise, the choice
$m(1-\alpha)<1$ and $p=q=\frac 1{m(1-\alpha )}$ shows that also
the second embedding in~\eqref{exeucl1} can be in fact essentially
improved by
$$
V\sp mL\sp{p}(\Omega)\hra L\sp{\infty,p;-1}(\Omega).
$$

Assume now that $\alpha =1$. The embedding theorem in Lebesgue
spaces  takes the following form.

\begin{thm}\label{T:eucl_lebesgue-alpha=1}
Let $n\in\N$, $n\geq 2$, and let $\Omega \in \mathcal J _1$. Let
$m\in\N$ and $p \in [1, \infty]$. Then
\begin{equation}\label{exeucl1-alpha=1}
 V^mL^p(\Omega ) \to \begin{cases}
L^{p}(\Omega ) & \hbox{if  $1 \leq p< \infty$,} \\
L^{r} (\Omega ) & \hbox{for any $r\in[1,\infty)$, if
 $ p= \infty$.}
\end{cases}
\end{equation}
Moreover, in the former case of \eqref{exeucl1-alpha=1}, the target space is optimal among all
Lebesgue spaces, as $\Omega$ ranges in $\mathcal J _1$.
\end{thm}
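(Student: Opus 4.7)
The strategy is to invoke Theorem~\ref{T:eucl_reduction-div} with $\alpha=1$: after the reduction it suffices to check the validity (and failure, for the optimality) of the one-dimensional inequality~\eqref{E:eucl_condition-div1} for the operator
\[
T_m f(t) = \int_t^1 f(s)\,\frac{(\log(s/t))^{m-1}}{s}\,ds
\]
for the relevant pairs $(X,Y)$. The basic structural observation is that, writing $T_1 f(t) = \int_t^1 f(s)/s\,ds$, a swap of the order of integration yields inductively $T_1^m f = \tfrac{1}{(m-1)!}T_m f$. So $L^p$-boundedness of $T_m$ reduces to $L^p$-boundedness of the single operator $T_1$.

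For the first case of~\eqref{exeucl1-alpha=1} I would verify that $T_1\colon L^p(0,1)\to L^p(0,1)$ is bounded for every $1\le p<\infty$. The endpoint $p=1$ follows instantly from Fubini, since $\int_0^1 T_1 f(t)\,dt=\int_0^1 f(s)\,ds$. For $1<p<\infty$, the adjoint is $T_1^* g(s)=\tfrac{1}{s}\int_0^s g(t)\,dt$, which is bounded on $L^{p'}(0,1)$ by the classical Hardy inequality; so $T_1$ is bounded on $L^p(0,1)$ by duality. Composing $m$ times gives~\eqref{E:eucl_condition-div1} with $X=Y=L^p$, and Theorem~\ref{T:eucl_reduction-div} then delivers $V^m L^p(\Omega)\hookrightarrow L^p(\Omega)$ for every $\Omega\in\mathcal J_1$.

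For the second case ($p=\infty$), I would just compute directly: for any $f\in L^\infty(0,1)$,
\[
T_m f(t)\le \|f\|_\infty\int_t^1\frac{(\log(s/t))^{m-1}}{s}\,ds=\frac{\|f\|_\infty}{m}\bigl(\log\tfrac{1}{t}\bigr)^m,
\]
and $(\log(1/t))^m\in L^r(0,1)$ for every $r<\infty$. So~\eqref{E:eucl_condition-div1} holds with $X=L^\infty$ and $Y=L^r$ for every finite $r$, and another application of Theorem~\ref{T:eucl_reduction-div} yields $V^m L^\infty(\Omega)\hookrightarrow L^r(\Omega)$.

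It remains to prove optimality in the former case. Suppose $V^m L^p(\Omega)\hookrightarrow L^q(\Omega)$ for every $\Omega\in\mathcal J_1$. By the converse assertion in Theorem~\ref{T:eucl_reduction-div}, this would force $T_m\colon L^p(0,1)\to L^q(0,1)$. Testing with $f_a=\chi_{(0,a)}$ for small $a>0$, one computes $T_m f_a(t)=\tfrac{1}{m}(\log(a/t))^m$ for $t<a$ and $0$ otherwise, so after the substitution $u=\log(a/t)$ one gets $\|T_m f_a\|_{L^q}\approx a^{1/q}$, whereas $\|f_a\|_{L^p}=a^{1/p}$. Letting $a\to 0^+$ forces $1/q\ge 1/p$, i.e. $q\le p$. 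The principal obstacle is isolating the factorization $T_m=(m-1)!\,T_1^m$ and handling the $p=1$ endpoint cleanly; once these are in place, the remaining verifications are entirely routine.
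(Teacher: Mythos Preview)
Your proof is correct, and it takes a genuinely more elementary route than the paper. The paper does not argue directly with Hardy's inequality; instead it invokes Theorem~\ref{EX:expo_lz} with $\beta=1$. That theorem (proved in Section~\ref{S:probability_proofs}) identifies the optimal rearrangement-invariant target $\widetilde X_m$ for the operator $T_m$ via the general machinery of Proposition~\ref{P:prop3} and Theorem~\ref{T:reiteration}, and since the one-dimensional reduction for $\mathcal J_1$ (Theorem~\ref{T:eucl_reduction-div} with $\alpha=1$) and for the Boltzmann space with $\beta=1$ (Theorem~\ref{T:reduction_power}) involve the same operator, the computation transfers. For $p=\infty$ the paper reads off the target $\exp L^{1/m}$ and then passes to $L^r$ via~\eqref{E:glz-identity}.

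Your argument bypasses all of this: the factorization $T_m=(m-1)!\,T_1^{\,m}$ together with the classical Hardy inequality on $(0,1)$ handles the embedding, and the test with $\chi_{(0,a)}$ settles Lebesgue optimality directly. This is cleaner and entirely self-contained once Theorem~\ref{T:eucl_reduction-div} is in hand. What the paper's detour buys is a stronger conclusion (optimality among \emph{all} rearrangement-invariant spaces, as recorded in Theorem~\ref{T:eucl_lorentz-alpha=1}), which your characteristic-function test does not detect; but for the statement as written, your approach is both sufficient and preferable.
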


Optimal embeddings for Lorentz-Sobolev spaces are provided in the
next theorem.

\begin{thm}\label{T:eucl_lorentz-alpha=1}
Let $n\in\N$, $n\geq 2$, and let $\Omega \in \mathcal J _1$. Let
$m\in\N$ and $p,q \in [1, \infty]$. Assume that one of the conditions in~\eqref{E:new-star} holds. Then
\begin{equation}\label{E:lorentz-alpha=1}
 V^mL^{p,q}(\Omega ) \to
\begin{cases}
L^{p,q}(\Omega ) & \textup{if}\  1 \leq p< \infty, \\
\exp L\sp{\frac 1m} (\Omega ) & \textup{if}\ p= q=\infty.
\end{cases}
\end{equation}
The target spaces are optimal in \eqref{E:lorentz-alpha=1} among all
rearrangement-invariant spaces, as $\Omega$ ranges in $\mathcal J _1$.
\end{thm}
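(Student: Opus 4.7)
The result will follow by specialising Theorem~\ref{T:eucl_optimal-div} and the preceding iteration/optimal target machinery to the case $\alpha=1$ with $X=L^{p,q}$. By that theorem, the optimal rearrangement-invariant target space for $V^m L^{p,q}(\Omega)$, as $\Omega$ varies over $\mathcal J_1$, is $X_{m,1}(\Omega)$, whose associate norm is
\[
\|f\|_{X'_{m,1}(0,1)} = \left\| T f^* \right\|_{X'(0,1)},
\qquad
Tg(s) := \frac{1}{s}\int_0^s \left(\log\tfrac{s}{r}\right)^{m-1} g(r)\,dr.
\]
Thus the whole task reduces to identifying $X_{m,1}$ explicitly in each of the admissible parameter regimes singled out in \eqref{E:new-star}.

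The key preliminary observation is the identity $T = (m-1)!\, T_1^m$, where $T_1 g(s) = \tfrac{1}{s}\int_0^s g(r)\,dr$ is the classical Hardy averaging operator. I would verify it by induction on $m$ via Fubini: the substitution $u=\log(s/r)$ turns $\int_r^s (\log(\tau/r))^{k-1}\,\tfrac{d\tau}{\tau}$ into $\tfrac{1}{k}(\log(s/r))^k$, which gives $T_1 \circ T^{(k)} = \tfrac{1}{k} T^{(k+1)}$.

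Case $1<p<\infty$, $1\leq q\leq\infty$: here $X'=L^{p',q'}$ with $1<p'<\infty$, and the classical Hardy inequality on Lorentz spaces ensures that $T_1$ is bounded on $L^{p',q'}$; hence so is $T$ by iterating the identity above. The reverse estimate is immediate: for $f^*$ non-increasing one has $T f^*(s)\geq (m-1)!\,f^*(s)$ pointwise, so $\|T f^*\|_{L^{p',q'}}\gtrsim \|f^*\|_{L^{p',q'}}$. Combined this yields $X'_{m,1}=L^{p',q'}$ with equivalent norms, i.e.~$X_{m,1}=L^{p,q}$. The case $p=q=1$ is an elementary variant: $X'=L^\infty$ and the two-sided bound $\|f^*\|_{L^\infty}\leq \tfrac{1}{(m-1)!}\|Tf^*\|_{L^\infty}\leq \|f^*\|_{L^\infty}$ yields $X_{m,1}=L^{1,1}=L^1$.

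The remaining case is $p=q=\infty$, where $X'=L^1$. Applying Fubini gives
\[
\int_0^1 T f^*(s)\,ds = \int_0^1 f^*(r)\int_r^1 \frac{(\log(s/r))^{m-1}}{s}\,ds\,dr = \frac{1}{m}\int_0^1 f^*(r)\,\log^m\tfrac{1}{r}\,dr,
\]
which is equivalent to $\|f\|_{L^{1,1;m}(0,1)}$. Hence $X'_{m,1}=L^{1,1;m}$, and by \eqref{E:lz_assoc} its associate is $L^{\infty,\infty;-m}$, which coincides up to equivalent norms with $\exp L^{1/m}$ as recorded in Section~\ref{S:measurable}. The optimality of all the targets among rearrangement-invariant spaces, as $\Omega$ ranges in $\mathcal J_1$, is inherited directly from the optimality statement in Theorem~\ref{T:eucl_optimal-div}. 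I do not expect any serious obstacle; the whole argument is computational, the only delicate point being the identification $T=(m-1)!\,T_1^m$, which cleanly reduces the $p<\infty$ case to the Hardy inequality on Lorentz spaces and isolates the endpoint $p=q=\infty$ as a single Fubini calculation.
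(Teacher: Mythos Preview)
Your argument is correct. The paper's proof is a one-line reference to Theorem~\ref{EX:expo_lz} with $\beta=1$, which in turn relies on Proposition~\ref{P:prop3} (boundedness of $f\mapsto f^{**}$ on $X'$) for the case $p<\infty$, and on Theorem~\ref{T:lenka-remark-3} together with the iteration principle (Theorem~\ref{T:reiteration} and Remark~\ref{R:note}) for $p=q=\infty$. Your route is more self-contained: the factorization $T=(m-1)!\,T_1^{m}$ is exactly \eqref{E:rij} with $I(t)=t$, and it lets you reduce the $p<\infty$ case to the classical Hardy inequality on $L^{p',q'}$ in one stroke, while the $p=q=\infty$ endpoint becomes a single Fubini computation rather than a one-step identification followed by iteration. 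The paper's packaging has the advantage that the same machinery simultaneously handles all Boltzmann measures $\gamma_{n,\beta}$; your approach trades that generality for a shorter, standalone proof in the $\mathcal J_1$ case.
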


Our last application in this section concerns Orlicz-Sobolev spaces.
Let $n\in\N$, $n\geq 2$, $m\in\N$,  $\alpha \in [\frac1{n'},1)$, and
let $A$ be a Young function. We may assume, without loss of
generality, that $m<\frac 1{1-\alpha}$ and
\begin{equation}\label{orlicz1}
\int _0 \bigg(\frac t{A(t)}\bigg)^{\frac {m(1-\alpha )}{1-m(1-\alpha
)}} dt < \infty.
\end{equation}
Indeed, by \eqref{B.6}, the function $A$ can be modified near $0$,
if necessary, in such a way that \eqref{orlicz1} is fulfilled, on
leaving the space $V^m L^A(\Omega )$ unchanged (up to equivalent
norms).

If $m< \frac 1{1-\alpha}$ and the integral
\begin{equation}\label{orlicz1bis}
\int ^\infty \bigg(\frac t{A(t)}\bigg)^{\frac {m(1-\alpha
)}{1-m(1-\alpha )}} dt
\end{equation}
diverges, we define the function $H_{m, \alpha} : [0, \infty ) \to
[0, \infty )$  as
\[
H_{m, \alpha}(s) = \bigg(\int _0^s \bigg(\frac t{A(t)}\bigg)^{\frac
{m(1-\alpha )}{1-m(1-\alpha )}} dt\bigg)^{1-m(1-\alpha )} \qquad
\hbox{for $s \geq 0$,}
\]
and the Young function $A_{m,\alpha}$ as
\[
A_{m,\alpha} (t) = A(H_{m, \alpha}^{-1}(t)) \qquad \hbox{for $t \geq
0$.}
\]

\begin{thm}\label{EX:eucl_orlicz}
Assume that $n\in\N$, $n\geq 2$,  $m\in\N$, $\alpha \in
[\frac1{n'},1)$ and   $\Omega \in \mathcal J _\alpha$. Let  $A$ be a
Young function fulfilling \eqref{orlicz1}. Then
\begin{equation}\label{orlicz4}
 V^mL^{A}(\Omega ) \to \begin{cases}
L^{A_{m,\alpha}}(\Omega ) & \hbox{if $m< \frac 1{1-\alpha}$, and the
integral
\eqref{orlicz1bis} diverges,} \\
L^\infty (\Omega )  & \hbox{if either $m\geq \frac 1{1-\alpha}$, or
$m< \frac 1{1-\alpha}$ and the integral \eqref{orlicz1bis}
converges.}
\end{cases}
\end{equation}
Moreover, the target spaces in \eqref{orlicz4} are optimal among all
Orlicz spaces, as $\Omega$ ranges in $\mathcal J _\alpha$.
\end{thm}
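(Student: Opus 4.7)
The plan is to specialize the general machinery to Orlicz norms via the reduction principle. By Theorem~\ref{T:eucl_reduction-div}, the embedding $V^mL^A(\Omega)\to Y(\Omega)$ holds for every $\Omega\in\mathcal J_\alpha$ if and only if the weighted Hardy-type inequality
\[
  \left\|\int_t^1 f(s)\,s^{-1+m(1-\alpha)}\,ds\right\|_{Y(0,1)} \leq C\,\|f\|_{L^A(0,1)}
\]
holds for every nonnegative $f$. The problem therefore reduces to identifying the smallest Orlicz space $L^B(0,1)$ which can serve as the target. Note that by \eqref{B.6}, modifying $A$ near $0$ leaves $V^mL^A(\Omega)$ unchanged up to equivalent norms, so \eqref{orlicz1} may always be assumed, ensuring in particular that $f\mapsto\int_t^1 f(s)\,s^{-1+m(1-\alpha)}\,ds$ is well-defined on $L^A(0,1)$.

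First I would dispose of the $L^\infty$ cases. If $m\geq \frac1{1-\alpha}$, then $s^{-1+m(1-\alpha)}$ is bounded on $(0,1)$, so $\int_t^1 f(s)s^{-1+m(1-\alpha)}ds\leq C\int_0^1 f(s)ds\leq C'\|f\|_{L^A(0,1)}$ by axiom (P5), giving $V^mL^A(\Omega)\to L^\infty(\Omega)$. If instead $m<\frac1{1-\alpha}$ and the integral in \eqref{orlicz1bis} converges, I would appeal to Corollary~\ref{linfinity} with $I(s)=s^\alpha$ and observe that, via the standard identification of $(L^A)'$ with the Orlicz space built on the complementary Young function $\widetilde A$, the finiteness of the norm $\bigl\|s^{-1+m(1-\alpha)}\bigr\|_{(L^A)'(0,1)}$ is, by a direct computation with the Luxemburg functional and an integration by substitution, equivalent to convergence of \eqref{orlicz1bis}.

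The core of the argument concerns the genuine Orlicz target, namely the case $m<\frac1{1-\alpha}$ with \eqref{orlicz1bis} divergent. Here I would invoke the sharp Orlicz-Hardy inequality technology developed in \cite{Ci_ind, Cianchiibero}: the function $H_{m,\alpha}$ is constructed precisely so that, setting $A_{m,\alpha}=A\circ H_{m,\alpha}^{-1}$, the change of variables $t=H_{m,\alpha}(\tau)$ reduces the weighted Hardy inequality with kernel $s^{-1+m(1-\alpha)}$ between $L^A$ and $L^{A_{m,\alpha}}$ to an unweighted one, which is valid, and moreover $A_{m,\alpha}$ dominates near infinity every other Young function $B$ for which the corresponding Hardy inequality holds. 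Combined with the reduction principle, this yields $V^mL^A(\Omega)\to L^{A_{m,\alpha}}(\Omega)$ for every $\Omega\in\mathcal J_\alpha$.

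Optimality is then extracted from the converse direction of Theorem~\ref{T:eucl_reduction-div}: if $V^mL^A(\Omega)\to L^B(\Omega)$ were to hold for every $\Omega\in\mathcal J_\alpha$, then the one-dimensional Hardy inequality must hold with $L^B(0,1)$ on the left, forcing $A_{m,\alpha}$ to dominate $B$ near infinity, whence $L^{A_{m,\alpha}}(\Omega)\to L^B(\Omega)$ by \eqref{B.6}; the $L^\infty$ cases are automatic. The principal obstacle is the sharp Orlicz-Hardy inequality at the heart of the third paragraph, namely the verification that $A_{m,\alpha}$ is genuinely the smallest Young function for which the weighted Hardy bound holds — the upper bound amounts to a careful estimate of the Luxemburg norm of the integrated kernel, while the matching lower bound requires test functions $f$ concentrated near $0$ in a manner dictated by the definition of $H_{m,\alpha}$, which is delicate because the weight and the Young function interact non-trivially through the composition $A\circ H_{m,\alpha}^{-1}$.
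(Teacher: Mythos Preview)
Your proposal is correct and follows essentially the same route as the paper: reduce via Theorem~\ref{T:eucl_reduction-div} to the one-dimensional weighted Hardy inequality with kernel $s^{-1+m(1-\alpha)}$, and then invoke the existing sharp Orlicz--Hardy theory to identify $A_{m,\alpha}$ (respectively $L^\infty$) as the optimal Orlicz target. The paper compresses all of this into a single sentence, citing \cite[Theorem~4]{CianchiMZ} rather than \cite{Ci_ind, Cianchiibero} for the Orlicz--Hardy step, but the strategy is identical.
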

Theorem \ref{EX:eucl_orlicz} follows from Theorem
\ref{T:eucl_reduction-div}, via \cite[Theorem 4]{CianchiMZ}.

The first case of embedding \eqref{orlicz4} can be enhanced, on
replacing the optimal Orlicz target spaces with the optimal
rearrangement-invariant target spaces. The latter turn out to belong
to the family of Orlicz-Lorentz spaces defined in Section
\ref{S:measurable}.

Assume that $m< \frac 1{1-\alpha}$, and the integral
\eqref{orlicz1bis} diverges. Let $a$ be the left-continuous function
appearing in \eqref{young},
 and let $B$ be the
Young function given by
\[
B(t) = \int _0^t b(\tau ) d\tau \qquad \quad \hbox{for $t \geq 0$},
\]
where $b$ is the non-decreasing, left-continuous function in $[0,
\infty )$ obeying
\[
 b^{-1}(s ) = \bigg(\int _{a^{-1}(s )}^{\infty}\bigg(\int _0^\tau
\bigg(\frac{1}{a(t)}\bigg)^{\frac{m(1-\alpha)}{1-m(1-\alpha)}}
 dt\bigg)^{-\frac{1}{m(1-\alpha )}}\frac{d\tau}{a(\tau )^{\frac{1}{1-m(1-\alpha
)}}}\bigg)^{\frac{m(1-\alpha )}{m(1-\alpha ) -1}}\,\,\,\quad{\rm
for}\,\,\,
 s \geq 0\,.
\]
Here,  $a^{-1}$ and $b^{-1}$ denote the (generalized)
left-continuous inverses of $a$ and $b$, respectively.

Recall from Section \ref{S:measurable} that
 $L(\frac 1{m(1-\alpha )}, 1,
B)(\Omega)$ is the Orlicz-Lorentz space built upon  the function
norm given by
\[
\|f\|_{L(\frac 1{m(1-\alpha )}, 1, B)(0,1)} = \|s^{-m (1-\alpha )}
f^* (s)\|_{L^B(0, 1)}
\]
for $f \in \mathcal M _+(0,1)$.

\begin{thm}\label{EX:eucl_orlicz-lorentz}
Assume that $n\in\N$,
$n\geq 2$,  $m\in\N$, $\alpha \in [\frac1{n'},1)$ and   $\Omega \in
\mathcal J _\alpha$. Let  $A$ be a Young function fulfilling
\eqref{orlicz1}.
 Assume that $m<
\tfrac 1{1-\alpha}$, and  the integral in \eqref{orlicz1bis}
diverges. Then
\begin{equation}\label{orlicz5}
 V^mL^{A}(\Omega ) \to L(\tfrac 1{m(1-\alpha )}, 1,
B)(\Omega),
\end{equation}
 and the target space in \eqref{orlicz5} is optimal among
all rearrangement-invariant spaces, as $\Omega$ ranges in $\mathcal
J _\alpha$.
\end{thm}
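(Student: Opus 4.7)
The proof will proceed by a two-step reduction. First, Theorem~\ref{T:eucl_optimal-div} applied with $X(\Omega)=L\sp A(\Omega)$ (and $\alpha\in[\tfrac{1}{n'},1)$) asserts that the optimal rearrangement-invariant target for the Sobolev embedding
$V\sp m L\sp A(\Omega)\to Y(\Omega)$, as $\Omega$ ranges over $\mathcal J_\alpha$, is the space $(L\sp A)_{m,\alpha}(\Omega)$, whose associate function norm admits the presentation
\begin{equation*}
\|f\|_{(L\sp A)_{m,\alpha}'(0,1)} = \left\|s\sp{-1+m(1-\alpha)}\int_0\sp s f\sp*(r)\,dr\right\|_{(L\sp A)'(0,1)}=\bigl\|s\sp{m(1-\alpha)}f\sp{**}(s)\bigr\|_{L\sp{\widetilde A}(0,1)},
\end{equation*}
where $\widetilde A$ is the Young function complementary to $A$, and the last equality is understood up to equivalent norms via the standard identification $(L\sp A)'\approx L\sp{\widetilde A}$. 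Thus, both the embedding \eqref{orlicz5} and its optimality are reduced to establishing the equivalence
\begin{equation*}
(L\sp A)_{m,\alpha}(0,1) = L\bigl(\tfrac{1}{m(1-\alpha)},1,B\bigr)(0,1)
\end{equation*}
at the level of representation spaces on $(0,1)$; the optimality assertion of the theorem then transfers automatically from that of Theorem~\ref{T:eucl_optimal-div}.

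Second, to prove this equivalence I would compare the two function norms by passing to associates. The associate of the Orlicz--Lorentz norm $\|s\sp{-m(1-\alpha)}f\sp*(s)\|_{L\sp B(0,1)}$ can be computed by Luxemburg-norm duality combined with the monotone change of variables $s\mapsto a\sp{-1}(s)$; the point is that the specific definition of $B$ through the formula for $b\sp{-1}$ in terms of $a\sp{-1}$ is tailored precisely so that this computed associate matches, up to a multiplicative constant, the Luxemburg expression $\|s\sp{m(1-\alpha)} f\sp{**}(s)\|_{L\sp{\widetilde A}(0,1)}$ appearing above. This is the higher-order analogue --- with $1/n$ replaced by $m(1-\alpha)$ --- of the first-order Orlicz--Lorentz identification carried out in~\cite{Cianchiibero} and~\cite{CianchiMZ}. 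The assumptions $m<1/(1-\alpha)$ and divergence of the integral in~\eqref{orlicz1bis} guarantee, respectively, that $m(1-\alpha)\in(0,1)$ and that the function $b\sp{-1}$ produced by the formula is finite, non-decreasing, and vanishes at $0$, so that $B$ is a genuine Young function and the associated Orlicz--Lorentz space is strictly smaller than $L\sp\infty$.

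The principal obstacle will be precisely this explicit identification of the associate norm of the weighted Hardy-type operator $f\sp*\mapsto s\sp{m(1-\alpha)}f\sp{**}(s)$ on $L\sp{\widetilde A}(0,1)$ when acting on nonincreasing inputs. The argument requires combining: an O'Neil-type estimate reducing the Luxemburg norm with weight $s\sp{m(1-\alpha)}$ and density $\widetilde A$ to an expression involving the inverse $a\sp{-1}$; a change of variables $\sigma = a\sp{-1}(s)$; and a final inversion step producing the function $b\sp{-1}$ in the form prescribed before the statement of the theorem. The verification that the resulting $B$ is convex and satisfies the $\Delta_2$-type conditions needed for the Orlicz--Lorentz norm to be actually rearrangement-invariant is routine but tedious, and hinges on the hypothesis~\eqref{orlicz1} normalising the behaviour of $A$ near $0$.
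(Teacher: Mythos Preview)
Your proposal is correct and essentially matches the paper's approach, with only a cosmetic difference in framing. You route through Theorem~\ref{T:eucl_optimal-div} (abstract optimal target $(L^A)_{m,\alpha}$, then identify it concretely with the Orlicz--Lorentz space by computing associates), whereas the paper routes through Theorem~\ref{T:eucl_reduction-div} (reduce to the one-dimensional Hardy inequality~\eqref{E:eucl_condition-div}, then invoke \cite[inequality~(3.1)]{Cianchiibero} directly). These two routes are equivalent: the one-dimensional inequality you would need to verify in order to compute the associate norm of $(L^A)_{m,\alpha}$ is exactly the inequality cited from~\cite{Cianchiibero}, with $1/n$ replaced by $m(1-\alpha)$.

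In particular, the ``principal obstacle'' you flag --- the explicit identification of the associate of the weighted Hardy operator on $L^{\widetilde A}$ --- need not be redone from scratch; it is precisely what \cite[inequality~(3.1)]{Cianchiibero} provides, and a citation suffices. Your framing via Theorem~\ref{T:eucl_optimal-div} has the minor advantage that optimality is automatic once the identification is made, whereas in the paper's formulation optimality is implicit in the sharpness of the cited inequality together with the converse direction of Theorem~\ref{T:eucl_reduction-div}.
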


Embedding \eqref{orlicz5} is a consequence of  Theorem
\ref{T:eucl_reduction-div}, and
  of
\cite[inequality (3.1)]{Cianchiibero}.

\begin{example}\label{log}
{\rm Consider the case when
$$\hbox{ $A(t) \approx t^p (\log t)^\beta$ near infinity, where either $p>1$
and $\beta \in R$, or $p=1$ and $ \beta \geq 0$.}$$ Hence,
$L^A(\Omega) = L^p{\log}^\beta L(\Omega )$. An application of Theorem
\ref{EX:eucl_orlicz} tells us that
\begin{equation}\label{log1}
 V^mL^p{\log}^\beta L(\Omega ) \to \begin{cases}
L^{\frac p{1-pm(1-\alpha )}}{\log}^\frac \beta{1-pm(1-\alpha
)}L(\Omega ) & \hbox{if
$mp(1-\alpha )< 1$,} \\
\exp L^{\frac 1{1-(1+\beta )m(1-\alpha)}} (\Omega ) & \hbox{if
$mp(1-\alpha )= 1$ and $\beta < \frac{1-m(1-\alpha
)}{m(1-\alpha )}$,} \\
\exp\exp L^{\frac 1{1-m(1-\alpha)}} (\Omega ) & \hbox{if
$mp(1-\alpha )= 1$ and $\beta = \frac{1-m(1-\alpha )}{m(1-\alpha
)}$,}
\\
L^\infty (\Omega )  & \hbox{if either $mp(1-\alpha )> 1$,} \\ &
\hbox{or $mp(1-\alpha )= 1$ and $\beta > \frac{1-m(1-\alpha
)}{m(1-\alpha )}$.}
\end{cases}
\end{equation}
Moreover, the target spaces in \eqref{log1} are optimal among all
Orlicz spaces, as $\Omega$ ranges in $\mathcal J _\alpha$.

The first three embeddings in \eqref{log1} can be improved on
allowing more general rearrangement-invariant target spaces. Indeed,
we have that
\begin{equation}\label{log2}
 V^mL^p{\log}^\beta L(\Omega ) \to \begin{cases}
L^{\frac p{1-pm(1-\alpha )},p;\frac \beta p}(\Omega ) & \hbox{if
$mp(1-\alpha )<
1$,} \\
L^{\infty,\frac 1{m(1-\alpha)};m(1-\alpha)\beta -1}(\Omega ) &
\hbox{if $mp(1-\alpha )= 1$ and $\beta < \frac{1-m(1-\alpha
)}{m(1-\alpha )}$,} \\
L^{\infty,\frac 1{m(1-\alpha)};-m(1-\alpha), -1}(\Omega )& \hbox{if
$mp(1-\alpha )= 1$ and $\beta = \frac{1-m(1-\alpha )}{m(1-\alpha
)}$,}
\end{cases}
\end{equation}
the targets  being optimal among all rearrangement-invariant spaces
in \eqref{log2}  as $\Omega$ ranges among all domains in $\mathcal J _\alpha$.
This is a consequence of Theorem \ref{EX:eucl_orlicz-lorentz}, and
of the fact that the Orlicz-Lorentz spaces $L(\frac 1{m(1-\alpha )},
1, B)(\Omega)$ associated with the present choices of the function
$A$ agree (up to equivalent norms) with the (generalized)
Lorentz-Zygmund spaces appearing on the right-hand side of
\eqref{log2}.
 }
\end{example}

\section{Sobolev embeddings in product probability
spaces}\label{S:probability}

The  class of product probability measures in $\rn$, $n \geq 1$,
which we consider in this section  arises in connection with the
study of  generalized hypercontractivity theory and integrability
properties of the associated heat semigroups. The isoperimetric
problem in the corresponding probability spaces was studied in
\cite{BCR} -- see also \cite{ BCR1, BH,  Le1, Le2}.

Assume that $\Phi: [0,\infty) \to [0,\infty)$ is a strictly
increasing convex function on $[0,\infty)$, twice  continuously
differentiable in $(0, \infty)$, such that $\sqrt{\Phi}$ is concave
and $\Phi (0)=0$. Let  $\mu_{\Phi}$ be the probability measure on
$\R$ given by
\begin{equation}\label{E:measure}
d\mu_{\Phi}(x)=c_{\Phi}e\sp{-\Phi(|x|)}\,dx,
\end{equation}
where $c_{\Phi}$ is a constant chosen in such a way that
$\mu_{\Phi}(\R)=1$. The product
 measure $\mufn$ on $\rn$, $n \geq 1$, generated by   $\mu_{\Phi}$, is then defined as
\begin{equation}\label{prodmeasure}
\mufn=\underbrace{\muf\times\dots\times\muf}_{n-\textup{times}}.
\end{equation}
Clearly, $\mu_{\Phi, 1}= \mu_{\Phi}$, and  $(\rn,\mufn)$ is a probability space for every $n\in\N$.

The main example of a~measure $\mu_{\Phi}$ is obtained by taking
\begin{equation}\label{phigauss}
\Phi(t)=\tfrac12t\sp{2}.
\end{equation}
 This choice yields
$\mu_{\Phi,n}=\gamma_n$,  the Gauss measure which obeys
\[
d \gamma _n (x) =  (2\pi)^{-\frac n2} e^{-\frac{|x|^2}{2}}dx.
\]
More generally, given any $\beta\in[1,2]$,   the  Boltzmann measure
$\gamma _{n,\beta }$ in $\rn$, associated with
\begin{equation}\label{E:def-boltzmann}
\Phi (t) = \tfrac 1\beta t^\beta,
\end{equation}
satisfies the above assumptions.
\par
Let $H:\R\to(0,1)$ be defined as
\begin{equation}\label{H}
H(t)=\int_t\sp{\infty}c_{\Phi}e\sp{-\Phi(|r|)}\,dr \quad \hbox{for
$t \in \R$,}
\end{equation}
and let $F_\Phi : [0,1] \to [0, \infty )$ be given by
\[
F_\Phi (s)  = c_{\Phi}e\sp{-\Phi(|H^{-1}(s)|)} \quad \hbox{for $s
\in (0,1)$, \,\, and} \quad F_\Phi (0)= F_\Phi (1) =0.
\]
 Since $\mu _\Phi$ is
a probability measure and $\mu_{\Phi,n}$ is defined  by
\eqref{prodmeasure}, it is easily seen that, for each $i=1, \dots ,
n$,
\begin{equation}\label{measphi}
\mu_{\Phi,n} (\{(x_1, \dots , x_n): x_i >t\}) = H(t) \quad \hbox{for
$t \in \R$,}
\end{equation}
and
\[
P_{\mu_{\Phi,n}}(\{(x_1, \dots , x_n): x_i >t\}, \rn)=
c_{\Phi}e\sp{-\Phi(|t|)} = -H'(t) \quad \hbox{for $t \in \R$.}
\]
Hence, $F_\Phi (s)$ agrees with the perimeter of any half-space of
the form $\{x_i >t\}$, whose measure is $s$.

Next, define $L_\Phi : [0, 1] \to [0, \infty)$ as
\begin{equation}\label{Lphi}
L_\Phi (s)  =  s \Phi ' \big(\Phi ^{-1}(\log \big (\tfrac
2s\big)\big)\big)\qquad  \hbox{for $s \in (0,1]$, \,\, and} \quad
L_\Phi (0)=0.
\end{equation}
 Then the isoperimetric function of $(\rn , \mu_{\Phi
, n})$ satisfies
\begin{equation}\label{E:profile}
I_{(\rn , \mu_{\Phi , n})}(s)   \approx F_\Phi (s) \approx L_\Phi
(s) \quad \hbox{for $s\in[0,\frac 12]$}
\end{equation}
(see \cite[Proposition 13 and Theorem 15]{BCR}; note that the second
equivalence in \eqref{E:profile} also relies upon Lemma \ref{delta2} \textup{(ii)}
of Section \ref{S:probability_proofs}). Furthermore, half-spaces,
whose boundary is orthogonal to a coordinate axis,
 are \lq\lq approximate solutions" to the isoperimetric
problem   in $(\rn , \mu_{\Phi , n})$ in the sense that there exist
constants $C_1$ and $C_2$, depending on $n$, such that, for every $s
\in (0, 1)$, any such half-space $V$ with measure $s$  satisfies
$$C_1 P_{\mu_{\Phi , n}} (V, \rn ) \leq I_{(\rn , \mu_{\Phi ,
n})}(s)\leq C_2 P_{\mu_{\Phi , n}} (V, \rn ).$$
In the special case
when $ \mu_{\Phi , n}=  \gamma _n$, the Gauss measure, equation
\eqref{E:profile} yields
\[
I_{(\rn , \gamma _n)}(s)   \approx s \big(\log \tfrac 2s\big)^{\frac
12} \quad \hbox{for $s\in(0,\frac 12]$}.
\]
Moreover, any half-space is, in fact, an exact minimizer in the
isoperimetric inequality \cite{Bor, ST}.

\smallskip

Our reduction theorem for Sobolev embeddings in product probability
spaces reads as follows.

\begin{thm}\label{T:prob-reduction}{\bf [Reduction principle for
product probability spaces]} Let $n \in \N$, $m \in\N$, let
$\mu_{\Phi,n}$ be the probability measure defined by
\eqref{prodmeasure}, and let $\| \cdot \|_{X(0,1)}$ and $\| \cdot
\|_{Y(0,1)}$ be rearrangement-invariant function norms. Then the
following facts are equivalent.

\textup{(i)} The inequality
\begin{equation}\label{E:hardy}
  \left\|\left(\frac{\Phi^{-1}(\log \frac{2}{t})}{\log \frac{2}{t}} \right)^{m} \int_t^1 \frac{f(s)}{s} \left(\log \frac{s}{t}\right)^{m-1}\,ds\right\|_{ Y(0,1)}
  \leq C_1\left\|f\right\|_{X(0,1)}
\end{equation}
holds for some constant $C_1$, and for every nonnegative $f\in
X(0,1)$.

\textup{(ii)}
The embedding
\begin{equation}\label{E:embedding}
  V^mX(\rn,\mufn) \to Y(\rn,\mufn)
\end{equation}
holds.

\textup{(iii)}
 The Poincar\'e inequality
\begin{equation}\label{E:sobolev}
  \left\|u \right\|_{Y(\rn,\mufn)}
  \leq C_2\left\|\nabla^m u\right\|_{X(\rn,\mufn)}
\end{equation}
holds for some constant $C_2$ and every
$u\in V^m_\bot X(\rn,\mufn)$.
\end{thm}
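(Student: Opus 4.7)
The plan is to deduce the theorem from the general reduction principle of Theorem~\ref{T:reduction}, specialized via the isoperimetric estimate~\eqref{E:profile}, and to supply the reverse implication by constructing trial functions adapted to half-spaces, which are approximate isoperimetric minimizers in $(\rn,\mufn)$. By \eqref{E:profile}, condition \eqref{isop-ineq} is met with $I(s)=c\, L_\Phi(c s)$ for a suitable $c>0$, and $L_\Phi$ satisfies \eqref{E:lower-bound}. The equivalence of \textup{(ii)} and \textup{(iii)} will be lifted from Proposition~\ref{poincsob}, and by Remark~\ref{R:star} we may restrict to nonincreasing $f$ when checking \eqref{E:kernel-cond} or \eqref{E:hardy}.

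The core analytical step is to show that, up to multiplicative constants, the kernel in \eqref{E:kernel-cond} with $I = L_\Phi$ coincides with the kernel in \eqref{E:hardy}. The convexity of $\Phi$ combined with the concavity of $\sqrt{\Phi}$ delivers the two-sided asymptotic $x\Phi'(x)\approx \Phi(x)$ near infinity, so that
\[
\Phi'\!\bigl(\Phi^{-1}(u)\bigr) \approx \frac{u}{\Phi^{-1}(u)},
\]
and consequently
\[
L_\Phi(s) \approx \frac{s\,\log\frac{2}{s}}{\Phi^{-1}(\log\frac{2}{s})}\quad\text{for } s\in(0,\tfrac12].
\]
Since the function $r\mapsto \Phi^{-1}(\log\frac2r)/\log\frac2r$ is, up to constants, comparable to its value at the lower endpoint throughout any dyadic interval (a consequence of the $\Delta_2$-type properties implied by \eqref{E:profile}; cf.\ the lemma invoked after \eqref{E:profile}), one obtains
\[
\int_t\sp s\frac{dr}{L_\Phi(r)} \approx \frac{\Phi^{-1}(\log\frac{2}{t})}{\log\frac{2}{t}}\,\log\frac{s}{t},
\]
and therefore
\[
\frac{1}{L_\Phi(s)}\left(\int_t\sp s\frac{dr}{L_\Phi(r)}\right)^{m-1} \approx \left(\frac{\Phi^{-1}(\log\frac{2}{t})}{\log\frac{2}{t}}\right)^{m}\frac{1}{s}\left(\log\frac{s}{t}\right)^{m-1}.
\]
This pointwise equivalence of kernels makes \eqref{E:hardy} equivalent to \eqref{E:kernel-cond}, so the implication \textup{(i)}$\Rightarrow$\textup{(ii)}, \textup{(iii)} follows directly from Theorem~\ref{T:reduction}.

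The main obstacle is the reverse implication \textup{(iii)}$\Rightarrow$\textup{(i)}. My plan is to apply \eqref{E:sobolev} to trial functions $u(x)=\psi(x_1)$ depending only on the first coordinate. By the product structure of $\mufn$ and \eqref{measphi}, the distribution function of $u$ is governed by $H$; for a monotone $\psi$ one has $u^*(s)\approx \psi(H^{-1}(s))$ and $(|\nabla^m u|)^*(s)\approx |\psi^{(m)}(H^{-1}(s))|$. Given a nonnegative nonincreasing $f\in X(0,1)$, I would choose $\psi$ as an $m$-fold antiderivative (started at a large truncation level) of a function whose rearrangement is comparable to $f$, so that $\|\nabla^m u\|_{X(\rn,\mufn)}\lesssim \|f\|_{X(0,1)}$ while the iterated integration reproduces both the logarithmic kernel $(\log\frac{s}{t})^{m-1}$ and the prefactor $(\Phi^{-1}(\log\frac{2}{t})/\log\frac{2}{t})^{m}$ after the change of variables $s=H(x_1)$, by way of $H'(t)=-c_\Phi e^{-\Phi(|t|)}$ and the asymptotic $|H^{-1}(s)|\approx \Phi^{-1}(\log\frac{2}{s})$. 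Subtracting the unique polynomial of degree at most $m-1$ with the right averages (as in the proof of Proposition~\ref{poincsob}) puts the modified function in $V^m_\bot X(\rn,\mufn)$ and only costs a lower-order error controlled by $\|\nabla^m u\|_{X(\rn,\mufn)}$; applying \eqref{E:sobolev} and taking the supremum over admissible $f$ then yields \eqref{E:hardy}. The delicate points will be the sharpness of the rearrangement equivalences $u^*$, $(|\nabla^m u|)^*$ (which reflects the sharpness of the isoperimetric inequality on half-spaces captured in \eqref{E:profile}) and the bookkeeping for the polynomial correction in the non-bounded domain $\rn$.
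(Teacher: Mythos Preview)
Your claimed pointwise equivalence of kernels,
\[
\frac{1}{L_\Phi(s)}\left(\int_t\sp s\frac{dr}{L_\Phi(r)}\right)^{m-1} \approx \left(\frac{\Phi^{-1}(\log\frac{2}{t})}{\log\frac{2}{t}}\right)^{m}\frac{1}{s}\left(\log\frac{s}{t}\right)^{m-1},
\]
is false, and this is the gap in the argument. Your estimate $\int_t^s \frac{dr}{L_\Phi(r)} \approx \frac{\Phi^{-1}(\log\frac2t)}{\log\frac2t}\log\frac{s}{t}$ is correct (it follows from Lemma~\ref{delta2}\,(iii), since the integral equals $\Phi^{-1}(\log\frac2t)-\Phi^{-1}(\log\frac2s)$). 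But the remaining factor $\frac{1}{L_\Phi(s)} \approx \frac{1}{s}\cdot\frac{\Phi^{-1}(\log\frac2s)}{\log\frac2s}$ involves $s$, not $t$. Since $\Phi^{-1}$ is concave with $\Phi^{-1}(0)=0$, the map $u\mapsto\Phi^{-1}(u)/u$ is \emph{non-increasing}; hence for $t<s$ one has $\frac{\Phi^{-1}(\log\frac2s)}{\log\frac2s}\ge \frac{\Phi^{-1}(\log\frac2t)}{\log\frac2t}$, with no uniform upper bound on the ratio as $t\to0$ and $s$ stays bounded away from~$0$ (e.g.\ for $\Phi(t)=\tfrac12 t^2$ the ratio is $\big(\log\frac2t/\log\frac2s\big)^{1/2}$). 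So only the inequality $P^m_\Phi f \lesssim H^m_{L_\Phi} f$ holds pointwise for arbitrary $f$; the reverse fails.

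The paper circumvents this by Proposition~\ref{T:proposition2}: the reverse bound $H^m_{L_\Phi} f \lesssim P^m_\Phi f$ is established \emph{only for non-increasing} $f$, by first checking it on characteristic functions $\chi_{(0,b)}$ (where both integrals can be evaluated in closed form and the comparison reduces exactly to the last inequality in~\eqref{E:e}) and then passing to general non-increasing $f$ by superposition. Combined with Corollary~\ref{C:restriction-of-main-inequality-to-nonincreasing-functions} (your Remark~\ref{R:star}), this yields the equivalence of boundedness of $P^m_\Phi$ and $H^m_{L_\Phi}$ (Proposition~\ref{T:theorem}), which is what you need. Your invocation of Remark~\ref{R:star} is in the right spirit, but you cannot use it to justify a kernel equivalence that simply does not hold; the restriction to non-increasing $f$ enters at the level of the integrated quantities, not the kernels. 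For the reverse implication (ii)$\Rightarrow$(i), your trial-function idea is essentially the paper's: one takes $u(x)=H^m_{F_\Phi}f(H(x_1))$, computes $|\nabla^k u|^*$ explicitly via~\eqref{measphi}, controls the lower-order $L^1$ terms by the boundedness of $H^{m-k}_{L_\Phi}$ on $L^1$, and then again uses Proposition~\ref{T:proposition2} (plus a dilation argument to remove the support restriction) to pass from the $H^m_{L_\Phi}$ inequality to~\eqref{E:hardy}. No polynomial subtraction is needed since one can work directly with~\eqref{E:embedding} rather than~\eqref{E:sobolev}.
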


Let us notice that inequality \eqref{E:hardy} is not
just a specialization of \eqref{E:kernel-cond}, but even a further
simplification of such specialization.

Let
 $\| \cdot \|_{X(0,1)}$ be a rearrangement-invariant function norm,
 and let
$n, m  \in\N$. The  rearrangement-invariant function norm $\|\cdot
\|_{X_{m,\Phi}(0,1)}$ which yields the optimal
rearrangement-invariant target space $Y(\rn,\mufn)$ in embedding
\eqref{E:embedding} is defined as follows. Consider the
rearrangement-invariant function norm
$\|\cdot\|_{\widetilde X_m(0,1)}$ whose associate
norm fulfils
$$
\|g\|_{\widetilde X'_m(0,1)}=
\left\|\frac{1}{s} \int_0^s \left(\log \frac sr\right)^{m-1}
g^*(r)\,dr\right\|_{X'(0,1)}
$$
for  $g \in \M _+ (0,1)$. Then $\|\cdot
\|_{X_{m,\Phi}(0,1)}$ is given by
\begin{equation}\label{E:optimal_range}
\|f\|_{X_{m,\Phi}(0,1)} = \left\|\left(\frac{\log
\frac 2s}{\Phi^{-1}(\log \frac 2s)}\right)^m
f^*(s)\right\|_{\widetilde X_m(0,1)}
\end{equation}
for  $f \in \M _+ (0,1)$.

\begin{remark}\label{R:note}
Note that if $\Phi(t)=t$,   and $m\in\N$, we have that
\[
\widetilde X_m(0,1)=X_{m,\Phi}(0,1)
\]
for every r.i.~norm $\|\cdot\|_{X(0,1)}$.
\end{remark}

\begin{thm}\label{T:optimal_range}{\bf [Optimal target for product
probability spaces]} Let $n$,  $m$,  $\mu_{\Phi,n}$ and  $\| \cdot
\|_{X(0,1)}$ be as in Theorem \ref{T:prob-reduction}.
Then the functional $\|\cdot
\|_{X_{m,\Phi}(0,1)}$, given by \eqref{E:optimal_range},  is
a~rearrangement-invariant function norm satisfying
\begin{equation}\label{E:optimal_range_m}
 V^mX(\rn,\mufn) \to X_{m,\Phi}(\Rn,\mufn),
\end{equation}
and there exists a constant $C$ such that
\begin{equation}\label{E:optimal_poinc_m}
  \left\|u \right\|_{X_{m,\Phi}(\Rn,\mufn)}
  \leq C \left\|\nabla^m u\right\|_{X(\rn,\mufn)},
\end{equation}
for every $u\in
V^m_\bot X(\rn,\mufn)$.
\par\noindent
 Moreover, the function norm
$\|\cdot\|_{X_{m,\Phi}(0,1)}$ is  optimal in \eqref{E:optimal_range_m} and in
\eqref{E:optimal_poinc_m} among all rearrangement-invariant norms.
\end{thm}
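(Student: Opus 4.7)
The plan is to deduce Theorem~\ref{T:optimal_range} from the reduction principle in Theorem~\ref{T:prob-reduction}, which identifies the Sobolev embedding \eqref{E:optimal_range_m} and the Poincar\'e inequality \eqref{E:optimal_poinc_m} with the validity of the one-dimensional Hardy-type inequality \eqref{E:hardy}. Writing
\[
Tf(t) = \left(\frac{\Phi^{-1}(\log\tfrac 2 t)}{\log \tfrac 2 t}\right)^{m} Kf(t),\qquad Kf(t):=\int_t^1 \frac{f(s)}{s}\left(\log \tfrac s t\right)^{m-1}ds,
\]
the whole statement reduces to a one-dimensional question: identify the optimal rearrangement-invariant target norm for the operator $T$ from $X(0,1)$, and verify that it coincides with $\|\cdot\|_{X_{m,\Phi}(0,1)}$. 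First, I would check that \eqref{E:optimal_range} indeed defines an r.i.~function norm: property (P6) is automatic since only $f^*$ enters; axioms (P1)--(P5) are inherited from $\|\cdot\|_{\widetilde X_m(0,1)}$, which is itself r.i.~because its associate $\|\cdot\|_{\widetilde X_m'(0,1)}$ is visibly an r.i.~function norm, once one notes that the weight $w(s):=(\log(2/s)/\Phi^{-1}(\log(2/s)))^m$ is positive and bounded both above and away from zero on every compact subinterval of $(0,1)$.

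To establish the embedding \eqref{E:optimal_range_m}, I would invoke the analogue of Remark~\ref{R:star} in the present setting in order to restrict \eqref{E:hardy} to non-increasing~$f$. For such $f$, the iterated Hardy expression $Kf$ is non-increasing in $t$, while the standing assumption that $\sqrt{\Phi}$ is concave makes $1/w$ non-decreasing in $t$. Balancing these two opposing monotonicities, I would derive a pointwise bound of the form $w(s)(Tf)^*(s)\leq C\,Kf(s/2)$ on $(0,1)$, which, combined with the dilation boundedness of r.i.~norms and the definition of $\|\cdot\|_{X_{m,\Phi}(0,1)}$, yields
\[
\|Tf\|_{X_{m,\Phi}(0,1)} = \|w(\cdot)(Tf)^*(\cdot)\|_{\widetilde X_m(0,1)} \leq C\,\|Kf\|_{\widetilde X_m(0,1)}.
\]
The remaining bound $\|Kf\|_{\widetilde X_m(0,1)}\leq C\|f\|_{X(0,1)}$ follows by duality: for any $g\geq 0$ with $\|g\|_{\widetilde X_m'(0,1)}\leq 1$, Fubini yields
\[
\int_0^1 Kf(t)g(t)\,dt = \int_0^1 f(s)\,\frac 1 s\int_0^s g(t)\left(\log \tfrac s t\right)^{m-1}dt\,ds,
\]
and H\"older's inequality, combined with the Hardy--Littlewood--P\'olya principle replacing $g$ by $g^*$ inside the inner integral, bounds the right-hand side by $\|f\|_{X(0,1)}\|g\|_{\widetilde X_m'(0,1)}$; taking the supremum over $g$ through \eqref{E:pre-duality} concludes the proof.

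For the optimality part, suppose $\|\cdot\|_{Y(0,1)}$ is any r.i.~function norm for which \eqref{E:optimal_range_m} holds. Then Theorem~\ref{T:prob-reduction} forces \eqref{E:hardy} with this $Y$, and by \eqref{E:equivemb} it suffices to prove the associate embedding $Y'(0,1)\hookrightarrow X_{m,\Phi}'(0,1)$. I would dualize \eqref{E:hardy} via the Fubini identity above, read in reverse, obtaining $\|T^{\ast}g\|_{X'(0,1)}\leq C\|g\|_{Y'(0,1)}$ for every $g\geq 0$, where the formal adjoint is $T^{\ast}g(s)=\tfrac 1 s\int_0^s g(t)w(t)^{-1}(\log\tfrac s t)^{m-1}\,dt$. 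A direct computation, using the definition of $\|\cdot\|_{\widetilde X_m'(0,1)}$ and the fact that by rearrangement invariance one may restrict attention to non-increasing $g=g^*$, shows that $\|T^{\ast}g^*\|_{X'(0,1)}$ is equivalent, up to universal constants, to $\|g\|_{X_{m,\Phi}'(0,1)}$. This gives $\|g\|_{X_{m,\Phi}'(0,1)}\leq C\|g\|_{Y'(0,1)}$, i.e.~$Y'\hookrightarrow X_{m,\Phi}'$ as required. The main obstacle will be the pointwise rearrangement estimate $w(s)(Tf)^*(s)\leq C\,Kf(s/2)$: it is precisely this bound that bridges the two-weight inequality \eqref{E:hardy} with the single-weight definition of $X_{m,\Phi}$, and its proof requires a delicate quantitative use of both monotonicities of $Kf$ and $1/w$ made available by the assumptions on $\Phi$.
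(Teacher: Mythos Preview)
Your overall strategy—reducing everything to the one-dimensional Hardy inequality \eqref{E:hardy} via Theorem~\ref{T:prob-reduction} and then analysing the operator $T$ directly—is coherent, but two of the steps you treat as routine are exactly where the real work sits, and neither follows by the elementary means you indicate.

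\smallskip
\textbf{The pointwise rearrangement bound.} You acknowledge that $w(s)(Tf)^*(s)\leq C\,Kf(s/2)$ is the ``main obstacle,'' but ``balancing the two opposing monotonicities'' does not by itself yield such an estimate: a product of a non-increasing and a non-decreasing function can have an arbitrary distribution. What is actually needed is that, for non-increasing $f$, the function $Tf$ is pointwise equivalent to the \emph{genuinely non-increasing} function $H_{L_\Phi}^m f$; this is the content of Proposition~\ref{T:proposition2}, whose proof uses Lemma~\ref{delta2}(iii) and a careful computation on characteristic functions. Once that equivalence is in hand your bound follows, but not before.

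\smallskip
\textbf{The optimality step.} The assertion that $\|T^\ast g^\ast\|_{X'(0,1)}\approx\|g\|_{X_{m,\Phi}'(0,1)}$ is not a ``direct computation.'' Unwinding the definitions, one has $\|g\|_{X_{m,\Phi}'(0,1)}=\sup\{\int f^\ast g^\ast:\|w f^\ast\|_{\widetilde X_m}\leq 1\}$, which is bounded above by $\|w^{-1}g^\ast\|_{\widetilde X_m'(0,1)}$; but the supremum runs only over functions of the form $w\cdot(\text{non-increasing})$, so the reverse inequality is not free. Similarly, $T^\ast g^\ast$ involves $w^{-1}g^\ast$ \emph{without} the rearrangement that $\|\cdot\|_{\widetilde X_m'}$ requires. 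Passing between the ``down'' and rearranged versions of these expressions is precisely the delicate point.

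\smallskip
The paper takes a different and more structural route. It first invokes the general Theorem~\ref{T:optran}, which already gives the abstractly optimal target $X_{m,L_\Phi}(\Rn,\mufn)$, and then proves $X_{m,\Phi}(0,1)=X_{m,L_\Phi}(0,1)$. The identification proceeds by: (a) the chain \eqref{E:1}, using Theorem~\ref{T:lenka-main} (equivalence of $\|R_I^m f^\ast\|_{X'}$ and $\|R_I^m f^\ast\|_{X_d'}$) together with Proposition~\ref{T:proposition2}; (b) a representation trick \eqref{E:21} writing $f^\ast(\Phi^{-1}(\log\tfrac2s)/\log\tfrac2s)^m$ as $R_I h^\ast$ for a suitable auxiliary non-decreasing $I$, so that Theorem~\ref{T:lenka-main} can be applied a second time; and (c) a final duality argument. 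The representation in (b) is the key idea that your approach lacks: it is exactly what lets one replace the non-rearranged integrand by its rearrangement without loss. Also note that verifying axioms (P1) and (P4) for $\|\cdot\|_{X_{m,\Phi}}$ is not as automatic as you suggest—the triangle inequality needs Hardy's lemma (since $w$ is non-increasing), and (P4) uses the embedding $\exp L^{1/m}\to\widetilde X_m$.
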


\begin{remark}\label{n}
{ \rm   Let us emphasize that inequality \eqref{E:hardy} implies
embedding \eqref{E:embedding} with a norm independent of $n$, and
the Poincar\'e inequality \eqref{E:sobolev} with constant $C_2$
independent of $n$. The norm of the optimal embedding
\eqref{E:optimal_range_m}, and the constant $C$ in the corresponding
Poincar\'e inequality \eqref{E:optimal_poinc_m} are independent of
$n$ as well.}
\end{remark}

For a broad class of rearrangement-invariant
function norms $\|\cdot \|_{X(0,1)}$ the expression of the
associated optimal Sobolev target norm $\|\cdot
\|_{X_{m,\Phi}(0,1)}$ can be substantially simplified, as observed
in the next proposition.

\begin{prop}\label{P:prop3}
Let $m\in \mathbb N$ and let $\Phi$ be as
in~\eqref{E:measure}. Suppose that  $\|\cdot\|_{X(0,1)}$ is a
rearrangement-invariant function norm such that the operator
$$
f\mapsto f^{**}
$$
is bounded on $X'(0,1)$. Then
$$
\|f\|_{X_{m,\Phi}(0,1)} \approx \left\|\left(\frac{\log \frac 2s}{\Phi^{-1}(\log \frac 2s)}\right)^m f^*(s)\right\|_{X(0,1)}
$$
up to multiplicative constants independent of $f\in \mathcal M_+(0,1)$.
\end{prop}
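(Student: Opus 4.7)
The plan is to reduce everything to a pointwise identity that rewrites the kernel operator defining $\widetilde X_m'$ as an iterate of the maximal operator $M\colon g\mapsto g^{**}$, and then exploit the assumed boundedness of $M$ on $X'(0,1)$.

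First I would establish, for every $g \in \mathcal M_+(0,1)$ and $m \in \mathbb N$, the identity
\[
\frac{1}{s}\int_0^s\left(\log\frac{s}{r}\right)^{m-1} g^*(r)\,dr \;=\; (m-1)!\,M^m g(s),
\]
where $M^m$ denotes $m$-fold iteration (well-defined since each $M^k g$ is already non-increasing, so $(M^k g)^* = M^k g$). The base case $m=1$ is just $g^{**}=Mg$. For the inductive step I would write $(\log(s/r))^{m-1} = (m-1)\int_r^s (\log(t/r))^{m-2}\frac{dt}{t}$, apply Fubini to switch the order, and recognise the inner integral as $(m-2)!\,t\cdot M^{m-1}g(t)$ by the inductive hypothesis. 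Dividing by $s$ and using that $M^{m-1}g$ is non-increasing then identifies the resulting average with $M^m g(s)$.

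Next I would derive the two-sided norm equivalence for the associate norms. The assumed boundedness $\|Mg\|_{X'(0,1)} \le C\|g\|_{X'(0,1)}$ iterates to $\|M^m g\|_{X'(0,1)} \le C^m \|g\|_{X'(0,1)}$. In the reverse direction, since $g^{**}\ge g^*$ pointwise and $M$ preserves this bound monotonically, an easy induction yields $M^m g \ge g^*$ on $(0,1)$, whence $\|M^m g\|_{X'(0,1)} \ge \|g^*\|_{X'(0,1)} = \|g\|_{X'(0,1)}$. Combined with the identity above, this gives
\[
\|g\|_{X'(0,1)} \;\lesssim\; \|g\|_{\widetilde X_m'(0,1)} \;\lesssim\; \|g\|_{X'(0,1)}.
\]

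Finally, by the involutive property $X''=X$ (with equal norms), the equivalence transfers to the spaces themselves: $\|\cdot\|_{\widetilde X_m(0,1)} \approx \|\cdot\|_{X(0,1)}$. To conclude the proposition, I would apply this with
\[
h(s) \;=\; \left(\frac{\log\frac{2}{s}}{\Phi^{-1}(\log\frac{2}{s})}\right)^m f^*(s),
\]
which is non-increasing in $s$ (the factor in parentheses is non-increasing because convexity of $\Phi$ together with $\Phi(0)=0$ makes $u\mapsto u/\Phi^{-1}(u)$ non-decreasing, and $u=\log(2/s)$ is decreasing in $s$), so that $\|h\|_{X(0,1)}$ is the r.i.~norm of $h$ itself. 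Then by definition of $\|\cdot\|_{X_{m,\Phi}(0,1)}$ and the equivalence above,
\[
\|f\|_{X_{m,\Phi}(0,1)} = \|h\|_{\widetilde X_m(0,1)} \approx \|h\|_{X(0,1)},
\]
which is exactly the claim.

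The only genuinely delicate step is the identity in the first paragraph; the care needed is verifying at each stage of the induction that the function being fed into the next $M$ is non-increasing, so that the iteration is consistent and the application of Fubini produces exactly $M^m g$ rather than some rearranged variant. After that, the argument is a purely formal manipulation of associate norms and a monotonicity check for $h$.
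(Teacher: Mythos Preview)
Your proof is correct and follows essentially the same route as the paper's: both show $\widetilde X_m(0,1)=X(0,1)$ by recognising the kernel $\frac{1}{s}\int_0^s(\log\frac{s}{r})^{m-1}g^*(r)\,dr$ as (a constant times) the $m$-fold iterate of $g\mapsto g^{**}$, then invoke the assumed boundedness on $X'(0,1)$ and dualise. You have simply supplied the inductive verification of the identity and the pointwise lower bound $M^m g\ge g^*$ that the paper leaves implicit.
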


The rearrangement-invariant spaces on which the
operator \lq\lq ** " is bounded are fully characterized in terms of
their upper Boyd index. In particular, the assumptions of
Proposition \ref{P:prop3} are satisfied if and only if the upper
Boyd index of $X'(0,1)$ is strictly smaller that $1$ \cite[Theorem
5.15]{BS}.

The   iteration principle for Sobolev embeddings on product
probability measure spaces, on which Theorem \ref{T:prob-reduction}
rests, reads as follows.

\begin{thm}\label{T:reiteration}{\bf [Iteration principle for product probability
spaces]}
 Let $n$,  $\mu_{\Phi,n}$ and  $\| \cdot
\|_{X(0,1)}$ be as in Theorem \ref{T:prob-reduction}, and let
 $k, h \in\N$. Then,
\[
\big(X_{k,\Phi}\big)_{h,\Phi}(\Rn,\mufn)= X_{k+h,\Phi}(\Rn,\mufn)\,,
\]
up to equivalent norms.
\end{thm}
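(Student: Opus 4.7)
The plan is to reduce Theorem~\ref{T:reiteration} to the abstract iteration principle of Theorem~\ref{T:reit} by identifying the explicit product-measure norm $\|\cdot\|_{X_{m,\Phi}(0,1)}$ with the abstract norm $\|\cdot\|_{X_{m,I}(0,1)}$ corresponding to a suitable $I$. A natural choice is $I=L_\Phi$ (or any non-decreasing representative of its equivalence class), which by~\eqref{E:profile} is equivalent near $0$ to the isoperimetric function of $(\rn,\mufn)$ and satisfies the admissibility condition~\eqref{E:lower-bound} by Proposition~\ref{lower}. Then Theorem~\ref{T:reit} gives $(X_{k,L_\Phi})_{h,L_\Phi}=X_{k+h,L_\Phi}$ up to equivalent norms. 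Since the operations $Y\mapsto Y_{m,\Phi}$ and $Y\mapsto Y_{m,L_\Phi}$ are defined through associate-norm formulas and hence preserve equivalence classes of rearrangement-invariant function norms, the entire problem will reduce to establishing the identification
\begin{equation*}
\|\cdot\|_{X_{m,\Phi}(0,1)} \approx \|\cdot\|_{X_{m,L_\Phi}(0,1)} \qquad\text{for every } m\in\N
\end{equation*}
and every rearrangement-invariant base norm $\|\cdot\|_{X(0,1)}$.

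For this identification I would appeal to uniqueness (up to equivalent norms) of the optimal r.i.\ Sobolev target. By Theorems~\ref{T:prob-reduction} and~\ref{T:optimal_range}, $X_{m,\Phi}$ is the optimal r.i.\ target for $V^mX(\rn,\mufn)\hra\cdot$. On the other hand, Theorems~\ref{T:reduction} and~\ref{T:optran}, applied with $I=L_\Phi$, supply $X_{m,L_\Phi}$ as an r.i.\ target for the same embedding, and it is optimal provided that the kernel inequality~\eqref{E:kernel-cond} (with $I=L_\Phi$) is equivalent to the Sobolev embedding~\eqref{E:main-emb}. That equivalence follows from Theorem~\ref{T:prob-reduction} together with the equivalence between~\eqref{E:kernel-cond} for $I=L_\Phi$ and the simplified Hardy-type inequality~\eqref{E:hardy}. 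Both $X_{m,\Phi}$ and $X_{m,L_\Phi}$ are thereby optimal r.i.\ targets, which forces them to be equivalent. With the identification at hand, the chain
\begin{equation*}
(X_{k,\Phi})_{h,\Phi} \approx (X_{k,\Phi})_{h,L_\Phi} \approx (X_{k,L_\Phi})_{h,L_\Phi} = X_{k+h,L_\Phi} \approx X_{k+h,\Phi}
\end{equation*}
concludes the proof.

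The main obstacle will be the missing link in the argument above, namely the equivalence of the integral operators appearing in~\eqref{E:kernel-cond} (for $I=L_\Phi$) and in~\eqref{E:hardy}. The change of variable $u=\Phi^{-1}(\log(2/r))$ produces $dr/L_\Phi(r) = -\,du$, hence
\begin{equation*}
\int_t^s \frac{dr}{L_\Phi(r)} = \Phi^{-1}\!\left(\log\tfrac{2}{t}\right) - \Phi^{-1}\!\left(\log\tfrac{2}{s}\right) \qquad\text{for } 0 < t \leq s.
\end{equation*}
The assumption that $\sqrt{\Phi}$ is concave then delivers the $\Delta_2$-type properties of $\Phi$ recorded in Lemma~\ref{delta2}, which I would use to show that, up to multiplicative constants independent of $t$ and $s$, the above difference behaves like $[\Phi^{-1}(\log(2/t))/\log(2/t)]\cdot\log(s/t)$. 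Combined with the weight $1/L_\Phi(s)$, this reproduces exactly the structure of the operator in~\eqref{E:hardy}. This analytic step is elementary in spirit but is the technical heart of the argument.
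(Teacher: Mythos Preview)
Your overall strategy coincides with the paper's: reduce to the abstract iteration principle (Theorem~\ref{T:reit}) via the identification $X_{m,\Phi}(0,1)=X_{m,L_\Phi}(0,1)$. The paper's proof is in fact a single sentence citing exactly these two ingredients; the identification itself is established inside the proof of Theorem~\ref{T:optimal_range}, so once that theorem is available your optimality-uniqueness argument is valid but redundant --- you would be using the conclusion of Theorem~\ref{T:optimal_range} to recover a fact that is already proved in its demonstration.

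There is, however, a genuine oversimplification in your ``technical heart''. The pointwise estimate you describe, namely that $1/L_\Phi(s)$ combined with $\big(\Phi^{-1}(\log\tfrac2t)-\Phi^{-1}(\log\tfrac2s)\big)^{m-1}$ reproduces the kernel of~\eqref{E:hardy} up to two-sided constants, is \emph{false} for general $f$: the weight $1/L_\Phi(s)\approx \Phi^{-1}(\log\tfrac2s)/(s\log\tfrac2s)$ carries an $s$-dependent factor $\Phi^{-1}(\log\tfrac2s)/\log\tfrac2s$, while~\eqref{E:hardy} needs the corresponding factor evaluated at $t$. Since $r\mapsto\Phi^{-1}(r)/r$ is non-increasing, one inequality is immediate, but the reverse fails pointwise. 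The paper resolves this in Proposition~\ref{T:proposition2}: the bound $H^m_{L_\Phi}f\lesssim P^m_\Phi f$ is proved only for \emph{non-increasing} $f$ (first for characteristic functions via~\eqref{E:e}, then by superposition), and the passage to general $f$ at the level of boundedness between rearrangement-invariant spaces requires Corollary~\ref{C:restriction-of-main-inequality-to-nonincreasing-functions}. Your sketch would need this monotonicity-restriction step to go through.
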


Specialization of Theorems \ref{T:prob-reduction},
\ref{T:optimal_range} and~\ref{T:reiteration} to the case of
\eqref{phigauss} easily leads to the following results for Gaussian
Sobolev embeddings of any order.

\begin{thm}\label{reductiongauss}{\bf [Reduction principle in Gauss
space]} Let $n \in \N$, $m \in\N$, and let $\| \cdot \|_{X(0,1)}$ and $\|
\cdot \|_{Y(0,1)}$ be rearrangement-invariant function norms. Then
the following facts are equivalent.

\textup{(i)} The inequality
\[
  \left\|\frac 1{\big( \log \frac 2s \big)^{\frac {m}2}}\int_{s}^1
\frac{f(r)}{r}\left(\log \frac rs\right)^{m-1}
  \,dr\right\|_{ Y(0,1)}
  \leq C_1\left\|f\right\|_{X(0,1)}
\]
holds for some constant $C_1$, and for every nonnegative $f\in
X(0,1)$.

\textup{(ii)}   The embedding
\[
  V^mX(\rn,\gamma_n) \to Y(\rn,\gamma_n)
\]
holds.

\textup{(iii)}  The Poincar\'e inequality
\[
  \left\|u \right\|_{Y(\rn,{\gamma_n})}
  \leq C_2\left\|\nabla ^m u\right\|_{X(\rn,{\gamma_n})}
\]
holds for some constant $C_2$,
and for every
$u\in V^m_\bot X(\rn,{\gamma_n})$.
\end{thm}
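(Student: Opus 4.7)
The plan is to obtain Theorem \ref{reductiongauss} as a direct specialization of Theorem \ref{T:prob-reduction} to the choice $\Phi(t)=\tfrac12 t^2$ that, by \eqref{phigauss}, produces the Gauss measure $\gamma_n=\mu_{\Phi,n}$ on $\rn$. The first step is to verify that this $\Phi$ meets all the hypotheses imposed in Section \ref{S:probability}: it is strictly increasing and convex on $[0,\infty)$, twice continuously differentiable on $(0,\infty)$, vanishes at $0$, and $\sqrt{\Phi(t)}=t/\sqrt 2$ is concave (in fact linear). Consequently Theorem \ref{T:prob-reduction} applies with $\mu_{\Phi,n}$ replaced by $\gamma_n$, and it yields the equivalence of conditions (ii) and (iii) of Theorem \ref{reductiongauss} with the one-dimensional inequality \eqref{E:hardy}. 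It remains to identify \eqref{E:hardy} in this case with the inequality in (i).

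For $\Phi(t)=\tfrac12 t^2$ one has $\Phi^{-1}(s)=\sqrt{2s}$, so, for every $t\in(0,1)$,
\[
\left(\frac{\Phi^{-1}\bigl(\log\frac2t\bigr)}{\log\frac2t}\right)^{m}
=\left(\frac{\sqrt{2\log\frac2t}}{\log\frac2t}\right)^{m}
=\frac{2^{m/2}}{\bigl(\log\frac2t\bigr)^{m/2}}.
\]
Substituting this identity into \eqref{E:hardy}, and absorbing the harmless multiplicative constant $2^{m/2}$ into $C_1$, transforms the left-hand side of \eqref{E:hardy} into
\[
\left\|\frac{1}{\bigl(\log\frac2t\bigr)^{m/2}}\int_t^1\frac{f(s)}{s}\Bigl(\log\frac{s}{t}\Bigr)^{m-1}ds\right\|_{Y(0,1)},
\]
which is precisely the left-hand side appearing in condition (i) of Theorem \ref{reductiongauss} (after renaming $t$ as $s$ and $s$ as $r$, in agreement with the notation of the statement).

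Combining the two steps, the equivalence (i) $\Leftrightarrow$ (ii) $\Leftrightarrow$ (iii) of Theorem \ref{reductiongauss} follows at once. There is no substantive obstacle here: the entire content of Theorem \ref{reductiongauss} is contained in Theorem \ref{T:prob-reduction}, and the only task is the explicit computation of $\Phi^{-1}$ for the Gaussian choice and the bookkeeping of multiplicative constants, which does not affect the existence of $C_1$ and $C_2$ in any of the three formulations.
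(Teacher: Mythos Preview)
Your proposal is correct and matches the paper's approach exactly: the paper states that Theorem \ref{reductiongauss} follows by specializing Theorem \ref{T:prob-reduction} to $\Phi(t)=\tfrac12 t^2$ (see the sentence preceding the statement), and your argument carries out precisely this specialization, including the explicit computation of $\Phi^{-1}$ and the absorption of the constant $2^{m/2}$.
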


Given $n, m  \in\N$, and a rearrangement-invariant function norm $\|
\cdot \|_{X(0,1)}$,  define the rearrangement-invariant function norm $\|\cdot \|_{X_{m, G}(0,1)}$ by
\begin{equation}\label{opnorm}
\|f\|_{X_{m,G}(0,1)} =
\left\|\left(\log \frac 2s\right)^\frac m2
f^*(s)\right\|_{\widetilde X_m(0,1)}
\end{equation}
for $f\in\M_+(0,1)$.

\begin{thm}\label{optimalgauss}{\bf [Optimal target in Gauss space]}
Let $n \in \N$, $m \in\N$, and let $\| \cdot \|_{X(0,1)}$  be a
rearrangement-invariant function norm. Then the functional $\|\cdot
\|_{X_{m, G}(0,1)}$, given by \eqref{opnorm},  is
a~rearrangement-invariant function norm satisfying
\begin{equation}\label{opnormm}
V^mX(\rn,{\gamma_n}) \to X_{m,G}(\Rn,{\gamma_n})
\end{equation}
and
\begin{equation}\label{oppoinc}
  \left\|u \right\|_{X_{m,G}(\Rn,{\gamma_n})}
  \leq C\left\|\nabla ^m u\right\|_{X(\rn,{\gamma_n})}
\end{equation}
 for some constant $C$ and every
$u\in V^m_\bot X(\rn,{\gamma_n})$.
\par\noindent
 Moreover, the function norm $\|\cdot\|_{X_{m,G}(0,1)}$ is  optimal in
\eqref{opnormm} and \eqref{oppoinc} among all
rearrangement-invariant norms.
\end{thm}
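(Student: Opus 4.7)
The plan is to derive Theorem \ref{optimalgauss} as a direct specialization of Theorem \ref{T:optimal_range} to the case when $\Phi(t)=\tfrac12 t^2$, i.e.~when the underlying product probability measure coincides with the Gauss measure $\gamma_n$. First I would check that $\Phi(t)=\tfrac12t^2$ satisfies the standing hypotheses imposed on $\Phi$ in Section~\ref{S:probability}: it is strictly increasing, convex, $C^2$ on $(0,\infty)$, vanishes at $0$, and $\sqrt{\Phi(t)}=t/\sqrt2$ is (trivially) concave. Hence Theorem~\ref{T:optimal_range} applies with $\mu_{\Phi,n}=\gamma_n$.

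Next I would compute the weight that appears in the definition \eqref{E:optimal_range} of $\|\cdot\|_{X_{m,\Phi}(0,1)}$ in this special case. Since $\Phi^{-1}(u)=\sqrt{2u}$ for $u\geq 0$, one has
\begin{equation*}
\left(\frac{\log\frac{2}{s}}{\Phi^{-1}\!\left(\log\frac{2}{s}\right)}\right)^{\!m}
=\left(\frac{\log\frac{2}{s}}{\sqrt{2\log\frac{2}{s}}}\right)^{\!m}
=2^{-m/2}\left(\log\frac{2}{s}\right)^{\!m/2}
\qquad\textup{for $s\in(0,1)$.}
\end{equation*}
Plugging this into \eqref{E:optimal_range} yields $\|f\|_{X_{m,\Phi}(0,1)}=2^{-m/2}\|f\|_{X_{m,G}(0,1)}$, where $\|\cdot\|_{X_{m,G}(0,1)}$ is the functional defined in \eqref{opnorm}. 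Therefore $\|\cdot\|_{X_{m,G}(0,1)}$ and $\|\cdot\|_{X_{m,\Phi}(0,1)}$ differ only by a harmless multiplicative constant and thus define the same rearrangement-invariant function norm (and the same space).

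Once this identification is in place, all the assertions of Theorem~\ref{optimalgauss} are immediate consequences of the corresponding assertions of Theorem~\ref{T:optimal_range}: (i) $\|\cdot\|_{X_{m,G}(0,1)}$ is a rearrangement-invariant function norm because $\|\cdot\|_{X_{m,\Phi}(0,1)}$ is; (ii) the embedding \eqref{opnormm} and the Poincar\'e inequality \eqref{oppoinc} follow from \eqref{E:optimal_range_m} and \eqref{E:optimal_poinc_m}, respectively, with the same constant up to the factor $2^{m/2}$; (iii) the optimality of $\|\cdot\|_{X_{m,G}(0,1)}$ among all rearrangement-invariant target norms translates verbatim from the optimality statement for $\|\cdot\|_{X_{m,\Phi}(0,1)}$, since the two norms are equivalent.

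There is no essential obstacle here: the entire argument is a routine specialization, once one observes the algebraic identity $\Phi^{-1}(\log(2/s))=\sqrt{2\log(2/s)}$. The only minor point worth flagging is that the implicit constant $2^{-m/2}$ must be absorbed into the constant $C$ appearing in \eqref{oppoinc}, which is innocuous; no additional analytic work is required beyond what has already been done in the proof of Theorem~\ref{T:optimal_range}.
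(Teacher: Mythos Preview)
Your proposal is correct and follows exactly the approach indicated by the paper, which states that Theorems \ref{reductiongauss}--\ref{iterationgauss} are obtained by ``specialization of Theorems \ref{T:prob-reduction}, \ref{T:optimal_range} and~\ref{T:reiteration} to the case of \eqref{phigauss}.'' Your verification that $\Phi(t)=\tfrac12 t^2$ meets the standing hypotheses and your computation $\Phi^{-1}(\log(2/s))=\sqrt{2\log(2/s)}$, yielding $\|\cdot\|_{X_{m,\Phi}(0,1)}=2^{-m/2}\|\cdot\|_{X_{m,G}(0,1)}$, are precisely the details the paper leaves implicit.
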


Observe that, even for $m=1$, Theorems~\ref{reductiongauss} and~\ref{optimalgauss} provide us with a
characterization of Gaussian Sobolev embeddings which somewhat simplifies
 earlier results in a similar direction
\cite{CP_gauss, MM}.

\begin{thm}\label{iterationgauss}{\bf [Iteration principle in Gauss space]}
Let $n, k, h \in\N$, and let $\| \cdot \|_{X(0,1)}$  be a
rearrangement-invariant function norm. Then,
\[
\big(X_{k,G}\big)_{h,G}(\Rn,{\gamma_n})=
X_{k+h,G}(\Rn,{\gamma_n})\,,
\]
up to equivalent norms.
\end{thm}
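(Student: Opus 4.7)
The plan is to show that Theorem~\ref{iterationgauss} is a direct specialization of Theorem~\ref{T:reiteration} (the iteration principle for general product probability measures) to the case where $\Phi(t)=\tfrac12 t^2$, which generates the Gauss measure $\gamma_n=\mu_{\Phi,n}$. To do so, I just need to verify that the norm $\|\cdot\|_{X_{m,G}(0,1)}$ defined in \eqref{opnorm} coincides, up to multiplicative constants, with the general optimal-target norm $\|\cdot\|_{X_{m,\Phi}(0,1)}$ defined in \eqref{E:optimal_range} for this particular choice of $\Phi$. Once this identification is done for every rearrangement-invariant function norm $\|\cdot\|_{X(0,1)}$ and every $m\in\N$, the claimed identity follows by applying Theorem~\ref{T:reiteration}.

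First, I would compute $\Phi^{-1}$ for $\Phi(t)=\tfrac12 t^2$: one has $\Phi^{-1}(u)=\sqrt{2u}$ for $u\geq 0$. Consequently,
\[
\frac{\log\frac{2}{s}}{\Phi^{-1}\!\bigl(\log\frac{2}{s}\bigr)}
= \frac{\log\frac{2}{s}}{\sqrt{2\log\frac{2}{s}}}
= \frac{1}{\sqrt{2}}\left(\log\frac{2}{s}\right)^{1/2}
\quad\text{for }s\in(0,1).
\]
Raising to the $m$-th power and comparing with \eqref{E:optimal_range} yields
\[
\|f\|_{X_{m,\Phi}(0,1)}
= 2^{-m/2}\left\|\left(\log\tfrac{2}{s}\right)^{m/2} f^*(s)\right\|_{\widetilde X_m(0,1)}
= 2^{-m/2}\|f\|_{X_{m,G}(0,1)},
\]
for every $f\in\M_+(0,1)$, where the last identity uses the definition \eqref{opnorm}. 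Thus the function norms $\|\cdot\|_{X_{m,G}(0,1)}$ and $\|\cdot\|_{X_{m,\Phi}(0,1)}$ are equivalent (in fact, proportional).

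Having established this equivalence, we apply Theorem~\ref{T:reiteration} with $\Phi(t)=\tfrac12 t^2$: for any $k,h\in\N$,
\[
\big(X_{k,\Phi}\big)_{h,\Phi}(\R^n,\mu_{\Phi,n}) = X_{k+h,\Phi}(\R^n,\mu_{\Phi,n}),
\]
up to equivalent norms. Since the construction $X\mapsto X_{m,\Phi}$ depends only on the norm $\|\cdot\|_{X(0,1)}$ up to equivalence, and since $\|\cdot\|_{Y_{m,G}(0,1)}$ and $\|\cdot\|_{Y_{m,\Phi}(0,1)}$ are equivalent for every rearrangement-invariant $\|\cdot\|_{Y(0,1)}$ (including in particular $Y=X_{k,G}$), the identity above rewrites as
\[
\big(X_{k,G}\big)_{h,G}(\R^n,\gamma_n) = X_{k+h,G}(\R^n,\gamma_n),
\]
which is the desired conclusion. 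There is no genuine obstacle here beyond the short verification that the two optimal-target functionals agree up to a harmless constant factor; the substantive content of the iteration is already carried by Theorem~\ref{T:reiteration}.
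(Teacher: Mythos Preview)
Your proposal is correct and follows exactly the approach indicated by the paper: the paper states that Theorems~\ref{reductiongauss}--\ref{iterationgauss} arise by specializing Theorems~\ref{T:prob-reduction}, \ref{T:optimal_range} and~\ref{T:reiteration} to the choice $\Phi(t)=\tfrac12 t^2$, without spelling out the details. You supply precisely the needed verification that $\|\cdot\|_{X_{m,G}(0,1)}$ and $\|\cdot\|_{X_{m,\Phi}(0,1)}$ are proportional (via $\Phi^{-1}(u)=\sqrt{2u}$), together with the observation that the construction $Y\mapsto Y_{m,\Phi}$ respects equivalence of norms, so that Theorem~\ref{T:reiteration} transfers to the Gaussian notation.
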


Of course, versions of Theorems
\ref{reductiongauss}--\ref{iterationgauss}, with the Gauss measure
replaced with the Boltzmann measure, given by the choice
\eqref{E:def-boltzmann},  can similarly be deduced from Theorems
\ref{T:prob-reduction}, \ref{T:optimal_range}
and~\ref{T:reiteration}. The reduction principle and  the optimal target space then
take the following form.

\begin{thm}\label{T:reduction_power}{\bf [Reduction principle in
Boltzmann spaces]} Assume that $n, m \in\N$,  and   $\beta
\in [1, 2]$.  Let $\| \cdot \|_{X(0,1)}$ and $\| \cdot \|_{Y(0,1)}$
be rearrangement-invariant function norms. Then the following facts
are equivalent.

\textup{(i)} The inequality
\[
  \left\|\frac 1{\big( \log \frac 2s \big)^{\frac {m(\beta -1)}\beta}}\int_{s}^1
\frac{f(r)}{r}\left(\log \frac rs\right)^{m-1}
  \,dr\right\|_{ Y(0,1)}
  \leq C_1\left\|f\right\|_{X(0,1)}
\]
holds for some constant $C_1$, and for every nonnegative $f\in X(0,1)$.

\textup{(ii)}
  The embedding
\[
  V^mX(\rn, \gamma_{n, \beta}) \to Y(\rn, \gamma_{n, \beta })
\]
holds.

\textup{(iii)} The Poincar\'e inequality
\[
  \left\|u \right\|_{Y(\rn, \gamma_{n, \beta })}
  \leq C_2\left\|\nabla ^m u\right\|_{X(\rn, \gamma_{n, \beta })}
\]
holds for some constant $C_2$ and  for every
$u\in V^m_\bot X(\rn, \gamma_{n, \beta })$.
\end{thm}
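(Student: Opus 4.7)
The plan is to deduce Theorem~\ref{T:reduction_power} directly from Theorem~\ref{T:prob-reduction} by specializing the generating profile to $\Phi(t)=\tfrac1\beta t^\beta$, which by \eqref{E:def-boltzmann} makes $\mu_{\Phi,n}$ coincide with the Boltzmann measure $\gamma_{n,\beta}$. The first thing to verify is that this choice of $\Phi$ falls within the class admitted in Section~\ref{S:probability}: namely $\Phi(0)=0$, $\Phi$ is strictly increasing and convex on $[0,\infty)$, $C^2$ on $(0,\infty)$, and $\sqrt{\Phi}$ is concave. The first four properties are immediate for $\beta\geq 1$. The concavity of $\sqrt{\Phi(t)}=\beta^{-1/2}t^{\beta/2}$ is equivalent to $\beta/2\leq 1$, which is precisely why the statement restricts attention to $\beta\in[1,2]$.

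Next I would compute the kernel weight explicitly. Since $\Phi^{-1}(u)=(\beta u)^{1/\beta}$, one has
\[
\left(\frac{\Phi^{-1}(\log\tfrac{2}{s})}{\log\tfrac{2}{s}}\right)^{m}
= \beta^{m/\beta}\left(\log\tfrac{2}{s}\right)^{-m(\beta-1)/\beta}.
\]
Absorbing the multiplicative constant $\beta^{m/\beta}$ into $C_{1}$, and renaming the integration variables ($t\mapsto s$, $s\mapsto r$), inequality \eqref{E:hardy} becomes (up to equivalent constants) exactly condition (i) of Theorem~\ref{T:reduction_power}. Thus the Hardy-type inequality of Theorem~\ref{T:prob-reduction}\,(i) is equivalent to the one stated here.

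Conditions (ii) and (iii) translate verbatim: embedding \eqref{E:embedding} and Poincar\'e inequality \eqref{E:sobolev}, read with $\mu_{\Phi,n}=\gamma_{n,\beta}$, are precisely the statements (ii) and (iii) of Theorem~\ref{T:reduction_power}. Applying Theorem~\ref{T:prob-reduction} then yields the equivalence (i)$\Leftrightarrow$(ii)$\Leftrightarrow$(iii) in the Boltzmann setting.

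The main (and essentially only) obstacle is the admissibility check on $\Phi$, and in particular the concavity of $\sqrt{\Phi}$, which is what pins the parameter range to $\beta\in[1,2]$; once that is in hand the proof is a mechanical substitution. A minor bookkeeping point worth flagging is that the constants in both directions depend only on $\beta$ and $m$ (through the factor $\beta^{m/\beta}$), so no sharpness or uniformity in $n$ is lost in the reduction, consistently with Remark~\ref{n}.
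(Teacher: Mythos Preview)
Your proposal is correct and is precisely the approach the paper takes: the paper states explicitly that the Boltzmann results are deduced from Theorems~\ref{T:prob-reduction}, \ref{T:optimal_range} and~\ref{T:reiteration} via the specialization $\Phi(t)=\tfrac1\beta t^\beta$, without spelling out the details. Your admissibility check and the computation of $\Phi^{-1}$ are exactly what is needed to make that specialization rigorous.
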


Given $n, m\in\N$, $\beta\in[1,2]$, and a rearrangement-invariant function norm $\|
\cdot \|_{X(0,1)}$,  define the rearrangement-invariant function norm $\|\cdot \|_{X_{m, B,\beta}(0,1)}$ by
\begin{equation}\label{opnorm-B}
\|f\|_{X_{m,B,\beta}(0,1)}
  = \left\|\left(\log \frac 2s\right)^\frac{m(\beta-1)}{\beta} f^*(s)\right\|_{\widetilde X_m(0,1)}
\end{equation}
for $f\in\M_+(0,1)$.

\begin{thm}\label{optimalboltz}{\bf [Optimal target in Boltzmann spaces]}
Let $n, m \in\N$, and let $\| \cdot \|_{X(0,1)}$  be a
rearrangement-invariant function norm. Then the functional $\|\cdot
\|_{X_{m, B,\beta}(0,1)}$, given by \eqref{opnorm-B},  is
a~rearrangement-invariant function norm satisfying
\begin{equation}\label{opnormm-B}
V^mX(\rn,{\gamma_{n,\beta}}) \to X_{m, B,\beta}(\Rn,{\gamma_{n,\beta}})
\end{equation}
and
\begin{equation}\label{oppoinc-B}
  \left\|u \right\|_{X_{m, B,\beta}(\Rn,{\gamma_{n,\beta}})}
  \leq C\left\|\nabla ^m u\right\|_{X(\rn,{\gamma_{n,\beta}})}
\end{equation}
for some constant $C$ and every
$u\in V^m_\bot X(\rn,{\gamma_{n,\beta}})$.

 Moreover, the function norm $\|\cdot\|_{X_{m, B,\beta}(0,1)}$ is  optimal in
\eqref{opnormm-B} and \eqref{oppoinc-B} among all
rearrangement-invariant norms.
\end{thm}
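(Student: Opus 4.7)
The plan is to deduce Theorem \ref{optimalboltz} directly from Theorem \ref{T:optimal_range} by specializing it to the choice $\Phi(t) = \tfrac{1}{\beta}t^{\beta}$, which, by \eqref{E:def-boltzmann}, produces the Boltzmann probability space $(\rn, \gamma_{n,\beta})$. First I would verify that this $\Phi$ satisfies the general structural hypotheses imposed in Section \ref{S:probability}: it is strictly increasing, convex and $C^2$ on $(0,\infty)$ with $\Phi(0)=0$, and $\sqrt{\Phi(t)} = \beta^{-1/2}\,t^{\beta/2}$ is concave precisely because $\beta \in [1,2]$. Hence the framework of Theorems \ref{T:prob-reduction} and \ref{T:optimal_range} applies.

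The main computation is to identify the optimal norm. Inverting gives $\Phi^{-1}(s) = (\beta s)^{1/\beta}$, whence
\begin{equation*}
  \frac{\log \tfrac{2}{s}}{\Phi^{-1}\!\bigl(\log \tfrac{2}{s}\bigr)}
  = \beta^{-1/\beta}\,\Bigl(\log \tfrac{2}{s}\Bigr)^{\!\tfrac{\beta-1}{\beta}}
  \qquad \text{for } s\in(0,1).
\end{equation*}
Plugging this into the definition \eqref{E:optimal_range} of $\|\cdot\|_{X_{m,\Phi}(0,1)}$ and absorbing the multiplicative constant $\beta^{-m/\beta}$ yields, for every $f \in \mathcal{M}_+(0,1)$,
\begin{equation*}
  \|f\|_{X_{m,\Phi}(0,1)}
  \approx \left\|\Bigl(\log \tfrac{2}{s}\Bigr)^{\!\tfrac{m(\beta-1)}{\beta}} f^{*}(s)\right\|_{\widetilde X_{m}(0,1)}
  = \|f\|_{X_{m,B,\beta}(0,1)},
\end{equation*}
up to equivalent norms, where the right-hand side is exactly the norm defined in \eqref{opnorm-B}. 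Since Theorem \ref{T:optimal_range} asserts that $\|\cdot\|_{X_{m,\Phi}(0,1)}$ is a rearrangement-invariant function norm, so is $\|\cdot\|_{X_{m,B,\beta}(0,1)}$.

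With this identification in hand, the embedding \eqref{opnormm-B} and the Poincaré inequality \eqref{oppoinc-B} follow immediately from \eqref{E:optimal_range_m} and \eqref{E:optimal_poinc_m} of Theorem \ref{T:optimal_range} applied to $\mu_{\Phi,n} = \gamma_{n,\beta}$. Likewise, the optimality clause in Theorem \ref{T:optimal_range} transfers verbatim: if some rearrangement-invariant norm $\|\cdot\|_{Y(0,1)}$ satisfied $V^{m}X(\rn,\gamma_{n,\beta}) \to Y(\rn,\gamma_{n,\beta})$, then by Theorem \ref{T:optimal_range} we would have $X_{m,\Phi}(\rn,\gamma_{n,\beta}) \to Y(\rn,\gamma_{n,\beta})$, and the norm equivalence above turns this into $X_{m,B,\beta}(\rn,\gamma_{n,\beta}) \to Y(\rn,\gamma_{n,\beta})$, giving the sought optimality.

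No genuine obstacle is expected here: the entire content of Theorem \ref{optimalboltz} is encoded in Theorem \ref{T:optimal_range}, and the only real work is the elementary algebra above that rewrites the ratio $\log(2/s)/\Phi^{-1}(\log(2/s))$ as a power of $\log(2/s)$ when $\Phi$ is a pure power. The mildly delicate point to flag is simply checking that $\beta \in [1,2]$ is exactly the range for which $\sqrt{\Phi}$ remains concave, so that the ambient hypotheses of Section \ref{S:probability} are indeed met.
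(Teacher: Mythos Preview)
Your proposal is correct and follows exactly the approach the paper indicates: the paper states that Theorem \ref{optimalboltz} is deduced from Theorem \ref{T:optimal_range} via the specialization \eqref{E:def-boltzmann}, and your computation of $\Phi^{-1}(s)=(\beta s)^{1/\beta}$ to identify $\|\cdot\|_{X_{m,\Phi}(0,1)}$ with $\|\cdot\|_{X_{m,B,\beta}(0,1)}$ up to equivalent norms is precisely the missing detail. No separate argument is given in the paper, so your write-up is in fact more explicit than what appears there.
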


We  present an application of the results of this  section to the
particular case when $\mu _{\Phi , n}$ is a Boltzmann measure, and
the norms are of Lorentz--Zygmund type.

\begin{thm}\label{EX:expo_lz}
Let $n, m \in\N$, let $\beta \in [1, 2]$ and let $p,q\in[1,\infty]$

and $\alpha\in\R$ be such that
 one of the conditions in~\eqref{E:lz_bfs} is satisfied. Then
\[
V\sp mL\sp{p,q;\alpha}(\Rn,\gamma_{n,\beta})\to
\begin{cases}
L\sp{p,q;\alpha+\frac{m(\beta-1)}{\beta}}(\Rn,\gamma_{n,\beta})\quad&\textup{if}\quad
p<\infty;\\
L\sp{\infty,q;\alpha-\frac{m}{\beta}}(\Rn,\gamma_{n,\beta})\quad&\textup{if}\quad
p=\infty.
\end{cases}
\]
Moreover, in both cases, the target space is optimal among all
rearrangement-invariant spaces.
\end{thm}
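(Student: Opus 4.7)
The plan is to invoke Theorem~\ref{optimalboltz}, which identifies the optimal rearrangement-invariant target of $V^m L^{p,q;\alpha}(\rn,\gamma_{n,\beta})$ as the space $X_{m,B,\beta}$ associated with $X=L^{p,q;\alpha}$; optimality among all rearrangement-invariant spaces is therefore automatic, and the task reduces to identifying $X_{m,B,\beta}$ with the claimed Lorentz--Zygmund space. By \eqref{opnorm-B},
\[
\|f\|_{X_{m,B,\beta}(0,1)}=\bigl\|(\log\tfrac{2}{s})^{m(\beta-1)/\beta}f^*(s)\bigr\|_{\widetilde X_m(0,1)},
\]
where $\widetilde X_m$ has associate norm $\|T_m g^*\|_{X'(0,1)}$ with $T_m g^*(s)=\tfrac{1}{s}\int_0^s(\log\tfrac{s}{r})^{m-1}g^*(r)\,dr$. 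Everything thus reduces to identifying $\widetilde X_m$ for $X=L^{p,q;\alpha}$.

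The substitution $r=sv$ rewrites $T_m g^*(s)=\int_0^1(\log\tfrac{1}{v})^{m-1}g^*(sv)\,dv$, displaying $T_m$ as a log-weighted dilation average. When $p<\infty$, a weighted Minkowski-type argument shows that $T_m$ is bounded on $X'=L^{p',q';-\alpha}$ (with the sub-case $p=q=1$ handled separately by a direct sub-additivity estimate on the $L^{\infty,\infty;-\alpha}$ norm, using $(\log\tfrac{2}{s}+\log\tfrac{1}{v})^\alpha\leq C((\log\tfrac{2}{s})^\alpha+(\log\tfrac{1}{v})^\alpha)$), while the pointwise bound $T_m g^*(s)\geq(m-1)!\,g^*(s)$ yields the reverse inequality. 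Consequently $\widetilde X_m\approx L^{p,q;\alpha}$, and since $(\log\tfrac{2}{s})^{m(\beta-1)/\beta}f^*(s)$ is already non-increasing, the defining formula of the Lorentz--Zygmund norm produces $\|f\|_{L^{p,q;\alpha+m(\beta-1)/\beta}(0,1)}$, as claimed.

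When $p=\infty$, $T_m$ fails to be bounded on $X'$, and $\widetilde X_m$ genuinely differs from $X$. In the sub-case $p=q=\infty$, $\alpha\leq0$, one has $X'=L^{1,1;-\alpha}$; Fubini combined with the beta-function identity
\[
\int_r^1(\log\tfrac{2}{s})^{-\alpha}(\log\tfrac{s}{r})^{m-1}\tfrac{ds}{s}\approx(\log\tfrac{2}{r})^{m-\alpha}
\]
(derived via the substitution $\log\tfrac{1}{s}=u\log\tfrac{1}{r}$, reducing to a classical beta integral) gives $\widetilde X'_m\approx L^{1,1;m-\alpha}$, hence $\widetilde X_m\approx L^{\infty,\infty;\alpha-m}$. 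Combining with the prefactor produces the net exponent shift $\alpha-m+m(\beta-1)/\beta=\alpha-m/\beta$, matching $L^{\infty,\infty;\alpha-m/\beta}$. The remaining sub-case $p=\infty$, $1\leq q<\infty$, $\alpha+1/q<0$, where $X'=L^{(1,q';-\alpha-1)}$ involves $f^{**}$ rather than $f^*$, is handled by the same Fubini--beta-integral computation after unpacking the definition of the $L^{(\cdot)}$ norm; the main technical obstacle lies in this sub-case, where careful tracking of logarithmic exponents through the $f^{**}$ factor is needed to verify the correct shift to $L^{\infty,q;\alpha-m/\beta}$.
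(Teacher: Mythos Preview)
Your overall strategy coincides with the paper's: invoke Theorem~\ref{optimalboltz} and reduce to identifying $\widetilde X_m$ for $X=L^{p,q;\alpha}$. The execution, however, differs in both cases.

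For $p<\infty$, the paper does not estimate $T_m$ directly via Minkowski but instead appeals to Proposition~\ref{P:prop3}: since the maximal operator $g\mapsto g^{**}$ is bounded on $X'(0,1)=(L^{p,q;\alpha})'(0,1)$ whenever $p<\infty$ (by~\eqref{E:lz_assoc} and~\eqref{E:glz-identity}), and since $T_m$ is the $m$-th iterate of this operator, one gets $\widetilde X_m=X$ immediately. Your Minkowski computation is valid but is essentially a reproof of that proposition specialised to Lorentz--Zygmund spaces; the paper's route is shorter because the abstract machinery has already been set up.

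For $p=\infty$, the difference is more substantial. The paper does \emph{not} compute $\widetilde X_m$ for general $m$ by a direct Fubini--beta argument. Instead it computes only $\widetilde X_1$, using Theorem~\ref{T:lenka-remark-3} (applied with $I(s)=s$, so $R_If^*=f^{**}$) to obtain $\|f\|_{(\widetilde X_1)'(0,1)}\approx\|f\|_{L^{(1,q';-\alpha)}(0,1)}$, whence $\widetilde X_1=L^{\infty,q;\alpha-1}$ by~\eqref{E:lz_assoc}. This single step already covers \emph{both} sub-cases $q=\infty$ and $q<\infty$ uniformly. The passage to general $m$ is then a one-line application of the iteration principle (Theorem~\ref{T:reiteration} combined with Remark~\ref{R:note}), which gives $\widetilde X_m=L^{\infty,q;\alpha-m}$ by induction. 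Your direct computation for $q=\infty$ is correct, but the sub-case $p=\infty$, $1\le q<\infty$ --- which you yourself flag as the ``main technical obstacle'' --- is left as a sketch; carrying it out would require a genuine weighted Hardy-type estimate relating $(T_{m+1}g^*)$ to $(\log\tfrac{2}{s})^m g^{**}(s)$ in the $L^{q'}$ norm with weight $s^{1-1/q'}(\log\tfrac{2}{s})^{-\alpha-1}$, and this is not the ``same Fubini--beta-integral computation'' as in the $q=\infty$ case. The paper's iteration trick sidesteps this entirely, and you should adopt it: once $\widetilde X_1$ is identified, the hard sub-case costs nothing extra.
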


When  $\beta =2$,   Theorem \ref{EX:expo_lz}
yields the following sharp Sobolev type embeddings in Gauss space.

\begin{thm}\label{EX:expo_lzgauss}
Let $n, m \in\N$,  and let $p,q\in[1,\infty]$ and $\alpha\in\R$ be
such that
 one of the conditions in~\eqref{E:lz_bfs} is satisfied. Then
\begin{equation*}
V\sp mL\sp{p,q;\alpha}(\Rn,\gamma_{n})\to
\begin{cases}
L\sp{p,q;\alpha+\frac{m}{2}}(\Rn,\gamma_{n})\quad&\textup{if}\quad p<\infty;\\
L\sp{\infty,q;\alpha-\frac{m}{2}}(\Rn,\gamma_{n})\quad&\textup{if}\quad
p=\infty.
\end{cases}
\end{equation*}
Moreover, in both cases, the target space is optimal among all
rearrangement-invariant spaces.
\end{thm}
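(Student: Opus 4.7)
The statement is the specialization of Theorem~\ref{EX:expo_lz} to $\beta=2$. Indeed, the Gauss measure $\gamma_n$ is the product probability measure generated by $\Phi(t)=\tfrac{1}{2}t^2$, which coincides with the Boltzmann profile $\Phi(t)=\tfrac{1}{\beta}t^{\beta}$ in \eqref{E:def-boltzmann} at $\beta=2$, so that $\gamma_{n,2}=\gamma_n$. Substituting $\beta=2$ into the exponents appearing in Theorem~\ref{EX:expo_lz} yields $\alpha+\tfrac{m(\beta-1)}{\beta}=\alpha+\tfrac{m}{2}$ when $p<\infty$, and $\alpha-\tfrac{m}{\beta}=\alpha-\tfrac{m}{2}$ when $p=\infty$, matching the statement. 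Since optimality among rearrangement-invariant spaces is asserted in Theorem~\ref{EX:expo_lz} uniformly in $\beta\in[1,2]$, the optimality claim in Theorem~\ref{EX:expo_lzgauss} follows at once.

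\textbf{Alternative self-contained route.} The theorem can also be obtained directly from Theorems~\ref{reductiongauss} and~\ref{optimalgauss} applied with $X(0,1)=L^{p,q;\alpha}(0,1)$. By Theorem~\ref{optimalgauss}, the optimal rearrangement-invariant target is the space $(L^{p,q;\alpha})_{m,G}(\rn,\gamma_n)$, whose representation norm is
\[
\|f\|_{(L^{p,q;\alpha})_{m,G}(0,1)} = \left\|\left(\log\tfrac{2}{s}\right)^{m/2} f^*(s)\right\|_{\widetilde{L^{p,q;\alpha}}_m(0,1)}.
\]
The task reduces to identifying this space, up to equivalent norms, with the Lorentz--Zygmund space appearing in the statement.

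\textbf{Identification of the target.} When $p<\infty$, the associate space $(L^{p,q;\alpha})'(0,1)$ equals $L^{p',q';-\alpha}(0,1)$ by \eqref{E:lz_assoc}, and the latter has upper Boyd index $\tfrac{1}{p}<1$; hence the operator $g\mapsto g^{**}$ is bounded on it. Proposition~\ref{P:prop3} is therefore applicable and gives
\[
\|f\|_{(L^{p,q;\alpha})_{m,G}(0,1)} \approx \left\|\left(\log\tfrac{2}{s}\right)^{m/2} f^*(s)\right\|_{L^{p,q;\alpha}(0,1)} = \|f\|_{L^{p,q;\alpha+m/2}(0,1)},
\]
which is exactly the claimed target. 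When $p=\infty$, Proposition~\ref{P:prop3} does not apply: one must instead unfold $\widetilde{L^{\infty,q;\alpha}}_m$ from the definition, using the associate norm $(L^{\infty,q;\alpha})'$ provided by \eqref{E:lz_assoc} (together with \eqref{E:glz-identity}) and evaluating the weighted integral against the kernel $\tfrac{1}{s}\int_0^s(\log\tfrac{s}{r})^{m-1} g^*(r)\,dr$. Standard manipulations with logarithmic weights in Lorentz--Zygmund spaces then collapse the extra logarithmic factor $(\log\tfrac{2}{s})^{m/2}$ into the tertiary parameter, yielding the norm of $L^{\infty,q;\alpha-m/2}(0,1)$.

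\textbf{Main obstacle.} The only genuinely delicate step is the case $p=\infty$, where the boundedness of the operator $g\mapsto g^{**}$ fails on the associate space and a direct one-dimensional computation is needed. The required estimates amount to the Lorentz--Zygmund calculus developed in \cite{OP, EOP}, and are precisely the ones handled in Theorem~\ref{EX:expo_lz}; no further difficulty arises beyond that which has already been negotiated there.
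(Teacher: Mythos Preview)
Your primary argument (specializing Theorem~\ref{EX:expo_lz} to $\beta=2$, noting $\gamma_{n,2}=\gamma_n$ and checking that the exponents reduce to $\alpha\pm\tfrac{m}{2}$) is exactly the paper's approach; the paper simply states that Theorem~\ref{EX:expo_lzgauss} is the case $\beta=2$ of Theorem~\ref{EX:expo_lz}. Your alternative self-contained route is unnecessary here but correctly mirrors the proof of Theorem~\ref{EX:expo_lz} itself.
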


A further specialization of the indices $p, q,
\alpha$ appearing in Theorem \ref{EX:expo_lzgauss} leads to the
following basic embeddings.

\begin{corollary}\label{EX:expo_lzgauss-bis}
Let $n, m \in\N$.

\textup{(i)} Assume that  $p \in [1, \infty)$. Then
$$V\sp mL\sp{p}(\Rn,\gamma_{n})\to L\sp{p}(\log L)^{\frac
{mp}2}(\Rn,\gamma_{n}),$$ and the target space is optimal among all
rearrangement-invariant spaces.

\textup{(ii)} Assume that  $\gamma >0$. Then
$$V\sp m\exp L^\gamma(\Rn,\gamma_{n})\to \exp L^{\frac {2\gamma}{2 + m \gamma}}(\Rn,\gamma_{n}),$$
and the target space is optimal among all rearrangement-invariant
spaces.

\textup{(iii)}
$$V\sp m L^\infty (\Rn,\gamma_{n})\to \exp L^{\frac {2}{ m}}(\Rn,\gamma_{n}),$$
and the target space is optimal among all rearrangement-invariant
spaces.
\end{corollary}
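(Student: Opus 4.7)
The plan is to deduce each of the three embeddings as a direct specialization of Theorem~\ref{EX:expo_lzgauss}, after rewriting the source space as a Lorentz--Zygmund space $L^{p,q;\alpha}$ with appropriate parameters, and then converting the resulting Lorentz--Zygmund target back into the Lebesgue/Orlicz form stated.

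For part (i), take $p \in [1,\infty)$ and observe that $L^p(\mathbb{R}^n,\gamma_n) = L^{p,p;0}(\mathbb{R}^n,\gamma_n)$. The parameters $(p,q,\alpha)=(p,p,0)$ satisfy either the first or second condition in \eqref{E:lz_bfs}. Theorem~\ref{EX:expo_lzgauss} (first case, since $p<\infty$) then yields
\[
V^m L^{p,p;0}(\mathbb{R}^n,\gamma_n) \to L^{p,p;m/2}(\mathbb{R}^n,\gamma_n),
\]
with the target optimal among all rearrangement-invariant spaces. Using the identification $L^{p,p;m/2}(\mathbb{R}^n,\gamma_n)=L^p(\log L)^{mp/2}(\mathbb{R}^n,\gamma_n)$ recalled right after \eqref{E:glz-identity}, the desired embedding and its optimality follow.

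For part (ii), given $\gamma>0$, recall from the same list of identifications that $\exp L^\gamma(\mathbb{R}^n,\gamma_n)=L^{\infty,\infty;-1/\gamma}(\mathbb{R}^n,\gamma_n)$. Here $(p,q,\alpha)=(\infty,\infty,-1/\gamma)$ satisfies the third condition in \eqref{E:lz_bfs}. Theorem~\ref{EX:expo_lzgauss} (second case) gives the optimal embedding
\[
V^m L^{\infty,\infty;-1/\gamma}(\mathbb{R}^n,\gamma_n) \to L^{\infty,\infty;-1/\gamma - m/2}(\mathbb{R}^n,\gamma_n).
\]
A short calculation shows $-\tfrac{1}{\gamma}-\tfrac{m}{2}=-\tfrac{2+m\gamma}{2\gamma}$, whence the target coincides with $\exp L^{2\gamma/(2+m\gamma)}(\mathbb{R}^n,\gamma_n)$, again by the Orlicz vs.~Lorentz--Zygmund identification. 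Part (iii) is the limiting case $\alpha=0$ of the same argument: $L^\infty(\mathbb{R}^n,\gamma_n)=L^{\infty,\infty;0}(\mathbb{R}^n,\gamma_n)$ fits the third condition in \eqref{E:lz_bfs}, Theorem~\ref{EX:expo_lzgauss} (second case) yields the optimal target $L^{\infty,\infty;-m/2}(\mathbb{R}^n,\gamma_n)$, which is precisely $\exp L^{2/m}(\mathbb{R}^n,\gamma_n)$.

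There is no genuine obstacle: the corollary is a bookkeeping consequence of Theorem~\ref{EX:expo_lzgauss}, once one translates between the Orlicz and Lorentz--Zygmund scales. The only point requiring attention is verifying in each case that the parameters $(p,q,\alpha)$ lie in the admissible range \eqref{E:lz_bfs} so that $\|\cdot\|_{L^{p,q;\alpha}(0,1)}$ is actually equivalent to a rearrangement-invariant function norm, and that optimality is preserved under these identifications (which is clear since the identifications hold up to equivalent norms).
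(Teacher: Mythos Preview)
Your proposal is correct and follows exactly the approach indicated in the paper: the text preceding the corollary explicitly states that it arises as ``a further specialization of the indices $p, q, \alpha$ appearing in Theorem~\ref{EX:expo_lzgauss}'', and no separate proof is written out. Your verification of the admissibility conditions in~\eqref{E:lz_bfs} and the Orlicz/Lorentz--Zygmund identifications from Section~\ref{S:measurable} is precisely the bookkeeping the paper leaves implicit.
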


Note that the target space in the second embedding of Theorem
\ref{EX:expo_lzgauss}, and in the embeddings (ii) and (iii) of
Corollary \ref{EX:expo_lzgauss-bis} increases in $m$. This is caused
by the fact that,  $V\sp mL\sp{\infty,q;\alpha}(\Rn,\gamma_{n})
\nsubseteq V\sp kL\sp{\infty,q;\alpha}(\Rn,\gamma_{n})$ if $m >k$.

\section{Optimal target function norms}\label{onedim}

In this section we collect some  basic properties
about certain one-dimensional operators playing a role in the  proofs
of our main results.

Let $T: \Mpl(0,1) \to \Mpl(0,1) $ be a~sublinear operator, namely an
operator such that
$$T(\lambda f) = \lambda  Tf, \quad \hbox{and} \quad T(f+ g) \leq C
(Tf + Tg),$$ for some positive constant $C$, and for every
$\lambda \geq 0$ and $f, g \in \Mpl(0,1)$.

Given two rearrangement-invariant spaces $X(0,1)$ and $Y(0,1)$, we
say that $T$ is bounded from $X(0,1)$ into $Y(0,1)$, and write
\begin{equation}\label{Tembed}
T:X(0,1)\to Y(0,1),
\end{equation}
if the quantity
$$
\|T\|=\sup\left\{\|Tf\|_{Y(0,1)};\ f\in X(0,1)\cap\Mpl(0,1),\
\|f\|_{X(0,1)}\leq 1\right\}
$$
is finite. Such a quantity will be called the norm of $T$. The space
$Y(0,1)$  will be called optimal, within a certain class, in
\eqref{Tembed} if, whenever $Z(0,1)$ is another
rearrangement-invariant space, from the same class, such that
$T:X(0,1)\to Z(0,1)$, we have that $Y(0,1)\to Z(0,1)$. Equivalently,
the function norm $\| \cdot \|_{Y(0,1)}$ will be said to be optimal
in \eqref{Tembed} in the relevant class.

Two operators $T$ and $T'$ from $\Mpl(0,1)$ into
$\Mpl(0,1)$ will be called mutually associate if
\[
\int_0\sp1 Tf(s)g(s)\,ds=\int_0\sp 1f(s)T'g(s)\,ds
\]
 for every $f,g\in\M_+(0,1)$.

\begin{lemma}\label{R:nova}
Let $T$ and $T'$  be mutually associate operators, and let $X(0,1)$
and $Y(0,1)$ be rearrangement-invariant spaces. Then,
$$
T:X(0,1)\to Y(0,1)\quad \hbox{if and only if} \quad T':Y'(0,1)\to
X'(0,1),
$$
and
$$
\|T\|=\|T'\|.
$$
\end{lemma}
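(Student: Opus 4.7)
The plan is to obtain the equivalence and the norm equality simultaneously via a single duality computation, exploiting the characterization of the rearrangement-invariant norm through its associate norm recorded in \eqref{E:pre-duality}. Since $T'f\in\Mpl(0,1)$ whenever $f\in\Mpl(0,1)$, the formula \eqref{E:pre-duality} can be applied to $Tf$ in $Y(0,1)$ and to $T'g$ in $X'(0,1)$.

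First I would write, for every $f\in X(0,1)\cap\Mpl(0,1)$, using \eqref{E:pre-duality} with the norm $\|\cdot\|_{Y(0,1)}$,
\[
\|Tf\|_{Y(0,1)}=\sup_{\substack{g\geq 0\\ \|g\|_{Y'(0,1)}\leq 1}}\int_0^1 (Tf)(s)\,g(s)\,ds.
\]
Taking the supremum over $f\in X(0,1)\cap\Mpl(0,1)$ with $\|f\|_{X(0,1)}\leq 1$ and interchanging the two suprema (they act on independent variables, so this interchange is unconditional) gives
\[
\|T\|=\sup_{\substack{g\geq 0\\ \|g\|_{Y'(0,1)}\leq 1}}\;\sup_{\substack{f\geq 0\\ \|f\|_{X(0,1)}\leq 1}}\int_0^1 (Tf)(s)\,g(s)\,ds.
\]

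Next I would invoke the hypothesis that $T$ and $T'$ are mutually associate to rewrite the integrand, and apply \eqref{E:pre-duality} again, this time with the norm $\|\cdot\|_{X'(0,1)}$ (noting that $T'g\in\Mpl(0,1)$):
\[
\sup_{\substack{f\geq 0\\ \|f\|_{X(0,1)}\leq 1}}\int_0^1 f(s)\,(T'g)(s)\,ds=\|T'g\|_{X'(0,1)}.
\]
Combining the two displays yields
\[
\|T\|=\sup_{\substack{g\geq 0\\ \|g\|_{Y'(0,1)}\leq 1}}\|T'g\|_{X'(0,1)}=\|T'\|,
\]
where the rightmost equality is just the definition of the norm of $T':Y'(0,1)\to X'(0,1)$ (sublinearity of $T'$ is inherited from that of $T$ via the associate relation, so this quantity is indeed the operator norm in the sense defined at the start of the section). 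The equivalence of boundedness statements is now immediate: $\|T\|<\infty$ iff $\|T'\|<\infty$.

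I do not expect any serious obstacle; the only point requiring a line of justification is the unconstrained interchange of the two suprema, which is legitimate because the integrand $\int_0^1(Tf)g\,ds$ is a function of the pair $(f,g)$ over a product set and no constraint couples the two variables. One minor subtlety worth mentioning is that the suprema in \eqref{E:pre-duality} run over \emph{nonnegative} $g$, which matches the sign convention under which associate operators act on $\Mpl(0,1)$, so nothing needs to be said about absolute values or signs.
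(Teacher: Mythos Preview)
Your proof is correct and follows essentially the same approach as the paper's: both express $\|T\|$ via \eqref{E:pre-duality}, swap the two suprema, invoke the associate relation, and apply \eqref{E:pre-duality} once more to recognize $\|T'\|$. The paper's version is simply the same chain compressed into a single display, without the explanatory commentary.
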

\begin{proof} The conclusion is a consequence of the following chain:
\begin{align*}
\|T\| &= \sup _{ \overset{f\geq 0}{\|f\|_{X(0,1)}\leq 1}
}\|Tf\|_{Y(0,1)}
= \sup _{\overset{f\geq 0}{\|f\|_{X(0,1)}\leq 1}}\sup _{\overset{g\geq 0}{\|g\|_{Y'(0,1)}\leq 1}}\int_0\sp1Tf(s)g(s)\,ds\\
\nonumber & = \sup _{\overset{g\geq 0}{\|g\|_{Y'(0,1)}\leq 1}}\sup
_{\overset{f\geq 0}{\|f\|_{X(0,1)}\leq 1}}\int_0\sp 1f(s)T'g(s)\,ds
 =
 \sup _{\overset{g\geq 0}{\|g\|_{Y'(0,1)}\leq 1}}\|T'g\|_{X'(0,1)} = \|T'\|.
\end{align*}
 \end{proof}

Let $I:[0,1]\to[0,\infty)$ be a measurable function
satisfying~\eqref{E:lower-bound}. We define the operators $H_I$ and $R_I$ from $\Mpl(0,1)$ into $\Mpl(0,1)$ by
\begin{equation}\label{Hbis}
H_If(t) =\int_t\sp1\frac{f(s)}{I(s)}\,ds \qquad \hbox{ for
$t\in(0,1]$,}
\end{equation}
and
\[
R_If(t) =\frac1{I(t)}\int_0\sp t{f(s)}\,ds \qquad \hbox{ for
$t\in(0,1]$,}
\]
for $f\in\M_+(0,1)$.
 Moreover, given $j\in\N$, we set
\begin{equation}\label{Hj}
H_I\sp j=\underbrace{H_I\circ H_I\circ \dots\circ H_I}_{
j-\textup{times}} \quad \hbox{and} \quad R_I\sp
j=\underbrace{R_I\circ R_I\circ \dots\circ R_I}_{ j-\textup{times}}.
\end{equation}
 We also set
$H_I\sp 0=R_I\sp0=\Id$.

\begin{remarks}\label{R:hrq}
(i) The operators $H_I$ and $R_I$ are mutually associate.
Hence, $H_I^j$ and $R_I^j$ are also mutually associate for $j \in
\N$.

(ii) By the Hardy--Littlewood inequality~\eqref{E:HL}, we have, for
every$f\in\Mpl(0,1)$,
\[
R_If(t)\leq\ri f\sp*(t) \qquad \hbox{ for $t\in(0,1]$.}
\]
More generally, for every $f\in\Mpl(0,1)$ and $j\in\N$, one has that
\begin{equation}\label{E:riij}
R_I\sp jf(t)\leq\ri\sp j f\sp*(t) \qquad \hbox{ for $t\in(0,1]$.}
\end{equation}

(iii) For every $j\in\N$ and $f\in\Mpl(0,1)$, we have that
\begin{equation}\label{E:kernel-formula}
H_I\sp j
f(t)=\frac1{(j-1)!}\int_t\sp1\frac{f(s)}{I(s)}\left(\int_t\sp
s\frac{dr}{I(r)}\right)\sp{j-1}\,ds  \quad \hbox{ for $t\in(0,1)$.}
\end{equation}
Equation \eqref{E:kernel-formula} holds for $j=1$ by the very
definition of $H_I$. On the other hand, if \eqref{E:kernel-formula}
is assumed to hold for some $j\in\N$, then
\begin{align*}
H_I\sp{j+1}f(t)=\int_t\sp1\frac {H_I\sp jf(s)}{I(s)}\,ds &=
\frac1{(j-1)!} \int_t\sp1\frac {1}{I(s)}\int_s\sp1\frac
{f(r)}{I(r)}\left(\int_s\sp
r\frac{d\tau}{I(\tau)}\right)\sp{j-1}\,dr\,ds\\
&= \frac1{(j-1)!} \int_t\sp1\frac {f(r)}{I(r)}\int_t\sp r\frac
{1}{I(s)}\left(\int_s\sp
r\frac{d\tau}{I(\tau)}\right)\sp{j-1}\,ds\,dr\\
&=\frac1{j!}\int_t\sp1\frac {f(r)}{I(r)}\left(\int_t\sp
r\frac{d\tau}{I(\tau)}\right)\sp{j}\,dr.
\end{align*}
Hence, \eqref{E:kernel-formula} follows by induction. Similarly, for
every $j\in\N$ and $f\in\Mpl(0,1)$, we also have that
\begin{equation}\label{E:rij}
R\sp j_If(t)=\frac1{(j-1)!}\frac{1}{I(t)}\int _0^t
f(s)\left(\int_s\sp t\frac{dr}{I(r)}\right)\sp{j-1}\,ds \quad \hbox{
for $t\in(0,1]$.}
\end{equation}
\end{remarks}

Given any $j\in\N$ and any rearrangement-invariant function norm $\|\cdot \|_{X(0,1)}$, equation \eqref{E:rij} implies that
\begin{equation}\label{E:xji}
\|f\|_{X_{j,I}'(0,1)}= {(j-1)!}\|R\sp{j}_If\sp* \|_{X'(0,1)},
\end{equation}
for  $f \in \Mpl(0,1)$, where $\|\cdot\|_{X_{j,I}'(0,1)}$ is the functional introduced in~\eqref{E:eucl_opt_norm}. We also formally set
$\|\cdot\|_{X_{0,I}'}=\| \cdot \|_{X'(0,1)}$.

\begin{prop}\label{P:norm}
Let $I:[0,1]\to[0,\infty)$ be a~measurable function
satisfying~\eqref{E:lower-bound}. Let $\|\cdot\|_{X(0,1)}$ be a~rearrangement-invariant~function norm and let $j\in\N$. Then the functional
$\|\cdot\|_{X_{j,I}'(0,1)}$ defined in~\eqref{E:xji} is a
rearrangement-invariant function norm, whose associate norm
$\|\cdot\|_{X_{j,I}(0,1)}$ fulfils
\begin{equation}\label{E:bdj}
H_I\sp j:X(0,1)\to X_{j,I}(0,1).
\end{equation}
Moreover, the space $X_{j,I}(0,1)$ is
the optimal target  in~\eqref{E:bdj} among all
rearrangement-invariant spaces.
\end{prop}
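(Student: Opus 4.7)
The plan is to verify that $\|\cdot\|_{X_{j,I}'(0,1)}$ is a rearrangement-invariant function norm by checking axioms (P1)--(P6), and then to deduce the embedding \eqref{E:bdj} and its optimality from a duality argument based on Lemma \ref{R:nova}. All the heavy lifting is done by the explicit kernel representation \eqref{E:rij} of $R_I\sp j$ together with the monotonicity in $s$ of the weight $s\mapsto\bigl(\int_s\sp t dr/I(r)\bigr)\sp{j-1}$ on $(0,t)$.

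First I would verify the norm axioms for $\|f\|_{X_{j,I}'(0,1)}=(j-1)!\,\|R_I\sp j f\sp*\|_{X'(0,1)}$. Property (P6) is immediate, since the definition depends on $f$ only through $f\sp*$. Homogeneity in (P1), monotonicity (P2) and the Fatou property (P3) follow directly from the corresponding properties of $\|\cdot\|_{X'(0,1)}$ combined with standard properties of the map $f\mapsto f\sp*$ (monotonicity and stability under monotone convergence) and monotone convergence inside the integral in \eqref{E:rij}. The nontrivial piece is the triangle inequality: writing
\[
R_I\sp j f\sp*(t)=\frac{1}{(j-1)!\,I(t)}\int_0\sp t f\sp*(s)\left(\int_s\sp t \tfrac{dr}{I(r)}\right)\sp{j-1}ds,
\]
and observing that the weight $s\mapsto\bigl(\int_s\sp t dr/I(r)\bigr)\sp{j-1}$ is non-increasing on $(0,t)$, Hardy's lemma together with the subadditivity $\int_0\sp u (f+g)\sp*\leq \int_0\sp u f\sp*+\int_0\sp u g\sp*$ gives the pointwise bound $R_I\sp j(f+g)\sp*\leq R_I\sp j f\sp*+R_I\sp j g\sp*$; the triangle inequality in $X'(0,1)$ then transfers to $\|\cdot\|_{X_{j,I}'(0,1)}$. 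For (P4), an induction based on \eqref{E:lower-bound} shows that $R_I\sp j 1$ is bounded, hence lies in $L\sp\infty(0,1)\hra X'(0,1)$; for (P5) a two-sided comparison between $\int_0\sp 1 f\sp*$ and $\int_0\sp 1 R_I\sp j f\sp*$, again using \eqref{E:lower-bound}, closes the argument.

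With the norm axioms in hand, \eqref{E:bdj} follows by duality. By Remarks \ref{R:hrq}(i) and Lemma \ref{R:nova}, $H_I\sp j:X(0,1)\to X_{j,I}(0,1)$ is equivalent to $R_I\sp j:X_{j,I}'(0,1)\to X'(0,1)$. For any $g\in\Mpl(0,1)$ the same Hardy-lemma argument yields the pointwise inequality $R_I\sp j g(t)\leq R_I\sp j g\sp*(t)$ already recorded in Remark \ref{R:hrq}(ii). Monotonicity of $\|\cdot\|_{X'(0,1)}$ on $\Mpl(0,1)$ then gives
\[
\|R_I\sp j g\|_{X'(0,1)}\leq \|R_I\sp j g\sp*\|_{X'(0,1)}= \tfrac{1}{(j-1)!}\,\|g\|_{X_{j,I}'(0,1)},
\]
which is precisely the desired boundedness, with norm at most $1/(j-1)!$.

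Finally, for optimality, suppose that $Z(0,1)$ is a rearrangement-invariant space satisfying $H_I\sp j:X(0,1)\to Z(0,1)$. By Lemma \ref{R:nova} this is the same as $R_I\sp j:Z'(0,1)\to X'(0,1)$. Testing on non-increasing functions ($g=g\sp*$) and using the rearrangement-invariance of $\|\cdot\|_{Z'(0,1)}$ yields
\[
\|g\|_{X_{j,I}'(0,1)}=(j-1)!\,\|R_I\sp j g\sp*\|_{X'(0,1)}\leq C\,\|g\sp*\|_{Z'(0,1)}=C\,\|g\|_{Z'(0,1)}
\]
for every $g\in\Mpl(0,1)$, i.e.\ $Z'(0,1)\hra X_{j,I}'(0,1)$; by \eqref{E:equivemb} this is equivalent to $X_{j,I}(0,1)\hra Z(0,1)$, which proves the claimed optimality. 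The main obstacle throughout is the careful bookkeeping around Hardy's lemma applied to the kernel in \eqref{E:rij}: once the monotonicity of the weight in $s$ is exploited, both the subadditivity of the functional and the pointwise estimate $R_I\sp j g\leq R_I\sp j g\sp*$ fall out together, and the remaining steps are routine consequences of the general theory of rearrangement-invariant spaces.
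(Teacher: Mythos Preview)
Your proposal is correct and follows essentially the same route as the paper's proof: Hardy's lemma for subadditivity, the duality via Lemma~\ref{R:nova} for both \eqref{E:bdj} and optimality, and the pointwise estimate $R_I^j g\le R_I^j g^*$ from Remark~\ref{R:hrq}(ii). Two small points of comparison: for (P4) the paper computes $R_I^j 1$ directly via the bound $1/I(t)\le C/t$ and the explicit logarithmic integral, whereas your inductive argument ($R_I^j 1\le c^{-j}$ from $t/I(t)\le c^{-1}$) is cleaner; for (P5), your sketch is a bit vague and the reference to \eqref{E:lower-bound} is slightly misplaced---the paper's argument uses (P5) for $X'(0,1)$ together with Fubini to obtain $\int_0^1 R_I^j f^*=\tfrac1{j!}\int_0^1 f^*(s)\bigl(\int_s^1 dr/I(r)\bigr)^j ds$, and then bounds the inner integral below by the positive constant $\int_{1/2}^1 dr/I(r)$, which requires only that $I$ is finite-valued and positive on $(0,1)$.
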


\begin{proof}
We begin by  showing that the functional $\|\cdot\|_{X_{j,I}'(0,1)}$
is a rearrangement-invariant function norm.
Let $f,g\in\Mpl (0,1)$. By~\eqref{subadd}, $\int_0\sp
t(f+g)\sp*(s)\,ds\leq \int_0\sp tf\sp*(s)\,ds+\int_0\sp
tg\sp*(s)\,ds$ for $t \in (0,1)$.  Thus, by Hardy's lemma (see
Section~\ref{S:measurable}) applied, for each fixed $t \in (0,1)$,
with $f_1(s)=(f+g)\sp*(s)$, $f_2(s)=f\sp*(s)+g\sp*(s)$ and
$h(s)=\chi _{(0,t)}(s)\big(\int_s\sp t\frac{dr}{I(r)}\big)\sp{j-1}$,
we obtain the triangle inequality
$$
\|f+g\|_{X_{j,I}'(0,1)}\leq \|f\|_{X_{j,I}'(0,1)}+\|g\|_{X_{j,I}'(0,1)}.
$$
Other properties in the axiom (P1) of the definition of
rearrangement-invariant function norm, as well as the axioms (P2),
(P3) and (P6) are obviously satisfied. Next, it follows
from~\eqref{E:lower-bound} that there exists a~positive constant~$C$
such that $ \tfrac1{I(t)}\leq \tfrac Ct$ for $t\in(0,1)$. Therefore,
\begin{align*}
\|1\|_{X_{j,I}'(0,1)} &= \left\|\frac1{I(t)}\int_0\sp
t\left(\int_s\sp
t\frac{dr}{I(r)}\right)\sp{j-1}\,ds\right\|_{X'(0,1)}\leq C\sp j
\left\|\frac1{t}\int_0\sp t\left(\int_s\sp
t\frac{dr}{r}\right)\sp{j-1}\,ds\right\|_{X'(0,1)}\\
&=C\sp j \left\|\frac1{t}\int_0\sp t\left(\log \frac
ts\right)\sp{j-1}\,ds\right\|_{X'(0,1)}= (j-1)!
C\sp j\|1\|_{X'(0,1)},
\end{align*}
and (P4) follows. As far as (P5) is concerned, note that
$$
\int_0\sp{1} f \sp*(s)\,ds\leq 2\int_0\sp{\frac12} f \sp*(s)\,ds
$$
for every $f\in\Mpl (0,1)$. Thus, by (P5) for the norm $\|\cdot
\|_{X'(0,1)}$, there exists a positive constant $C$ such that, if
$f\in\Mpl (0,1)$, then
\begin{align*}
&\left\|\frac1{I(t)}\int_0\sp tf\sp*(s)\left(\int_s\sp
t\frac{dr}{I(r)}\right)\sp{j-1}\,ds\right\|_{X'(0,1)}  \geq C
\int_0^1 \frac1{I(t)}\int_0\sp tf\sp*(s)\left(\int_s\sp
t\frac{dr}{I(r)}\right)\sp{j-1}\,ds \,dt \\
&  = \frac Cj \int _0^1 f\sp*(s) \left(\int_s\sp
1\frac{dr}{I(r)}\right)\sp{j}\,ds \geq \frac Cj
\left(\int_{\frac12}\sp 1\frac{dr}{I(r)}\right)\sp{j}\int _0^{\frac
12}f\sp*(s)\,ds \geq C'\|f\|_{L\sp1(0,1)}
\end{align*}
where $C'=\frac C{2j}(\int_{\frac12}\sp 1\frac{dr}{I(r)})\sp{j}$.
Hence, property (P5) follows.

To prove~\eqref{E:bdj}, note that, by~\eqref{E:riij} and~\eqref{E:xji}, we have
\[
\|R\sp j_If\|_{X'(0,1)}\leq \|R\sp
j_If\sp*\|_{X'(0,1)}=\frac{1}{(j-1)!}\|f\|_{X'_{j,I}(0,1)}
\]
for $f \in \M_+(0,1)$. Hence,
\[
R\sp j_I:X'_{j,I}(0,1)\hra X'(0,1).
\]
Since $R\sp j_I$ and $H\sp j_I$ are
mutually associate,  equation \eqref{E:bdj} follows via   Lemma \ref{R:nova}.

It remains to prove that $X_{j,I}(0,1)$ is  optimal in \eqref{E:bdj}
among all rearrangement-invariant spaces. To this purpose, assume
that  $Y(0,1)$ is  another rearrangement-invariant space such that
$H\sp j_I:X(0,1)\to Y(0,1)$. Then, by Lemma \ref{R:nova} again,
$R\sp j_I: Y'(0,1)\to X'(0,1)$, namely
$$
\|R\sp j_I f\|_{X'(0,1)}\leq C\|f\|_{Y'(0,1)}
$$
for some positive constant $C$, and every $ f\in\Mpl(0,1)$. Thus, in
particular, by \eqref{E:xji},
$$
\|f \|_{X_{j,I}'(0,1)}=
(j-1)!\|R\sp j_If\sp*\|_{X'(0,1)}\leq
(j-1)!C\|f\sp*\|_{Y'(0,1)}=(j-1)!C\|f\|_{Y'(0,1)}
$$
for every $f\in\Mpl (0,1)$. Hence, $Y'(0,1)\hra X_{j,I}'(0,1)$, and, equivalently,
$X_{j,I}(0,1)\hra Y(0,1)$. This shows that $X_{j,I}(0,1)$ is
optimal in \eqref{E:bdj} among all rearrangement-invariant spaces.
\end{proof}

We   introduce one more sequence of function norms, based on the
iteration of the first-order  function norm $\|f\|_{X_{1,I}'(0,1)}$.
Let $I:[0,1]\to[0,\infty)$ be a~measurable function
satisfying~\eqref{E:lower-bound}. Let $\|\cdot\|_{X(0,1)}$ be
a~rearrangement-invariant~function norm. Let $j\in\N\cup\{0\}$. We
define $\|\cdot\|_{X_{j}(0,1)}$   as the rearrangement-invariant
function norm whose associate norm $\|\cdot \|_{X_j'(0,1)}$ is
given, via iteration, by
$\|\cdot\|_{X_{0}'(0,1)}=\|\cdot\|_{X'(0,1)}$, and, for $j \geq 1$,
by
\begin{equation}\label{E:krus}
\|f\|_{X_j'(0,1)}=\|R_If\sp*\|_{X_{j-1}'(0,1)}
\end{equation}
for $f\in\M_+(0,1)$. Note that
\begin{equation}\label{E:eq-one}
\|f \|_{X_{1}(0,1)}=
\|f\|_{X_{1,I}(0,1)}.
\end{equation}

\begin{remark}\label{R:hrq2}
By Proposition~\ref{P:norm}, applied $j$ times with $j=1$, we obtain
that, for every $j\in\N\cup\{0\}$, the functional
$\|\cdot\|_{X_{j}'(0,1)}$ is actually a rearrangement-invariant~function norm. Moreover, its associate function norm
$\|\cdot\|_{X_{j}(0,1)}$ fulfils
\begin{equation}\label{sept1}
H_I:X_{j}(0,1)\to X_{j+1}(0,1),
\end{equation}
and $\|\cdot\|_{X_{j+1}(0,1)}$ is the optimal target function norm
in \eqref{sept1} among all rearrangement-invariant~function norms.
By Lemma \ref{R:nova}, we also have
$$
R_I:X_{j+1}'(0,1)\to X_{j}'(0,1).
$$
\end{remark}

\begin{remark}\label{R:comparison-of-spaces}
Note that, by the very definition of $X_{j}(0,1)$,
\[
X_j(0,1)=\underbrace{(\dots(X_{1,I})_{1,I}\dots)_{1,I}}_{
j-\textup{times}}(0,1)
\]
for $j \in \N$. In particular,
\begin{equation}\label{E:reit-iter}
(X_k)_h(0,1)=X_{k+h}(0,1)
\end{equation}
for every $k,h\in\N$.
\end{remark}

We now turn our attention to the special situation when $I$
satisfies, in addition, condition \eqref{E:doubling}. In this case,
most of the results take a~simpler form. We start with a result concerned with the equivalence of two couples of
functionals under \eqref{E:doubling}.

\begin{prop}\label{link}
Let $I:[0,1]\to[0,\infty)$ be a~non-decreasing function
satisfying~\eqref{E:doubling} and let $\|\cdot \|_{X(0,1)}$ be any
rearrangement-invariant function norm. Then:

\textup{(i)} For every $j\in\N$, and $f \in \mathcal M_+(0,1)$,
\begin{equation}\label{link1}
\left\|\int_t\sp1\frac{f(s)}{I(s)}\left(\int_t\sp
s\frac{dr}{I(r)}\right)\sp{j-1}\,ds\right\|_{X(0,1)} \approx
\left\|\int_t\sp1f(s)\frac{s^{j-1}}{I(s)^j}\,ds\right\|_{X(0,1)},
\end{equation}
up to multiplicative constants independent of $\|\cdot \|_{X(0,1)}$
and $f$.

\textup{(ii)} For every $j\in\N$, and $f \in \mathcal M_+(0,1)$,
\[
\left\|\frac{1}{I(s)}\int _0^s f(t)\left(\int_t\sp s
\frac{dr}{I(r)}\right)\sp{j-1} dt\right\|_{X(0,1)} \approx
\left\|\frac{s^{j-1}}{I(s)^j} \int _0^s f(t)dt\right\|_{X(0,1)},
\]
up to multiplicative constants independent of $\|\cdot \|_{X(0,1)}$
and $f$.
\end{prop}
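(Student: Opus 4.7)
My plan is to derive both (i) and (ii) from a single sharpening of \eqref{E:doubling} concerning $\psi(s):=\int_0\sp s dr/I(r)$. Since $I$ is non-decreasing,
\[
\psi(s)-\psi(s/2)=\int_{s/2}\sp s\frac{dr}{I(r)}\geq\frac{s/2}{I(s)},
\]
and by \eqref{E:doubling} the right-hand side is comparable to $\psi(s)$. Combined with the trivial $\psi(s)-\psi(t)\leq\psi(s)$, this yields $\psi(s)-\psi(t)\approx\psi(s)$ whenever $0\leq t\leq s/2$. As a byproduct, combining $\psi(2s)\geq s/I(s)$ (from $I$ non-decreasing) with \eqref{E:doubling} applied at $2s$ forces $I(2s)\lesssim I(s)$, so that $I$, and hence the weight $\phi(s):=s\sp{j-1}/I(s)\sp j$, is itself doubling.

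For (i), the $\lesssim$ direction is pointwise: $(\psi(s)-\psi(t))\sp{j-1}\leq\psi(s)\sp{j-1}\lesssim(s/I(s))\sp{j-1}$, hence dividing by $I(s)$ and integrating yields the desired pointwise bound, which passes to the $X$-norm. For the $\gtrsim$ direction, I restrict the outer integral to $s>2t$ and apply the key estimate to obtain $\textup{LHS}_{(i)}(t)\gtrsim \int_{2t}\sp 1 f(s)s\sp{j-1}/I(s)\sp j\,ds$ for $t\in(0,\tfrac12)$. Reading this at $t/2$ gives $\textup{LHS}_{(i)}(t/2)\gtrsim\textup{RHS}_{(i)}(t)$ on $(0,1)$, and since the dilation $E_2$, $(E_2 h)(t)=h(t/2)$, has norm at most $\max\{1,\tfrac12\}=1$ on every $X(0,1)$, passing to $X$-norms closes the argument.

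For (ii) I run the parallel scheme, with one extra ingredient to handle the absence of $f$'s ``future'' values. The $\lesssim$ direction is again pointwise. For the $\gtrsim$ direction, I split $\int_0\sp s f(t)\,dt=\int_0\sp{s/2}+\int_{s/2}\sp s$ and control each piece by a value of $\textup{LHS}_{(ii)}$. The first piece is captured directly: the key estimate on $(0,s/2)$ gives $\textup{LHS}_{(ii)}(s)\gtrsim\phi(s)\int_0\sp{s/2}f(t)\,dt$. The second piece is captured by $\textup{LHS}_{(ii)}(2s)$: for $t\in(s/2,s)$ one has $t\leq (2s)/2$, so the same estimate applied at $2s$, combined with $\phi(2s)\approx\phi(s)$, yields $\textup{LHS}_{(ii)}(2s)\gtrsim\phi(s)\int_{s/2}\sp s f(t)\,dt$ for $s\in(0,\tfrac12)$. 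Summing,
\[
\textup{RHS}_{(ii)}(s)\lesssim\textup{LHS}_{(ii)}(s)+\textup{LHS}_{(ii)}(2s),\quad s\in(0,\tfrac12),
\]
and the bound $\|E_{\frac12}\|_{X\to X}\leq 2$ converts this into $\|\textup{RHS}_{(ii)}\chi_{(0,\frac12)}\|_X\lesssim\|\textup{LHS}_{(ii)}\|_X$. On the complementary interval $(\tfrac12,1)$, both $\psi$ and $1/I$ are bounded above and below by positive constants depending only on $I$, so one can bound $\textup{RHS}_{(ii)}(s)\chi_{(\frac12,1)}(s)$ by a finite sum of terms, each comparable to a value of $\textup{LHS}_{(ii)}$ on $(\tfrac14,1)$, with constants depending only on $I$ and $j$; this is achieved by iterating the splitting-plus-key-estimate step a fixed (finite) number of times, each iteration shrinking the residual integration interval for $f$ by a factor of two until the remaining range $(s/2,s)$ itself lies in $(\tfrac14,1)$ where the inner kernel and the weight $\phi$ are mutually comparable up to $I$-dependent constants.

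The hardest part of the plan is precisely the bookkeeping in the tail interval $(\tfrac12,1)$ of (ii): the ``natural'' dilation step that worked on $(0,\tfrac12)$ breaks down because $2s$ exits $(0,1)$, and one has to run a finite cascade of splittings, each step using the doubling of $\phi$ and $I$ to preserve the comparison, in order to keep all emerging constants independent of both $f$ and $\|\cdot\|_{X(0,1)}$.
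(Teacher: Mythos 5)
Your proof of part~(i) is correct and is essentially the paper's own argument: the $\lesssim$ direction is the pointwise kernel bound $\int_t^s dr/I(r) \leq \int_0^s dr/I(r) \lesssim s/I(s)$ from~\eqref{E:doubling}, and the $\gtrsim$ direction is the restriction of the outer integral to $s>2t$, the elementary bound $\int_{s/2}^s dr/I(r) \geq s/(2I(s))$, and the boundedness of the dilation operator $E_2$; your function $\psi$ is just a notational wrapper around the same steps.

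Your part~(ii) has a genuine gap, and it is exactly where you flag the hard part: the tail interval $(\tfrac12,1)$. Your estimate on $(0,\tfrac12)$ is fine, but the proposed finite cascade on $(\tfrac12,1)$ cannot close. The inner kernel $\bigl(\int_t^s dr/I(r)\bigr)^{j-1}$ vanishes continuously as $t\uparrow s$, while the weight $s^{j-1}/I(s)^j$ stays bounded away from zero; the two are not ``mutually comparable'' on $(\tfrac14,1)$, nor on any region whose closure meets the diagonal $t=s$, so each additional dyadic splitting of $\int_{s/2}^s f$ loses an unbounded factor as $s\to1$. No bookkeeping can repair this, because the $\gtrsim$ direction of~(ii) is in fact false for arbitrary $f\in\mathcal M_+(0,1)$. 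Take $I(s)=\sqrt{s}$, $j=2$, $X=L^\infty$, and $f_\varepsilon=\varepsilon^{-1}\chi_{(1-\varepsilon,1)}$: the right-hand side of~(ii) equals $\int_0^s f_\varepsilon$, with sup norm $1$, while the left-hand side at $s=1$ equals $\tfrac{2}{\varepsilon}\int_{1-\varepsilon}^1(1-\sqrt t)\,dt\approx\varepsilon/2$, so its sup norm is $\approx\varepsilon$; no constant independent of $f$ reverses the inequality.

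The resolution is that the paper invokes~(ii) only with $f^*$ in place of $f$ (see the defining formulas~\eqref{E:eucl_opt_norm} and~\eqref{E:xji-convm}), and the phrase ``the proof of (ii) is similar'' should be read in that light. For non-increasing $f$ one has $\int_0^s f\leq 2\int_0^{s/2}f$, and then the $\gtrsim$ direction of~(ii) becomes a clean \emph{pointwise} inequality on all of $(0,1)$, with no dilation and no tail to manage:
\[
\frac{s^{j-1}}{I(s)^j}\int_0^s f(t)\,dt \;\leq\; \frac{2\,s^{j-1}}{I(s)^j}\int_0^{s/2}f(t)\,dt \;\leq\; 2^{j}\,\frac{1}{I(s)}\int_0^{s/2}f(t)\Bigl(\int_t^s\frac{dr}{I(r)}\Bigr)^{j-1}dt,
\]
using $\int_t^s dr/I(r)\geq\int_{s/2}^s dr/I(r)\geq s/(2I(s))$ for $t\leq s/2$. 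Replace your cascade with this pointwise step, together with the observation that monotonicity of $f$ suffices for all uses of Proposition~\ref{link}(ii).
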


\begin{proof} We first note that, owing to the monotonicity of $I$, we have, for every  $j\in\N$,
\[
\left(\frac s{I(s)}\right)\sp{j-1}
=\frac{2\sp{j-1}}{I(s)\sp{j-1}}\left(\int_{\frac
s2}\sp{s}\,dr\right)\sp{j-1} \leq 2\sp{j-1}\left(\int_{\frac
s2}\sp{s}\frac{dr}{I(r)}\right)\sp{j-1} \quad \hbox{for
$s\in(0,1)$.}
\]
Thus,
\begin{align*}
\int_{2t}\sp1& \frac{f(s)}{I(s)}\left(\frac
s{I(s)}\right)\sp{j-1}\,ds
 \leq 2\sp{j-1} \int_{2t}\sp1\frac{f(s)}{I(s)}\left(\int_{\frac
s2}\sp{s}\frac{dr}{I(r)}\right)\sp{j-1}\,ds \\ & \leq 2\sp{j-1}
\int_{2t}\sp1\frac{f(s)}{I(s)}\left(\int_t\sp{s}\frac{dr}{I(r)}\right)\sp{j-1}\,ds
\leq 2\sp{j-1}
\int_{t}\sp1\frac{f(s)}{I(s)}\left(\int_t\sp{s}\frac{dr}{I(r)}\right)\sp{j-1}\,ds
\quad \hbox{for  $t\in(0,\frac12]$.}
\end{align*}
Hence, the right-hand side of \eqref{link1} does not exceed a
constant times its left-hand side, owing to the boundedness of the
dilation operator in rearrangement-invariant spaces. Note that this
inequality holds even without the assumption~\eqref{E:doubling}. On
the other hand,~\eqref{E:doubling} implies
\begin{align*}
\int_t\sp1\frac{f(s)}{I(s)}\left(\int_t\sp
s\frac{dr}{I(r)}\right)\sp{j-1}\,ds & \leq
\int_t\sp1\frac{f(s)}{I(s)}\left(\int_0\sp
s\frac{dr}{I(r)}\right)\sp{j-1}\,ds \\ & \leq C\sp{j-1}
\int_t\sp1\frac{f(s)}{I(s)}\left(\frac{s}{I(s)}\right)\sp{j-1}\,ds
\quad \hbox{for  $t\in(0,1)$,}
\end{align*}
hence the converse inequality in~\eqref{link1} follows. This proves (i).
\par
The proof of (ii) is similar.
\end{proof}

Given
$j\in\N$ and a~rearrangement-invariant function norm
$\|\cdot\|_{X(0,1)}$, we define the functional
$\|\cdot\|_{(X_{j,I}^\sharp)'(0,1)}$ by
\begin{equation}\label{E:xji-conv}
\|f\|_{(X_{j,I}^\sharp)'(0,1)}=\left\|\frac{t\sp{j-1}}{I(t)^j}\int _0^t f\sp*(s)\,ds\right\|_{X'(0,1)}
\end{equation}
for $f \in \Mpl (0,1)$.

\begin{remark}\label{R:jn}
It follows from Proposition~\ref{link} and its proof that for every rearrangement-invariant
norm $\|\cdot\|_{X(0,1)}$ and every $j\in\N$, we have
\[
(X_{j,I}^\sharp)'(0,1)\hra X_{j,I}'(0,1),
\]
and if moreover~\eqref{E:doubling} is satisfied, then, in fact,
\[
(X_{j,I}^\sharp)'(0,1)= X_{j,I}'(0,1).
\]
\end{remark}

This observation has a straightforward consequence.

\begin{prop}\label{linkcor}
Let $I:[0,1]\to[0,\infty)$ be a~non-decreasing function
satisfying~\eqref{E:doubling} and let $\|\cdot \|_{X(0,1)}$ be any
rearrangement-invariant function norm. Then
$$X_{j,I}(0,1) = X_{j,I}^\sharp (0,1),$$
up to equivalent norms.
\end{prop}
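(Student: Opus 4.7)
The plan is essentially to invoke Proposition \ref{link}(ii), which was set up precisely to handle this comparison. Comparing the definitions \eqref{E:eucl_opt_norm} and \eqref{E:xji-conv} of $\|\cdot\|_{X_{j,I}'(0,1)}$ and $\|\cdot\|_{(X_{j,I}^\sharp)'(0,1)}$, I see that the first is obtained by applying the $X'(0,1)$-norm to the function
\[
t\mapsto \frac{1}{I(t)}\int_0^t f^*(s)\left(\int_s^t \frac{dr}{I(r)}\right)^{j-1} ds,
\]
while the second is obtained by applying the same outer norm to
\[
t\mapsto \frac{t^{j-1}}{I(t)^j}\int_0^t f^*(s)\, ds.
\]
Proposition \ref{link}(ii), applied with $f$ replaced by $f^*$ and with the outer norm taken to be $\|\cdot\|_{X'(0,1)}$, asserts exactly that these two expressions are equivalent, up to multiplicative constants that are independent of $f$ and of the underlying rearrangement-invariant norm. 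Since Proposition \ref{link}(ii) is proved under hypothesis \eqref{E:doubling} (and the monotonicity of $I$), both of which we are assuming here, this equivalence applies in our setting.

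From this equivalence of norms we immediately obtain
\[
X_{j,I}'(0,1) = (X_{j,I}^\sharp)'(0,1),
\]
with equivalent norms, as was already observed in Remark \ref{R:jn}. Passing to the associate spaces, and using that the associate space of any rearrangement-invariant space coincides with its bi-associate (so that $X'' = X$), the equivalence of the associate norms transfers directly to an equivalence of the original norms:
\[
X_{j,I}(0,1) = X_{j,I}^\sharp(0,1),
\]
up to equivalent norms. No step here is delicate; the only point to verify carefully is that both functionals $\|\cdot\|_{X_{j,I}'(0,1)}$ and $\|\cdot\|_{(X_{j,I}^\sharp)'(0,1)}$ are genuine rearrangement-invariant function norms, so that the duality argument is legitimate. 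This is already known: the former is established in Proposition \ref{P:norm}, and the latter is an instance of the type of norm handled in the statement of Corollary \ref{T:optimalconv}. Thus the proof reduces to a one-line application of Proposition \ref{link}(ii) followed by an appeal to duality.
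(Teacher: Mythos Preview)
Your proposal is correct and matches the paper's approach exactly: the paper presents Proposition~\ref{linkcor} as an immediate consequence of Remark~\ref{R:jn}, which in turn is just the observation that Proposition~\ref{link}(ii) applied with $f^*$ and outer norm $\|\cdot\|_{X'(0,1)}$ gives the equivalence of the associate norms, whence duality yields the result. The only minor adjustment is that the fact that $\|\cdot\|_{(X_{j,I}^\sharp)'(0,1)}$ is a genuine rearrangement-invariant function norm is most directly supplied by Proposition~\ref{P:norm-conv} (or by Proposition~\ref{P:norm} applied with $I$ replaced by $t\mapsto I(t)^j/t^{j-1}$), rather than by Corollary~\ref{T:optimalconv}.
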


The following result is a counterpart of Proposition \ref{P:norm}
under \eqref{E:doubling}. It
 follows  from Proposition \ref{P:norm}, with $j=1$ and     $I$ replaced with the function
 $(0,1) \ni t \mapsto \frac{I(t)\sp{j}}{t\sp{j-1}}$, which obviously
 satisfies~\eqref{E:lower-bound}.

\begin{prop}\label{P:norm-conv}
Let $I:[0,1]\to[0,\infty)$ be a~non-decreasing function
satisfying~~\eqref{E:doubling}. Let $X(0,1)$ be a  rearrangement
invariant~space and let $j\in\N$. Then the functional
$\|\cdot\|_{(X_{j,I}^\sharp)'(0,1)}$ defined in~\eqref{E:xji-conv}
is a rearrangement-invariant function norm. Moreover,
\[
H\sp j_I:X(0,1)\to  X_{j,I}^\sharp (0,1),
\]
and $X_{j,I}^\sharp(0,1)$ is  optimal in~\eqref{E:bdj} among all
rearrangement-invariant spaces.
\end{prop}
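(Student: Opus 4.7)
The plan is to reduce Proposition~\ref{P:norm-conv} to the first-order case ($j=1$) of Proposition~\ref{P:norm} applied to a modified weight. Define $\tilde I:(0,1)\to(0,\infty)$ by $\tilde I(t)=I(t)\sp{j}/t\sp{j-1}$, and note that
\[
\frac{\tilde I(t)}{t}=\left(\frac{I(t)}{t}\right)\sp{j},
\]
so $\tilde I$ inherits~\eqref{E:lower-bound} from $I$. With this choice, direct computation gives
\[
H_{\tilde I}f(t)=\int_t\sp1 f(s)\frac{s\sp{j-1}}{I(s)\sp{j}}\,ds
\qquad\text{and}\qquad
R_{\tilde I}f\sp*(t)=\frac{t\sp{j-1}}{I(t)\sp{j}}\int_0\sp t f\sp*(s)\,ds,
\]
so that the functional $\|\cdot\|_{(X_{j,I}\sp\sharp)'(0,1)}$ defined in~\eqref{E:xji-conv} coincides exactly with the first-order functional $\|\cdot\|_{X_{1,\tilde I}'(0,1)}$ of~\eqref{E:xji} built from $\tilde I$ and $\|\cdot\|_{X(0,1)}$.

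Next, by Proposition~\ref{P:norm} applied with $j$ replaced by $1$ and $I$ replaced by $\tilde I$, it follows at once that $\|\cdot\|_{(X_{j,I}\sp\sharp)'(0,1)}$ is a~rearrangement-invariant function norm, that its associate space $X_{j,I}\sp\sharp(0,1)$ satisfies
\[
H_{\tilde I}:X(0,1)\to X_{j,I}\sp\sharp(0,1),
\]
and that $X_{j,I}\sp\sharp(0,1)$ is the optimal rearrangement-invariant target for $H_{\tilde I}$ in this embedding. It remains only to transfer these conclusions about $H_{\tilde I}$ to the operator $H_I\sp{j}$, and this is where the doubling assumption~\eqref{E:doubling} will enter.

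For this transfer I would invoke Proposition~\ref{link}~(i), whose proof rested precisely on~\eqref{E:doubling}. Using the kernel representation~\eqref{E:kernel-formula},
\[
(j-1)!\,H\sp{j}_I f(t)=\int_t\sp1\frac{f(s)}{I(s)}\left(\int_t\sp s\frac{dr}{I(r)}\right)\sp{j-1}\,ds,
\]
and Proposition~\ref{link}~(i) asserts that the $X_{j,I}\sp\sharp(0,1)$-norm of this expression is equivalent, up to multiplicative constants independent of $f$, to $\|H_{\tilde I}f\|_{X_{j,I}\sp\sharp(0,1)}$. Combined with the boundedness of $H_{\tilde I}$ just established, this gives the desired $H\sp j_I:X(0,1)\to X_{j,I}\sp\sharp(0,1)$.

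Finally, for optimality, suppose $Y(0,1)$ is any rearrangement-invariant space with $H\sp j_I:X(0,1)\to Y(0,1)$. The same two-sided equivalence from Proposition~\ref{link}~(i), now applied with the norm $\|\cdot\|_{Y(0,1)}$ in place of $\|\cdot\|_{X_{j,I}\sp\sharp(0,1)}$, forces $H_{\tilde I}:X(0,1)\to Y(0,1)$, and the optimality clause of Proposition~\ref{P:norm} applied to $\tilde I$ then yields $X_{j,I}\sp\sharp(0,1)\hra Y(0,1)$. The only genuine technical point in this plan is the equivalence between the iterated kernel operator $H\sp j_I$ and the single weighted Hardy operator $H_{\tilde I}$, which is exactly the content of Proposition~\ref{link}~(i) under~\eqref{E:doubling}; once this is in hand, everything else transfers verbatim through Proposition~\ref{P:norm}.
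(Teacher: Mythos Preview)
Your proposal is correct and takes essentially the same approach as the paper: the paper's own proof is a single sentence stating that the result follows from Proposition~\ref{P:norm} with $j=1$ and $I$ replaced by the function $t\mapsto I(t)^{j}/t^{j-1}$, which is exactly your $\tilde I$. You have simply made explicit the transfer step from $H_{\tilde I}$ to $H_I^{j}$ via Proposition~\ref{link}~(i), which the paper leaves implicit (it being immediate from Proposition~\ref{link} and Remark~\ref{R:jn} placed just before the statement).
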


\section{Proofs of the main results}\label{S:proof-convergent}

Here we are concerned with the  proof of the results of
Section~\ref{S:main}. In what follows, $R_I^m$ denotes the operator
defined as in \eqref{Hj}.

\begin{lemma}\label{T:lemma}
Let $I:[0,1] \rightarrow [0,\infty)$ be a non-decreasing function
fulfilling~\eqref{E:lower-bound}, and let $m\in \mathbb N \cup
\{0\}$. Then, for every $f\in \M_+(0,1)$,
\begin{equation}\label{E:1-operator-R}
R_I^m f^*(t) \leq 2^m R_I^m f^*(s)  \quad \hbox{if $0<\tfrac t2 \leq
s \leq t \leq 1$.}
\end{equation}
Consequently, for every $f\in \M_+(0,1)$,
\begin{equation}\label{E:1bis}
(d-c) R^m_I f^*(d)
\leq 2^{m+1} \int_c^d R^m_I f^*(s)\,ds \quad \hbox{if $0\leq c
<d\leq 1$.}
\end{equation}
\end{lemma}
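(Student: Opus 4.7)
The plan is to derive both inequalities from a single monotonicity observation: whenever $I$ is non-decreasing and $g \in \M_+(0,1)$ is non-increasing, the function $t \mapsto R_I^m g(t)/t^m$ is non-increasing on $(0,1]$, for every $m \in \N \cup \{0\}$. Applied with $g = f^*$, this will give $R_I^m f^*(t) \leq (t/s)^m R_I^m f^*(s)$ for $0 < s \leq t \leq 1$, and the restriction $s \geq t/2$ then yields $(t/s)^m \leq 2^m$, producing \eqref{E:1-operator-R}.

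The monotonicity claim itself will be proved by induction on $m$. The base case $m=0$ is trivial, since $R_I^0 g = g$. For the inductive step, I would set $\psi(r) := R_I^{m-1} g(r)/r^{m-1}$, which is non-increasing by the inductive hypothesis, and write
\[
\frac{R_I^m g(t)}{t^m} = \frac{1}{I(t)} \cdot \frac{J(t)}{t^m}, \qquad J(t) := \int_0^t \psi(r)\, r^{m-1}\, dr.
\]
Since $1/I(t)$ is non-increasing, it suffices to show that $J(t)/t^m$ is non-increasing. For $0 < s \leq t$, this is equivalent to
\[
(t^m - s^m) \int_0^s \psi(r)\, r^{m-1}\, dr \geq s^m \int_s^t \psi(r)\, r^{m-1}\, dr,
\]
and both sides will be compared to the common value $\psi(s)\, s^m (t^m - s^m)/m$: the left-hand side is at least this quantity because $\psi \geq \psi(s)$ on $(0,s]$, while the right-hand side is at most this quantity because $\psi \leq \psi(s)$ on $[s,t]$.

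Finally, \eqref{E:1bis} will follow by splitting on the size of $c$. If $c \geq d/2$, then $[c,d] \subset [d/2,d]$, and integrating \eqref{E:1-operator-R} (with $t=d$) over $s \in [c,d]$ gives $(d-c) R_I^m f^*(d) \leq 2^m \int_c^d R_I^m f^*(s)\, ds$. If $c < d/2$, then $[d/2,d] \subset [c,d]$, and the reverse of \eqref{E:1-operator-R}, namely $R_I^m f^*(s) \geq 2^{-m} R_I^m f^*(d)$ for $s \in [d/2,d]$, yields $\int_c^d R_I^m f^*(s)\, ds \geq (d/2) \cdot 2^{-m} R_I^m f^*(d) \geq 2^{-m-1}(d-c) R_I^m f^*(d)$. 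The main obstacle I anticipate is the monotonicity of $R_I^m g(t)/t^m$ itself: a direct iterative argument that only uses the $m=1$ estimate loses a constant at each step (producing, e.g., a bound like $2^{2m-1}$ in place of $2^m$), and the purpose of normalizing by $t^m$ and invoking the sharp Hardy-type comparison above is precisely to preserve the optimal constant through iteration.
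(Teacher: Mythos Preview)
Your proof is correct, and the route for \eqref{E:1-operator-R} is genuinely different from the paper's. You establish the stronger claim that $t\mapsto R_I^m f^*(t)/t^m$ is non-increasing on $(0,1]$, via an inductive Hardy-type comparison, and then specialize to $s\in[t/2,t]$. The paper instead runs a direct induction on the target inequality itself: assuming \eqref{E:1-operator-R} at level $m-1$, it applies the bound $R_I^{m-1}f^*(r)\le 2^{m-1}R_I^{m-1}f^*(r/2)$ pointwise inside the integral, substitutes $r\mapsto 2r$, and uses $I(t)\ge I(s)$ together with $t/2\le s$ to conclude. Your argument buys a sharper quantitative statement (the full $(t/s)^m$ bound for all $0<s\le t$), at the cost of the extra algebraic step comparing both sides to $\psi(s)s^m(t^m-s^m)/m$; the paper's version is shorter but only yields the dyadic estimate actually needed.

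For \eqref{E:1bis} your case split on $c\gtrless d/2$ is fine. The paper avoids the split by a single observation: since $(c+d)/2\ge d/2$, one has
\[
(d-c)R_I^m f^*(d)=2\int_{(c+d)/2}^d R_I^m f^*(d)\,ds\le 2^{m+1}\int_{(c+d)/2}^d R_I^m f^*(s)\,ds\le 2^{m+1}\int_c^d R_I^m f^*(s)\,ds,
\]
which is marginally cleaner but equivalent in strength.
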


\begin{proof}
We prove inequality \eqref{E:1-operator-R}  by induction. Fix any $f \in
\M_+(0,1)$. If $m=0$ then~\eqref{E:1-operator-R} is satisfied thanks to the
monotonicity of $f^*$. Next, let $m\geq 1$, and assume that
\eqref{E:1-operator-R} is fulfilled with $m$ replaced with $m-1$. If $0<\tfrac
t2 \leq s \leq t \leq 1$, then
\begin{align*}
R^m_I f^*(t)= &\frac{1}{I(t)} \int_0^t R^{m-1}_I f^*(r)\,dr \leq \frac{2^{m-1}}{I(s)} \int_0^t R^{m-1}_I f^*\left(\frac{r}{2}\right) \,dr
= \frac{2^m}{I(s)} \int_0^{\frac{t}{2}} R^{m-1}_I f^*(r)\,dr\\
&\leq \frac{2^m}{I(s)} \int_0^{s} R^{m-1}_I f^*(r)\,dr = 2^m R^m_I f^*(s),
\end{align*}
where the first inequality holds according to the induction
assumption and to the fact that $I$ is non-decreasing on $[0,1]$.
Inequality \eqref{E:1-operator-R} follows.

Now, let $0\leq c <d \leq 1$,   $m\in \mathbb N \cup \{0\}$ and
$f\in \M_+(0,1)$. Thanks to~\eqref{E:1-operator-R},
\begin{align*}
\int_c^d R^m_I f^*(d) \,ds = 2 \int_{\frac{c+d}{2}}^d R^m_I f^*(d)\,ds \leq 2^{m+1} \int_{\frac{c+d}{2}}^d R^m_I f^*(s)\,ds \leq 2^{m+1} \int_{c}^d R^m_I f^*(s)\,ds.
\end{align*}
This proves \eqref{E:1bis}.
\end{proof}

Given $m\in \mathbb N$ and a non-decreasing  function
$I:[0,1]\rightarrow [0,\infty)$ fulfilling~\eqref{E:lower-bound}, we
define the operator $G\sp m_I$ at every $f\in \M_+(0,1)$ by
\[
G^m_I f(t)=\sup_{t\leq s \leq 1} R^m_{I}f\sp*(s) \quad \hbox{for
$t\in (0,1)$.}
\]
Note that, trivially, $R^m_{I}f\sp* \leq G^m_I f$ for every $f\in
\M_+(0,1)$. Moreover,  $G^m_I f$ is a non-increasing function, and
hence $(R^m_{I}f\sp*)^* \leq G^m_I f$ as well.

The following lemma tells us that the operator $G^m_I$ does not
essentially change if  $I$ is replaced with its left-continuous
representative.

\begin{lemma}\label{T:equal_a_e}
Let $m\in \mathbb N$, let $I:[0,1]\rightarrow [0,\infty)$ be  a
non-decreasing function fulfilling~\eqref{E:lower-bound}, and let
$I_0:[0,1] \rightarrow [0,\infty)$ be the left-continuous function
which agrees with $I$ a.e.\ in $[0,1]$. Then, for every $f\in
\mathcal M_+(0,1)$,
$$
G^m_{I}f=G^m_{I_0}f
$$
up to a countable subset of $(0,1)$.
\end{lemma}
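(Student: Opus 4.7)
The plan is to exploit the fact that the non-decreasing function $I$ has at most countably many discontinuities; denote by $D\subset[0,1]$ this countable set, and note that on $(0,1)\setminus D$ the functions $I$ and $I_0$ coincide, while in general $I_0(t)=I(t\sp-)\leq I(t)$ for all $t\in(0,1]$ by monotonicity of $I$.

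First I would prove by induction on $m$ the pointwise inequality $R_I\sp m f\sp*\leq R_{I_0}\sp m f\sp*$ on $(0,1)$, together with equality outside $D$. For $m=1$ both facts are immediate from $I_0\leq I$ (with equality on $(0,1)\setminus D$) and from the fact that $\int_0\sp t f\sp*(s)\,ds$ does not depend on $I$. The inductive step uses
\[
R_I\sp{m+1} f\sp*(t)=\frac{1}{I(t)}\int_0\sp t R_I\sp{m}f\sp*(s)\,ds,
\]
because (a) $1/I(t)\leq 1/I_0(t)$ everywhere, and (b) modifying an integrand on a countable set does not alter its integral, so for $t\notin D$ the equality $I(t)=I_0(t)$ together with the induction hypothesis gives $R_I\sp{m+1} f\sp*(t)=R_{I_0}\sp{m+1} f\sp*(t)$. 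Taking the supremum over $s\in[t,1]$ immediately yields $G_I\sp m f\leq G_{I_0}\sp m f$ on $(0,1)$.

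For the reverse inequality, I would show that $R_{I_0}\sp m f\sp*$ is left-continuous on $(0,1]$ for every $m\geq 1$. Indeed, the primitive $t\mapsto\int_0\sp t R_{I_0}\sp{m-1}f\sp*(s)\,ds$ is continuous wherever it is finite (with the usual extended-valued convention otherwise), while $1/I_0$ is left-continuous since $I_0$ is left-continuous and positive by~\eqref{E:lower-bound}. Now fix $t\in(0,1)\setminus D$ and $s_0\in[t,1]$. If $s_0=t$, then $s_0\notin D$ and hence $R_{I_0}\sp m f\sp*(t)=R_I\sp m f\sp*(t)\leq G_I\sp m f(t)$; if $s_0>t$, choose a sequence $s_n\nearrow s_0$ with $s_n\in(t,s_0)\setminus D$ (possible since $D$ is countable and $(t,s_0)\setminus D$ is dense in $(t,s_0)$), and use left-continuity to write
\[
R_{I_0}\sp m f\sp*(s_0)=\lim_{n\to\infty}R_{I_0}\sp m f\sp*(s_n)=\lim_{n\to\infty}R_I\sp m f\sp*(s_n)\leq G_I\sp m f(t).
\]
Taking the supremum over $s_0\in[t,1]$ yields $G_{I_0}\sp m f(t)\leq G_I\sp m f(t)$ for every $t\in(0,1)\setminus D$, and since $D$ is countable the conclusion follows.

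The main technical obstacle I foresee is coordinating the two pieces: the pointwise comparison $R_I\sp m f\sp*\leq R_{I_0}\sp m f\sp*$ is straightforward, but promoting it to equality of the sup-type functionals $G_I\sp m f$ and $G_{I_0}\sp m f$ off a countable set requires the left-continuity of $R_{I_0}\sp m f\sp*$ together with the density of $(0,1)\setminus D$ in $(t,1)$ whenever $t\notin D$; the essential point is that one single exceptional set $D$, namely the discontinuity set of $I$, works uniformly in $m$, and this uniformity is secured by the induction.
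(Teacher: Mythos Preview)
Your proof is correct and follows essentially the same approach as the paper: both arguments exploit that $I$ and $I_0$ differ only on the countable discontinuity set $D$, and both use left limits (you via the left-continuity of $R^m_{I_0}f^*$, the paper via $\lim_{\tau\to s^-}1/I(\tau)=1/I_0(s)$) to obtain the nontrivial inequality $G^m_{I_0}f\leq G^m_I f$ off $D$. The only organizational difference is that the paper isolates the argument at the outermost layer (proving the identity $\sup_{t\leq s\leq 1}\tfrac{1}{I(s)}\int_0^s g=\sup_{t\leq s\leq 1}\tfrac{1}{I_0(s)}\int_0^s g$ for arbitrary $g$ and then applying it once with $g=R^{m-1}_{I_0}f^*$), whereas you carry an explicit induction on $m$; the content is the same.
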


\begin{proof}
Define $M=\{t\in (0,1): I(t)\neq I_0(t)\}$. The set $M$ is at most
countable. We shall prove that, for every $g\in \mathcal M_+(0,1)$,
\begin{equation}\label{E:eq_M}
\sup_{t\leq s \leq 1} \frac{1}{I(s)}\int_0\sp{s}g(r)\,dr =
\sup_{t\leq s \leq 1} \frac{1}{I_0(s)}\int_0\sp{s}g(r)\,dr \quad \hbox{for $t\in (0,1)\setminus
M$.}
\end{equation}
The conclusion will then follow by applying~\eqref{E:eq_M} to the
function $g=R^{m-1}_{I_0}f^*$, and by  the fact that
$\frac{1}{I(s)}\int_0\sp{s}(R^{m-1}_{I}f^*)(r)\,dr=\frac{1}{I(s)}\int_0\sp{s}(R^{m-1}_{I_0}f^*)(r)\,dr$
for $s\in (0,1)$. Fix $g\in \mathcal M_+(0,1)$
and $t\in (0,1)$. Given $s\in (t,1]$, we have that
$$
\frac{1}{I(s)}\int_0\sp{s}g(r)\,dr \leq \Big(\lim_{\tau\to s_{-}}\frac{1}{I(\tau)}\Big)\int_0\sp{s}g(r)\,dr  = \lim_{\tau\to s_{-}} \frac{1}{I(\tau)}\int_0\sp{\tau}g(r)\,dr
\leq \sup_{t<\tau\leq 1} \frac{1}{I(\tau)}\int_0\sp{\tau}g(r)\,dr.
$$
On taking the supremum over all $s\in (t,1]$, we get that
$$
\sup_{t<s\leq 1} \frac{1}{I(s)}\int_0\sp{s}g(r)\,dr \leq \sup_{t<s\leq 1}\Big (\lim_{\tau\to s_{-}}\frac{1}{I(\tau)}\Big)\int_0\sp{s}g(r)\,dr
\leq\sup_{t<\tau\leq 1} \frac{1}{I(\tau)}\int_0\sp{\tau}g(r)\,dr.
$$
Hence, since $I_0(s) = \lim_{\tau\to s_{-}}I(\tau)$ for $s \in (0,1)$,
$$
\sup_{t< s \leq 1} \frac{1}{I(s)}\int_0\sp{s}g(r)\,dr =
\sup_{t< s \leq 1} \frac{1}{I_0(s)}\int_0\sp{s}g(r)\,dr \quad \hbox{
 for  $t\in(0,1)$.}
$$
This yields \eqref{E:eq_M}.
\end{proof}

\begin{prop}\label{T:proposition-extra}
Let $m\in \mathbb N$, let $I:[0,1]\rightarrow [0,\infty)$ be a
left-continuous non-decreasing function
fulfilling~\eqref{E:lower-bound}, and let $f\in \M_+(0,1)$. Define
\begin{equation}\label{E:E}
E=\{t\in (0,1): R\sp m_{I} f^*(t) < G\sp m_If(t)\}.
\end{equation}
Then $E$ is an open subset of $(0,1)$. Hence, there exists an at
most countable  collection $\{(c_k,d_k)\}_{k\in S}$ of pairwise
disjoint open intervals in $(0,1)$ such that
\begin{equation}\label{E:decomp}
E=\cup_{k\in S} (c_k,d_k).
\end{equation}
Moreover,
\begin{equation}\label{E:not_M}
G^m_I f(t) =R^m_{I}f^{*}(t) \quad \hbox{if $t\in (0,1)\setminus E$,}
\end{equation}
and
\begin{equation}\label{E:in_M}
G^m_I f(t) = R^m_{I} f^{*}(d_k)    \quad \hbox{if $t\in (c_k,d_k)$
for some $k\in S$.}
\end{equation}
\end{prop}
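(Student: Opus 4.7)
The plan is to first establish that $R\sp m_I f\sp*$ is upper semicontinuous (USC) on $(0,1]$. For any nonnegative $g$, the map $t \mapsto \int_0\sp t g(s)\,ds$ is continuous, while $t \mapsto 1/I(t)$ is left-continuous and non-increasing (since $I$ is left-continuous and non-decreasing, with $I(t)>0$ on $(0,1]$ by~\eqref{E:lower-bound}). Hence $R_I g$ is left-continuous, and at any $t_0 \in (0,1)$ the right-limit satisfies
\[
R_I g(t_0+) = \frac{\int_0\sp{t_0} g(s)\,ds}{I(t_0+)} \leq \frac{\int_0\sp{t_0} g(s)\,ds}{I(t_0)} = R_I g(t_0),
\]
so $R_I g$ is USC. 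Iterating gives USC of $R\sp m_I f\sp*$. Consequently, $R\sp m_I f\sp*$ attains its supremum on every compact subinterval of $(0,1]$, and $G\sp m_I f$ is automatically non-increasing.

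Openness of $E$ then follows. Fix $t_0 \in E$ and pick $s\sp* \in [t_0, 1]$ with $R\sp m_I f\sp*(s\sp*) = G\sp m_I f(t_0) > R\sp m_I f\sp*(t_0)$; strictness forces $s\sp* > t_0$. Put $\alpha = R\sp m_I f\sp*(s\sp*) - R\sp m_I f\sp*(t_0) > 0$. USC of $R\sp m_I f\sp*$ at $t_0$ supplies a neighborhood $U$ of $t_0$, which we may shrink so as to exclude $s\sp*$, such that $R\sp m_I f\sp*(t) < R\sp m_I f\sp*(t_0) + \alpha/2$ for $t \in U$. For such $t$, the point $s\sp*$ lies in $[t,1]$, so $G\sp m_I f(t) \geq R\sp m_I f\sp*(s\sp*) > R\sp m_I f\sp*(t) + \alpha/2 > R\sp m_I f\sp*(t)$, whence $U \subset E$. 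The decomposition~\eqref{E:decomp} is then a direct consequence of the standard structure theorem for open subsets of $\R$.

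Equation~\eqref{E:not_M} is immediate, since the trivial bound $R\sp m_I f\sp*(t) \leq G\sp m_I f(t)$ combined with $t \notin E$ forces equality. The main content is~\eqref{E:in_M}. For $t \in (c_k, d_k)$, pick $s\sp* \in (t, 1]$ where $R\sp m_I f\sp*(s\sp*) = G\sp m_I f(t)$. I would rule out $s\sp* \in (c_k, d_k)$ by contradiction: if $s\sp* \in (c_k, d_k) \subset E$, then $R\sp m_I f\sp*(s\sp*) < G\sp m_I f(s\sp*) \leq G\sp m_I f(t) = R\sp m_I f\sp*(s\sp*)$, absurd. So $s\sp* \geq d_k$, which yields $G\sp m_I f(t) = R\sp m_I f\sp*(s\sp*) \leq G\sp m_I f(d_k) \leq G\sp m_I f(t)$; hence all three are equal. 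Finally $G\sp m_I f(d_k) = R\sp m_I f\sp*(d_k)$, either by~\eqref{E:not_M} when $d_k \in (0,1) \setminus E$, or trivially from the definition of $G\sp m_I f$ at $s = 1$ when $d_k = 1$.

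The main obstacle is the USC property of $R\sp m_I f\sp*$: without attainment of the sup in $G\sp m_I f(t)$, both the openness argument for $E$ and the identification of the constant value on each component of $E$ would break down (one would have to argue with limsups of approximating sequences, and the crucial dichotomy $s\sp* \in (c_k, d_k)$ versus $s\sp* \geq d_k$ would become ambiguous). The hypothesis that $I$ is left-continuous is precisely what supplies USC; and this assumption entails no loss of generality thanks to Lemma~\ref{T:equal_a_e}, which permits one to replace $I$ by its left-continuous representative without affecting $G\sp m_I f$ off a countable set.
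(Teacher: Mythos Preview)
Your proof is correct and follows essentially the same approach as the paper: both arguments hinge on the upper semicontinuity of $R^m_I f^*$ (the paper phrases this as ``$I(s)R^m_I f^*(s)$ is continuous and $1/I(s)$ is upper-semicontinuous''), use it to show the supremum defining $G^m_I f$ is attained, and deduce openness of $E$ from the existence of a maximizer strictly to the right of $t$. The only notable difference is that the paper dismisses \eqref{E:in_M} as ``an easy consequence of the definition of $G^m_I f$,'' whereas you spell out the argument via the dichotomy $s^*\in(c_k,d_k)$ versus $s^*\geq d_k$; your added detail here is correct and in fact clarifies a step the paper leaves implicit.
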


\begin{proof}
Fix $t\in(0,1)$. If $G\sp m_If(t)=\infty$, then both  functions $G\sp m_If$ and $R\sp m_If\sp*$  are identically equal to $\infty$, and hence there is nothing to
prove. Assume that $G\sp m_If(t)<\infty$. Then we claim that
$\sup_{t\leq s \leq 1} R^m_{I}f\sp*(s)$ is attained. This follows
from the fact that the function $R^m_{I}f\sp*(s)$ is
upper-semicontinuous, since $I(s) R^m_{I}f\sp*(s)$ is continuous,
and $\frac{1}{I(s)}$ is upper-semicontinuous. Notice that this
latter property holds since $I$ is left-continuous and
non-decreasing, and hence lower-semicontinuous.
\par
Suppose now that $t\in E$. Then, due to the upper-semicontinuity of
$R^m_{I}f\sp*$, there exists $\delta >0$ such that
\begin{equation}\label{E:open}
R^m_{I} f^{*}(r) < G^m_I f(t) \quad \hbox{if   $r\in
(t-\delta,t+\delta)$.}
\end{equation}
Let $c \in [t,1]$ be such that
$
R^m_{I}f^{*}(c) = G^m_I f(t).
$
Then, thanks to~\eqref{E:open}, $c\in [t+\delta,1]$.  It easily
follows that $G^m_I f(t)=G^m_I f(r)$ for every $r\in
(t-\delta,t+\delta)$, a piece of information that, combined
with~\eqref{E:open}, yields   $r\in E$. This shows that $E$ is an
open set. Assertion~\eqref{E:not_M} is trivial and~\eqref{E:in_M}
is an easy consequence of the definition of  $G^m_I f$.
\end{proof}

\begin{prop}\label{T:proposition}
Let $m\in \mathbb N$, let $I:[0,1]\rightarrow [0,\infty)$ be a
left-continuous non-decreasing function
fulfilling~\eqref{E:lower-bound}, and let $f\in \M_+(0,1)$. Then
\begin{equation}\label{E:Gprop}
G^m_I G_I f \approx G^{m+1}_I f,
\end{equation}
up to multiplicative constants depending on $m$.
\end{prop}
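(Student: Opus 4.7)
The direction $G^{m+1}_I f \leq G^m_I G_I f$ is immediate: since $G_I f$ is the non-increasing envelope of $R_I f^*$, we have $R_I f^* \leq G_I f$ pointwise a.e., so by positivity of $R^m_I$ we obtain $R^{m+1}_I f^* = R^m_I(R_I f^*) \leq R^m_I G_I f$ pointwise, and taking $\sup_{s\geq t}$ on both sides yields the inequality.

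For the converse $G^m_I G_I f \leq C_m\, G^{m+1}_I f$, the plan is to establish the pointwise estimate $R^m_I G_I f(s) \leq C_m\, G^{m+1}_I f(s)$ for a.e.~$s \in (0,1)$; taking $\sup_{s \geq t}$ and using that $G^{m+1}_I f$ is non-increasing will then give the claim. Fix $s$ and use the kernel formula \eqref{E:rij} to write
\[
R^m_I G_I f(s) = \frac{1}{(m-1)!\, I(s)}\int_0^s G_I f(u)\left(\int_u^s \frac{dr}{I(r)}\right)^{m-1} du.
\]
Invoking Proposition~\ref{T:proposition-extra}, decompose $E = \{u : R_I f^*(u) < G_I f(u)\} = \bigcup_k (c_k, d_k)$, on each interval of which $G_I f \equiv R_I f^*(d_k)$. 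Split the integral above according to whether $u$ lies in $(0,s)\setminus E$, in an interval $(c_k,d_k)$ wholly contained in $(0,s)$, or in a partial interval $(c_{k_0}, s)$ with $c_{k_0} < s < d_{k_0}$.

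Over $(0,s) \setminus E$, $G_I f(u)=R_I f^*(u)$ and the contribution is trivially at most $R^{m+1}_I f^*(s)$. Over each full interval $(c_k, d_k)\subset(0,s)$, $G_I f$ is the constant $R_I f^*(d_k)$, and the averaging inequality~\eqref{E:1bis} of Lemma~\ref{T:lemma} (applied with $m=1$) yields $(d_k - c_k)R_I f^*(d_k) \leq 4 \int_{c_k}^{d_k}R_I f^*(u)\,du$. Combined with the monotonicity of the kernel $u\mapsto (\int_u^s dr/I(r))^{m-1}$, this converts each such contribution into at most a constant multiple of the corresponding contribution of $R_I f^*$, and summing over $k$ delivers a bound of $C_m R^{m+1}_I f^*(s) \leq C_m G^{m+1}_I f(s)$.

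The main obstacle is the partial interval case, where the averaging identity cannot be applied on the truncated range $(c_{k_0}, s)$. The resolution exploits that $d_{k_0}$ lies on the boundary of the open set $E$, hence $d_{k_0} \notin E$ and $G_I f(d_{k_0}) = R_I f^*(d_{k_0})$; moreover $d_{k_0} \geq s$ gives $R^{m+1}_I f^*(d_{k_0}) \leq G^{m+1}_I f(s)$. One then compares $R^m_I G_I f(s)$ with $R^m_I G_I f(d_{k_0})$, using the explicit expression for $R^m_I G_I f$ and the fact that $G_I f$ is constant on $(c_{k_0}, d_{k_0})$. The geometric factor $I(d_{k_0})/I(s)$ that appears in this comparison is absorbed via inequality~\eqref{E:1-operator-R} of Lemma~\ref{T:lemma}, applied to the non-increasing function $R^m_I G_I f = R^m_I(G_I f)$ (recall $(G_I f)^* = G_I f$). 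Since $d_{k_0}\notin E$, the two previous cases give $R^m_I G_I f(d_{k_0}) \leq C_m R^{m+1}_I f^*(d_{k_0})$, which chained with the comparison yields $R^m_I G_I f(s) \leq C_m' G^{m+1}_I f(s)$. Putting the three pieces together completes the proof, with a~constant depending only on~$m$.
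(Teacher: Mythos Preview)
Your easy direction and your use of the decomposition of $E$ from Proposition~\ref{T:proposition-extra} are fine and match the paper's strategy. The gap is in the estimation of the two nontrivial pieces.

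\textbf{Full intervals.} You claim that $(d_k-c_k)R_If^*(d_k)\le 4\int_{c_k}^{d_k}R_If^*$ together with the monotonicity of $K(u)=(\int_u^s dr/I(r))^{m-1}$ converts $R_If^*(d_k)\int_{c_k}^{d_k}K(u)\,du$ into $C\int_{c_k}^{d_k}R_If^*(u)K(u)\,du$. This does not follow. On $(c_k,d_k)$ one has $R_If^*(u)\le R_If^*(d_k)$ (the supremum over $[u,1]$ is attained at $d_k$), so the constant you want to replace is the \emph{largest} value; meanwhile $K$ is \emph{decreasing}, hence largest near $c_k$ where $R_If^*(u)$ may be smallest. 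To push the flat estimate through via Hardy's lemma you would need $(v-c_k)R_If^*(d_k)\le C\int_{c_k}^v R_If^*$ for every $v\in(c_k,d_k)$, whereas \eqref{E:1bis} only gives $(v-c_k)R_If^*(v)\le 4\int_{c_k}^v R_If^*$, with $R_If^*(v)\le R_If^*(d_k)$ the wrong way round.

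\textbf{Partial interval.} The inequality \eqref{E:1-operator-R} yields $R^m_I(G_If)(d_{k_0})\le 2^m R^m_I(G_If)(s')$ for $s'\in[d_{k_0}/2,d_{k_0}]$; this bounds the value at $d_{k_0}$ by the value at $s$, not the value at $s$ by the value at $d_{k_0}$, and it says nothing when $s<d_{k_0}/2$. So the factor $I(d_{k_0})/I(s)$ is not absorbed. What is actually needed here --- and what you do not invoke --- is the structural inequality $\frac{t}{I(t)}\le\frac{d_k}{I(d_k)}$ for $t\in(c_k,d_k)$, which the paper derives directly from the definition of $E$ (see \eqref{E:b_k}).

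The paper avoids both difficulties by arguing \emph{inductively in $m$}: it proves the stronger pointwise bound
\[
R_I^m G_I f(t)\le C\Big(R_I^{m+1}f^*(t)+\sum_k\chi_{(c_k,d_k)}(t)\,R_I^{m+1}f^*(d_k)\Big),
\]
and in the inductive step applies one more $\frac{1}{I(t)}\int_0^t$. Because only a single $R_I$ is applied at each step, the weight against which \eqref{E:1bis} must be used is always flat, so the lemma applies cleanly for full intervals; and for the partial interval the factor $t/I(t)$ is traded for $d_k/I(d_k)$ via \eqref{E:b_k} before invoking \eqref{E:1bis} once more. Taking $\sup_{s\ge t}$ of the displayed bound then gives $G_I^mG_If\le C\,G_I^{m+1}f$.
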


\begin{proof} Fix any $f\in \M_+(0,1)$. Since  $R_If\sp*\leq G_If$,   for every $m\in\N$
\begin{equation}\label{E:inequality-for-G-trivial-part}
G_I\sp{m+1}f(t)=\sup_{t\leq s \leq 1} R^m_{I}R_If\sp*(s)
\leq \sup_{t\leq s \leq 1} R^m_{I}G_If\sp*(s)=G^m_I G_I f (t) \quad \hbox{for $t \in (0,1)$.}
\end{equation}
This shows that the right-hand side of \eqref{E:Gprop} does not exceed the left-hand side.
To show a converse inequality, consider the set $E$ defined as in \eqref{E:E}, with $m=1$.
By Proposition~\ref{T:proposition-extra}, the set $E$ is  open. Let
$\{(c_k,d_k)\}_{k\in S}$ be open intervals as in \eqref{E:decomp}.
If $t\in (c_k,d_k)$ for some $k\in S$, then, by \eqref{E:in_M} with
$m=1$,
\begin{equation}\label{nov10}
 \frac{d_k}{I(d_k)}f^{**}(d_k) =
R_{I}f^{*}(d_k) = G_If (t) \geq R_{I}f^{*}(t) \geq
\frac{t}{I(t)}f^{**}(d_k).
\end{equation}
Observe that $f^{**}(d_k)>0$. Indeed, if $f^{**}(d_k)=0$, then
$R_{I}f^{*}(t)=R_{I}f^{*}(d_k)=G_I\sp mf(t)=0$, and hence $t\notin
E$, a contradiction. Thus, we obtain from \eqref{nov10}
\begin{equation}\label{E:b_k}
\frac{d_k}{I(d_k)} \geq \frac{t}{I(t)} \quad \hbox{for $t\in
(c_k,d_k)$.}
\end{equation}

We shall now prove by induction that, given $m\in \mathbb N \cup
\{0\}$, there exists a constant $C=C(m)$  such that
\begin{equation}\label{E:prop}
R_{I}^m G_I f (t)\leq C  \left(R_{I}^{m+1}f^*(t) + \sum_{k\in S}
\chi_{(c_k,d_k)}(t) R^{m+1}_{I} f^*(d_k)\right) \quad \hbox{for $t
\in (0,1)$.}
\end{equation}
Let $m=0$. Then~\eqref{E:prop} holds with $C =1$,  by
\eqref{E:not_M} and~\eqref{E:in_M} (with $m=1$). Next, suppose
that~\eqref{E:prop} holds for some $m\in \mathbb N \cup \{0\}$. Fix
any $t\in (0,1)$. Then
\begin{align*}
R^{m+1}_{I} G_I f (t) &= \frac{1}{I(t)} \int_0^t R^m_{I} G_I
f(r)\,dr
\leq \frac{C}{I(t)} \int_0^t R_{I}^{m+1}f^*(r)\,dr \\
& \quad + \frac{C}{I(t)} \sum_{\{\ell\in S: d_\ell \leq t\}}
\int_{c_\ell}^{d_\ell}  R^{m+1}_{I} f^*(d_\ell)\,dr +
 \frac{C}{I(t)} \sum_{k\in S} \chi_{(c_k,d_k)}(t) \int_{c_k}^t R^{m+1}_{I} f^*(d_k)\,dr\\
&\leq C R_{I}^{m+2}f^*(t) + \frac{2^{m+2}C}{I(t)} \sum_{\{\ell\in S: d_\ell \leq t\}} \int_{c_\ell}^{d_\ell}  R^{m+1}_{I} f^*(r)\,dr\\
& \quad + C \frac{t}{I(t)} \sum_{k\in S} \chi_{(c_k,d_k)}(t) R^{m+1}_{I} f^*(d_k) ~~~ \qquad \qquad \textup{(by \eqref{E:1bis})}
\\ &\leq C R_{I}^{m+2}f^*(t) + \frac{2^{m+2}C}{I(t)} \int_0^t R^{m+1}_{I} f^*(r)\,dr
\\
& \quad + C \sum_{k\in S} \chi_{(c_k,d_k)}(t) \frac{d_k}{I(d_k)} R^{m+1}_{I} f^*(d_k) ~~~~~~ \qquad \qquad \qquad \textup{(by~\eqref{E:b_k})}
\\  &\leq (C+2^{m+2}C) R_{I}^{m+2}f^*(t)
\\
&\quad +C2^{m+2} \sum_{k\in S} \chi_{(c_k,d_k)}(t) \frac{1}{I(d_k)} \int_0^{d_k} R^{m+1}_{I} f^*(r)\,dr ~~~~ \qquad \qquad \textup{(by \eqref{E:1bis})}\\
&=(C+2^{m+2}C) R_{I}^{m+2}f^*(t) + C2^{m+2} \sum_{k\in S} \chi_{(c_k,d_k)}(t) R^{m+2}_{I} f^*(d_k)\\
&\leq C'\left(R_{I}^{m+2}f^*(t) + \sum_{k\in S} \chi_{(c_k,d_k)}(t)
R^{m+2}_{I} f^*(d_k)\right),
\end{align*}
where $C'=C+2^{m+2}C$. This proves~\eqref{E:prop}.
\par\noindent
Owing to \eqref{E:prop}, for every $m \in \mathbb
N$ we have that
$$
G^m_I G_I f(t)=\sup_{t\leq s \leq 1} R^m_{I} G_{I} f(s) \leq 2C
G_{I}^{m+1} f(t) \quad \hbox{for $t \in (0,1)$.}
$$
Combining this inequality with~\eqref{E:inequality-for-G-trivial-part} yields~\eqref{E:Gprop}.
\end{proof}

\begin{thm}\label{T:lenka-main}
Let $I:[0,1]\rightarrow [0,\infty)$ be a non-decreasing function
satisfying~\eqref{E:lower-bound} and let $\|\cdot\|_{X(0,1)}$ be a
rearrangement-invariant function norm. Let $m\in \mathbb
N\cup\{0\}$. Then
\begin{equation}\label{E:down-dual-equivalence}
\|R^{m+1}_I f^*\|_{X'(0,1)} \approx \left\|R^m_I\left(\left(R_I
f^*\right)^*\right)\right\|_{X'(0,1)} \approx \|G^{m+1}_I
f\|_{X'(0,1)} \approx
 \|R^{m+1}_I f^*\|_{X'_d(0,1)}
\end{equation}
for every $f\in \M_+(0,1)$, up to multiplicative constants depending
on $m$.
\end{thm}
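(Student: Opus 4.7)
Denote the four quantities in the order stated by $A$, $B$, $C$, $D$. I plan to establish the cyclic chain $D\leq A\leq B\leq K_1 C\leq K_2 A$, and close the loop with $A\leq K_3 D$; all constants will depend only on $m$.

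The first three inequalities are direct. Trivially $D\leq A$ since $\|\cdot\|_{X'_d}\leq\|\cdot\|_{X'}$. For $A\leq B$, the Hardy-Littlewood inequality \eqref{E:HL} gives $\int_0^t R_If^*\leq\int_0^t (R_If^*)^*$ for every $t\in(0,1)$, hence $R_I(R_If^*)\leq R_I((R_If^*)^*)$ pointwise; applying the monotone operator $R_I^{m-1}$ and property (P2) delivers the norm inequality. For $B\leq K_1 C$, the function $G_If$ is a non-increasing majorant of $R_If^*$, so it dominates the minimal such majorant $(R_If^*)^*$ pointwise; applying $R_I^m$ and using $R_I^m(G_If)\leq G^m_I(G_If)$ (since $(G_If)^*=G_If$), Proposition~\ref{T:proposition} yields $G^m_I(G_If)\leq K_1 G^{m+1}_I f$.

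For the crucial envelope bound $C\leq K_2 A$, set $h=R^{m+1}_If^*$. Lemma~\ref{T:lemma} gives slow variation $h(t)\leq 2^{m+1}h(s)$ for $s\in[t/2,t]$, whence $h(s)\geq h(t)/2^{m+1}$ on $[t/2,t]$; consequently $|\{h>h(t)/(2^{m+1}+1)\}|\geq t/2$, so $h^*(t/2)\geq h(t)/(2^{m+1}+1)$. Since $h^*$ is non-increasing,
\[
G^{m+1}_If(t)=\sup_{s\geq t}h(s)\leq (2^{m+1}+1)\sup_{s\geq t}h^*(s/2)=(2^{m+1}+1)\,h^*(t/2).
\]
Taking $X'$-norms and invoking that the dilation operator $E_2$ on $X'(0,1)$ has norm at most $\max(1,1/2)=1$ gives $C\leq(2^{m+1}+1)A$. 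Combining these four inequalities yields $D\leq A\approx B\approx C$.

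The main obstacle is $A\leq K_3 D$. Since $A\approx C$ and $G^{m+1}_I f$ is non-increasing (so $C=\|G^{m+1}_If\|_{X'_d}$), it suffices to prove $\|G^{m+1}_If\|_{X'_d}\leq K\|R^{m+1}_If^*\|_{X'_d}$. By the Hardy-Littlewood-P\'olya principle this would follow from the pointwise integral inequality $\int_0^t G^{m+1}_If\leq K\int_0^t R^{m+1}_If^*$ for every $t\in(0,1)$. Writing $E=\bigcup_k(c_k,d_k)$ via Proposition~\ref{T:proposition-extra}, on each component \eqref{E:1bis} furnishes $\int_{c_k}^{d_k} G^{m+1}_If=(d_k-c_k)R^{m+1}_If^*(d_k)\leq 2^{m+2}\int_{c_k}^{d_k}R^{m+1}_If^*$; aggregation then settles $t\notin E$ and the sub-case $t\in(c_k,d_k)$ with $c_k\geq d_k/2$. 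The delicate case is a ``wide'' component ($c_k<d_k/2$ and $t<d_k/2$): here I plan to exploit $R^{m+1}_If^*(c_k)=G^{m+1}_If(c_k)\geq R^{m+1}_If^*(d_k)$ (which holds because $c_k\notin E$) together with iterated slow variation, namely $R^{m+1}_If^*(r)\geq R^{m+1}_If^*(d_k)/(2^{m+1})^j$ for $r\in[c_k/2^j,c_k/2^{j-1}]$, to obtain the geometric-series lower bound $\int_0^{c_k}R^{m+1}_If^*\gtrsim c_k R^{m+1}_If^*(d_k)$, which will be the key quantitative input. Controlling arbitrarily wide components uniformly in the ratio $d_k/c_k$ is the central technical difficulty that I expect to require the finest use of the slow-variation estimate.
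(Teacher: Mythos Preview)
Your chain $D\leq A\leq B\leq K_1C\leq K_2A$ is sound (the dilation bound for $E_2$ is $2$, not $1$, but this is harmless), so $A\approx B\approx C$ and $D\leq A$. The whole content of the theorem lies in closing the loop via $A\lesssim D$, and here your plan fails. The proposed pointwise integral inequality $\int_0^t G^{m+1}_If\leq K\int_0^t R^{m+1}_If^*$ is simply \emph{false}. Take $m=0$, $I(s)=s^{1/2}$, $f=\chi_{(0,a)}$ with $a\in(0,1)$. Then $h:=R_If^*$ equals $t^{1/2}$ on $(0,a)$ and $at^{-1/2}$ on $[a,1)$, so $E=(0,a)$ is a single component with $c_1=0$, $d_1=a$, and $G_If\equiv a^{1/2}$ on $(0,a)$. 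For $t<a$ one has $\int_0^t G_If=t\,a^{1/2}$ while $\int_0^t h=\tfrac23 t^{3/2}$, and the ratio $\tfrac32(a/t)^{1/2}$ is unbounded as $t\to0^+$. Your geometric-series bound $\int_0^{c_k}h\gtrsim c_k\,h(d_k)$ is vacuous here since $c_k=0$; even when $c_k>0$ it would only yield $(t-c_k)h(d_k)\lesssim (t/c_k)\int_0^{c_k}h$, with $t/c_k$ uncontrolled. So the ``wide component'' case is not merely delicate---it destroys this line of attack.

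The paper obtains $C\leq 2^{m+2}D$ by acting on the test function rather than on $G^{m+1}_If$ or $h$. For $g$ in the unit ball of $X(0,1)$, define
\[
A(g)=\chi_{(0,1)\setminus E}\,g^*+\sum_k\chi_{(c_k,d_k)}\,\frac{1}{d_k-c_k}\int_{c_k}^{d_k}g^*.
\]
This is non-increasing, and by \cite[Chapter~2, Theorem~4.8]{BS} satisfies $\|A(g)\|_{X(0,1)}\leq1$. Since $G^{m+1}_If$ equals $h$ on $(0,1)\setminus E$ and is constant $=h(d_k)$ on each $(c_k,d_k)$, one rewrites $\int_0^1 g^*\,G^{m+1}_If$ accordingly; then \eqref{E:1bis} on each component gives $(d_k-c_k)h(d_k)\leq 2^{m+2}\int_{c_k}^{d_k}h$, and since $A(g)$ is constant there the bound assembles to $\int_0^1 g^*\,G^{m+1}_If\leq 2^{m+2}\int_0^1 A(g)\,h\leq 2^{m+2}\|h\|_{X'_d(0,1)}$. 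Taking the supremum over $g$ gives $C\leq 2^{m+2}D$. The key idea you are missing is to flatten $g^*$ (not $h$) over the components of $E$: this preserves the pairing with $G^{m+1}_If$ exactly and produces a non-increasing competitor, converting the $X'$ pairing into an $X'_d$ pairing in one stroke.
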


\begin{proof} We may assume, without loss of generality, that $I$ is
left-continuous. Indeed, equation \eqref{E:down-dual-equivalence} is
not affected by a replacement of $I$ with its left-continuous
representative, since the latter can differ from $I$ at most on a
countable subset of $[0,1]$, and since Lemma~\ref{T:equal_a_e}
holds.
\par
Fix any $f\in \M_+(0,1)$, and let $m\geq 1$. By  \eqref{E:riij}  and
Proposition~\ref{T:proposition}, there exists a constant $C=C(m)$
such that
$$
R^{m+1}_I f^*(t) \leq R^m_I ((R_If^*)^*)(t) \leq R^m_{I} (G_If)(t)
\leq G^m_I G_If(t) \leq C G^{m+1}_If(t) \quad \hbox{for  $t\in
(0,1)$.}
$$
Hence,
\begin{equation}\label{E:m>0}
\|R^{m+1}_I f^*\|_{X'(0,1)} \leq \left\|R^m_I\left(\left(R_I
f^*\right)^*\right)\right\|_{X'(0,1)} \leq C \|G^{m+1}_I
f\|_{X'(0,1)}.
\end{equation}
Observe that~\eqref{E:m>0} trivially holds also when $m=0$.
\par
Let $E$ be defined as in \eqref{E:E}, with $m$ replaced with $m+1$,
and let $\{(c_k,d_k)\}_{k\in S}$  be as in \eqref{E:decomp}. For
every $g \in X(0,1)$, define
\[
A(g)=\chi_{(0,1)\setminus E}g^* + \sum_{k\in S} \chi_{(c_k,d_k)}
\frac{1}{d_k-c_k} \int_{c_k}^{d_k} g^*(t)\,dt.
\]
Then $A(g)$ is non-increasing on $(0,1)$. Moreover, if
$\|g\|_{X(0,1)} \leq 1$, then by~\cite[Theorem 4.8, Chapter 2]{BS},
\begin{equation}\label{E:A}
\|A(g)\|_{X(0,1)} \leq \|g^*\|_{X(0,1)}=\|g\|_{X(0,1)} \leq 1.
\end{equation}
Therefore,
\begin{align*}
&\int_0^1 g^*(t) G^{m+1}_If(t)\,dt =\int_{(0,1)\setminus E} g^*(t)
R^{m+1}_{I} f^* (t)\,dt
+\sum_{k\in S} \int_{c_k}^{d_k} g^*(t) R^{m+1}_{I} f^*(d_k) \,dt\\
&= \int_{(0,1)\setminus E} g^*(t) R^{m+1}_{I} f^* (t)\,dt
+ \sum_{k\in S} \frac{1}{d_k-c_k} \left(\int_{c_k}^{d_k} g^*(t)\,dt\right) (d_k-c_k) R^{m+1}_{I} f^*(d_k) \\
&\leq \int_{(0,1)\setminus E} A(g)(t) R^{m+1}_{I} f^* (t)\,dt
+2^{m+2} \sum_{k\in S} \int_{c_k}^{d_k} A(g)(t) R^{m+1}_{I} f^*(t)\,dt~~~~~~~~~\qquad \qquad \textup{(by \eqref{E:1bis})}\\
&\leq 2^{m+2} \int_0^1 A(g)(t) R^{m+1}_{I} f^*(t)\,dt
\\
& \leq 2^{m+2} \sup_{\|h\|_{X(0,1)}\leq 1} \int_0^1 h^*(t) R^{m+1}_{I} f^*(t)\,dt~~~~~~~~~ \qquad \qquad\textup{(by~\eqref{E:A})}\\
&
=2^{m+2} \|R^{m+1}_{I} f^*\|_{X'_d(0,1)}.
\end{align*}
On  taking the supremum over all $g$ from the unit ball of $X(0,1)$,
we get
\begin{equation}\label{E:Abis}
\|G^{m+1}_If\|_{X'(0,1)}=\|G^{m+1}_If\|_{X'_d(0,1)}\leq
2^{m+2}\|R^{m+1}_{I} f^*\|_{X'_d(0,1)}.
\end{equation}
On the other hand, by the very definition of $\|\cdot \|_{X'_d(0,1)}$,
\begin{equation}\label{E:Ater}
\|R^{m+1}_{I} f^*\|_{X'_d(0,1)} \leq \|R^{m+1}_{I} f^*\|_{X'(0,1)}.
\end{equation}
Equation \eqref{E:down-dual-equivalence} follows from \eqref{E:m>0},
\eqref{E:Abis} and \eqref{E:Ater}.
\end{proof}

\begin{corollary}\label{C:lenka}
Let $I:[0,1]\to[0,\infty)$ be a~non-decreasing function
satisfying~\eqref{E:lower-bound}, and let $\|\cdot\|_{X(0,1)}$ be
a~rearrangement-invariant function norm. Let $m\in\N$. Then
\begin{equation}\label{E:lenka-eq}
(X_{m,I})_1(0,1)=X_{m+1,I}(0,1)
\end{equation}
(up to equivalent norms).
\end{corollary}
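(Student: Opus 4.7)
The plan is to prove the equality of the two spaces by showing that their associate norms are equivalent and then invoking the double associate space theorem. The machinery to do this has been essentially set up in Theorem \ref{T:lenka-main}.

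First I would unfold the two associate norms explicitly. On the one hand, by formula \eqref{E:xji} applied with $j=m+1$,
\[
\|f\|_{X_{m+1,I}'(0,1)} = m!\,\|R_I^{m+1}f^*\|_{X'(0,1)}.
\]
On the other hand, $(X_{m,I})_1$ is obtained by applying the $Y\mapsto Y_1$ construction of \eqref{E:krus} with the starting norm $\|\cdot\|_{X_{m,I}(0,1)}$. Thus
\[
\|f\|_{(X_{m,I})_1'(0,1)} = \|R_I f^*\|_{X_{m,I}'(0,1)},
\]
and applying \eqref{E:xji} to the right-hand side yields
\[
\|f\|_{(X_{m,I})_1'(0,1)} = (m-1)!\,\|R_I^{m}\bigl((R_I f^*)^*\bigr)\|_{X'(0,1)}.
\]

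Next I would appeal directly to Theorem \ref{T:lenka-main}, whose content is precisely the equivalence
\[
\|R_I^{m+1}f^*\|_{X'(0,1)} \approx \|R_I^{m}\bigl((R_I f^*)^*\bigr)\|_{X'(0,1)}
\]
for every $f\in\mathcal M_+(0,1)$, with constants depending on $m$. Combining this with the two expressions just computed gives
\[
\|f\|_{(X_{m,I})_1'(0,1)} \approx \|f\|_{X_{m+1,I}'(0,1)},
\]
up to multiplicative constants depending on $m$.

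Finally, this equivalence of the associate function norms translates, via the identity $Z''=Z$ for rearrangement-invariant spaces and the resulting order-reversing nature of associates, into the desired equivalence of $\|\cdot\|_{(X_{m,I})_1(0,1)}$ and $\|\cdot\|_{X_{m+1,I}(0,1)}$. The only genuinely non-trivial input is Theorem \ref{T:lenka-main}; once that is in hand the proof is a two-line computation. The main conceptual difficulty therefore lies not in this corollary itself but in the $G_I^m$-machinery already developed to prove Theorem \ref{T:lenka-main}, namely the fact that iterating the operator $R_I$ once more on $(R_I f^*)^*$ is, up to constants, the same as iterating it on $f^*$ directly — a statement that a priori is nontrivial because rearrangements do not commute with $R_I$.
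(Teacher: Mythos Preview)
Your proof is correct and follows essentially the same route as the paper: unfold both associate norms via \eqref{E:krus} and \eqref{E:xji}, invoke Theorem~\ref{T:lenka-main} for the equivalence $\|R_I^{m+1}f^*\|_{X'(0,1)}\approx\|R_I^{m}((R_If^*)^*)\|_{X'(0,1)}$, and pass back to the original norms via duality (the paper cites \eqref{E:equivemb} for this last step).
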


\begin{proof}
By~\eqref{E:krus} and~\eqref{E:xji}, if $f\in \M_+(0,1)$, then
\[
\|f\|_{((X_{m,I})_1)'(0,1)}=\|R_If\sp*\|_{X'_{m,I}(0,1)}=(m-1)!\|R\sp
m_I((R_If\sp*)\sp*)\|_{X'(0,1)},
\]
and
\[
\|f\|_{X'_{m+1,I}(0,1)}=m!\|R\sp{m+1}_If\sp*\|_{X'(0,1)}.
\]
Hence, it follows from Theorem~\ref{T:lenka-main} that
\[
\|f\|_{((X_{m,I})_1)'(0,1)}\approx \|f\|_{X'_{m+1,I}(0,1)}.
\]
By \eqref{E:equivemb}, this establishes \eqref{E:lenka-eq}.
\end{proof}

\begin{thm}\label{T:equivalence}
Let $I:[0,1]\to[0,\infty)$ be a~non-decreasing function
satisfying~\eqref{E:lower-bound} and let $\|\cdot\|_{X(0,1)}$ be
a~rearrangement-invariant function norm.
Then, for every $m\in\N$,
\begin{equation}\label{E:equivalence}
X_{m,I}(0,1)=X_m(0,1).
\end{equation}
\end{thm}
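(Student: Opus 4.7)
The plan is to prove \eqref{E:equivalence} by induction on $m$, with the main engine being Corollary~\ref{C:lenka}, which has already been established and which itself rests on Theorem~\ref{T:lenka-main}.

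For the base case $m=1$, the identity $X_{1,I}(0,1) = X_1(0,1)$ is exactly the content of \eqref{E:eq-one}, so nothing needs to be done.

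For the inductive step, I would assume $X_{m,I}(0,1) = X_m(0,1)$ (up to equivalent norms) and verify that
\[
X_{m+1,I}(0,1) = X_{m+1}(0,1).
\]
By the iterative definition \eqref{E:krus} of the spaces $X_j$ (equivalently, by Remark~\ref{R:comparison-of-spaces}, specifically equation \eqref{E:reit-iter} with $k=m$ and $h=1$), one has
\[
X_{m+1}(0,1) = (X_m)_1(0,1).
\]
Using the induction hypothesis, $(X_m)_1(0,1) = (X_{m,I})_1(0,1)$, where the subscript $1$ on the right-hand side is the \emph{same} one-step construction — that is, $(Y)_1$ has associate norm $\|f\|_{(Y)_1'(0,1)} = \|R_I f^*\|_{Y'(0,1)}$, which by \eqref{E:eq-one} coincides with the norm of $Y_{1,I}$. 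Finally, applying Corollary~\ref{C:lenka} yields
\[
(X_{m,I})_1(0,1) = X_{m+1,I}(0,1),
\]
which closes the induction.

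No genuine obstacle remains, since the heavy lifting has been absorbed into Corollary~\ref{C:lenka}, whose proof in turn relies on the key equivalence \eqref{E:down-dual-equivalence} in Theorem~\ref{T:lenka-main}. That equivalence is the only nontrivial point: it asserts that iterating the rearrangement on the Hardy-type operator $R_I$ produces, up to constants, the same $X'$-norm as iterating $R_I$ directly on $f^*$, and this is what allows the two formally distinct iterative definitions of the optimal target — the "all-at-once" definition \eqref{E:xji} and the "one-step-at-a-time" definition \eqref{E:krus} — to coincide. Once this is in hand, the identification \eqref{E:equivalence} is a routine induction as described above.
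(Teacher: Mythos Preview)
Your proof is correct and follows essentially the same argument as the paper: induction on $m$, with the base case given by \eqref{E:eq-one} and the inductive step obtained by chaining \eqref{E:reit-iter}, the induction hypothesis, and Corollary~\ref{C:lenka}.
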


\begin{proof}
As noted in~\eqref{E:eq-one}, we have $X_1(0,1)=X_{1,I}(0,1)$.
Assume now that  \eqref{E:equivalence} holds for some $m\in\N$.
By~\eqref{E:reit-iter}, the induction assumption
and~\eqref{E:lenka-eq},
\[
X_{m+1}(0,1)=(X_m)_1(0,1)=(X_{m,I})_1(0,1)=X_{m+1,I}(0,1).
\]
The conclusion  follows by induction.
\end{proof}

One consequence of Theorem~\ref{T:lenka-main},
specifically of the equivalence of the leftmost and the rightmost
side of~\eqref{E:down-dual-equivalence}, is the following feature of
 inequality \eqref{E:kernel-cond}, which was already mentioned
in Remark~\ref{R:star}.

\begin{corollary}\label{C:restriction-of-main-inequality-to-nonincreasing-functions}
Assume that  $(\Omega,\nu)$  fulfils
\eqref{isop-ineq} for some non-decreasing function  $I$ satisfying
\eqref{E:lower-bound}. Let $m\in\N$, and let $\|\cdot \|_{X(0,1)}$
and $\|\cdot\|_{Y(0,1)}$ be rearrangement-invariant function norms.
Then the following two assertions are equivalent:

\textup{(i)} There exists a constant $C_1$ such
that   inequality~\eqref{E:kernel-cond} holds for every nonnegative
$f \in  X(0,1)$.

\textup{(ii)} There exists a constant $C_1'$
such that   inequality~\eqref{E:kernel-cond} holds for every
nonnegative non-increasing $f \in  X(0,1)$.
\end{corollary}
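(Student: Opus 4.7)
The direction (i) $\Rightarrow$ (ii) is immediate, so my plan is to prove (ii) $\Rightarrow$ (i) by a duality argument, bridging the gap with Theorem~\ref{T:lenka-main}.

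Via the representation~\eqref{E:kernel-formula}, inequality~\eqref{E:kernel-cond} amounts, up to the factor $(m-1)!$, to the boundedness of $H_I\sp m$ from $X(0,1)$ into $Y(0,1)$. Since $H_I\sp m$ and $R_I\sp m$ are mutually associate (Remark~\ref{R:hrq}~\textup{(i)}), Lemma~\ref{R:nova} recasts~(i) as
\[
\|R_I\sp m g\|_{X'(0,1)} \le C\|g\|_{Y'(0,1)} \qquad \text{for every }g\in\Mpl(0,1).
\]
An analogous dualization of (ii) -- obtained by expanding $\|H_I\sp m f\|_{Y(0,1)}$ via~\eqref{E:pre-duality}, invoking the associate identity $\int_0\sp1 H_I\sp m f\cdot g\,dt = \int_0\sp1 f\cdot R_I\sp m g\,dt$, and interchanging the order of the resulting suprema -- recasts~(ii) as
\[
\sup_{\substack{f\ge 0\text{ non-incr.}\\ \|f\|_{X(0,1)}\le 1}}\int_0\sp1 f(t)R_I\sp m g(t)\,dt \le C'\|g\|_{Y'(0,1)} \qquad \text{for every }g\in\Mpl(0,1).
\]
Since the set $\{f\sp* : \|f\|_{X(0,1)}\le 1\}$ coincides with the set of non-negative non-increasing elements of the unit ball of $X(0,1)$, the left-hand side above is, by the very definition recalled in Section~\ref{S:measurable}, equal to $\|R_I\sp m g\|_{X'_d(0,1)}$.

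The crux is then to upgrade this $X'_d$-control to $X'$-control of $R_I\sp m g$. To this end I would apply the $X'_d$-bound just derived with $g$ replaced by its decreasing rearrangement $g\sp*$; since $\|g\sp*\|_{Y'(0,1)} = \|g\|_{Y'(0,1)}$, this yields $\|R_I\sp m g\sp*\|_{X'_d(0,1)} \le C' \|g\|_{Y'(0,1)}$. Theorem~\ref{T:lenka-main}, applied with its parameter $m$ equal to our $m-1 \in \N\cup\{0\}$, then provides $\|R_I\sp m g\sp*\|_{X'(0,1)} \le C''\|R_I\sp m g\sp*\|_{X'_d(0,1)}$. Combining these with the pointwise bound $R_I\sp m g \le R_I\sp m g\sp*$ furnished by~\eqref{E:riij} and property~(P2) of $\|\cdot\|_{X'(0,1)}$, I obtain
\[
\|R_I\sp m g\|_{X'(0,1)} \le \|R_I\sp m g\sp*\|_{X'(0,1)} \le C''\|R_I\sp m g\sp*\|_{X'_d(0,1)} \le C'C'' \|g\|_{Y'(0,1)},
\]
which is the dual reformulation of~(i) and hence gives~(i) itself via Lemma~\ref{R:nova}.

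The main obstacle is the passage from $X'_d$-control to $X'$-control: the duality manipulations are purely formal and well-known, but this step genuinely rests on the non-trivial equivalence~\eqref{E:down-dual-equivalence} of Theorem~\ref{T:lenka-main}, whose own proof depends on the analysis of the maximal operators $G_I\sp m$ developed earlier in this section.
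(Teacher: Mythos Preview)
Your proof is correct and follows essentially the same route as the paper's: both dualize via the mutual associate pair $(H_I^m, R_I^m)$, reduce the question to comparing $\|R_I^m g^*\|_{X'(0,1)}$ with $\|R_I^m g^*\|_{X'_d(0,1)}$, and invoke Theorem~\ref{T:lenka-main} for that comparison. The only cosmetic difference is that the paper exploits the fact that $H_I^m f$ is non-increasing to insert $g^*$ from the outset via Hardy--Littlewood, whereas you first dualize with a general $g$, then specialize to $g^*$ and close with the pointwise bound~\eqref{E:riij}; both executions are equally valid.
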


\begin{proof}
The fact that (i) implies (ii) is trivial.
Conversely, assume that (ii)
holds. Fix $f\in\Mpl(0,1)$. Equation~\eqref{E:kernel-formula} with
$j=m$ reads
\begin{equation}\label{Hm}
\int_t\sp1\frac{f(s)}{I(s)}\left(\int_t\sp
s\frac{dr}{I(r)}\right)\sp{m-1}\,ds=(m-1)!H\sp m_If(t),
\end{equation}
Now, the function $H\sp m_If$ is non-increasing on $(0,1)$. Therefore, it follows from~\eqref{E:pre-duality} and the Hardy--Littlewood inequality~\eqref{E:HL} that
\[
\|H\sp m_If\|_{Y(0,1)} =
\sup_{\|g\|_{Y'(0,1)}\leq 1} \int_0\sp1 g\sp*(t)H\sp m_If(t)\,dt.
\]
Consequently, by  Fubini's theorem, we have
\begin{equation}\label{E:duality-formula-for-hmi}
\|H\sp m_If\|_{Y(0,1)} =
\sup_{\|g\|_{Y'(0,1)}\leq 1}\int_0\sp1 f(t)R\sp m_Ig\sp{*}(t)\,dt.
\end{equation}
Owing to \eqref{Hm} and to the
rearrangement-invariance of the norm $\|\cdot\|_{X(0,1)}$, assertion
(ii) tells us that
\[
C_1'
\geq
(m-1)!\sup_{\|f\|_{X(0,1)}\leq 1}
\|H\sp m_If\sp*\|_{Y(0,1)}.
\]
Hence, on applying~\eqref{E:duality-formula-for-hmi} with $f$
replaced with $f\sp*$, interchanging the suprema and recalling the
definition of the norm $\|\cdot\|_{X_d'(0,1)}$, we get
\begin{align*}
C_1'
&\geq
(m-1)!\sup_{\|f\|_{X(0,1)}\leq 1}
\sup_{\|g\|_{Y'(0,1)}\leq 1}
\int_0\sp1
f\sp*(t)R\sp m_Ig\sp{*}(t)\,dt\\
&=
(m-1)!\sup_{\|g\|_{Y'(0,1)}\leq 1}
\sup_{\|f\|_{X(0,1)}\leq 1}
\int_0\sp1
f\sp*(t)R\sp m_Ig\sp{*}(t)\,dt
=
(m-1)!
\sup_{\|g\|_{Y'(0,1)}\leq 1}\|R\sp m_Ig\sp*\|_{X'_d(0,1)}.
\end{align*}
It follows from the equivalence of the first and the last term
in~\eqref{E:down-dual-equivalence} that there exists a constant $C$
such that
\[
\|R\sp m_Ig\sp*\|_{X'(0,1)}\leq C\|R\sp m_Ig\sp*\|_{X'_d(0,1)}.
\]
Therefore,
\[
C_1'C \geq (m-1)! \sup_{\|g\|_{Y'(0,1)}\leq 1}\|R\sp
m_Ig\sp*\|_{X'(0,1)},
\]
namely, by the definition of the norm $\|\cdot\|_{X'(0,1)}$,
\[
C_1'C
\geq
(m-1)!
\sup_{\|g\|_{Y'(0,1)}\leq 1}\sup_{\|f\|_{X(0,1)}\leq 1}\int_0\sp1
f(t)R\sp m_Ig\sp{*}(t)\,dt.
\]
Interchanging suprema again and using  Fubini's theorem
and~\eqref{E:duality-formula-for-hmi} yields
\[
C_1'C \geq (m-1)! \sup_{\|f\|_{X(0,1)}\leq
1}\sup_{\|g\|_{Y'(0,1)}\leq 1}\int_0\sp1 g\sp*(t)H\sp
m_If(t)\,dt=(m-1)! \sup_{\|f\|_{X(0,1)}\leq 1}\|H\sp
m_If\|_{Y(0,1)}.
\]
Hence, inequality \eqref{E:kernel-cond}, or
equivalently assertion (i), follows.
\end{proof}

\begin{proof}[{\bf Proof of Theorem~\ref{T:reduction}}] As observed in
Section \ref{S:main}, the case when $m=1$ is already well-known, and
is in fact the point of departure of our approach. We thus focus on
the case when $m \geq 2$. On applying Proposition~\ref{P:norm} with
$j=1$, we get that
$$
\left\|\int_t\sp1\frac{f(s)}{I(s)}\,ds\right\|_{X_{1,I}(0,1)} \leq
\|f\|_{X(0,1)}
$$
for every $f\in\M_+(0,1)$. Thus,~\eqref{E:kernel-cond} holds with
$m=1$ and  $Y(0,1)=X_{1,I}(0,1)$. Hence, by the result for $m=1$,
\begin{equation}\label{embedfirst}
V\sp1X (\Omega,\nu)\to X_{1}(\Omega,\nu).
\end{equation}
Note that here we have also made use of~\eqref{E:eq-one}. By
embedding \eqref{embedfirst} applied  to each of the spaces
$X_j(\Omega,\nu)$, for $j=0, \dots , m-1$, we get
$$
V\sp1X_j(\Omega,\nu)\to X_{j+1}(\Omega,\nu),
$$
whence
\begin{equation}\label{embedm}
V\sp mX(\Omega,\nu)\to  V\sp {m-1}X_1(\Omega,\nu) \to  V\sp
{m-2}X_2(\Omega,\nu) \to \dots \to V\sp {1}X_{m-1}(\Omega,\nu) \to
X_m(\Omega,\nu).
\end{equation}
Inequality   \eqref{E:kernel-cond} tells us that
\begin{equation}\label{Eq:*}
H_I\sp m:X(0,1)\to Y(0,1).
\end{equation}
The optimality of the space $X_{m,I}(0,1)$ as a target in
\eqref{Eq:*},
proved in
Proposition~\ref{P:norm},  entails that
\begin{equation}\label{sept6}
X_{m,I}(0,1)\to Y(0,1).
\end{equation}
A combination of \eqref{embedm}, ~\eqref{E:equivalence}
and~\eqref{sept6} yields
\begin{equation}\label{sept7}
V\sp mX(\Omega,\nu)\to X_m(\Omega,\nu)= X_{m,I}(\Omega,\nu)\to
Y(\Omega,\nu),
\end{equation}
and \eqref{E:main-emb} follows.

Finally,~\eqref{E:main-poincare} is equivalent to~\eqref{E:main-emb}
by Proposition~\ref{poincsob}. Note  that assumption
~\eqref{lower-est} of that Proposition is satisfied, owing to
\eqref{E:lower-bound}.
\end{proof}

\begin{proof}[{\bf Proof of Theorem \ref{T:optran}}]
Embedding \eqref{E:optemb} is a straightforward consequence of
\eqref{sept7}. In turn, Proposition~\ref{poincsob} yields the
Poincar\'e inequality \eqref{E:optran}.

Assume now that the validity of~\eqref{E:main-emb}
implies~\eqref{E:kernel-cond}.
Let $\|\cdot\|_{Y(0,1)}$ be any rearrangement-invariant function
norm such that \eqref{E:main-emb} holds.
Then, by our assumption, inequality~\eqref{E:kernel-cond} holds as
well, namely \begin{equation}\label{**} H_I^m : X(0,1) \to Y(0,1).
\end{equation}
Since, by Proposition \ref{P:norm},  $X_{m,I}(0,1)$ is the optimal
rearrangement-invariant target space in \eqref{**},
we necessarily have
$$
X_{m,I}(0,1)\hra Y(0,1).
$$
This implies the optimality of the norm $\|\cdot\|_{X_{m,I}(0,1)}$
in
\eqref{E:optemb}.
\end{proof}

\begin{proof}[{\bf Proof of Corollary \ref{linfinity}}]
Observe that
\begin{align*}\label{linfinity11}
\sup_{ {\overset{f\geq 0}{\|f\|_{X(0,1)}\leq 1}} } \bigg\| \int
_t^1\frac{f(s)}{I(s)} \left(\int_t\sp
s\frac{dr}{I(r)}\right)\sp{m-1} ds \bigg\|_{L^\infty(0,1)} & =
\sup_{{\overset{f\geq 0}{\|f\|_{X(0,1)}\leq 1}}} \int
_0^1\frac{f(s)}{I(s)} \left(\int_0\sp
s\frac{dr}{I(r)}\right)\sp{m-1} ds
\\  & = \bigg\|\frac{1}{I(s)} \left(\int_0\sp
s\frac{dr}{I(r)}\right)\sp{m-1} \bigg\|_{X'(0,1)}.
\end{align*}
Hence, \eqref{linfinity1} is equivalent to~\eqref{E:kernel-cond}
with $Y(0,1)=L\sp{\infty}(0,1)$. The assertion thus follows from
Theorem~\ref{T:reduction}.
\end{proof}

\begin{proof}[{\bf Proof of Theorem \ref{T:reit}}]
By Theorem~\ref{T:equivalence} and \eqref{E:reit-iter},
$$
(X_{k,I})_{h,I}(0,1)=
(X_k)_h(0,1)=X_{k+h}(0,1)=X_{k+h,I}(0,1),
$$
and the claim follows.
\end{proof}

Corollaries \ref{T:convergent}, \ref{T:optimalconv} and
\ref{T:iterconv} follow from Theorems \ref{T:reduction},
\ref{T:optran} and \ref{T:reit}, respectively (via
Propositions \ref{link}--\ref{P:norm-conv}).

\begin{proof}[{\bf Proof of Proposition \ref{WV}}]
Owing to \eqref{inclWV}, equation \eqref{WV1} will follow if we show
that
\begin{equation}\label{WV3}
V^mX(\Omega , \nu )\to W^mX(\Omega , \nu ).
\end{equation}
The isoperimetric function $I_{\Omega , \nu}$ is non-decreasing on $[0,\frac13]$ by
definition. Let us define the function $I$ by
\begin{equation}\label{Ipiecewise}
I(s)=
\begin{cases}
I_{\Omega , \nu}(s) &\textup{if}\ s\in[0,\frac13],\\
I_{\Omega , \nu}(\frac13) &\textup{if}\ s\in[\frac13,1].
\end{cases}
\end{equation}
Then $I$ is non-decreasing on $[0,1]$. Moreover, by \eqref{WV2}, it satisfies
\eqref{lower-est}. Let $H_I$ be the operator defined as in \eqref{Hbis}, with $I$ given by
\eqref{Ipiecewise}.
Then,
\[
\|H_{I}f\|_{L\sp1(0,1)}\leq\int_0\sp 1f(t)\frac
t{I(t)}\,dt\leq C\|f\|_{L\sp1(0,1)},
\]
and
\[
\|H_{I}f\|_{L\sp{\infty}(0,1)}\leq\int_0\sp 1\frac
{f(t)}{I(t)}\,dt\leq
\int_0\sp1\frac{ds}{I(s)}\|f\|_{L\sp{\infty}(0,1)} \leq
C\|f\|_{L\sp{\infty}(0,1)},
\]
 for every $f\in\Mpl(0,1)$.
Thus, $H_{I}$ is well defined and bounded  both on $L\sp1(0,1)$ and on
$L\sp{\infty}(0,1)$. Owing to  an interpolation theorem of Calder\'on \cite[Chapter 3, Theorem 2.12]{BS},
the operator  $H_{I}$ is bounded on every r.i.~space
$X(0,1)$. Hence, from Theorem~\ref{T:reduction} applied with
$Y(0,1)=X(0,1)$ and $m=1$,
 we obtain that
\begin{equation}\label{july4}
V^1X(\Omega , \nu )  \to X(\Omega , \nu ).
\end{equation}
Iterating \eqref{july4} tells us that there exists a constant $C$
such that
\begin{equation}\label{nov11}\| \nabla ^h u \|_{X(\Omega , \nu )} \leq C \Big(\sum _{k=h}^{m-1}
\| \nabla ^k u \|_{L^1(\Omega , \nu )} + \| \nabla ^m u \|_{X(\Omega
, \nu )}\Big)
\end{equation}
for every $h=0, \dots , m-1$, and $u \in V^m X (\Omega , \nu )$.
Embedding
 \eqref{WV3} is a consequence of \eqref{nov11}.
\end{proof}

\section{Proofs of the Euclidean Sobolev embeddings}\label{proofeucl}

In what follows,  we shall make use of the fact that the function
$I(t)=t\sp{\alpha}$
 satisfies \eqref{E:doubling} if
$\alpha\in(0,1)$.

\begin{proof}[{\bf Proof of Theorem \ref{T:eucl_reduction-john}}]
If the one-dimensional inequality \eqref{E:eucl_condition-john}
holds, then the Sobolev embedding \eqref{E:eucl_embedd-john} and the
Poincar\'e inequality \eqref{E:eucl_reduction-john} hold as well,
owing to \eqref{Ijohn} and to Corollary \ref{T:convergent}.
This shows that (i) implies (ii) and (iii). The equivalence of (ii)
and (iii) is a consequence of Proposition \ref{poincsob}.

It thus only remains to prove that  (ii) implies (i). Assume that
the Sobolev embedding \eqref{E:eucl_embedd-john} holds. If
$m\geq n$, then there is nothing to prove,
because~\eqref{E:eucl_condition-john} holds for every rearrangement
invariant spaces $X(0,1)$ and $Y(0,1)$. Indeed,
$$
\left\|\int_t\sp1 f(s)s\sp{-1+\frac
mn}\,ds\right\|_{L\sp{\infty}(0,1)} = \int_0\sp1 f(s)s\sp{-1+\frac
mn}\,ds\leq \|f\|_{L\sp1 (0,1)}
$$
for every nonnegative $f \in L\sp1 (0,1)$, and  hence
\eqref{E:eucl_condition-john} follows from~\eqref{l1linf}.  In the
case when $m\leq n-1$, the validity of \eqref{E:eucl_condition-john}
was proved in~\cite[Theorem~A]{T2}. Note that the proof is given in
\cite{T2} for Lipschitz domains, and with the space
$W\sp{m}X(\Omega)$ in the place of $V\sp{m}X(\Omega)$. However, by
Proposition~\ref{WV}, $W\sp{m}X(\Omega)=V\sp{m}X(\Omega)$ if
$\Omega$ is a John domain, since \eqref{WV2} is fulfilled for any
such domain. Moreover,
the Lipschitz property of the domain is immaterial, since the proof
does not involve any property of the boundary and hence applies,  in
fact, to any open set $\Omega$.
\end{proof}

\begin{proof}[{\bf Proof of Theorem \ref{T:eucl_optimal-john}}]
By Theorem~\ref{T:eucl_reduction-john},  every John domain
has the property that ~\eqref{E:conv-embedding}
implies~\eqref{E:condition-convergent}. Consequently, the conclusion
follows from Corollary~\ref{T:optimalconv}.
\end{proof}

\begin{proof}[{\bf Proof of Theorem \ref{T:iteration-john}}]
The assertion is a consequence of Corollary~\ref{T:iterconv}.
\end{proof}

The following result provides us with model Euclidean domains of
revolution in $\rn$ in the class $\mathcal J _ \alpha$. It is an
easy consequence of a special case of  \cite[Section 5.3.3]{Mabook}.
In the statement, $\omega _{n-1}$ denotes the Lebesgue measure of
the unit ball in $\mathbb R^{n-1}$.

\begin{prop}\label{auxdiv}
\textup{(i)} Given  $\alpha \in [\frac 1{n'}, 1)$, define $\eta
_\alpha : [0, \frac 1{1-\alpha}] \to [0, \infty )$ as
$$\eta _\alpha (r) = \omega _{n-1}^{-\frac 1{n-1}} (1 -
(1-\alpha )r)^{\frac \alpha{(1-\alpha)(n-1)}} \qquad \hbox{for $r
\in [0, \tfrac 1{1-\alpha}]$.}$$ Let $\Omega $ be the Euclidean
domain in $\rn$ given by
\[
\Omega = \{ (x', x_n) \in \rn: x' \in \mathbb R^{n-1}, 0< x_n
<\tfrac 1{1-\alpha}, |x'| < \eta _\alpha (x_n) \}.
\]
Then $|\Omega |=1$, and
\begin{equation}\label{auxdiv1}
I_\Omega (s) \approx s^\alpha \quad \hbox{for $s \in [0, \frac
12]$.}
\end{equation}

\textup{(ii)} Define $\eta _1 : [0, \infty ) \to [0, \infty )$ as
$$\eta _1 (r) = \omega _{n-1}^{-\frac 1{n-1}} e^{- \frac r{n-1}} \qquad \hbox{for $r
\geq0$.}$$ Let $\Omega $ be the Euclidean
domain in $\rn$ given by
\[
\Omega = \{ (x', x_n) \in \rn: x' \in \mathbb R^{n-1}, x_n >0, |x'|
< \eta _1 (x_n)   \}.
\]
Then $|\Omega |=1$, and
\begin{equation}\label{auxdiv2}
I_\Omega (s) \approx s  \quad \hbox{for $s \in [0, \frac 12]$.}
\end{equation}
\end{prop}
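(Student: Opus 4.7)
The plan is to reduce both parts to direct calculations combined with an invocation of Maz'ya's general isoperimetric theory for Euclidean domains of revolution, as cited from [Mabook, Section 5.3.3]. The domains described are rotationally symmetric about the $x_n$-axis with a monotone decreasing profile $\eta_\alpha$, and for such sets both the volume and the isoperimetric function can be read off from $\eta_\alpha$ directly.

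First I would verify that $|\Omega|=1$ in each case by Fubini's theorem. In case (i), the substitution $u=1-(1-\alpha)r$ yields
\[
|\Omega|=\int_0^{1/(1-\alpha)}\omega_{n-1}\eta_\alpha(r)^{n-1}\,dr=\int_0^{1/(1-\alpha)}(1-(1-\alpha)r)^{\alpha/(1-\alpha)}\,dr=1,
\]
since $\tfrac{\alpha}{1-\alpha}+1=\tfrac{1}{1-\alpha}$. In case (ii), analogously $|\Omega|=\int_0^\infty e^{-r}\,dr=1$.

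Next I would establish the upper bound on $I_\Omega$ by testing it on the horizontal slabs $E_t=\Omega\cap\{x_n>t\}$, which on rotationally symmetric Euclidean domains are the natural candidates for isoperimetric sets. An elementary computation (again via the same substitution) gives, in case (i), $|E_t|=(1-(1-\alpha)t)^{1/(1-\alpha)}$ and $P(E_t,\Omega)=\omega_{n-1}\eta_\alpha(t)^{n-1}=(1-(1-\alpha)t)^{\alpha/(1-\alpha)}=|E_t|^\alpha$; and in case (ii), $|E_t|=e^{-t}$ and $P(E_t,\Omega)=e^{-t}=|E_t|$. Choosing $t$ so that $|E_t|=s$ and taking the infimum in the definition of $I_\Omega$ yields
\[
I_\Omega(s)\leq s^\alpha\quad\text{in (i)},\qquad I_\Omega(s)\leq s\quad\text{in (ii)},
\]
for all sufficiently small $s$.

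The matching lower bounds are the substantive part, and here I would simply appeal to the result of Maz'ya [Mabook, Section 5.3.3], which provides a sharp two-sided estimate for the isoperimetric function of a Euclidean domain of revolution in terms of its profile $\eta$. The key mechanism behind that estimate is a symmetrization argument that shows slabs orthogonal to the axis of revolution are, up to a multiplicative constant depending only on $n$, perimeter-minimizing among all sets of a given measure; substituting the explicit $\eta_\alpha$ defined above reproduces~\eqref{auxdiv1} and~\eqref{auxdiv2}. The main obstacle is exactly this lower bound, since it requires a nontrivial symmetrization argument tailored to domains of revolution with decreasing profile; however, since the paper references [Mabook, Section 5.3.3] for precisely this fact, we need only check that the profiles $\eta_\alpha$ in (i) and $\eta_1$ in (ii) satisfy the monotonicity and regularity hypotheses of that reference, which is immediate, and then plug into the general formula to read off the stated asymptotic behavior near $0$.
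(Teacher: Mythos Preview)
Your proposal is correct and follows essentially the same approach as the paper, which simply records that the proposition ``is an easy consequence of a special case of [Mabook, Section~5.3.3]'' without further detail. Your explicit verification of $|\Omega|=1$ and of the upper bound via the slabs $E_t$ fleshes out what the paper leaves implicit, and your appeal to Maz'ya's result on domains of revolution for the lower bound matches the paper's citation exactly.
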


\begin{proof}[{\bf Proof of Theorem~\ref{T:eucl_reduction-div}}]
The  Sobolev embedding \eqref{E:eucl_embedd-div} and the Poincar\'e
inequality \eqref{E:eucl_reduction-div}
 are equivalent, owing to Theorem \ref{poincsob}.
 If $\alpha \in
[\frac 1{n'}, 1)$, then inequality \eqref{E:eucl_condition-div}
implies \eqref{E:eucl_embedd-div} and \eqref{E:eucl_reduction-div},
via Corollary \ref{T:convergent}, whereas if $\alpha =1$, then
inequality \eqref{E:eucl_condition-div1}  implies
\eqref{E:eucl_embedd-div} and \eqref{E:eucl_reduction-div} via
 Theorem~\ref{T:reduction}.

 It thus remains to exhibit a domain $\Omega \in \mathcal J _\alpha$ such that the Sobolev embedding
\eqref{E:eucl_embedd-div} implies either
\eqref{E:eucl_condition-div}, or \eqref{E:eucl_condition-div1},
according to whether $\alpha \in [\frac 1{n'}, 1)$ or $\alpha =1$.
\par\noindent
If $\alpha\in[\frac1{n'},1)$, let $\Omega$ be the set given by
Proposition \ref{auxdiv}, Part (i), whereas, if $\alpha=1$, let
$\Omega$ be the set given by Proposition \ref{auxdiv}, Part (ii). By
either \eqref{auxdiv1} or~\eqref{auxdiv2}, one has that $\Omega \in
\mathcal J _\alpha$.  Consequently, embedding
\eqref{E:eucl_embedd-div} entails that
 there exists a  constant $C$ such that
\begin{equation}\label{Enec0div}
\|u\|_{Y(\Omega )} \leq C \Big(\| \nabla ^m u\|_{X(\Omega )} + \sum
_{k=0}^{m-1} \|\nabla ^k u\|_{L^1(\Omega )}\Big)
\end{equation}
for every $u \in V^m X(\Omega )$. Let us fix any nonnegative
function $f \in X (0,1)$, and define $u : \Omega \to [0, \infty )$
as
\[
u (x) = \int _{M_\alpha (x_n)} ^1 \frac{1}{r_1^\alpha} \int _{r_1}
^1 \frac{1}{r_2^\alpha} \dots \int _{r_{m-1}}^1
\frac{f(r_m)}{r_m^\alpha} dr_m\, dr_{m-1}\, \dots dr_1 \quad
\hbox{for $ x \in \Omega ,$}
\]
where $M_\alpha $ is given by
$$M_\alpha (r) = \begin{cases} (1- (1-\alpha )r)^{\frac 1{1-\alpha }}
&  \hbox{ for $r \in [0, \frac 1{1-\alpha}]$,} \quad \hbox{if
$\alpha \in [\frac 1{n'}, 1)$,}
\\ e^{-r} &  \hbox{ for $r \in [0, \infty )$,}  \quad \hbox{if $\alpha
=1$.}
\end{cases}$$
 The function $u$ is $m$-times
weakly differentiable in $\Omega$, and, since $ - M_\alpha ' =
(M_\alpha)^\alpha $,
\[
|\nabla ^k u(x)| = \frac {\partial ^k u}{\partial x_n
 ^k}  (x) =
\int _{M_\alpha (x_n)} ^1 \frac{1}{r_{k+1}^\alpha} \int _{r_{k+1}}
^1 \frac{1}{r_{k+2}^\alpha} \dots \int _{r_{m-1}}^1
\frac{f(r_m)}{r_m^\alpha} dr_m\, dr_{m-1}\, \dots dr_{k+1} \quad
\hbox{for $ x \in \Omega ,$}
\]
for $k=1, \dots , m-1$,  and
\[
 |\nabla ^m u(x)| = \frac {\partial ^m u}{\partial x_n
 ^m}  (x) =  f(M_{\alpha}(x_n)) \quad \hbox{for a.e. $ x \in \Omega
$.}
\]
 Moreover, on setting $L_\alpha = \frac 1{1-\alpha}$ if $\alpha \in [\frac 1{n'},
 1)$, and $L_\alpha = \infty $ if $\alpha =1$, we have that
\begin{align*}
|\{(x', x_n)\in \Omega: x_n >t \}| &= \omega _{n-1} \int
_{t}^{L_\alpha} \eta_\alpha (r)^{n-1}\, dr = \int _{t}^{L_\alpha}
M_\alpha(r)^\alpha\, dr
\\ &= \int _{t}^{L_\alpha} -M_\alpha '(r)\, dr = M_\alpha (t) \quad
\quad\hbox{for $t \in (0, L_\alpha)$}.
\end{align*}
Thus,
\begin{equation}\label{Enec4div}
u^*(s) =\int _{s} ^1 \frac{1}{r_1^\alpha} \int _{r_1} ^1
\frac{1}{r_2^\alpha} \dots \int _{r_{m-1}}^1
\frac{f(r_m)}{r_m^\alpha} dr_m\, dr_{m-1} \, \dots dr_1 \quad
\hbox{for $ s \in (0, 1),$}
  \end{equation}
\begin{equation}\label{Enec4divk}
|\nabla ^k u|^*(s) = \int _{s} ^1 \frac{1}{r_{k+1}^\alpha} \int
_{r_{k+1}} ^1 \frac{1}{r_{k+2}^\alpha} \dots \int _{r_{m-1}}^1
\frac{f(r_m)}{r_m^\alpha} dr_m\, dr_{m-1}\, \dots dr_{k+1} \quad
\hbox{for $ s \in (0, 1),$}
\end{equation}
for  $1 \leq k \leq m-1$, and
\begin{equation}\label{Enec6div}
 |\nabla ^m u|^*(s) =    f^*(s) \quad \hbox{for $ s
\in (0, 1).$}
\end{equation}
 \relax Equation
\eqref{Enec6div} ensures  that $u \in V^m X(\Omega )$. On the other
hand, by \eqref{Enec0div} and \eqref{Enec4div}--\eqref{Enec6div},
    \begin{align}\label{Enec7div}
    \bigg\| \int _{s} ^1 & \frac{1}{r_1^\alpha} \int _{r_1} ^1
\frac{1}{ r_2^\alpha} \dots \int _{r_{m-1}}^1 \frac{f(r_m)}{
r_m^\alpha} dr_m\, dr_{m-1} \, \dots dr_1  \bigg\|_{ Y(0,1)} \\
\nonumber &\leq C \|f\|_{ X (0,1)} +  C \sum _{k=0}^{m-1}
 \int _0^1
\int _{s} ^1 \frac{1}{r_{k+1}^\alpha} \int _{r_{k+1}} ^1
\frac{1}{r_{k+2}^\alpha} \dots \int _{r_{m-1}}^1
\frac{f(r_m)}{r_m^\alpha} dr_m\, dr_{m-1}\, \dots dr_{k+1}\, ds.
\end{align}

Subsequent applications of Fubini's Theorem tells us that
\begin{multline}\label{Enec8'div}
\int _{s} ^1 \frac{1}{r_{k+1}^\alpha} \int _{r_{k+1}} ^1
\frac{1}{r_{k+2}^\alpha} \dots \int _{r_{m-1}}^1
\frac{f(r_m)}{r_m^\alpha} dr_m\, dr_{m-1}\, \dots dr_{k+1} =
 \\ = \frac 1{(m-k-1)!}\int _{s} ^1 \frac{1}{r
^\alpha}\left(\int_s\sp r\frac{dt}{ t^\alpha}\right)\sp{m-k-1}f(r) dr
\quad \hbox{for $s \in (0, 1)$.}
\end{multline}

By \eqref{Enec8'div}, \eqref{l1linf} and~\eqref{E:bdj} applied with
$I(t)=t\sp{\alpha}$,  one has that
 \begin{align}\label{Enec8div}
\int _0^1 & \int _{s} ^1 \frac{1}{r_{k+1}^\alpha} \int _{r_{k+1}} ^1
\frac{1}{r_{k+2}^\alpha} \dots \int _{r_{m-1}}^1
\frac{f(r_m)}{r_m^\alpha} dr_m\, dr_{m-1}\, \dots dr_{k+1}\, ds  \\
\nonumber & = \frac 1{(m-k-1)!}\int _0^1 \int _{s} ^1
\frac{1}{r^\alpha}\left(\int_s\sp r\frac{dt}{
t^\alpha}\right)\sp{m-k-1}f(r) dr \, ds
=\|H\sp{m-k}_If\|_{L\sp1(0,1)}\\
&\leq C\|H\sp{m-k}_If\|_{(L\sp1)_{m-k,I}(0,1)}
\leq C'\|f\|_{L\sp1(0,1)}.\nonumber
\end{align}
for $k=0, \dots , m-1$, for some  constants $C$ and $C'$.

When $\alpha =1$,   inequality \eqref{E:eucl_condition-div1} follows
from \eqref{Enec7div}, \eqref{Enec8'div} and \eqref{Enec8div}. When
$\alpha \in [\frac 1{n'}, 1)$, inequality
\eqref{E:eucl_condition-div} follows from \eqref{Enec7div},
\eqref{Enec8'div} and \eqref{Enec8div}, via Proposition \ref{link},
Part (i).
\end{proof}

\begin{thm}\label{T:lenka-remark-3}
Let $p,q\in[1,\infty]$ and $\alpha\in\R$ be such that one of the
conditions in~\eqref{E:lz_bfs} is  satisfied. Let  $I: [0, 1] \to
[0, \infty)$ be a non-decreasing function  such that $\frac t{I(t)}$
is non-decreasing. Then
\[
\left\|R_If\sp{*}\right\|_{(L^{p,q;\alpha})'(0,1)} \approx
\left\|t^{\frac{1}{p'}-\frac{1}{q'}} \left(\log \tfrac{2}{t}
\right)^{-\alpha} R_If\sp{*}(t) \right\|_{L^{q'}(0,1)}
\]
for every $f\in \M_+(0,1)$, up to multiplicative constants depending
on $p,q,\alpha$.
\end{thm}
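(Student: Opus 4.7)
The plan is to prove the two inequalities separately, using two distinct tools: Theorem~\ref{T:lenka-main} for the upper bound on the associate-space norm, and the quasi-monotonicity of $R_If\sp*$ supplied by Lemma~\ref{T:lemma} for the reverse bound. Throughout, write $w(t)=t^{1/p'-1/q'}\log^{-\alpha}(2/t)$.

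For the direction $\|R_If\sp*\|_{(L^{p,q;\alpha})'(0,1)}\lesssim\|wR_If\sp*\|_{L^{q'}(0,1)}$, I would first invoke Theorem~\ref{T:lenka-main} with $m=0$ and $X=L^{p,q;\alpha}$ to obtain
\[
\|R_If\sp*\|_{(L^{p,q;\alpha})'(0,1)}\approx\|R_If\sp*\|_{(L^{p,q;\alpha})'_d(0,1)}.
\]
By definition the right-hand side equals $\sup\int_0^1\phi(t)R_If\sp*(t)\,dt$ taken over non-increasing $\phi$ in the unit ball of $L^{p,q;\alpha}(0,1)$. Substituting $\psi(t)=t^{1/p-1/q}\log^{\alpha}(2/t)\phi(t)$ rewrites the constraint as $\|\psi\|_{L^q}\leq 1$ plus an implicit monotonicity condition on $\phi$. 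Dropping that monotonicity condition and invoking $L^q$--$L^{q'}$ duality then produces the upper bound $\|wR_If\sp*\|_{L^{q'}(0,1)}$, on recalling that $1/q-1/p=1/p'-1/q'$.

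For the reverse direction, Lemma~\ref{T:lemma} applied with $m=1$ (and using that $I$ is non-decreasing) gives $R_If\sp*(t)\leq 2R_If\sp*(s)$ for $t/2\leq s\leq t$, whence the set $\{R_If\sp*\geq R_If\sp*(t)/2\}$ has measure at least $t/2$, and consequently $R_If\sp*(t)\leq 2(R_If\sp*)\sp*(t/2)$. Inserting this pointwise estimate into $\|wR_If\sp*\|_{L^{q'}}$ and changing variables $s=t/2$ reduces matters to verifying $w(2s)\lesssim w(s)$ on $(0,1/2)$: the polynomial factor contributes just $2^{1/p'-1/q'}$, while the logarithmic ratio $[\log(2/s)/\log(1/s)]^{\alpha}$ stays in the bounded interval $[1,2]^{|\alpha|}$. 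The resulting inequality $\|wR_If\sp*\|_{L^{q'}}\lesssim\|w(R_If\sp*)\sp*\|_{L^{q'}}$ is exactly $\|R_If\sp*\|_{L^{p',q';-\alpha}}$ by the definition of the Lorentz--Zygmund norm, and the latter coincides with $\|R_If\sp*\|_{(L^{p,q;\alpha})'(0,1)}$ via~\eqref{E:lz_assoc}.

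The main obstacle will be the fourth case of~\eqref{E:lz_bfs} ($p=\infty$, $1\leq q<\infty$, $\alpha+1/q<0$), in which the associate space takes the $f^{**}$-form $L^{(1,q';-\alpha-1)}$ rather than the $f^*$-form. For that case the final identification in the lower-bound step needs an additional comparison between $(R_If\sp*)^{**}$ and $R_If\sp*$ adapted to the specific function $R_If\sp*$; the quasi-monotonicity estimate from Lemma~\ref{T:lemma}, together with the hypothesis that $t/I(t)$ is non-decreasing, should again be the key ingredient, since it forces $(R_If\sp*)^{**}(t)$ to be equivalent to a controlled average of $R_If\sp*$ over $(0,t)$. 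The boundary values $q=1$ and $q=\infty$ in the $L^q$--$L^{q'}$ duality step demand only minor notational adjustments.
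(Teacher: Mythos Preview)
Your upper bound matches the paper's exactly. For the lower bound in the first three cases of~\eqref{E:lz_bfs} your route is a valid variant of the paper's: the paper passes through the non-increasing majorant $G_If=\sup_{t\le s\le 1}R_If^*(s)$ and then invokes Theorem~\ref{T:lenka-main} once more to identify $\|G_If\|_{(L^{p,q;\alpha})'}$ with $\|R_If^*\|_{(L^{p,q;\alpha})'}$, whereas you pass through the rearrangement $(R_If^*)^*$ via the quasi-monotonicity of Lemma~\ref{T:lemma}. Your version is arguably a touch more direct, since the identification $\|R_If^*\|_{(L^{p,q;\alpha})'}=\|(R_If^*)^*\|_{L^{p',q';-\alpha}}$ needs only rearrangement invariance and~\eqref{E:lz_assoc}, not a second appeal to Theorem~\ref{T:lenka-main}.

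The genuine gap is in the fourth case $p=\infty$, $1\le q<\infty$, $\alpha+1/q<0$. Your suggestion of ``comparing $(R_If^*)^{**}$ with $R_If^*$'' cannot close the estimate, and here is why. The associate norm is $\|t^{1-1/q'}(\log 2/t)^{-\alpha-1}(R_If^*)^{**}(t)\|_{L^{q'}}$, while the target is $\|t^{1-1/q'}(\log 2/t)^{-\alpha}R_If^*(t)\|_{L^{q'}}$; the two weights differ by a full factor $\log(2/t)$, which no pointwise or rearrangement comparison between $R_If^*$ and $(R_If^*)^{**}$ can produce. What the paper actually does is exploit the \emph{integral structure} of $R_If^*$: after the harmless reduction $(R_If^*)^{**}(t)\ge t^{-1}\int_0^t R_If^*(s)\,ds$, Fubini gives
\[
\int_0^t R_If^*(s)\,ds=\int_0^t f^*(r)\int_r^t\frac{ds}{I(s)}\,dr
\ge \int_0^{t^2} f^*(r)\,dr\int_{t^2}^t\frac{s}{I(s)}\,\frac{ds}{s}
\ge \frac{t^2}{I(t^2)}\,\log\frac1t\int_0^{t^2} f^*(r)\,dr,
\]
where the last step uses the hypothesis that $s/I(s)$ is non-decreasing. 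It is this $\log(1/t)$ that bridges the gap between the exponents $-\alpha-1$ and $-\alpha$; a change of variable $t\mapsto t^2$ then finishes. So the monotonicity of $t/I(t)$ enters not through a control of $(R_If^*)^{**}$ by $R_If^*$, but through the inner integral $\int_r^t ds/I(s)$, and your sketch for this case should be replaced by that computation.
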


\begin{proof} Fix $f\in \M_+(0,1)$.
By Theorem~\ref{T:lenka-main}, applied with $m=0$ and
$X(0,1)=L\sp{p,q;\alpha}(0,1)$, and H\"older's inequality, there
exists a universal constant $C$ such that
\begin{multline*}
\left\|R_If\sp*(t)\right\|_{(L\sp{p,q;\alpha})'(0,1)} \leq C
\left\|R_If\sp *(t)\right\|_{(L\sp{p,q;\alpha})'_d(0,1)} =
C\sup_{\|g\|_{L\sp{p,q;\alpha}(0,1)}\leq 1}
\int_0\sp 1g\sp*(t)R_If\sp*(t)\,dt\\
= C\sup_{\|g\|_{L\sp{p,q;\alpha}(0,1)}\leq 1}
\int_0\sp 1g\sp*(t)t\sp{\frac 1p-\frac1q}(\log\tfrac2t)\sp{\alpha}t\sp{\frac 1{p'}-\frac1{q'}}(\log\tfrac2t)\sp{-\alpha}R_If\sp*(t)\,dt \leq C \|t\sp{\frac
1{p'}-\frac1{q'}}(\log\tfrac2t)\sp{-\alpha}R_If\sp*(t)\|_{L\sp{q'}(0,1)}.
\end{multline*}
In order to prove the reverse inequality,  assume first that either $1<p<\infty$ or $p=q=1$ and
$\alpha\geq0$ or $p=q=\infty$ and $\alpha\leq0$. By
Theorem~\ref{T:lenka-main} (with $m=0$) and~\eqref{E:lz_assoc},
\begin{align*}
&\|t\sp{\frac
1{p'}-\frac1{q'}}(\log\tfrac2t)\sp{-\alpha}R_If\sp*(t)\|_{L\sp{q'}(0,1)}
\leq
\|t\sp{\frac 1{p'}-\frac1{q'}}(\log\tfrac2t)\sp{-\alpha}\sup_{t\leq s\leq 1}R_If\sp*(t)\|_{L\sp{q'}(0,1)}\\
&= \left\|G_If\right\|_{L\sp{p',q';-\alpha}(0,1)}
\approx \left\|G_If\right\|_{(L\sp{p,q;\alpha})'(0,1)}
\approx
\|R_If\sp*\|_{(L\sp{p,q;\alpha})'(0,1)},
\end{align*}
where the last {but one} equivalence holds up to multiplicative constants
depending on $p, q, \alpha$.
\par
It remains to consider the case when $p=\infty$, $q\in [1,\infty)$
and $\alpha+\frac1q<0$.
We have that
\begin{align*}
& \left\|R_If\sp{*}(t)\right\|_{(L^{p,q;\alpha})'(0,1)}
\approx \left\|R_If\sp{*}(t)\right\|_{L^{(1,q';-\alpha-1)}(0,1)} =\left\|t^{1-\frac{1}{q'}} \left(\log \tfrac{2}{t}\right)^{-\alpha-1} \frac{1}{t} \int_0^t \left(R_If\sp{*}\right)^{*}(s)\,ds\right\|_{L^{q'}(0,1)}\\
&\geq \left\|t^{1-\frac{1}{q'}} \left(\log \tfrac{2}{t}\right)^{-\alpha-1} \frac{1}{t} \int_0^t R_If\sp{*}(s)\,ds\right\|_{L^{q'}(0,1)} =\left\|t^{-\frac{1}{q'}} \left(\log \tfrac{2}{t}\right)^{-\alpha-1} \int_0^t f^*(r)\int_r^t \frac{\,ds}{I(s)}\,dr\right\|_{L^{q'}(0,1)}\\
&\geq \left\|t^{-\tfrac{1}{q'}} \left(\log \tfrac{2}{t}\right)^{-\alpha-1} \int_0^{t^2} f^*(r)\int_r^t \frac{\,ds}{I(s)}\,dr\right\|_{L^{q'}(0,1)}
\geq \left\|t^{-\frac{1}{q'}} \left(\log \tfrac{2}{t}\right)^{-\alpha-1} \int_0^{t^2} f^*(r) \,dr \int_{t^2}^t \frac{s}{I(s)} \, \frac{ds}{s}\right\|_{L^{q'}(0,1)}\\
&\geq \left\|t^{2-\frac{1}{q'}} \left(\log \tfrac{2}{t}\right)^{-\alpha-1} f^{**}(t^2) \frac{t^2}{I(t^2)} \left(\log \frac{1}{t}\right)\right\|_{L^{q'}(0,1)}
\geq \tfrac{1}{2}\left\|\chi_{(0,\frac{1}{2})}(t) t^{2-\frac{1}{q'}} \left(\log \tfrac{2}{t}\right)^{-\alpha} \frac{t^2}{I(t^2)}f^{**}(t^2)\right\|_{L^{q'}(0,1)}\\
& \geq C
\left\|t^{1-\frac{1}{q'}}\left(\log \tfrac{2}{t}\right)^{-\alpha}
R_If\sp{*}(t)\right\|_{L^{q'}(0,1)},
\end{align*}
for some constant $C=C(\alpha ,q)$. The proof is complete.
\end{proof}

\begin{proof}[{\bf Proof of Theorem~\ref{EX:eucl_lebesgue}}]
By Corollary~\ref{T:optimalconv},
\[
\|f\|_{((L\sp
p)_{m,\alpha})'(0,1)}=\|s\sp{m(1-\alpha)}f\sp{**}(s)\|_{L\sp{p'}(0,1)}
\]
for $f \in \M_+(0,1)$. If $m(1-\alpha)<1$ and $p<\frac 1{m(1-\alpha)}$ and $r$ is given by $\frac 1r=\frac 1p-m(1-\alpha)$ (note that $1<r<\infty$), then this,~\eqref{E:glz-identity} and~\eqref{E:lz_assoc} yield
\[
(L\sp p)_{m,\alpha}(0,1)=(L\sp{(r',p')})'(0,1)=(L\sp{r',p'})'(0,1)=L\sp{r,p}(0,1).
\]
Since $p<r$, we have $L\sp{r,p}(0,1)\hra L\sp{r}(0,1)$, and the claim follows. If $m(1-\alpha)<1$ and $p=\frac
1{m(1-\alpha)}$, then, by~\eqref{E:lorentz-identity}, $L\sp
{r'}(0,1)\hra L\sp{(1,p)}(0,1)$   for every $r\in[1,\infty)$, and
hence
\[
(L\sp p)_{m,\alpha}(0,1)=(L\sp{(1,p')})'(0,1)\hra L\sp r(0,1).
\]
Finally, if either $m(1-\alpha)\geq1$, or $m(1-\alpha)<1$ and
$p>\frac 1{m(1-\alpha)}$, then~\eqref{linfinity1} is satisfied.
The conclusion thus follows from Corollary~\ref{linfinity}.
\end{proof}

\begin{proof}[{\bf Proof of Theorem~\ref{EX:eucl_lorentz}}] First, assume that either $m(1-\alpha)\geq 1$, or $m(1-\alpha)<1$,
$p=\frac1{m(1-\alpha)}$ and $q=1$, or $m(1-\alpha)<1$ and
$p>\frac1{m(1-\alpha)}$. In each of these cases,
condition~\eqref{linfinity1} is satisfied with $I(t)=t\sp{\alpha}$ and $X(0,1)=L\sp{p,q}(0,1)$.
Hence, by Corollary~\ref{linfinity},
$V\sp{m}L\sp{p,q}(\Omega)\hra L\sp{\infty}(\Omega)$.

Next, assume that $m(1-\alpha)<1$, and either $1\leq p<\frac1{m(1-\alpha)}$, or $p=\frac1{m(1-\alpha)}$ and $q>1$.
Set $J(t)=t\sp{-m(1-\alpha)+1}$ for $t \in [0, 1]$. Then $J$ is
a~non-decreasing function  such that $\frac t{J(t)}$ is
non-decreasing on $(0,1)$. Given $f \in \M_+(0,1)$, by
Theorem~\ref{T:lenka-remark-3} (with $\alpha=0$),
\begin{align*}
\|f\|_{(L\sp{p,q}_{m,\alpha})'(0,1)} &=
\left\|s\sp{-1+m(1-\alpha)}\int_0\sp sf\sp*(r)\,dr\right\|_{(L\sp{p,q})'(0,1)}=\|R_Jf\sp*\|_{(L\sp{p,q})'(0,1)}\\
&\approx \left\|t\sp{\frac
1{p'}-\frac1{q'}}R_Jf\sp*(t)\right\|_{L\sp{q'}(0,1)} =
\left\|t\sp{\frac
1{p'}-\frac1{q'}+m(1-\alpha)}f\sp{**}(t)\right\|_{L\sp{q'}(0,1)} =
\|f\|_{L\sp{(r',q')}(0,1)},
\end{align*}
where the equivalence holds up to constants depending on $p$ and
$q$, and $r'$ satisfies $\frac 1{r'}=\frac1{p'}+m(1-\alpha)$. Owing
to~\eqref{E:glz-identity}, \eqref{E:lz_assoc} and~\eqref{E:lorentz-identity},
$L\sp{(r',q')}(0,1)=(L\sp{\frac{p}{1-mp(1-\alpha)},q})'(0,1)$ if
$m(1-\alpha)<1$ and $1\leq p<\frac 1{m(1-\alpha)}$, and
$L\sp{(r',q')}(0,1)=(L\sp{\infty,q;-1})'(0,1)$ if $m(1-\alpha)<1$,
$p=\frac 1{m(1-\alpha)}$ and $q>1$. The conclusion follows.
\end{proof}

\begin{proof}[{\bf Proof of Theorem~\ref{T:eucl_lebesgue-alpha=1}}]
Since~\eqref{isop-ineq} holds with $I(t)=t$, in the case when $1\leq p<\infty$, the assertion follows  from
Theorem~\ref{EX:expo_lz} applied with $\beta=1$ (see Section~\ref{S:probability_proofs} below for the proof of Theorem~\ref{EX:expo_lz}). If
$p=\infty$,   Theorem~\ref{EX:expo_lz} has to be combined with
an~appropriate relation from~\eqref{E:glz-identity}.
\end{proof}

\begin{proof}[{\bf Proof of Theorem~\ref{T:eucl_lorentz-alpha=1}}]
Since~\eqref{isop-ineq} holds with $I(t)=t$, the conclusion is a consequence of Theorem~\ref{EX:expo_lz} applied
with $\beta=1$.
\end{proof}

\section{Proofs of the Sobolev embeddings in product probability
spaces}\label{S:probability_proofs}

This final section is devoted to the proof of the results of Section
\ref{S:probability}.

\begin{lemma}\label{delta2}
Let $\Phi$ be as in  \eqref{E:measure}. Then:

\textup{(i)} The function $L_\Phi$ defined by
\eqref{Lphi} is nondecreasing on $[0,1]$;

\textup{(ii)} The inequality
\begin{equation}\label{E:delta21}
s \Phi ' \big(\Phi ^{-1}(\log \big (\tfrac
1s\big)\big)\big) \leq L_\Phi(s) \leq
2s \Phi ' \big(\Phi ^{-1}(\log \big (\tfrac
1s\big)\big)\big)
\end{equation}
holds for every $s\in (0,\frac{1}{2}]$;

\textup{(iii)} The inequality
\begin{equation}\label{E:e}
\frac{\Phi^{-1}(s)}{2s} \leq
\frac{1}{\Phi'(\Phi^{-1}(s))} \leq
\frac{\Phi^{-1}(s)-\Phi^{-1}(t)}{s-t} \leq \frac{\Phi^{-1}(s)}{s}
\end{equation}
holds whenever $0\leq t <s <\infty$.
\end{lemma}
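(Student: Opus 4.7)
I would prove the three parts in the order (iii), (ii), (i), since (ii) and (i) both rely on pointwise bounds that are cleanly packaged in (iii).

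\textbf{Part (iii).} The inverse $\Phi^{-1}$ is concave (because $\Phi$ is convex and strictly increasing) and vanishes at $0$. Since $(\Phi^{-1})'(s)=1/\Phi'(\Phi^{-1}(s))$, the standard fact that the derivative of a concave function lies below any chord emanating to its left yields the middle inequality of \eqref{E:e}; the fact that for a concave function vanishing at $0$ the chord slope $(\Phi^{-1}(s)-\Phi^{-1}(t))/(s-t)$ is nonincreasing in $t$ then gives the rightmost inequality by letting $t\searrow 0$. For the leftmost inequality I would invoke the concavity of $\sqrt{\Phi}$ together with $\sqrt{\Phi}(0)=0$: for such a function $f$ one has $f'(u)\leq f(u)/u$, and with $f=\sqrt{\Phi}$ this reads $\Phi'(u)\leq 2\Phi(u)/u$. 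Substituting $u=\Phi^{-1}(s)$ gives $\Phi'(\Phi^{-1}(s))\leq 2s/\Phi^{-1}(s)$, equivalent to the claimed lower bound on $1/\Phi'(\Phi^{-1}(s))$.

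\textbf{Part (ii).} Since $\Phi'\circ\Phi^{-1}$ is nondecreasing (by convexity of $\Phi$ and monotonicity of $\Phi^{-1}$) and $\log(2/s)\geq \log(1/s)$, the lower bound in \eqref{E:delta21} is immediate from the definition of $L_\Phi$. For the upper bound, set $u=\Phi^{-1}(\log(1/s))$ and $v=\Phi^{-1}(\log(2/s))$. Concavity of $\sqrt{\Phi}$ means $(\sqrt{\Phi})'$ is nonincreasing, so $\Phi'(v)/\sqrt{\Phi(v)}\leq \Phi'(u)/\sqrt{\Phi(u)}$. Rearranging and using $\Phi(v)/\Phi(u)=1+\log 2/\log(1/s)\leq 2$ for $s\in(0,\tfrac12]$, I obtain $\Phi'(v)\leq \sqrt{2}\,\Phi'(u)$, which is stronger than the required factor $2$.

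\textbf{Part (i).} This is the most delicate step; here I would differentiate $L_\Phi$ directly on $(0,1]$ and exploit concavity of $\sqrt{\Phi}$ at the second-derivative level. Writing $u(s)=\Phi^{-1}(\log(2/s))$, implicit differentiation yields $u'(s)=-1/(s\Phi'(u))$, so
\[
L_\Phi'(s)=\Phi'(u)+s\Phi''(u)u'(s)=\Phi'(u)-\frac{\Phi''(u)}{\Phi'(u)},
\]
and monotonicity of $L_\Phi$ reduces to the pointwise inequality $\Phi''(u)\leq \Phi'(u)^2$. Concavity of $\sqrt{\Phi}$ translates into $2\Phi(u)\Phi''(u)\leq \Phi'(u)^2$, and at $u=u(s)$ one has $\Phi(u)=\log(2/s)\geq \log 2>\tfrac12$ for every $s\in(0,1]$. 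Combining the two gives $\Phi''(u)\leq \Phi'(u)^2/(2\Phi(u))\leq \Phi'(u)^2$, so $L_\Phi'\geq 0$ on $(0,1]$; together with $L_\Phi(0)=0$ this yields that $L_\Phi$ is nondecreasing on $[0,1]$.

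The main obstacle is part (i): the argument rests on the numerical inequality $\log 2>\tfrac12$, which is just barely enough to upgrade the weak second-order bound coming from concavity of $\sqrt{\Phi}$ to the strict inequality $\Phi''(u)\leq \Phi'(u)^2$ required for monotonicity. In particular, the factor $2$ in the argument $\log(2/s)$ of $L_\Phi$ (as opposed to, say, $\log(1/s)$) is essential for the argument to close.
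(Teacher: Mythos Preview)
Your proposal is correct. Parts (i) and (iii) are essentially the paper's own arguments: for (i) the paper also differentiates $L_\Phi$ directly and reduces to the second-order inequality $2\Phi(u)\Phi''(u)\le\Phi'(u)^2$ coming from concavity of $\sqrt{\Phi}$, combined with $2\log(2/s)>1$ (exactly your $\log 2>\tfrac12$); for (iii) the paper works with $\Phi$ rather than $\Phi^{-1}$ but the manipulations are the same.

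The one genuine difference is in the upper bound of (ii). The paper derives it in one line from (i): since $L_\Phi$ is nondecreasing and $L_\Phi(2s)=2s\,\Phi'(\Phi^{-1}(\log(1/s)))$, one has $L_\Phi(s)\le L_\Phi(2s)$, which is precisely the claimed estimate. Your route is instead self-contained (it does not invoke (i)) and uses monotonicity of $(\sqrt{\Phi})'$ together with $\Phi(v)/\Phi(u)\le 2$ to get the sharper constant $\sqrt{2}$. The paper's argument is shorter and exploits the logical order (i)$\Rightarrow$(ii); yours decouples (ii) from (i) and yields a quantitatively better bound.
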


\begin{proof}
\textup{(i)} The convexity of $\Phi$
and the concavity of  $\sqrt{\Phi}$ imply that
\[
0 \leq \Phi''(t)\leq\frac{\Phi'(t)\sp2}{2\Phi(t)} \quad \hbox{for $t
>0$.}
\]
Therefore,
\begin{equation*}
L_\Phi '(s)= \Phi '(\Phi ^{-1}(\log \tfrac 2s)) - \frac{\Phi ''(\Phi ^{-1}(\log \tfrac
2s))}{\Phi '(\Phi ^{-1}(\log \tfrac 2s))}
\geq \frac{\Phi ''(\Phi ^{-1}(\log \tfrac
2s))}{\Phi '(\Phi ^{-1}(\log \tfrac 2s))} \left(2\log \tfrac2s -1\right)>0 \quad \hbox{for $s \in (0,1)$.}
\end{equation*}
Hence,  \textup{(i)} follows.

\textup{(ii)} The first inequality in \eqref{E:delta21} trivially holds, since both
$\Phi '$ and $\Phi ^{-1}$ are non-decreasing  functions. The latter inequality follows from \textup{(i)} and from the fact that
$$
2s \Phi ' \big(\Phi ^{-1}(\log \big (\tfrac
1s\big)\big)\big) = L_\Phi(2s) \quad \hbox{for $s \in (0,\frac{1}{2}]$.}
$$

\textup{(iii)} Let $0\leq r_1 <r_2 <\infty$. Owing
to the convexity of $\Phi$ and the fact that $\Phi(0)=0$, we obtain
that
\begin{equation}\label{E:equation}
\frac{\Phi(r_2)}{r_2} \leq \frac{\Phi(r_2)-\Phi(r_1)}{r_2-r_1}\leq \Phi'(r_2).
\end{equation}
Furthermore, by the concavity of $\sqrt{\Phi}$ and
the fact that  $\sqrt{\Phi(0)}=0$,
\begin{equation*}
(\sqrt{\Phi})'(r_2) = \frac{\Phi'(r_2)}{2\sqrt{\Phi(r_2)}} \leq \frac{\sqrt{\Phi(r_2)}}{r_2},
\end{equation*}
and, therefore,
\begin{equation}\label{E:equation2}
\Phi'(r_2) \leq \frac{2\Phi(r_2)}{r_2}.
\end{equation}

Let $0\leq t<s<\infty$. If we set
$r_1=\Phi^{-1}(t)$, $r_2=\Phi^{-1}(s)$, then $0\leq r_1<r_2<\infty$.
Hence, inequalities~\eqref{E:equation} and~\eqref{E:equation2}
yield
$$
\frac{s}{\Phi^{-1}(s)} \leq \frac{s-t}{\Phi^{-1}(s)-\Phi^{-1}(t)} \leq \Phi'(\Phi^{-1}(s)) \leq \frac{2s}{\Phi^{-1}(s)}.
$$
Assertion~\eqref{E:e} follows.
\end{proof}

Let $m\in \mathbb N$. We define the operator
$P^m_\Phi$ from $\mathcal M_+(0,1)$ into $\mathcal M_+(0,1)$ by
$$
P^m_\Phi f(t)=\left(\frac{\Phi^{-1}(\log
\frac{2}{t})}{\log \frac{2}{t}} \right)^{m} \int_t^1 \frac{f(s)}{s}
\left(\log \frac{s}{t}\right)^{m-1}\,ds \quad \hbox{for}\,\, t\in
(0,1),
$$
and for $f\in \mathcal M_+(0,1)$.  Moreover, let
$H^m_{L_\Phi}$ be the operator defined as in \eqref{Hj} (see also~\eqref{E:kernel-formula}), with
$I=L_\Phi$, namely
$$
H^m_{L_\Phi} f(t) =\frac{1}{(m-1)!}\int_t^1 \frac{f(s)}{L_\Phi(s)}
\left(\int_t^s \frac{\,dr}{L_\Phi(r)}\right)^{m-1}\,ds \quad
\hbox{for}\,\, t\in (0,1),
$$
and for $f\in\Mpl(0,1)$. Observe that, by the change of
variables $\tau\mapsto \Phi\sp{-1}\left(\log\frac 2t\right)$, we have
\begin{align}
\frac{1}{{L_\Phi}(r)} \left(\int_s\sp r\frac{dt}{
{L_\Phi}(t)}\right)\sp{m-1}
&  =
\frac{1}{r\Phi'\left(\Phi\sp{-1}\left(\log\frac
2r\right)\right)}\left(\int_s\sp
r\frac{dt}{t\Phi'\left(\Phi\sp{-1}\left(\log\frac
2t\right)\right)}\right)\sp{m-1}\label{dec104}\\
 \nonumber & =
\frac{\left(\Phi\sp{-1}\left(\log \frac
2s\right)-\Phi\sp{-1}\left(\log \frac
2r\right)\right)\sp{m-1}}{r\Phi'\left(\Phi\sp{-1}\left(\log\frac
2r\right)\right)}
 \qquad \hbox{ for  $0 < s \leq r
< 1$}.
\end{align}
In particular, this yields
\begin{equation}\label{E:formula-for-operator-H}
H^m_{L_\Phi} f(t)
=\frac{1}{(m-1)!}\int_t^1 \frac{f(s)}{s\Phi'(\Phi^{-1}(\log \frac{2}{s}))}
\left(\Phi^{-1}\left(\log \frac{2}{t}\right)-\Phi^{-1}\left(\log
\frac{2}{s}\right)\right)^{m-1}\,ds\quad \hbox{for $t\in (0,1)$,}
\end{equation}
and $f\in \mathcal M_+(0,1)$.

A connection between the operators $P^m_\Phi$ and
$H^m_{L_\Phi}$ is described in the following proposition.

\begin{prop}\label{T:proposition2}
Suppose that $\Phi$ is as in~\eqref{E:measure}, $m\in \mathbb N$ and $f\in \mathcal M_+(0,1)$. Then
\begin{equation}\label{E:1-operators-P-and-H}
\frac{1}{2\sp m(m-1)!} P^m_\Phi f(t) \leq H^m_{L_\Phi} f(t)
\quad \textup{for $t\in (0,1)$.}
\end{equation}
Moreover, if $f$ is nonincreasing on $(0,1)$,
then
\begin{equation}\label{E:2}
H^m_{L_\Phi} f(t) \leq \frac{1}{(m-1)!} P^m_\Phi f(t) \quad
\textup{for $t\in (0,1)$.}
\end{equation}
\end{prop}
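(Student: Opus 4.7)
The plan is to introduce the substitution $u=\beta(s):=\Phi^{-1}(\log(2/s))$, which is a decreasing bijection of $[t,1]$ onto $[b,a]$, where $a=\Phi^{-1}(\log(2/t))$, $b=\Phi^{-1}(\log 2)$, and we set $c=\Phi(a)=\log(2/t)$ and $g(u)=f(2e^{-\Phi(u)})$. Since $\beta'(s)\Phi'(\beta(s))=-1/s$, this substitution converts the defining integrals into
\[
H^m_{L_\Phi}f(t)=\frac{1}{(m-1)!}\int_b^a g(u)(a-u)^{m-1}\,du,\qquad
P^m_\Phi f(t)=\Big(\frac{a}{c}\Big)^m\int_b^a g(u)(c-\Phi(u))^{m-1}\Phi'(u)\,du,
\]
using $\log(s/t)=c-\Phi(u)$ and $ds/s=-\Phi'(u)\,du$. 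Nonincreasingness of $f$ translates into $g$ being nondecreasing on $[b,a]$.

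For inequality~\eqref{E:1-operators-P-and-H}, I would prove the pointwise bound
\[
\tfrac{1}{2^m}(a/c)^m(c-\Phi(u))^{m-1}\Phi'(u)\le (a-u)^{m-1}\qquad\text{for }u\in[b,a],
\]
then integrate against $g$. Two applications of Lemma~\ref{delta2}~(iii) give this: first, setting $s=c$, $t=\Phi(u)$, we get $a-u=\Phi^{-1}(c)-\Phi^{-1}(\Phi(u))\ge \frac{a}{2c}(c-\Phi(u))$, and hence $(a-u)^{m-1}\ge(a/(2c))^{m-1}(c-\Phi(u))^{m-1}$; second, since $\Phi'$ is nondecreasing, $\Phi'(u)\le\Phi'(a)$, and the left-hand estimate in Lemma~\ref{delta2}~(iii) with $s=c$, $t=0$ gives $\Phi'(a)\le 2c/a$. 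Multiplying these bounds delivers the claim.

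For inequality~\eqref{E:2}, the naive pointwise comparison fails—indeed $1/\Phi'(\Phi^{-1}(\tau))\le\Phi^{-1}(\tau)/\tau$ from Lemma~\ref{delta2}~(iii), but $\Phi^{-1}(\tau)/\tau$ is nonincreasing in $\tau$ (a straightforward consequence of $\Phi$ being convex with $\Phi(0)=0$), so this bound goes the wrong way near $u=b$. I would instead exploit $g$ nondecreasing via integration by parts. Writing $(a-u)^{m-1}=-\frac{1}{m}\frac{d}{du}(a-u)^m$ and $(c-\Phi(u))^{m-1}\Phi'(u)=-\frac{1}{m}\frac{d}{du}(c-\Phi(u))^m$, and using that $dg\ge0$ as a Stieltjes measure, one obtains
\[
\int_b^a g(u)(a-u)^{m-1}\,du=\tfrac{1}{m}\Big[g(b)(a-b)^m+\int_b^a(a-u)^m\,dg(u)\Big],
\]
and an analogous identity for the $P^m_\Phi$-integral with $(c-\Phi(u))^m$ in place of $(a-u)^m$. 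The required inequality then reduces to the pointwise bound $(a-u)^m\le(a/c)^m(c-\Phi(u))^m$ on $[b,a]$, which is the $m$-th power of $a-u\le(a/c)(c-\Phi(u))$—exactly the leftmost-to-rightmost estimate of Lemma~\ref{delta2}~(iii) applied with $s=c$, $t=\Phi(u)$.

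The main obstacle is the second inequality, since the corresponding pointwise estimate on the integrand is false. The integration-by-parts maneuver is the essential step: it repackages the integrals in a form where monotonicity of $f$ (equivalently of $g$) converts the problem into comparing $(a-u)^m$ with $(a/c)^m(c-\Phi(u))^m$, a pointwise inequality that does follow from Lemma~\ref{delta2}~(iii).
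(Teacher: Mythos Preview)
Your argument is correct. For \eqref{E:1-operators-P-and-H}, your pointwise kernel bound after the substitution is the paper's proof rewritten in the $u$-variable: the paper bounds each factor $1/\Phi'(\Phi^{-1}(\log(2/\cdot)))$ in the nested integral by its value at $t$ and then invokes the leftmost inequality in \eqref{E:e}, which in your coordinates becomes $\Phi'(u)\le\Phi'(a)\le 2c/a$ together with $a-u\ge(a/(2c))(c-\Phi(u))$.

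For \eqref{E:2} the paper takes a different but equivalent route: it computes both operators explicitly on characteristic functions $\chi_{(0,b)}$, checks the inequality there via the rightmost estimate in \eqref{E:e}, then extends by linearity to nonincreasing simple functions and by monotone convergence to general nonincreasing $f$. Your Stieltjes integration by parts is the continuous version of the same layer-cake decomposition---the measure $dg\ge 0$ encodes the superposition of characteristic functions---and the pointwise bound $(a-u)^m\le(a/c)^m(c-\Phi(u))^m$ you arrive at is exactly the inequality the paper verifies for each $\chi_{(0,b)}$. Your packaging is more compact once the substitution is made; the paper's avoids any care with the Stieltjes integral. To make your argument watertight you should note that $g$ is bounded on $[b,a]$ (since $g(a^-)=f(t^+)<\infty$ for a nonincreasing $f$ that is finite a.e., the remaining case being trivial), or else truncate $f$ and pass to the limit by monotone convergence---at which point the two arguments effectively merge.
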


\begin{proof}
Let $f\in \mathcal M_+(0,1)$. Since the function $s
\mapsto \frac{1}{\Phi'(\Phi^{-1}(\log \frac{2}{s}))}$ is
nondecreasing on $(0,1)$, from the first inequality in~\eqref{E:e}
we obtain that
\begin{align*}
H^m_{L_\Phi} f(t) &
=\frac{1}{(m-1)!}\int_t^1 \frac{f(s)}{s\Phi'(\Phi^{-1}(\log \frac{2}{s}))} \left(\int_t^s \frac{\,dr}{r\Phi'(\Phi^{-1}(\log \frac{2}{r}))}\right)^{m-1}\,ds\\
&
\geq \frac{1}{(m-1)!}\frac{1}{\left(\Phi'(\Phi^{-1}(\log \frac{2}{t}))\right)^m} \int_t^1 \frac{f(s)}{s} \left(\int_t^s \frac{\,dr}{r}\right)^{m-1}\,ds\\
&
=\frac{1}{(m-1)!}\frac{1}{\left(\Phi'(\Phi^{-1}(\log \frac{2}{t}))\right)^m} \int_t^1 \frac{f(s)}{s} \left(\log \frac{s}{t} \right)^{m-1}\,ds\\
&
\geq \frac{1}{(m-1)!}\frac{1}{2^m}\left(\frac{\Phi^{-1}(\log \frac{2}{t})}{\log \frac{2}{t}}\right)^m \int_t^1 \frac{f(s)}{s} \left(\log \frac{s}{t} \right)^{m-1}\,ds\\
&
=\frac{1}{(m-1)!}\frac{1}{2^m} P^m_\Phi f(t) \quad \textup{for $t\in (0,1)$.}
\end{align*}

Now, assume that $f$ is nonincreasing on $(0,1)$.
In the special case when $f$ is a characteristic function of an open
interval, namely, $f=\chi_{(0,b)}$ for some $b\in (0,1]$, equation~\eqref{E:formula-for-operator-H} tells us that
$$
H^m_{L_\Phi}(\chi_{(0,b)})(t)=\frac{1}{m!}
\chi_{(0,b)}(t) \left(
\Phi^{-1}\left(\log \frac{2}{t}\right) - \Phi^{-1}\left(\log
\frac{2}{b}\right) \right)^m
$$
and
$$
P^m_\Phi (\chi_{(0,b)})(t)= \chi_{(0,b)}(t) \frac{1}{m} \left(\frac{\Phi^{-1}(\log \frac{2}{t})}{\log \frac{2}{t}}\right)^m \left(\log \frac{2}{t}
 - \log \frac{2}{b}\right)^m
$$
for $t\in (0,1)$. By the last inequality
in~\eqref{E:e},
$$
\left(\frac{ \Phi^{-1}\left(\log \frac{2}{t}\right) - \Phi^{-1}\left(\log \frac{2}{b}\right)}{\log \frac{2}{t} - \log \frac{2}{b}}\right)^m \leq \left(\frac{\Phi^{-1}(\log \frac{2}{t})}{\log \frac{2}{t}}\right)^m \quad \textup{for $t\in (0,b)$.}
$$
Hence,
\begin{equation}\label{sept05}
H^m_{L_\Phi}(\chi_{(0,b)}) \leq {\frac{1}{(m-1)!}}P^m_\Phi (\chi_{(0,b)}).
\end{equation}

Assume next that  $f$ is a  nonnegative
non-increasing simple function on $(0,1)$. Then there exist $k\in
\mathbb N$,  nonnegative  numbers $a_1, a_2, \dots, a_k \in \mathbb
R$ and $0<b_1 <b_2<\dots <b_k\leq 1$ such that $f=\sum_{i=1}^{k} a_i
\chi_{(0,b_i)}$ a.e.\ on $(0,1)$. Hence, owing to \eqref{sept05},
$$
H^m_{L_\Phi} f(t)= \sum_{i=1}^k a_i
H^m_{L_\Phi}(\chi_{(0,b_i)}) (t)\leq \frac{1}{(m-1)!}\sum_{i=1}^k a_i
P^m_\Phi(\chi_{(0,b_i)}) (t) = \frac{1}{(m-1)!}P^m_\Phi f(t) \quad \textup{for $t\in
(0,1)$.}
$$

Finally, if $f\in \mathcal M_+(0,1)$
 is nonincreasing on $(0,1)$, then
 there exists a sequence $f_k$ of nontrivial nonnegative nonincreasing simple functions on $(0,1)$ such that $f_n\uparrow f$. Clearly,
$$
H^m_{L_\Phi} f(t)= \lim_{n\to \infty} H^m_{L_\Phi}
f_n (t) \leq \frac{1}{(m-1)!}\lim_{n\to \infty} P^m_\Phi f_n (t) = \frac{1}{(m-1)!}P^m_\Phi f(t)
\quad \textup{for $t\in (0,1)$,}
$$
whence \eqref{E:2} follows.
\end{proof}

Proposition~\ref{T:proposition2} has an important consequence.

\begin{prop}\label{T:theorem}
Let $\Phi$ be as in~\eqref{E:measure}, let $m\in
\mathbb N$ and let $\|\cdot\|_{X(0,1)}$ and $\|\cdot\|_{Y(0,1)}$ be
rearrangement-invariant function norms. Then
$$
P^m_\Phi: X(0,1) \rightarrow Y(0,1) \quad
\textup{if and only if} \quad H^m_{L_\Phi}: X(0,1) \rightarrow
Y(0,1).
$$
\end{prop}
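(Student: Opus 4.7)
The plan is to exploit the pointwise two-sided control between $P^m_\Phi$ and $H^m_{L_\Phi}$ furnished by Proposition~\ref{T:proposition2}. One direction is straightforward, while the other requires an additional argument to pass from nonincreasing trial functions to arbitrary ones.

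First I would prove that $H^m_{L_\Phi}\colon X(0,1)\to Y(0,1)$ implies $P^m_\Phi\colon X(0,1)\to Y(0,1)$. This follows directly from the pointwise inequality~\eqref{E:1-operators-P-and-H}, which holds for every $f\in\mathcal M_+(0,1)$ without any monotonicity assumption. By the lattice property (P2) of rearrangement-invariant function norms,
\[
\|P^m_\Phi f\|_{Y(0,1)} \leq 2^m(m-1)!\,\|H^m_{L_\Phi} f\|_{Y(0,1)} \leq 2^m(m-1)!\,\|H^m_{L_\Phi}\|\,\|f\|_{X(0,1)}
\]
for every $f\in X(0,1)\cap\mathcal M_+(0,1)$.

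The harder direction, namely $P^m_\Phi\colon X(0,1)\to Y(0,1) \Longrightarrow H^m_{L_\Phi}\colon X(0,1)\to Y(0,1)$, will be the main obstacle. Here inequality~\eqref{E:2} of Proposition~\ref{T:proposition2} gives $H^m_{L_\Phi} f \leq \frac{1}{(m-1)!}P^m_\Phi f$ \emph{only for nonincreasing} $f$. Thus the boundedness of $P^m_\Phi$ immediately yields
\[
\left\|\int_t^1 \frac{f(s)}{L_\Phi(s)}\left(\int_t^s \frac{dr}{L_\Phi(r)}\right)^{m-1}ds\right\|_{Y(0,1)}
\leq C\,\|f\|_{X(0,1)}
\]
for every nonnegative \emph{non-increasing} $f\in X(0,1)$. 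To promote this inequality to arbitrary nonnegative $f\in X(0,1)$ I will invoke Corollary~\ref{C:restriction-of-main-inequality-to-nonincreasing-functions}, applied with $I=L_\Phi$. Before doing so, I need to verify that $L_\Phi$ satisfies the standing assumptions of that corollary: it is non-decreasing by Lemma~\ref{delta2}\,(i), and it satisfies~\eqref{E:lower-bound} because
\[
\frac{L_\Phi(t)}{t}=\Phi'\!\left(\Phi^{-1}\!\left(\log\tfrac{2}{t}\right)\right)
\]
is itself non-increasing in $t\in(0,1)$, so its infimum on $(0,1)$ equals $\Phi'(\Phi^{-1}(\log 2))>0$ since $\Phi$ is strictly increasing and convex with $\Phi(0)=0$.

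With these two ingredients, Corollary~\ref{C:restriction-of-main-inequality-to-nonincreasing-functions} yields the same inequality for all nonnegative $f\in X(0,1)$, which is exactly $H^m_{L_\Phi}\colon X(0,1)\to Y(0,1)$, completing the proof. The hard part is the monotonicity restriction in~\eqref{E:2}; the whole purpose of Corollary~\ref{C:restriction-of-main-inequality-to-nonincreasing-functions}, which rests in turn on the delicate Theorem~\ref{T:lenka-main} comparing $\|R^{m+1}_I f^*\|_{X'(0,1)}$ with $\|R^{m+1}_I f^*\|_{X'_d(0,1)}$, is precisely to remove this restriction.
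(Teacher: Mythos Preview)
Your proof is correct and follows essentially the same route as the paper's own argument: the easy direction via \eqref{E:1-operators-P-and-H}, and the reverse direction by first restricting to non-increasing $f$ via \eqref{E:2} and then invoking Corollary~\ref{C:restriction-of-main-inequality-to-nonincreasing-functions} to lift the restriction. Your added verification that $L_\Phi$ is non-decreasing and satisfies \eqref{E:lower-bound} is a welcome detail that the paper leaves implicit.
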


\begin{proof}
By~\eqref{E:1-operators-P-and-H}, the boundedness of the operator
$H^m_{L_\Phi}$ implies the boundedness of $P^m_\Phi$. Conversely, if
$P^m_\Phi$ is bounded from $X(0,1)$ into $Y(0,1)$ then, in
particular, there exists a constant $C$ such that
$$
\|P^m_\Phi f\|_{Y(0,1)} \leq C\|f\|_{X(0,1)}
$$
 for every
nonnegative non-increasing function $f\in X(0,1)$.
Combining this inequality with~\eqref{E:2}, we obtain that
$$
\|H^m_{L_\Phi} f\|_{Y(0,1)} \leq C \|f\|_{X(0,1)}
$$
for every nonnegative non-increasing $f\in
X(0,1)$. In view of Corollary~\ref{C:restriction-of-main-inequality-to-nonincreasing-functions}, this is equivalent to the
boundedness of $H^m_{L_\Phi}$ from $X(0,1)$ into $Y(0,1)$.
\end{proof}

\begin{proof}[{\bf Proof of Theorem \ref{T:prob-reduction}}]
Properties (ii) and (iii) are equivalent, by Proposition
\ref{poincsob}. Let us show that (i) and (ii) are equivalent as
well. First, assume that \textup{(i)} is
satisfied. Owing to Proposition~\ref{T:theorem}, there exists a
constant $C$ such that
$$
\left\|\int_t^1 \frac{f(s)}{L_\Phi(s)} \left(\int_t^s \frac{\,dr}{L_\Phi(r)}\right)^{m-1}\,ds\right\|_{Y(0,1)} \leq C\|f\|_{X(0,1)}
$$
for every nonnegative $f\in X(0,1)$. By
Lemma~\ref{delta2} \textup{(i)}, the function $L_\Phi$ is
non-decreasing on $[0,1]$. Furthermore,
condition~\eqref{E:lower-bound} is clearly satisfied with
$I=L_\Phi$. Thanks to these facts and to~\eqref{E:profile}, the
assumptions of Theorem \ref{T:reduction} are fulfilled with $(\Omega
, \nu) = (\rn, \mu _{\Phi ,n})$ and $I = L_\Phi$. Hence,
\textup{(ii)} follows.

It only remains to prove that (ii) implies (i). Assume that (ii)
holds, namely, there exists a constant $C$,
such that
\begin{align}\label{100000}
\|u\|_{Y(\rn,{\mu _{\Phi, n}})} & \leq C \Big( \|\nabla ^m
u\|_{X(\rn,{\mu _{\Phi, n}})} + \sum _{k=0}^{m-1} \| \nabla ^k
u\|_{L^1(\rn,{\mu _{\Phi , n}})}\Big)
\end{align}
for every  $u \in  V^m X(\rn,{\mu _{\Phi, n}})$.

Given any nonnegative function $f \in  X (0,1)$ such that $f(s) =0$
if $s \in (\frac 12 , 1)$, consider  the function $u : \rn \to \R$
defined as
\[
u(x) = \int _{H(x_1)}^{1} \frac 1{F_\Phi (r_1)} \int _{r_1}^{1}\frac
1{F_\Phi (r_2)} \dots \int _{r_{m-1}}^{1} \frac {f(r_m)}{F_\Phi
(r_m)} \,dr_m\,\,dr_{m-1} \dots \, dr_1 \qquad \hbox{for $x \in
\rn$,}
\]
where $H$ is given by \eqref{H}. Note that, since $H' (t) = - F_\Phi
(H (t))$,  then
\[
|\nabla ^k u (x)|= \frac {\partial ^k u}{\partial x_1^k}(x) = \int
_{H(x_1)}^{1} \frac 1{F_\Phi (r_{k+1})} \int _{r_{k+1}}^{1}\frac
1{F_\Phi (r_{k+2})} \dots \int _{r_{m-1}}^{1} \frac {f(r_m)}{F_\Phi
(r_m)} \,dr_m\,\,dr_{m-1} \dots \, dr_{k+1}
\]
for a.e. $x \in \rn$,  for  $k=1, \dots , m-1$, and
\[
|\nabla ^m u (x)|= \frac {\partial ^m u}{\partial x_1^m}(x) =  f(H
(x_1)) \qquad
 \hbox{for a.e. $x \in \rn$.}
\]
Thus, by \eqref{measphi},
\begin{equation}\label{100003k}
|\nabla ^k u|^*(s) = \int _{s}^{1} \frac 1{F_\Phi (r_{k+1})} \int
_{r_{k+1}}^{1}\frac 1{F_\Phi (r_{k+2})} \dots \int _{r_{m-1}}^{1}
\frac {f(r_m)}{F_\Phi (r_m)} \,dr_m\,\,dr_{m-1} \dots \, dr_{k+1}
\quad  \hbox{for $s \in (0,1)$,}
\end{equation}
for $k=0, \dots , m-1$, and
\begin{equation}\label{100005}
|\nabla ^m u|^*(s) = \bigg |\frac {\partial ^m u}{\partial
x_1^m}\bigg |^*(s) = f\sp*(s) \qquad
 \qquad \hbox{for $s \in (0,1)$.}
\end{equation}
By \eqref{100005}, $u \in  V^m X(\rn,{\mu_{\Phi,n}})$. \relax From
\eqref{100000}, \eqref{100003k} and~\eqref{100005} we thus deduce
that
\begin{align}\label{E:***}
&\bigg \|\int _{s}^{1}  \frac 1{F_\Phi (r_1)} \int _{r_1}^{1}\frac
1{F_\Phi (r_2)} \dots \int _{r_{m-1}}^{1} \frac {f(r_m)}{F_\Phi
(r_m)}
\,dr_m\,\,dr_{m-1} \dots \, dr_1 \bigg \|_{ Y(0,1)} \\
\nonumber & \leq C \|f \|_{ X(0,1)} + C \sum _{k=0}^{m-1}\int _0^1
\int _{s}^{1} \frac 1{F_\Phi (r_{k+1})} \int _{r_{k+1}}^{1}\frac
1{F_\Phi (r_{k+2})} \dots \int _{r_{m-1}}^{1} \frac {f(r_m)}{F_\Phi
(r_m)} \,dr_m\,\,dr_{m-1} \dots \, dr_{k+1}\,ds.
\end{align}
Owing to  Fubini's Theorem,  \eqref{E:profile} and  \eqref{dec104},
\begin{align}\label{A}
\int _{s}^{1} & \frac 1{F_\Phi(r_1)} \int _{r_1}^{1}\frac 1{F_\Phi
(r_2)} \dots \int _{r_{m-1}}^{1} \frac {f(r_m)}{F_\Phi(r_m)}
\,dr_m\,\,dr_{m-1} \dots \, dr_1
  \approx
\int _s^1  \frac{f(r)}{ {F_\Phi}(r)}\left(\int_s\sp r\frac{dt}{
{F_\Phi}(t)}\right)\sp{m-1}
 \, dr
 \\ \nonumber &
\approx
 \int _s^1  \frac{f(r)}{ {L_\Phi}(r)}\left(\int_s\sp
r\frac{dt}{{L_\Phi}(t)}\right)\sp{m-1}
 \, dr
 =
 \int _s^1   f(r)  \frac{\left(\Phi\sp{-1}\left(\log \frac
2s\right)-\Phi\sp{-1} \left(\log \frac
2r\right)\right)\sp{m-1}}{r\Phi'\left(\Phi\sp{-1}\left(\log\frac
2r\right)\right)} \, dr  \,\,\, \hbox{for $s \in (0,1)$.}
\end{align}
  Note that the second equivalence makes use of  the fact that $f$ vanishes in $(\frac 12 , 1)$. On the other
hand, by \eqref{A} (with $m$ replaced with $m-k$),
\eqref{l1linf}, and~\eqref{E:bdj} (with $I$ replaced with
$L_{\Phi}$),
\begin{align}\label{B}
\int _0^1  & \int _{s}^{1} \frac 1{F_\Phi(r_{k+1})}\int
_{r_{k+1}}^{1}\frac 1{F_\Phi(r_{k+2})} \dots \int _{r_{m-1}}^{1}
\frac
{f(r_m)}{F_\Phi(r_m)} \,dr_m\,\,dr_{m-1} \dots \, dr_{k+1}\,ds\\
\nonumber & \approx \int _0^1 \int _s^1  \frac{f(r)}{
{L_\Phi}(r)}\left(\int_s\sp r\frac{dt}{
{L_\Phi}(t)}\right)\sp{m-k-1}
 \, dr \, ds \approx\|H_{L_{\Phi}}\sp{m-k}f\|_{L\sp1(0,1)} \nonumber \\
& \leq
 C \|H_{L_{\Phi}}\sp{m-k}f\|_{(L\sp1)_{m-k,L_{\Phi}}(0,1)}\leq C'\|f\|_{L\sp1(0,1)} \leq C''\|f\|_{X(0,1)}\nonumber
\end{align}
for some constants $C$, $C'$ and $C''$. \relax From inequalities
\eqref{E:***} -- \eqref{B}, we deduce that
 there exists a constant
$C$ such that
\begin{align*}
\left\|\int _s^1 f(r) \frac{\left(\Phi\sp{-1}\left(\log \frac
2s\right)-\Phi\sp{-1}\left(\log \frac
2r\right)\right)\sp{m-1}}{r\Phi'\left(\Phi\sp{-1}\left(\log\frac
2r\right)\right)}  \, dr \right\|_{\bY(0,1)}
   & \leq C \left\|f\right\|_{\bX(0,1)}
  \end{align*}
for every nonnegative function $f \in
 X (0,1)$ such that $f(s) =0$ if $s \in (\frac 12 , 1)$.
By Proposition~\ref{T:proposition2}, for each such function $f$ we also have
\begin{equation}\label{E:half}
\left\|\left(\frac{\Phi^{-1}(\log \frac{2}{t})}{\log \frac{2}{t}} \right)^{m} \int_t^1 \frac{f(s)}{s} \left(\log \frac{s}{t}\right)^{m-1}\,ds\right\|_{Y(0,1)} \leq 2^mC\|f\|_{X(0,1)}.
\end{equation}

Finally, assume that $f$ is any  nonnegative function from $ X
(0,1)$ (which need not vanish in $(\frac 12 , 1)$). Then, by the
boundedness of the dilation operator on $Y(0,1)$, there exists a
constant $C$ such that
\begin{align}\label{E:nov11}
&
\left\|\left(\frac{\Phi^{-1}(\log \frac{2}{t})}{\log \frac{2}{t}} \right)^{m} \int_t^1 \frac{f(s)}{s} \left(\log \frac{s}{t}\right)^{m-1}\,ds\right\|_{Y(0,1)}\\
&
\leq C\left\|\chi_{(0,\frac{1}{2})}(t)\left(\frac{\Phi^{-1}(\log \frac{1}{t})}{\log \frac{1}{t}} \right)^{m} \int_{2t}^1 \frac{f(s)}{s} \left(\log \frac{s}{2t}\right)^{m-1}\,ds\right\|_{Y(0,1)}.\nonumber
\end{align}
Furthermore, since
$$
\frac{\Phi^{-1}(\log \frac{1}{t})}{\log
\frac{1}{t}} \leq \frac{\Phi^{-1}(\log \frac{2}{t})}{\log
\frac{1}{t}} \leq \frac{2\Phi^{-1}(\log \frac{2}{t})}{\log
\frac{2}{t}} \quad \textup{for $t\in (0,\tfrac{1}{2})$,}
$$
from inequality~\eqref{E:half} with $f$ replaced
with $\chi_{(0,\frac{1}{2})}(t)f(2t)$, and the boundedness of the
dilation operator, we obtain
\begin{align}\label{E:nov12}
&
\left\|\chi_{(0,\frac{1}{2})}(t)\left(\frac{\Phi^{-1}(\log \frac{1}{t})}{\log \frac{1}{t}} \right)^{m} \int_{2t}^1 \frac{f(s)}{s} \left(\log \frac{s}{2t}\right)^{m-1}\,ds\right\|_{Y(0,1)}\\ \nonumber
&
\leq 2^m\left\|\chi_{(0,\frac{1}{2})}(t)\left(\frac{\Phi^{-1}(\log \frac{2}{t})}{\log \frac{2}{t}} \right)^{m} \int_{t}^\frac{1}{2} \frac{f(2s)}{s} \left(\log \frac{s}{t}\right)^{m-1}\,ds\right\|_{Y(0,1)}\\ \nonumber
&
\leq C'\|\chi_{(0,\frac{1}{2})}(t)f(2t)\|_{X(0,1)} \leq C'' \|f\|_{X(0,1)}
\end{align}
for some constants $C'$ and $C''$ independent of
$f$. Coupling~\eqref{E:nov11} with~\eqref{E:nov12} yields
\eqref{E:hardy}.
\end{proof}

\begin{proof}[{\bf Proof of Theorem \ref{T:optimal_range}}]
Set $J(s)=s$ for $s\in [0,1]$.
Then condition~\eqref{E:lower-bound} is obviously
fulfilled with $I=J$. The norm
$\|\cdot\|_{\widetilde X_{m,J}(0,1)}$ is thus
well defined and, moreover, $\|\cdot\|_{\widetilde X_m(0,1)}=\|\cdot\|_{X_{m,J}(0,1)}$. Therefore,
Proposition~\ref{P:norm} tells us that
$\|\cdot\|_{\widetilde X_m(0,1)}$ is a
rearrangement-invariant function norm. We shall now verify that
$\|\cdot\|_{X_{m,\Phi}(0,1)}$ is a rearrangement-invariant function
norm as well. The first two properties in \textup{(P1)} and
properties \textup{(P2)} and \textup{(P3)} are straightforward
consequences of the corresponding properties for
$\|\cdot\|_{\widetilde X_m(0,1)}$. To prove the
triangle inequality, fix $f$, $g\in \mathcal M_+(0,1)$.
By~\eqref{subadd}, $\int_0^s (f+g)^*(r)\,dr \leq \int_0^s
(f^*(r)+g^*(r))\,dr$ for $s\in (0,1)$. We observe that for each
$t\in (0,1)$, the function $s\mapsto \chi_{(0,t)}(s)\left(\frac{\log
\frac 2s}{\Phi^{-1}(\log \frac 2s)} \right)^m$ is nonnegative and
non-increasing on $(0,1)$. The Hardy's lemma therefore yields that
$$
\int_0^t \left(\frac{\log \frac 2s}{\Phi^{-1}(\log \frac 2s)} \right)^m (f+g)^*(s)\,ds \leq \int_0^t \left(\left(\frac{\log \frac 2s}{\Phi^{-1}(\log \frac 2s)} \right)^m f^*(s) + \left(\frac{\log \frac 2s}{\Phi^{-1}(\log \frac 2s)} \right)^m g^*(s)\right)\,ds
$$
for $t\in (0,1)$. The triangle inequality now
follows using the Hardy - Littlewood - P\'olya principle and the
triangle inequality for $\|\cdot\|_{\widetilde
X_m(0,1)}$.

One has that
\[
\exp L^{\frac{1}{m}}(0,1) =(L^\infty)_m(0,1)
\rightarrow \widetilde X_m(0,1),
\]
where the equality is a consequence of
Theorem~\ref{T:eucl_lorentz-alpha=1}. Thus,
 there exists a constant $C$ such
that
\begin{align*}
\|1\|_{X_{m,\Phi}(0,1)}  & =
\left\|\left(\frac{\log \frac 2s}{\Phi^{-1}(\log \frac 2s)}
\right)^m\right\|_{\widetilde X_m(0,1)}
\leq C \left\|\left(\frac{\log \frac 2s}{\Phi^{-1}(\log \frac 2s)} \right)^m\right\|_{\exp L^{\frac 1m}(0,1)}\\
&
\approx C \left\|\frac{1}{(\Phi^{-1}(\log \frac 2s))^m}\right\|_{L^{\infty}(0,1)}
=\frac{C}{(\Phi^{-1}(\log 2))^m} <\infty.
\end{align*}
This proves \textup{(P4)}.

Finally, by property \textup{(P5)} for
$\|\cdot\|_{\widetilde X_m(0,1)}$, there exists a
positive constant $C$ such that for all $f\in \mathcal M_+(0,1)$,
$$
\|f\|_{X_{m,\Phi}(0,1)} \geq \left(\frac{\log
2}{\Phi^{-1}(\log 2)} \right)^m
\|f^*\|_{\widetilde X_m(0,1)} \geq
\left(\frac{C\log 2}{\Phi^{-1}(\log 2)} \right)^m \int_0^1
f^*(s)\,ds.
$$
Therefore, $\|\cdot\|_{X_{m,\Phi}(0,1)}$ satisfies
\textup{(P5)}. Since the property \textup{(P6)} holds trivially,
$\|\cdot\|_{X_{m,\Phi}(0,1)}$ is actually a rearrangement-invariant
norm.

It follows from the proof of Theorem~\ref{T:prob-reduction} that the assumptions of
 Theorem~\ref{T:optran} are fulfilled with $(\Omega,\nu)=(\mathbb R^n, \mu_{\Phi,n})$ and $I=L_\Phi$.
  Therefore, $\|\cdot \|_{X_{m,L_\Phi}(0,1)}$ is the   optimal rearrangement-invariant target function norm for $\|\cdot \|_{X(0,1)}$
   in  the Sobolev embedding~\eqref{E:embedding}. Thus, the proof will be complete if we show
   that
   $X_{m,\Phi}(0,1)=X_{m,L_\Phi}(0,1)$.
We have that
\begin{align}\label{E:1}
\|f\|_{X'_{m,L_\Phi}(0,1)} &
=(m-1)!\|R^m_{L_\Phi} f^*\|_{X'(0,1)} \approx \|R^m_{L_\Phi}
f^*\|_{X'_d(0,1)} \quad \textup{(by Theorem~\ref{T:lenka-main})} \\
\nonumber
&  =\sup_{\|g\|_{X(0,1)}\leq 1} \int_0^1
g^*(t) R^m_{L_\Phi}f^*(t)\,dt\\
\nonumber &
=\sup_{\|g\|_{X(0,1)}\leq 1} \int_0^1 f^*(t)
H^m_{L_\Phi}g^*(t)\,dt\\
\nonumber & \approx
\sup_{\|g\|_{X(0,1)}\leq 1} \int_0^1 f^*(t) P^m_{\Phi}g^*(t)\,dt
\quad \textup{(by Proposition~\ref{T:proposition2})}\\
\nonumber &
\approx\sup_{\|g\|_{X(0,1)}\leq 1} \int_0^1 f^*(t)
\left(\frac{\Phi^{-1}(\log \frac 2t)}{\log \frac 2t} \right)^m
H^m_{J}g^*(t)\,dt \\
\nonumber &
=\sup_{\|g\|_{X(0,1)}\leq 1} \int_0^1 g^*(t) R^m_J\left(f^*(s)
\left(\frac{\Phi^{-1}(\log \frac 2s)}{\log \frac 2s}
\right)^m\right)(t) \,dt\\
\nonumber &
=\left\|R^m_J\left(f^*(s) \left(\frac{\Phi^{-1}(\log \frac 2s)}{\log
\frac 2s} \right)^m\right)\right\|_{X'_d(0,1)} \quad \textup{for
$f\in L^1(0,1)$,}
\end{align}
up to multiplicative constants depending on $m$.

We now claim that, given $f\in L^1(0,1)$, there
exists a non-decreasing function $I$ on $[0,1]$
fulfilling~\eqref{E:lower-bound} and a function $h\in \mathcal
M_+(0,1)$ such that
\begin{equation}\label{E:21}
f^*(s) \left(\frac{\Phi^{-1}(\log \frac 2s)}{\log \frac 2s} \right)^m \approx R_I h^*(s) \quad \textup{for $s\in (0,1)$},
\end{equation}
up to multiplicative constants depending on $m$.
Indeed, let $s_0\in (0,1)$  be chosen in such a way that the
function $s\mapsto s\big(\log \frac 2s\big)^{m+1}$ is non-decreasing
on $(0,s_0)$. Then we set
$$
I(s)=\frac{1}{f^*(s)} \quad \textup{for $s\in (0,1]$} \quad \textup{and} \quad I(0)=0,
$$
and
$$
h(s)=
\begin{cases}
\frac{\left(\Phi^{-1}(\log \frac 2s)\right)^{m-1} \left(\Phi^{-1}(\log \frac 2s) - \frac{\log \frac 2s}{\Phi'(\Phi^{-1}(\log \frac 2s))}\right)}{s(\log \frac 2s)^{m+1}}, & s\in (0,s_0]\\
\frac{\left(\Phi^{-1}(\log \frac 2s)\right)^{m-1} \left(\Phi^{-1}(\log \frac 2s) - \frac{\log \frac 2s}{\Phi'(\Phi^{-1}(\log \frac 2s))}\right)}{s_0(\log \frac 2{s_0})^{m+1}}, & s\in (s_0,1).
\end{cases}
$$
It follows from~\eqref{E:e} that the function $h$ is non-negative on $(0,1)$.
To verify~\eqref{E:21} we first show that $h$ is non-increasing on $(0,1)$. The function $\Phi^{-1}$ is clearly non-decreasing on $(0,\infty)$.
 Furthermore, we deduce from the convexity of $\Phi$ that the function  $s \mapsto \Phi^{-1}(s) - \frac{s}{\Phi'(\Phi^{-1}(s))}$
  is non-decreasing on $(0,\infty)$. Altogether, this ensures that
$$
s\mapsto \left(\Phi^{-1}\left(\log \frac 2s\right)\right)^{m-1} \left(\Phi^{-1}\left(\log \frac 2s \right) - \frac{\log \frac 2s}{\Phi'(\Phi^{-1}(\log \frac 2s))}\right)
$$
is non-increasing on $(0,1)$. By the definition of $s_0$, the function
$$
s\mapsto
\begin{cases}
s(\log \frac 2s)^{m+1},& s\in (0,s_0]\\
s_0(\log \frac 2{s_0})^{m+1}, & s\in (s_0,1)
\end{cases}
$$
is non-decreasing (and continuous) on $(0,1)$, and
therefore, in particular, $h=h^*$.

Consequently, we have
\begin{align*}
f^*(s) \left(\frac{\Phi^{-1}(\log \frac 2s)}{\log \frac 2s} \right)^m
&
= \frac{m}{I(s)} \int_0^s \frac{\left(\Phi^{-1}(\log \frac 2r)\right)^{m-1} \left(\Phi^{-1}(\log \frac 2r) - \frac{\log \frac 2r}{\Phi'(\Phi^{-1}(\log \frac 2r))}\right)}{r(\log \frac 2r)^{m+1}}\,dr \\
&
\approx \frac{1}{I(s)} \int_0^s h^*(r)\,dr= R_I h^*(s) \quad \textup{for $s\in (0,1)$},
\end{align*}
up to multiplicative constants depending on $m$.
This proves~\eqref{E:21}. Furthermore, it can be easily verified
that the function $I$ fulfils also the remaining required
properties.

Coupling~\eqref{E:1} with~\eqref{E:21}  entails
that, for the fixed function $f$,
\begin{equation}\label{E:31}
\|f\|_{X'_{m,L_\Phi}(0,1)} \approx \|R^m_J R_I h^*\|_{X_d'(0,1)},
\end{equation}
up to multiplicative constants depending on $m$.

Now, the same proof as that of Theorem~\ref{T:lenka-main} yields that
\begin{equation}\label{E:41}
\|R^m_J R_I h^*\|_{X_d'(0,1)} \approx \|R^m_J (R_I h^*)^*\|_{X'(0,1)},
\end{equation}
up to multiplicative constants still depending only on $m$.

On combining \eqref{E:31}, \eqref{E:41} and
\eqref{E:21}, we obtain that for every $f\in L^1(0,1)$,
\begin{align}\label{E:51}
\|f\|_{X'_{m,L_\Phi}(0,1)}
&
\approx \left\|R^m_J \left(f^*(\cdot) \left(\frac{\Phi^{-1}(\log \frac 2{(\cdot)})}{\log \frac 2{(\cdot)}} \right)^m\right)^*(t)\right\|_{X'(0,1)}\\
&  \approx \left\|f^*(t)
\left(\frac{\Phi^{-1}(\log \frac 2t)}{\log \frac 2t}
\right)^m\right\|_{\widetilde X_m'(0,1)}\nonumber
\end{align}
up to mulplicative constants depending on $m$.
Consequently, by~\eqref{E:51},
 we have
that, for every $g\in \mathcal M_+(0,1)$,
\begin{align*}
\|g\|_{X_{m,L_\Phi}(0,1)}
&
=\sup \left\{\int_0^1 f^*(s) g^*(s)\,ds:  \|f\|_{X_{m,L_\Phi}'(0,1)}\leq 1 \right\}\\
&
\approx \sup \left\{\int_0^1 f^*(s) g^*(s)\,ds:  \left\|f^*(t) \left(\frac{\Phi^{-1}(\log \frac 2t)}{\log \frac 2t} \right)^m\right\|_{\widetilde X_m'(0,1)}\leq 1 \right\}\\
& \leq \left\|g^*(t) \left(\frac{\log \frac
2t}{\Phi^{-1}(\log \frac 2t)}
\right)^m\right\|_{\widetilde X_m(0,1)},
\end{align*}
up to mulplicative constants depending on $m$.

Conversely,
\begin{align}
&
\left\|g^*(t) \left(\frac{\log \frac 2t}{\Phi^{-1}(\log \frac 2t)} \right)^m\right\|_{\widetilde X_m(0,1)}\label{sept07}\\
&
=\sup \left\{\int_0^1 g^*(t) \left(\frac{\log \frac 2t}{\Phi^{-1}(\log \frac 2t)} \right)^m f^*(t)\,dt:   \|f\|_{\widetilde X_m'(0,1)}\leq 1\right\}\nonumber\\
&
\approx \sup \left\{\int_0^1 g^*(t) \left(\frac{\log \frac 2t}{\Phi^{-1}(\log \frac 2t)} \right)^m f^*(t)\,dt:
  \left\|f^*(t) \left(\frac{\log \frac 2t}{\Phi^{-1}(\log \frac 2t)} \right)^m\right\|_{X_{m,L_\Phi}'(0,1)}\leq 1\right\}\nonumber\\
& \leq \|g\|_{X_{m,L_\Phi}(0,1)},\nonumber
\end{align}
up to multiplicative constants depending on $m$.
Note that the equivalence in \eqref{sept07} holds by
\eqref{E:51} and the fact that the function $t
\mapsto \Big(\frac{\log \frac 2t}{\Phi^{-1}(\log \frac 2t)} \Big)^m$
is non-increasing. Hence,
$X_{m,\Phi}(0,1)=X_{m,L_\Phi}(0,1)$. The proof is complete.
\end{proof}

\begin{proof}[{\bf Proof of Proposition~\ref{P:prop3}}]
Since the $m$-th  iteration  of the double-star
operator $g\mapsto g\sp{**}$ associates a~ function $g$ with $\frac
1s\int_0\sp s (\log\frac sr)\sp{m-1}g\sp*(r)\,dr$ for $s\in (0,1)$,
we obtain from the boundedness of the double-star operator on
$X'(0,1)$ that
\[
\|g\|_{\widetilde X'_m(0,1)}\approx \|g\|_{X'(0,1)}.
\]
Thus, $\widetilde X_m(0,1)=X(0,1)$. Consequently, the assertion follows from~\eqref{E:optimal_range}.
\end{proof}

\begin{proof}[{\bf Proof of Theorem \ref{T:reiteration}}]
This is a consequence of Theorem \ref{T:reit} and of the fact that $X_{m,\Phi}(0,1)=X_{m,L_\Phi}(0,1)$.
\end{proof}

\begin{proof}[{\bf Proof of Theorem~\ref{EX:expo_lz}}]
Denote $X(0,1)=L\sp{p,q;\alpha}(0,1)$. We claim that
\[
\widetilde X_m(0,1)=
\begin{cases}
L\sp{p,q;\alpha}(0,1) &\textup{if}\ p<\infty,\\
L\sp{\infty,q;\alpha-m}(0,1) &\textup{if}\ p=\infty.
\end{cases}
\]
Indeed, let $p<\infty$ and set $\Phi(t)=t$, $t\in[0,\infty)$. Then, by Remark~\ref{R:note},
\[
\widetilde X_m(0,1)=X_{m,\Phi}(0,1).
\]
By~\eqref{E:lz_assoc} and~\eqref{E:glz-identity}, the operator $f\mapsto f\sp{**}$ is bounded on $X'(0,1)$. Therefore, by Proposition~\ref{P:prop3},
\[
X_{m,\Phi}(0,1)=X(0,1)=L\sp{p,q;\alpha}(0,1).
\]
Now, let $p=\infty$, and set $I(s)=s$, $s\in[0,1]$. Then $R_If\sp*=f\sp{**}$, whence,  by Theorem~\ref{T:lenka-remark-3},
\[
\|f\|_{(\widetilde X_1)'(0,1)} = \|f\sp{**}\|_{X'(0,1)} \approx
\|t\sp{ 1-\frac1{q'}}(\log\tfrac
2t)\sp{-\alpha}f\sp{**}(t)\|_{L\sp{q'}(0,1)}
=\|f\|_{L\sp{(1,q';-\alpha)}(0,1)}.
\]
Owing to~\eqref{E:lz_assoc} and~\eqref{E:glz-identity},
\[
(L\sp{(1,q';-\alpha)})'(0,1)=L\sp{\infty,q;\alpha-1}(0,1).
\]
Thus,
\[
\widetilde X_1(0,1)=L\sp{\infty,q;\alpha-1}(0,1).
\]
By making use of Theorem~\ref{T:reiteration} combined with Remark~\ref{R:note}, we obtain that
\[
\widetilde X_m(0,1)=L\sp{\infty,q;\alpha-m}(0,1).
\]
The conclusion is now a consequence of Theorem~\ref{optimalboltz}.
\end{proof}

\end{document}